\documentclass[11pt]{article}
\usepackage[margin=1in]{geometry}
\usepackage{amsthm,amsmath,amsfonts,amssymb,mathtools,bm}
\usepackage[authoryear,round]{natbib}
\usepackage{graphicx}
\usepackage{booktabs}
\usepackage{enumitem}
\usepackage[section]{placeins}
\usepackage{hyperref}
\hypersetup{colorlinks=true,citecolor=blue,linkcolor=blue,urlcolor=blue}

\numberwithin{equation}{section}
\theoremstyle{plain}
\newtheorem{theorem}{Theorem}[section]
\newtheorem{proposition}[theorem]{Proposition}
\newtheorem{lemma}[theorem]{Lemma}
\newtheorem{corollary}[theorem]{Corollary}

\theoremstyle{definition}
\newtheorem{assumption}[theorem]{Assumption}

\newtheorem{remark}[theorem]{Remark}

\newcommand{\R}{\mathbb{R}}
\newcommand{\E}{\mathbb{E}}
\newcommand{\Var}{\operatorname{Var}}
\newcommand{\Cov}{\operatorname{Cov}}

\newcommand{\Toep}{\operatorname{Toep}}
\newcommand{\diag}{\operatorname{diag}}
\newcommand{\sign}{\operatorname{sign}}

\newcommand{\Span}{\operatorname{span}}
\newcommand{\Id}{I}
\newcommand{\one}{\mathbf{1}}
\newcommand{\eps}{\varepsilon}
\newcommand{\calR}{\mathcal R}
\newcommand{\calI}{\mathcal I}
\newcommand{\calC}{\mathcal C}

\newcommand{\vect}[1]{\bm{#1}}
\newcommand{\norm}[1]{\left\lVert #1\right\rVert}
\newcommand{\ip}[2]{\left\langle #1,#2\right\rangle}

\newcommand{\parhead}[1]{\medskip\noindent\textbf{#1.}\quad}

\newcommand{\Prb}{\mathbb{P}}
\newcommand{\Om}{\Omega}
\newcommand{\ph}{\varphi}
\newcommand{\kobs}{\kappa_{\rm obs}}

\newcommand{\opnorm}[1]{\left\lVert #1\right\rVert_2}
\newcommand{\Fnorm}[1]{\left\lVert #1\right\rVert_F}

\title{Fixed-Threshold One-Bit Toeplitz Covariance Estimation under Sparse-Ruler Sampling}
\author{%
  Zhiyong Cheng\thanks{School of Computer and Artificial Intelligence, Chaohu University.}
  \and
  Shengyao Chen\thanks{School of Electronic and Optical Engineering, Nanjing University of Science and Technology.}%
}
\date{}

\begin{document}
\maketitle

\begin{abstract}
We estimate the Toeplitz covariance matrix of a centered Gaussian distribution from data that are both coarsely quantized and sparsely sampled.
Only the coordinates of a sparse ruler are recorded, and each recorded value is kept as a single bit: the sign of its comparison with a fixed threshold.
Such data arise in low-precision sensing front ends and sparse sensor arrays.
Because the threshold is nonzero, every bit has a common mean.
Each bit is also reused across many of the products that build the covariance, so one bit's error enters many of them.
Centering removes the shared error.
We prove a Gaussian variance contraction theorem for products of a centered, bounded nonlinearity of a Gaussian vector, the non-smooth one-bit sign included; it sets each lag's variance by how well the ruler covers that lag.
The resulting estimator needs neither the signal scale nor the bit mean in advance, since the nonzero threshold makes both identifiable from the marginal bits.
A matching minimax lower bound shows the resulting coverage rate is optimal up to constants over a neighborhood of white noise; the bound holds even for the unquantized real-valued samples, so one-bit quantization costs only a constant factor.
\end{abstract}

\noindent\textbf{Keywords:} one-bit covariance estimation, Gaussian quadratic forms, sparse rulers, Toeplitz covariance, quantized statistics.\par
\noindent\textbf{MSC2020 subject classifications:} Primary 62H12; secondary 60G15, 62M15, 62G20.

\bigskip

\section{Introduction and main results}
\label{sec:intro}

We study how accurately the Toeplitz covariance matrix of a Gaussian distribution can be estimated from data that are degraded in two ways at once.
Each snapshot is observed only on a deterministic sparse ruler \(\Omega\subset\{0,\ldots,d-1\}\), and every observed coordinate is kept as a single bit, the sign \(Y_j^{(\ell)}=\sign(X_j^{(\ell)}-\lambda)\) of its comparison with a fixed threshold \(\lambda\).
This is the regime of low-precision, large-scale acquisition, such as undithered analog-to-digital front ends and sparse sensor arrays, where coarse quantization and incomplete sampling occur together rather than in isolation.
We ask what operator-norm estimation rate is attainable here and which feature of the sampling design governs it.

On the quantization side, the analysis is usually made tractable by dithering.
A controlled dither signal added before each comparison makes the quantized products conditionally unbiased and simplifies the analysis \citep{DirksenMalyRauhut2022,DirksenMaly2024}, including in the closest ruler-based Toeplitz analyses \citep{xu2026bit}, but it requires a per-sample reference signal that many low-precision, high-channel-count front ends do not include.
A plain fixed comparator is a common alternative, as used in one-bit sparse arrays \citep{BarShalomWeiss2002,LiuVaidyanathan2017,ChengChen2020,Sedighi2021}, and a fixed nonzero threshold is the dither-free way to keep the scale information that a symmetric channel would lose \citep{LiuLin2021,Eamaz2023,Xiao2023}.
We therefore study this undithered fixed-threshold regime on a sparse ruler; the sparse-ruler and compressive side has otherwise been developed at full precision \citep{Eldar2020,RomeroLopezLeus2015}.
The difficulty is specific to this combination.
A nonzero threshold gives every observed bit a common nonzero mean, and a sparse ruler reuses each bit across many deterministic lag products, so the shared mean injects the same one-vertex fluctuation into all of them, a coherent error that dithering and random masking never create and that standard quadratic-form concentration does not control.

Centering removes this obstruction and reveals the coverage coefficient \(\varphi(\Omega)\) as the quantity that governs the leading operator-norm rate.
The paper turns this into matching upper and lower bounds, sharp up to constants in the natural operating regime for balanced designs, with a Gaussian variance-contraction theorem as the main probabilistic tool.

\subsection{Notation}

Vectors lie in $\mathbb R^d$ with Euclidean inner product $\ip{u}{v}=\sum_i u_iv_i$ and norm $\norm{v}_2=\ip{v}{v}^{1/2}$; $\Id_d$ is the identity matrix and $\one$ the all-ones vector, with the dimension dropped when it is clear from context.
For a matrix $M$, $M^\top$ is its transpose, $M^*$ its conjugate transpose, and $\operatorname{Re}M$, $\operatorname{Im}M$ its entrywise real and imaginary parts; $M\succeq0$ means that $M$ is positive semidefinite.
The operator, or spectral, norm $\norm{M}_2=\sup_{\norm{x}_2=1}\norm{Mx}_2$ equals, for a symmetric or Hermitian $M$, the largest magnitude of its eigenvalues, and the Frobenius norm is $\norm{M}_F=\bigl(\sum_{j,k}|M_{jk}|^2\bigr)^{1/2}$.
For a closed convex set $S$, $\Pi_S$ denotes the Euclidean nearest-point projection onto $S$.
We write $M\circ N$ for the entrywise, or Hadamard, product and $M^{\circ k}$ for its $k$-fold entrywise power.
A square matrix is hollow if its diagonal is zero, and a symmetric Toeplitz matrix is constant along each diagonal; for a lag sequence $b=(b_1,\ldots,b_{d-1})$, $T_d^0(b)$ is the hollow symmetric Toeplitz matrix with entry $b_{|j-k|}$ at position $(j,k)$ for $j\ne k$, and $\Toep_d(a)$ is the symmetric Toeplitz matrix with diagonal $a_0$ and lag sequence $a_1,\ldots,a_{d-1}$.
We write $\E$, $\Var$, $\Cov$, and $\Prb$ for expectation, variance, covariance, and probability; $\mathcal N(\mu,\Sigma)$ denotes the Gaussian law, $\Phi$ the standard normal distribution function, $Q=1-\Phi$ its survival function with inverse $Q^{-1}$, and $\sign(t)$ the sign of $t$, with $\sign(0)=0$.
Constants $C$ and $c$ denote positive numbers that depend only on the quantities indicated at each statement and may change from occurrence to occurrence; we write $a\lesssim b$ when $a\le Cb$ for such a constant, and $a\asymp b$ when both $a\lesssim b$ and $b\lesssim a$ hold.

\subsection{Model and main results}

Our setting is Toeplitz covariance estimation from a deterministic sparse ruler \citep{Moffet1968,LinebargerSudboroughTollis1993,PalVaidyanathan2010,VaidyanathanPal2011}.
Let \(X^{(1)},\ldots,X^{(n)}\in\mathbb R^d\) be independent centered Gaussian vectors with symmetric Toeplitz covariance \(\Gamma_{jk}=\gamma_{|j-k|}\), \(\gamma_0>0\), and normalized lags \(\rho_s=\gamma_s/\gamma_0\).
A ruler \(\Omega\subset\{0,\ldots,d-1\}\) selects the coordinates that are read, and at a fixed nonzero threshold \(\lambda\) each observation is a single bit \(Y_j^{(\ell)}=\sign(X_j^{(\ell)}-\lambda)\), \(j\in\Omega\), so the lags are estimated from products of these signs along the ruler.
For each lag \(s\) the observed pairs are \(\Omega_s=\{(j,k)\in\Omega^2:k-j=s\}\) with multiplicity \(q_s=|\Omega_s|\), assumed at least one for every \(s\).
The design enters through the coverage coefficient
\begin{equation}\label{eq:intro-phi}
    \varphi(\Omega)=\sum_{s=1}^{d-1}q_s^{-1}.
\end{equation}
We want the operator-norm rate this design allows and the feature of the ruler that sets it.
The answer is \(\varphi(\Omega)\) rather than the ruler size \(|\Omega|\), the same functional that already appears as an upper-bound scale in the dithered ruler line \citep{xu2026bit,XuZhengYang2025}; the rest of this subsection establishes it on a fixed one-bit channel.

At a nonzero threshold a direct sign-product estimator does not aggregate cleanly, because every sign carries a common mean.
Writing \(u_i=\sign(g_i-\tau)=\mu+v_i\) with \(\tau=\lambda/\sqrt{\gamma_0}\), \(\mu=\E u_i\), and \(\E v_i=0\), the raw product splits as
\begin{equation}\label{eq:intro-vertex-decomp}
    u_i u_j-\mathbb E(u_i u_j)
    =\{v_i v_j-\mathbb E(v_i v_j)\}+\mu(v_i+v_j),
\end{equation}
where the second term is a one-vertex projection that the ruler copies into every lag product touching that vertex, adding a row-sum variance that can dominate the edge-Frobenius scale.
We identify this vertex-projection obstruction and remove it by centering.
Proposition~\ref{prop:raw-obstruction} and Corollary~\ref{cor:sparse-ruler-obstruction} show that subtracting the marginal mean before aggregation cancels the projection and leaves a degenerate pair statistic that aggregates at the coverage scale \(\varphi(\Omega)\).

Centering does not by itself bound what remains.
The centered statistic is a degenerate quadratic form in hard-threshold signs, which are non-smooth with an infinite Hermite expansion, placing them beyond both the fixed-Hermite-rank limit theory for nonlinear functionals of Gaussian sequences \citep{BreuerMajor1983,Arcones1994} and order-dependent chaos concentration of Hanson--Wright type.
We bound its fluctuation directly, by a single-snapshot variance contraction that holds uniformly over all chaos orders (Theorem~\ref{thm:contraction}).
For \(g\sim\mathcal N(0,C)\), a bounded centered transform \(v_i=h(g_i)\), and a hollow matrix \(A\) whose active pairs stay away from \(|C_{ij}|=1\),
\[
    \Var(v^\top A v)\le K_{h,\varepsilon}\|C\|_2^2\|A\|_F^2 .
\]
Apart from the transform \(h\) and the boundary margin \(\varepsilon\), the constant is free of the dimension, the support size, the maximum degree, and the chaos order.
This order-uniformity is what lets the bound reach hard signs, and it comes from a fusion-frame Bessel inequality for degenerate pair-chaoses (Proposition~\ref{prop:fusion-frame-main}), in which a creation--annihilation factor cancels against the two-coordinate number operator.
The inequality is new in this non-asymptotic, arbitrary-support form, and we expect it to be of independent interest for the variance analysis of nonlinear functionals of Gaussian fields.

These ingredients give the estimator and its rate.
Whether that rate is optimal is settled by matching operator-norm bounds.
The oracle estimator forms centered products with the fixed-threshold Gaussian sign link \citep{Bussgang1952,Price1958,VanVleckMiddleton1966}, whose sign correlation \(c(\rho;\tau)=\Cov\{\sign(G_1-\tau),\sign(G_2-\tau)\}\) is strictly increasing on compact regularity intervals \citep{Plackett1954} and is inverted lag by lag, while the plug-in estimator first learns \((\gamma_0,\tau,\mu)\) from the \(n|\Omega|\) marginal signs.
Write \(t=\log(Cd/\delta)\), \(\kappa_{\rm obs}=\|\Gamma_{\Omega,\Omega}\|_2/\gamma_0\), and \(S_1(d;\rho)=\sum_{s=1}^{d-1}|\rho_s|\), and let \(L_1,L_2\ge1\) bound the first two derivatives of the inverse link.
Theorem~\ref{thm:toeplitz} shows that, with probability at least \(1-\delta\), the plug-in estimator satisfies
\begin{align*}
    \|\widehat\Gamma_{\rm plug}-\Gamma\|_2
    \le{}& C_\varepsilon\gamma_0L_1\kappa_{\rm obs}\sqrt{\frac{\varphi(\Omega)t}{n}}
    + C\gamma_0L_1d\,\frac{t}{n}
    + C_{\tau,\varepsilon}\gamma_0L_2\left\{\min\{\kappa_{\rm obs}^2\varphi(\Omega),d\}\,\frac{t}{n}+d\,\frac{t^2}{n^2}\right\}\\
    &{}+ C_{\tau,\varepsilon}\gamma_0\{1+S_1(d;\rho)\}\left\{\sqrt{\frac{\kappa_{\rm obs}t}{n|\Omega|}}+\frac{t}{n}\right\},
\end{align*}
and the oracle estimator obeys the same bound without the last term.
Centered pair coverage enters through \(\varphi(\Omega)\), spectral boundedness through the \(L_1d\,t/n\) term, inverse-link curvature at the coverage scale, and marginal calibration through the \(n|\Omega|\) one-coordinate signs.
A matching spectral-packing lower bound in the known-scale identity-neighborhood submodel \(\mathcal P_{\mathbb R}(c_0)\) of Section~\ref{subsec:lower-bounds} (Theorem~\ref{thm:spectral-packing-minimax}, Corollary~\ref{cor:coverage-log-lower}) shows that the minimax risk over this submodel is at least \(c\gamma_0\min\{1,\sqrt{\varphi(\Omega)\log d/n}\}\).
In the operating regime \(n\gtrsim\max\{\varphi(\Omega),d^2/\varphi(\Omega)\}\log d\) for balanced designs the two bounds meet, pinning down the minimax rate
\[
    \inf_{\widehat\Gamma}\sup_{\Gamma\in\mathcal P_{\mathbb R}(c_0)}\mathbb E_\Gamma\|\widehat\Gamma-\Gamma\|_2
    \asymp\gamma_0\sqrt{\frac{\varphi(\Omega)\log d}{n}},
\]
attained by the oracle with known scale and by the plug-in with unknown scale (Corollaries~\ref{cor:regime-minimax} and~\ref{cor:plugin-minimax}).
Because this lower bound is proved by data processing from full-precision sparse-ruler samples, for balanced designs \(\varphi(\Omega)\) is intrinsic to the sampling and fixed-threshold one-bit quantization costs only constant factors in this regime.
The statement is local, over this submodel and operating regime, and we do not claim the remaining factors are sharp.

\subsection{Relation to existing work}

Operator-norm covariance estimation has separate traditions for ordered, sparse, Toeplitz, and subsampled structure.
Banding, tapering, thresholding, and block-thresholding provide high-dimensional benchmarks \citep{BickelLevina2008Regularized,BickelLevina2008Thresholding,CaiZhangZhou2010,CaiYuan2012,CaiZhou2012}, while Toeplitz-specific rates reflect the ordered lag geometry \citep{CaiRenZhou2013,KlockmannKrivobokova2024}.
Sparse rulers and related sparse-array constructions provide deterministic lag coverage \citep{Moffet1968,LinebargerSudboroughTollis1993,PalVaidyanathan2010,VaidyanathanPal2011}, and full-precision compressive covariance sensing studies this geometry without one-bit quantization \citep{AriananadaLeus2012,RomeroLopezLeus2015,RomeroArianandaTianLeus2016,Eldar2020,Lawrence2020}.
Our lower bound refines the ruler endpoint to the design-adapted functional \(\varphi(\Omega)\) under balanced coverage.

Missing-data covariance estimation usually assumes random observation patterns independent of the data \citep{Lounici2014,CaiZhang2016,Abdalla2024}.
Masked and quantized covariance estimation gives closer comparisons, including unquantized masks, Toeplitz masks, and quantized masked estimators \citep{LevinaVershynin2012,ChenGittensTropp2012,KabanavaRauhut2017,DirksenMalyRauhut2022,MalyYangDirksenRauhutCaire2022}.
Here the mask is a fixed sparse ruler, the entries are one-bit signs, and deterministic reuse of mean-shifted bits creates the vertex-projection obstruction.

Dithered quantization changes the experiment by making quantized products conditionally unbiased \citep{ChenWangNgWang2023,DirksenMaly2024}.
Ruler-based dithered Toeplitz estimators already exhibit the coverage scale \(\varphi(\Omega)\) \citep{xu2026bit,XuZhengYang2025}.
Our fixed-threshold experiment keeps the hardware channel unchanged, so the statistical problem still involves link inversion, its curvature, scale calibration, and centering.

Nonzero and time-varying thresholds are standard tools for scale recovery in one-bit covariance and autocorrelation estimation \citep{LiuLin2021,Eamaz2023,Xiao2023,LiuChou2025}, and the sign-correlation link itself goes back to clipped-noise analysis \citep{Bussgang1952,Price1958,VanVleckMiddleton1966,Plackett1954}.
One-bit direction finding and sparse-array variants usually use zero-threshold signs and the arcsine law \citep{BarShalomWeiss2002,LiuVaidyanathan2017,ChengChen2020,Sedighi2021}.
A zero threshold avoids the mean shift but is scale-blind, whereas the fixed nonzero-threshold channel is scale-informative and therefore needs the centering theory developed here.

\subsection{Organization of the paper}

Section~\ref{sec:sampling-estimator} sets up the sampling model, the marginal calibration, the centered threshold-sign link, the oracle and plug-in estimators, and the general sparse-pair notation used in the proofs.
Section~\ref{sec:centering} establishes the vertex-projection obstruction and shows that centering removes it.
Section~\ref{sec:contraction} proves the variance-contraction theorem for centered sparse-pair statistics, together with the fusion-frame Bessel inequality that supplies its chaos-order uniformity.
Section~\ref{sec:estimator} assembles these into the operator-norm upper bounds and the matching spectral-packing lower bound, and draws the minimax consequences.
Section~\ref{sec:discussion} discusses the results and open questions, and the supplement contains all proofs and the settings of the numerical rate checks.

\section{Statistical model and estimators}
\label{sec:sampling-estimator}

This section sets up the statistical experiment before introducing the general sparse-pair notation used in the proofs.
The main object of the paper is an estimator and its risk under a deterministic sampling design.
The Gaussian contraction theorem in Section~\ref{sec:contraction} is a tool for controlling the centered estimator.
The construction separates three statistical components of the Toeplitz estimator: marginal calibration, centered pair interaction, and deterministic aggregation geometry.

\subsection{Sparse-ruler one-bit observations}
\label{subsec:toeplitz-observation}

Let $X^{(1)},\ldots,X^{(n)}\in\mathbb R^d$ be independent Gaussian vectors with mean zero and symmetric Toeplitz covariance $\Gamma$:
\[
    \Gamma_{jk}=\gamma_{|j-k|},
    \qquad \gamma_0>0 .
\]
Write $\rho_s=\gamma_s/\gamma_0$ for the normalized lags.
A deterministic sparse ruler $\Om\subset\{0,\ldots,d-1\}$ specifies the observed coordinates.
For a fixed threshold $\lambda>0$ we observe
\begin{equation}\label{eq:one-bit-observation}
    Y^{(\ell)}_j
    =\operatorname{sign}(X^{(\ell)}_j-\lambda),
    \qquad j\in\Om,
    \quad \ell=1,\ldots,n .
\end{equation}
The target is the full $d\times d$ Toeplitz covariance matrix $\Gamma$, while observations are restricted to the coordinates in $\Om$.

For positive lags define
\begin{equation}\label{eq:omega-s}
    \Om_s=\{(j,k)\in\Om^2:k-j=s\},
    \qquad q_s=|\Om_s|,
    \qquad s=1,\ldots,d-1 .
\end{equation}
We say that $\Om$ covers all lags when $q_s\ge1$ for every $s$.
Its coverage complexity is
\begin{equation}\label{eq:phi-coverage}
    \ph(\Om)=\sum_{s=1}^{d-1}q_s^{-1} .
\end{equation}
This quantity records how unevenly the deterministic ruler covers the lags.

The $n|\Om|$ marginal bits are used for pooled calibration of $(\gamma_0,\tau,\mu)$.
The lag products constructed from those bits estimate the covariance, but pairs within a snapshot share vertices and are not independent samples.
After centering, the lag coverage profile $q_s$ enters through the Frobenius scale $\ph(\Om)$.

\subsection{Marginal calibration}
\label{subsec:marginal-calibration}

For one coordinate $x\sim N(0,\gamma_0)$, put
\[
    \tau=\lambda/\sqrt{\gamma_0},
    \qquad q=\Prb(x\ge\lambda)=Q(\tau),
    \qquad \mu=\E\operatorname{sign}(G-\tau)=1-2\Phi(\tau),
\]
where $G\sim N(0,1)$.
A nonzero known threshold identifies the scale $\gamma_0$ through the marginal exceedance probability \citep{ChapeauBlondeau2008}.
Fix a compact regular threshold interval $0<\tau_{\min}<\tau_{\max}<\infty$ and write $Q_\tau=[Q(\tau_{\max}),Q(\tau_{\min})]$.
The pooled exceedance estimator is
\begin{equation}\label{eq:qhat}
    \widehat q_{\rm raw}
    =\frac{1}{n|\Om|}
      \sum_{\ell=1}^n\sum_{j\in\Om}
      \one\{Y^{(\ell)}_j=1\}.
\end{equation}
We set
\begin{equation}\label{eq:plugin-calibration}
    \widehat q=\Pi_{Q_\tau}(\widehat q_{\rm raw}),
    \qquad
    \widehat\tau=Q^{-1}(\widehat q),
    \qquad
    \widehat\gamma_0=(\lambda/\widehat\tau)^2,
    \qquad
    \widehat\mu=1-2\Phi(\widehat\tau).
\end{equation}
The projection makes the estimator globally defined and is inactive on the regular high-probability calibration event.

\subsection{Centered threshold-sign link}
\label{subsec:centered-link}

For a bivariate standard Gaussian pair $(G_1,G_2)$ with correlation $r$, define the centered link
\begin{equation}\label{eq:centered-link}
    c(r;\tau)
    =\operatorname{Cov}\{\operatorname{sign}(G_1-\tau),
                         \operatorname{sign}(G_2-\tau)\}
    =4\{\Prb(G_1>\tau,G_2>\tau)-Q(\tau)^2\} .
\end{equation}
On every compact interval $[-1+\varepsilon,1-\varepsilon]$, Plackett's identity \citep{Plackett1954} gives $\partial_r c(r;\tau)>0$, and the inverse map is denoted by $\psi(\cdot;\tau)$.
The estimator below uses this centered link rather than the raw sign-product link, removing the sampling obstruction of Section~\ref{sec:centering}.

\subsection{Oracle and plug-in lag estimators}
\label{subsec:lag-estimator}

First suppose that $(\gamma_0,\tau,\mu)$ are known.
Define the centered signs
\[
    V^{(\ell)}_j=Y^{(\ell)}_j-\mu .
\]
For $s\ge1$ set
\begin{equation}\label{eq:chat-oracle}
    \widehat c^{\rm or}_{s}
    =\frac{1}{nq_s}\sum_{\ell=1}^n
      \sum_{(j,k)\in\Om_s}
      V^{(\ell)}_{j}V^{(\ell)}_{k} .
\end{equation}
The normalized lag estimator is
\begin{equation}\label{eq:rho-oracle}
    \widehat\rho^{\rm or}_{s}
    =\psi\bigl(\Pi_{\calI_\tau}(\widehat c^{\rm or}_{s});\tau\bigr),
    \qquad s=1,\ldots,d-1,
\end{equation}
where $\calI_\tau=c([-1+\eps,1-\eps];\tau)$ is the compact inverse-link domain.
Clipping keeps the estimator globally defined and bounded; it is inactive on the regular high-probability event.
The oracle covariance estimator is the symmetric Toeplitz completion:
\begin{equation}\label{eq:Rhat-oracle}
    \widehat \Gamma_{\rm or}=\Toep_d(\gamma^{\rm or}),
    \qquad
    \widehat\gamma^{\rm or}_0=\gamma_0,
    \qquad
    \widehat\gamma^{\rm or}_s=\gamma_0\widehat\rho^{\rm or}_s .
\end{equation}

The plug-in estimator is defined by replacing $(\gamma_0,\tau,\mu)$ in \eqref{eq:chat-oracle}--\eqref{eq:Rhat-oracle} by $(\widehat\gamma_0,\widehat\tau,\widehat\mu)$ from \eqref{eq:plugin-calibration}; the arguments of $\psi$ are clipped to $\calI_{\widehat\tau}$ in the same way.
We denote the resulting estimator by $\widehat \Gamma_{\rm plug}$.

\subsection{Risk and regularity parameters}
\label{subsec:risk-regularity}

The performance criterion is operator-norm risk over a class $\mathcal P$ of Toeplitz covariance matrices:
\[
    \inf_{\widehat \Gamma}\sup_{\Gamma\in\mathcal P}
    \E_\Gamma\opnorm{\widehat \Gamma-\Gamma} .
\]
The upper bounds below are stated in high probability.
The observed-submatrix relative operator norm is
\begin{equation}\label{eq:kobs}
    \kobs=\opnorm{\Gamma_{\Om,\Om}}/\gamma_0,
\end{equation}
and the short-memory size is
\begin{equation}\label{eq:S1}
    S_1(d;\rho)=\sum_{s=1}^{d-1}|\rho_s| .
\end{equation}
The inverse-link constants $L_1,L_2\ge1$ are uniform bounds on the first two derivatives of $\psi(\cdot;\tau')$ over the buffered compact regularity domain of Assumption~\ref{ass:inverse}.

\subsection{General dependent sparse-pair notation}
\label{subsec:general-edge-notation}

Sections~3 and~4 use a graph abstraction of the same sampling mechanism.
Let $g=(g_1,\ldots,g_m)\sim N(0,C)$, where $C$ is a correlation matrix and $\opnorm{C}=\kappa$.
Let $E\subset\{(i,j):i\ne j\}$ be a deterministic edge set and let $h\in L^2(\gamma)\cap L^\infty(\gamma)$, where $\gamma$ is the standard $N(0,1)$ law, satisfy $\E h(G)=0$.
Put $v_i=h(g_i)$.
Weighted sparse-pair statistics have the form
\begin{equation}\label{eq:general-edge-stat}
    T_E(a,h)=\sum_{(i,j)\in E}a_{ij}\{v_i v_j-\E(v_i v_j)\} .
\end{equation}
For an undirected weighted design, let $a_{ij}=a_{ji}$ and define
\begin{equation}\label{eq:edge-and-row-geometry}
    \calC(a)=\sum_{i<j}a_{ij}^2,
    \qquad
    r_i=\sum_{j:j\ne i}a_{ij},
    \qquad
    \calR(a)=\sum_i r_i^2 .
\end{equation}
The centered theory controls $\calC(a)$; raw nonzero-threshold products also contain the row-sum term $\calR(a)$.

The local support-separation condition used in the contraction theorem is the following.
\begin{assumption}[Support separation]
\label{ass:support-separation}
\begin{equation}\label{eq:support-separation}
    A_{ij}\ne0,\ i\ne j
    \quad\Longrightarrow\quad
    |C_{ij}|\le1-\varepsilon .
\end{equation}
\end{assumption}
It excludes nearly duplicated Gaussian coordinates on the active edge support.
In the Toeplitz application, Assumption~\ref{ass:inverse} gives $|C_{jk}|=|\rho_{|j-k|}|\le1-2\eps$ on every observed nonzero-lag pair, so Assumption~\ref{ass:support-separation} holds with the theorem parameter $\eps$.
The Toeplitz model has the stronger separation margin $2\eps$, and the two occurrences of $\eps$ refer to the same constant.

For sparse-ruler spectral aggregation, deterministic weights $w_s$ generate the hollow frequency matrix $A_\theta(w)$ by
\[
    (A_\theta(w))_{jk}=q_s^{-1}w_s e^{2\pi i s\theta},
    \qquad k-j=s>0,
    \qquad (j,k)\in\Om_s,
\]
with Hermitian completion and zero diagonal. The corresponding centered spectral polynomial is
\begin{equation}\label{eq:spectral-polynomial}
    G_\theta(w)=v^*A_\theta(w)v-\E\,v^*A_\theta(w)v,
\end{equation}
a real trigonometric polynomial of degree $d-1$ in $\theta$.
Then
\begin{equation}\label{eq:freq-frobenius}
    \Fnorm{A_\theta(w)}^2
    =2\sum_{s=1}^{d-1}|w_s|^2q_s^{-1}
    \le 2W^2\ph(\Om),
    \qquad W=\max_s|w_s|.
\end{equation}
This identity is the bridge from the general edge theorem to the Toeplitz operator-norm rate.
Here $\Fnorm{A_\theta(w)}^2$ is the squared Frobenius norm of the hollow Toeplitz restriction to $\Omega$ with lag weights $q_s^{-1}w_se^{2\pi is\theta}$; the same sparse-pair restriction functional governs the centered obstruction variance of Section~\ref{sec:centering} and the sparse Frobenius metric of the lower bound, evaluated at different lag weights.

\section{The sampling obstruction: vertex projection under deterministic pair reuse}
\label{sec:centering}

The estimator in Section~\ref{sec:sampling-estimator} centers the one-bit signs before forming lag products.
To see why, it suffices to work at identity covariance.
Toeplitz approximation, inverse-link curvature and Gaussian conditioning then drop out, leaving deterministic vertex reuse as the only source of dependence.

Let $u_j=\sign(G_j-\tau)$ with $\tau\ne0$, $\mu=\E u_j$, and $v_j=u_j-\mu$.
The raw product decomposition \eqref{eq:intro-vertex-decomp} shows that raw sparse pair averages contain a linear statistic in the centered signs.
The following weighted-graph identity makes this one-vertex projection explicit.

\begin{proposition}[Vertex-projection obstruction on weighted edge designs]
\label{prop:raw-obstruction}
Let $G_i$, $i\in V$, be independent standard normal variables.
Let $u_i=\sign(G_i-\tau)=\mu+v_i$, where $\tau\ne0$, $\E v_i=0$ and $\Var(v_i)=\sigma_v^2$.
For real symmetric weights $a_{ij}=a_{ji}$ on an undirected edge design, define
\[
S_{\rm raw}=2\sum_{i<j}a_{ij}\{u_i u_j-\E(u_i u_j)\},
\qquad
S_{\rm cen}=2\sum_{i<j}a_{ij}v_i v_j,
\]
and $r_i=\sum_{j:j\ne i}a_{ij}$.
Then
\begin{equation}
S_{\rm raw}=S_{\rm cen}+2\mu\sum_i r_i v_i,
\label{eq:weighted-raw-decomposition}
\end{equation}
and the two terms on the right are uncorrelated.
Consequently,
\begin{align}
\Var(S_{\rm raw})
&=4\sigma_v^4\sum_{i<j}a_{ij}^2
  +4\mu^2\sigma_v^2\sum_i r_i^2,
\label{eq:weighted-raw-variance}\\
\Var(S_{\rm cen})
&=4\sigma_v^4\sum_{i<j}a_{ij}^2.
\label{eq:weighted-centered-variance}
\end{align}
Equivalently, if $A$ is the associated symmetric hollow matrix, then
\begin{equation}
\Var(S_{\rm cen})=2\sigma_v^4\norm{A}_F^2.
\label{eq:weighted-centered-frobenius}
\end{equation}
\end{proposition}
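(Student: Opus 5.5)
The plan is a direct algebraic expansion followed by an orthogonality computation. Independence of the $G_i$ (identity covariance) makes every cross moment factor.

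\textbf{Decomposition.} Substitute $u_i=\mu+v_i$ into each raw product:
\[
u_iu_j=\mu^2+\mu(v_i+v_j)+v_iv_j .
\]
For $i\ne j$, independence gives $\E(v_iv_j)=\E v_i\,\E v_j=0$, so $\E(u_iu_j)=\mu^2$ and
\[
u_iu_j-\E(u_iu_j)=v_iv_j+\mu(v_i+v_j).
\]
Multiply by $2a_{ij}$ and sum over $i<j$. Symmetry of the weights lets the linear part be rewritten as a sum over ordered pairs:
\[
2\mu\sum_{i<j}a_{ij}(v_i+v_j)=\mu\sum_{i\ne j}a_{ij}(v_i+v_j)=2\mu\sum_i\Bigl(\sum_{j\ne i}a_{ij}\Bigr)v_i=2\mu\sum_i r_iv_i .
\]
This is exactly \eqref{eq:weighted-raw-decomposition}.

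\textbf{Uncorrelatedness.} Both terms have mean zero, so it suffices to show $\E\{(v_iv_j)v_k\}=0$ for every $i<j$ and every $k$.
\begin{itemize}
\item If $k\notin\{i,j\}$, all three factors are independent and mean zero.
\item If $k=i$, the expectation factors as $\E(v_i^2)\,\E(v_j)=0$, since $j\ne i$; the case $k=j$ is symmetric.
\end{itemize}
No third-moment condition on $v$ is needed, because each product always keeps at least one lone independent mean-zero factor.

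\textbf{Variances.} The variance of the sum is the sum of the variances.
\begin{itemize}
\item The linear term consists of independent summands, so its variance is $4\mu^2\sigma_v^2\sum_ir_i^2$.
\item For $S_{\rm cen}$, the variables $v_iv_j$ with $i<j$ are pairwise uncorrelated. If $\{i,j\}\ne\{k,l\}$, some index appears exactly once in the product $v_iv_jv_kv_l$, which kills the expectation. Each $v_iv_j$ has variance $\sigma_v^4$. Hence $\Var(S_{\rm cen})=4\sigma_v^4\sum_{i<j}a_{ij}^2$.
\end{itemize}
This gives \eqref{eq:weighted-raw-variance} and \eqref{eq:weighted-centered-variance}. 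Finally, $A$ is hollow and symmetric, so $\norm{A}_F^2=2\sum_{i<j}a_{ij}^2$, which turns the centered variance into $2\sigma_v^4\norm{A}_F^2$, i.e. \eqref{eq:weighted-centered-frobenius}.

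There is no real obstacle. The only points needing care are the double-counting in the row-sum rewriting (the factor $2$ versus ordered pairs) and checking the case $k\in\{i,j\}$ in the orthogonality step. One could remark that $\sigma_v^2=1-\mu^2$, but the statement does not require it.
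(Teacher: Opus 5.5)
Your proposal is correct and follows the paper's own argument: substitute $u_i=\mu+v_i$, regroup the linear part into row sums using the symmetry of the weights, kill the mixed third moments $\E(v_iv_jv_k)$ with $i\ne j$ through the lone mean-zero factor, and observe that distinct edges are uncorrelated. Your write-up is slightly more explicit than the paper's on the double-counting in the row-sum step, and that step is also correct.
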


\begin{corollary}[Sparse-ruler coverage and row-sum separation]
\label{cor:sparse-ruler-obstruction}
Let $\Omega$ be any ruler covering all lags $1,\ldots,d-1$, set $m=|\Omega|$, and define
\[
    \zeta_s^{\rm raw}
      =q_s^{-1}\sum_{(j,k)\in\Omega_s}\{u_j u_k-\E(u_j u_k)\},
    \qquad
    \zeta_s^{\rm cen}=q_s^{-1}\sum_{(j,k)\in\Omega_s}v_jv_k .
\]
Then
\begin{equation}
    \Var\left(2\sum_{s=1}^{d-1}\zeta_s^{\rm raw}\right)
       \ge 16\mu^2\sigma_v^2\frac{(d-1)^2}{m},
\label{eq:raw-lower-bound}
\end{equation}
whereas
\begin{equation}
    \Var\left(2\sum_{s=1}^{d-1}\zeta_s^{\rm cen}\right)
       =4\sigma_v^4\varphi(\Omega).
\label{eq:centered-exact}
\end{equation}
\end{corollary}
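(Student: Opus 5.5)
The corollary is Proposition~\ref{prop:raw-obstruction} applied to the identity-covariance setting with the ruler-induced weights. Index the vertices by $\Omega$, so $m=|\Omega|$ independent standard Gaussians are involved. For an unordered pair $\{j,k\}\subset\Omega$ with $k-j=s>0$, set $a_{jk}=a_{kj}=q_s^{-1}$. Each unordered pair has exactly one positive lag, so these weights are well defined.

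With these weights, $2\sum_s\zeta_s^{\rm raw}=S_{\rm raw}$ and $2\sum_s\zeta_s^{\rm cen}=S_{\rm cen}$ exactly as in the proposition. The centered identity~\eqref{eq:centered-exact} then follows from \eqref{eq:weighted-centered-variance}, since
\[
\sum_{j<k}a_{jk}^2=\sum_{s=1}^{d-1}q_s\cdot q_s^{-2}=\varphi(\Omega).
\]
This step is immediate.

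For the raw bound I start from \eqref{eq:weighted-raw-variance} and discard the nonnegative edge term. This gives $\Var(S_{\rm raw})\ge 4\mu^2\sigma_v^2\sum_{i\in\Omega} r_i^2$, so it suffices to bound $\sum_i r_i^2$ from below. The key computation is the total row sum. Each unordered edge contributes its weight to two rows, so
\[
\sum_{i\in\Omega} r_i=2\sum_{j<k}a_{jk}=2\sum_{s=1}^{d-1}q_s\cdot q_s^{-1}=2(d-1).
\]
Cauchy--Schwarz over the $m$ vertices then gives
\[
\sum_i r_i^2\ge \frac{(\sum_i r_i)^2}{m}=\frac{4(d-1)^2}{m},
\]
and hence $\Var(S_{\rm raw})\ge 16\mu^2\sigma_v^2(d-1)^2/m$, which is \eqref{eq:raw-lower-bound}.

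I do not expect a real obstacle; the only point needing care is bookkeeping. First, the row sum counts each edge from both endpoints, which supplies the factor $2$ that turns $4$ into $16$. Second, the proposition's normalization $S=2\sum_{i<j}$ matches $2\sum_s\zeta_s$, which holds because each $\Omega_s$ lists each pair once as an ordered pair with $k>j$. The uncorrelatedness used in the proposition requires the independence of the $G_i$, and that holds at identity covariance.
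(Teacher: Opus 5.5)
Your proposal is correct and follows the same route as the paper: instantiate Proposition~\ref{prop:raw-obstruction} with weights $a_{jk}=q_{k-j}^{-1}$ on every pair of $\Omega$, read off $\sum_{j<k}a_{jk}^2=\varphi(\Omega)$, compute $\sum_i r_i=2(d-1)$, and apply Cauchy--Schwarz over the $m$ vertices. The paper packages the same two computations through its Hermitian lifting lemma $H_\Omega(\beta)$ with $\beta_s=q_s^{-1}$, so the difference is only notational.
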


The proofs are elementary and are given in the supplement.
Raw products retain the row-sum term $\calR(a)=\sum_i r_i^2$ of \eqref{eq:edge-and-row-geometry}, whereas centered products have only the edge-Frobenius scale $\calC(a)=\sum_{i<j}a_{ij}^2$.
When $\varphi(\Omega)\ll d^2/m$, the row-sum term can dominate even at identity covariance.
Centering removes it and recovers the sparse-pair scale.

\begin{figure}[t]
\centering
\includegraphics[width=.96\linewidth]{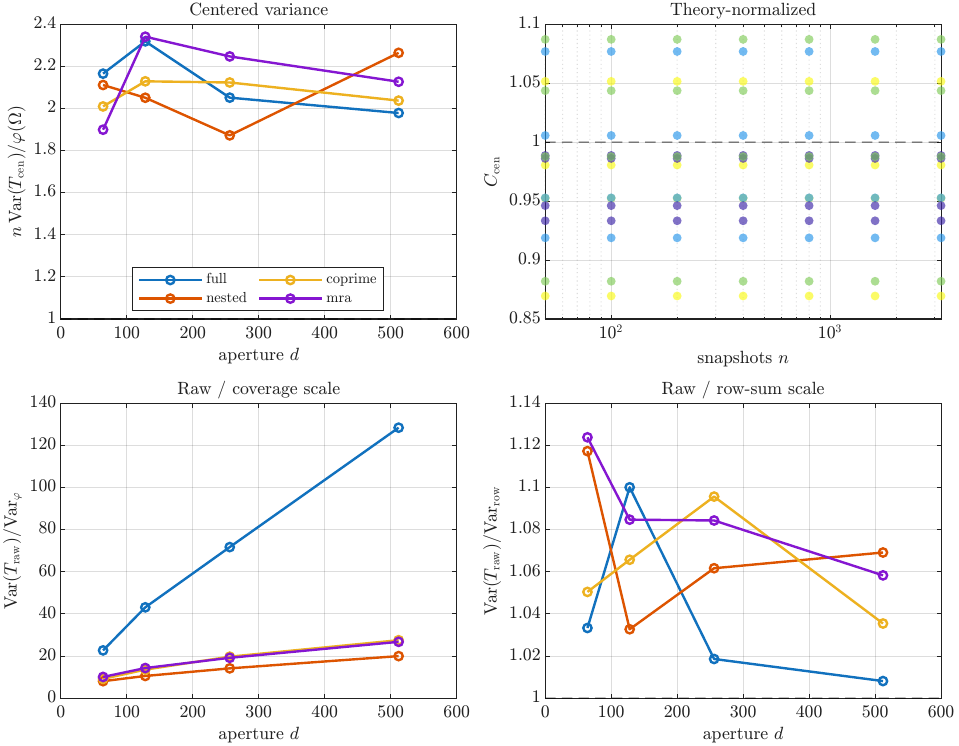}
\caption{Centering changes the variance scale.  Under identity covariance, centered products collapse at the predicted $\varphi(\Omega)/n$ scale across deterministic coverage profiles, whereas raw nonzero-threshold products retain the row-sum obstruction.}
\label{fig:obstruction-diagnostic}
\end{figure}

\section{A contraction theorem for centered sparse-pair statistics}
\label{sec:contraction}

Correlations between Gaussian coordinates could in principle reintroduce degree dependence after centering.
The next theorem rules this out on compact nondegenerate correlation domains by bounding the one-snapshot variance of centered lag averages in terms of the sampling geometry in \eqref{eq:freq-frobenius}.
Sample-level concentration is then obtained by averaging the independent snapshots.

Throughout this section,
\[
E_A=\{(i,j):i\ne j,\ A_{ij}\ne0\}
\]
denotes the ordered support of $A$, and $\norm{A}_F^2=\sum_{(i,j)\in E_A}|A_{ij}|^2$.
When an undirected edge convention is used, the symmetric matrix contains both orientations and the Frobenius norm includes both entries.

\begin{theorem}[Variance contraction for centered Gaussian quadratic forms]
\label{thm:contraction}
Let $g\sim N(0,C)$, where $C$ is a real correlation matrix and $\norm{C}_2=\kappa$.
Let $h:\R\to\R$ be real-valued with $h\in L^2(\gamma)\cap L^\infty(\gamma)$ and $\E h(G)=0$ for $G\sim N(0,1)$, and set $v_i=h(g_i)$.
If $A$ is a hollow Hermitian matrix satisfying Assumption~\ref{ass:support-separation}, then
\begin{equation}
    \Var(v^*Av)\le K_{h,\eps}\kappa^2\norm{A}_F^2 ,
\label{eq:contraction}
\end{equation}
where $K_{h,\eps}$ depends only on $\eps$, $\norm{h}_{L^2(\gamma)}$, and $\norm{h}_\infty$.
\end{theorem}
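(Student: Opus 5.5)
The plan is to expand everything in Wiener chaos and reduce the variance to a weighted sum of squared covariances of pair products, which a Hermite/diagram computation then controls. Write $h=\sum_{p\ge1}c_pH_p/\sqrt{p!}$ with $\sum_p c_p^2=\norm{h}_{L^2(\gamma)}^2$; since $C$ is a correlation matrix each $g_i$ is standard normal. Expand
\[
\Var(v^*Av)=\sum_{(i,j),(k,l)\in E_A}\overline{A_{ij}}A_{kl}\,\Cov(v_iv_j,v_kv_l).
\]
I then split the index quadruples by overlap pattern: the diagonal pattern $\{k,l\}=\{i,j\}$; the one-shared-vertex pattern ($|\{i,j\}\cap\{k,l\}|=1$); and the disjoint pattern. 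The diagonal pattern is bounded by $2\norm{h}_\infty^4\norm{A}_F^2$ directly. For the other two patterns I will use the diagram formula for four-fold products of Hermite polynomials of jointly Gaussian standard variables. Because $v_i$ has no zeroth chaos component, every surviving diagram in $\Cov(v_iv_j,v_kv_l)$ must contain at least one edge joining $\{i,j\}$ to $\{k,l\}$. This yields bounds of the form $|\Cov(v_iv_j,v_kv_l)|\lesssim\sum \prod|C_{\cdot\cdot}|$ in which at least one cross factor $|C_{ik}|$ (or a similar one) appears linearly, and two such factors appear in the disjoint case once the degenerate terms cancel.

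The key step is to turn these entrywise bounds into $\kappa^2\norm{A}_F^2$. For the disjoint pattern, the leading contribution is $\sum \overline{A_{ij}}A_{kl}C_{ik}C_{jl}\,\alpha_{ij}\alpha_{kl}$ with bounded weights, and this is essentially $\operatorname{tr}(A^*CAC)$, which is at most $\kappa^2\norm{A}_F^2$. Higher-order diagrams replace $C$ by Hadamard powers $C^{\circ p}$, and $\opnorm{C^{\circ p}}\le\opnorm{C}$ by Schur's theorem since the diagonal equals $1$. The one-shared-vertex pattern produces terms such as $\sum_i\bigl|\sum_{j,l}A_{ij}A_{il}C_{jl}\bigr|$. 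I will bound these by $\norm{AC A^*}$-type traces, using Cauchy--Schwarz to return to $\kappa\norm{A}_F^2$.

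The main obstacle is uniformity in the chaos order. The pair $(v_i,v_j)$ has an infinite expansion in which the within-edge correlation $C_{ij}$ multiplies coefficients of the form $\sqrt{\binom{p+q}{p}}$. Without a margin these coefficients blow up, because $h$ is only bounded and not smooth. This is where Assumption~\ref{ass:support-separation} enters. I will decompose the centered pair product $v_iv_j-\E v_iv_j$ into its two-coordinate chaos components and treat each edge as a projection onto a "fusion" subspace. The aim is to prove a Bessel-type inequality
\[
\sum_{(k,l)}\bigl|\langle F,\Pi_{kl}F\rangle\bigr|\le K_\eps\,\kappa^2\,\E|F|^2
\]
for $F=\sum a_{ij}(v_iv_j-\E v_iv_j)$. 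I would first try to establish this via the Mehler/Ornstein--Uhlenbeck representation. Since $|C_{ij}|\le1-\eps$, the joint law of $(g_i,g_j)$ has a density ratio relative to the product measure that is uniformly bounded in $L^2$. This gives $\norm{\sum_p(\cdot)}\le K_\eps\norm{h}_\infty^2$ uniformly over orders, and the creation--annihilation factor cancels against the two-coordinate number operator as the paper indicates. If that route fails, the fallback is to truncate at chaos order $P$. The low orders are handled by the finite diagram bound, and the tail is controlled by Gaussian hypercontractivity combined with the geometric factor $(1-\eps)^{P}$ coming from the margin. Combining the three patterns gives \eqref{eq:contraction}, with $K_{h,\eps}$ depending only on $\eps$, $\norm{h}_{L^2(\gamma)}$ and $\norm{h}_\infty$.
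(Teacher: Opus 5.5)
Your proposal has a genuine gap at exactly the step you flag as the main obstacle. Control of cross-edge interference uniformly in the chaos order is asserted, not proved, and neither route you suggest can supply it.

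The $L^2$-bounded likelihood ratio of $(g_i,g_j)$ under $|C_{ij}|\le1-\eps$ is a single-edge fact, and the per-edge bound $|h(g_i)h(g_j)|\le\norm{h}_\infty^2$ is trivial anyway. It says nothing about the Gram structure between components of different edges that share a vertex or are linked through $C$. That structure is the whole content of a synthesis bound $\norm{\sum_{(i,j)}A_{ij}f_{ij}}^2\le K\kappa^2\sum_{(i,j)}|A_{ij}|^2\norm{f_{ij}}^2$.

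The truncation fallback fails on three counts. First, the order-$N$ component of $h(g_i)h(g_j)$ contains the uncontracted parts of $H_p(g_i)H_q(g_j)$ with $p+q=N$, and these carry no power of $C_{ij}$. So there is no $(1-\eps)^P$ tail; for threshold signs the Hermite energies decay only polynomially. Second, a small per-edge tail can only be summed over edges by the triangle inequality, at a cost depending on $|E_A|$ or the degrees, and hypercontractivity does nothing for this cross-edge sum. Third, the natural fixed-order bounds grow with $N$: diagram counting does, and so does the creation estimate $\norm{\sum_ic(e_i)U_i}_{(N)}^2\le N\kappa\sum_i\norm{U_i}_{(N-1)}^2$. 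Since $\sum_pp\,c_p^2=\infty$ for signs, not even linear growth in $N$ is summable against the chaos energies.

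The edge subspaces in your Bessel inequality also cannot be the full two-coordinate chaoses. Taking $f_{ij}=e_i^{\odot N}$ for every neighbour $j$ of $i$ gives $\norm{\sum_jf_{ij}}^2=\deg(i)\sum_j\norm{f_{ij}}^2$. Such pure-tensor components are present in $v_iv_j-\E v_iv_j$ as soon as $C_{ij}\ne0$.

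The missing idea is the paper's split of each pair product by a pairwise Hoeffding projection, $h(g_i)h(g_j)-\E h(g_i)h(g_j)=f_{C_{ij}}(g_i)+\widetilde f_{C_{ij}}(g_j)+\eta^{\rm int}_{ij}$.
\begin{itemize}
\item Centering of $h$ gives $\norm{f_\rho}_2+\norm{\widetilde f_\rho}_2\le K_{h,\eps}|\rho|$: the Mehler operator contracts centered functions by $|\rho|$, and the normal equations are inverted using $|\rho|\le1-\eps$.
\item The one-vertex parts are then summed with Cauchy--Schwarz, $\sum_jC_{ij}^2\le\kappa$, and the Schur-power bound $\Var(\sum_iF_i(g_i))\le\kappa\sum_i\norm{F_i}_2^2$.
\item The interaction parts have chaos kernels in $K_{ij,N}=\Span\{e_i,e_j\}^{\odot N}\ominus\Span\{e_i^{\odot N},e_j^{\odot N}\}$. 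For these the fusion-frame bound is proved with the factor $N$ cancelled. The oriented synthesis over residual inputs costs $C_\eps N\kappa^2$, via the creation bound, residual Gram matrices with $\norm{R_i^{\circ b}}_2\le C_\eps\kappa$, and a pure-tensor correction. Then $d\Gamma(S_{ij})\succeq N\eps$ on $\Span\{e_i,e_j\}^{\odot N}$ returns the factor $1/(N\eps)$.
\end{itemize}

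This split also corrects your diagram bookkeeping. In the disjoint pattern, diagrams with a single cross edge do survive. For example, $H_2(g_i)H_1(g_j)$ against $H_2(g_k)H_1(g_l)$ with pairings $ij$, $ik$, $kl$ gives $C_{ij}C_{ik}C_{kl}$. These terms come from the one-vertex, first-chaos components of the pair products and do not cancel. They are controlled only through the centering gain $C_{ij}$, via $\norm{(A\circ C^{\circ r})\one}_2^2\le\kappa\norm{A}_F^2$, not by $\operatorname{tr}(A^*CAC)$.
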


The constant in \eqref{eq:contraction} never depends on $d$, $|E_A|$, the maximum degree of the edge design, or the entries of $A$.
Theorem~\ref{thm:contraction} is a one-snapshot variance bound rather than a Hanson--Wright-type tail inequality; the sample-level tail in Theorem~\ref{thm:toeplitz} is obtained only after averaging the $n$ independent snapshots.
The centering assumption $\E h(G)=0$ is essential. Without it, Section~\ref{sec:centering} shows that deterministic vertex reuse can create row-sum variance.
For Hermitian $A$, since $v$ is real-valued, the skew-symmetric imaginary part drops out:
\[
v^*Av=v^\top(\operatorname{Re}A)v,
\qquad
v^\top(\operatorname{Im}A)v=0.
\]

Two immediate consequences are used in the Toeplitz proof.

\begin{corollary}[Threshold signs]
\label{cor:threshold}
For
\[
    h_\tau(t)=\sign(t-\tau)-\E\sign(G-\tau),
\]
Theorem~\ref{thm:contraction} gives
\[
    \Var(v^*Av)\le C_\eps\kappa^2\norm{A}_F^2
\]
with a constant independent of $\tau$.
\end{corollary}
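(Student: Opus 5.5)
The plan is to apply Theorem~\ref{thm:contraction} directly to $h=h_\tau$ and then show that the constant $K_{h_\tau,\eps}$ can be taken uniform in $\tau$. We first check the hypotheses. The function $h_\tau$ is Borel measurable and bounded, with
\[
\norm{h_\tau}_\infty\le|\sign(\cdot)|+|\mu|\le 2,
\]
since $|\mu|=|1-2\Phi(\tau)|\le1$. Hence $h_\tau\in L^2(\gamma)\cap L^\infty(\gamma)$, and $\E h_\tau(G)=0$ by the definition of $\mu$. The convention $\sign(0)=0$ affects only the null set $\{g_i=\tau\}$, so $v_i=h_\tau(g_i)$ is well defined almost surely. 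Assumption~\ref{ass:support-separation} concerns only $A$ and $C$, so it carries over unchanged. Theorem~\ref{thm:contraction} therefore gives
\[
\Var(v^*Av)\le K_{h_\tau,\eps}\,\kappa^2\norm{A}_F^2 .
\]

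Uniformity in $\tau$ uses the statement that $K_{h,\eps}$ depends only on $\eps$, $\norm{h}_{L^2(\gamma)}$ and $\norm{h}_\infty$. We compute
\[
\norm{h_\tau}_{L^2(\gamma)}^2=\Var\,\sign(G-\tau)=1-\mu^2=4Q(\tau)\{1-Q(\tau)\}\le1,
\]
and we already have $\norm{h_\tau}_\infty\le2$. So $(\norm{h_\tau}_{L^2},\norm{h_\tau}_\infty)$ lies in the fixed compact set $[0,1]\times[0,2]$ for every $\tau\in\R$. The only remaining point is monotonicity: the constant must be nondecreasing in these two norms, or at least bounded on bounded sets of them. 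We will confirm this from the proof of Theorem~\ref{thm:contraction}, where $K_{h,\eps}$ should appear as an explicit expression such as a polynomial in $\norm{h}_{L^2}$ and $\norm{h}_\infty$ times a function of $\eps$. We then set
\[
C_\eps=\sup_{a\le1,\;b\le2}K_{a,b,\eps}<\infty .
\]

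We expect this monotonicity check to be the only real obstacle. If the stated dependence were not monotone, we would use a scaling fallback. Hermite coefficients scale linearly, $\Var(v^*Av)$ is homogeneous of degree $4$ in $h$, and $\norm{h}_{L^2}$ and $\norm{h}_\infty$ are homogeneous of degree $1$. Applying the theorem to $h_\tau/2$, whose norms are both at most $1$, and rescaling therefore gives
\[
\Var(v^*Av)\le 16\,K'_\eps\,\kappa^2\norm{A}_F^2,
\]
where $K'_\eps$ is the constant attached to the unit ball in these norms. This reduces everything to boundedness of $K$ on that ball, which we expect the proof of the theorem to provide directly. The degenerate limits $\tau\to\pm\infty$, where $h_\tau\to0$, only make the bound easier.
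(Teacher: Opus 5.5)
Your proposal is correct and matches the paper's (implicit) argument: the corollary is immediate from Theorem~\ref{thm:contraction} because $\norm{h_\tau}_{L^2(\gamma)}\le\norm{h_\tau}_\infty\le2$ uniformly in $\tau$, and the supplement's proof makes $K_{h,\eps}$ an explicit polynomial in these two norms. The relevant bounds are $\norm{a_\rho}_2\le(\norm{h}_\infty+\norm{h}_2)\norm{h}_2|\rho|$ in the pairwise projection and $\norm{\eta^{\rm int}_\rho}_2\le2\norm{h}_\infty^2$ for the two-vertex part, so your monotonicity check goes through and the scaling fallback is unnecessary (as you note, it would not by itself avoid needing $K$ bounded on a bounded set of norm values).
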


\begin{corollary}[Sparse-ruler spectral variance]
\label{cor:ruler-variance}
Let $\Omega$ be a sparse ruler, let $v_i=h_\tau(g_i)$ be the centered threshold signs of Corollary~\ref{cor:threshold}, and let $G_\theta=G_\theta(w)$ be the centered spectral polynomial \eqref{eq:spectral-polynomial}.
If the support correlations satisfy $|C_{jk}|\le1-\eps$ for all nonzero-lag observed pairs and $|w_s|\le W$, then
\begin{equation}
    \sup_{\theta\in[0,1]}\Var(G_\theta)
       \le C_\eps \kappa^2 W^2\varphi(\Omega).
\label{eq:ruler-variance}
\end{equation}
\end{corollary}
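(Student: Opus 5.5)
This is a direct application of Corollary~\ref{cor:threshold} to the matrix $A_\theta(w)$ at each fixed frequency, followed by the Frobenius identity \eqref{eq:freq-frobenius}.

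First, fix $\theta\in[0,1]$ and restrict attention to the observed coordinates. Set $m=|\Omega|$ and let $g=(g_j)_{j\in\Omega}\sim N(0,C)$, where $C$ is the correlation matrix of the observed block. Then $\opnorm{C}=\kappa$, which is $\kobs$ in the Toeplitz application. The matrix $A_\theta(w)$ is hollow and Hermitian by construction. Its off-diagonal support consists only of pairs $(j,k)\in\Omega_s$ with $s\ge1$, together with their transposes. By hypothesis these pairs satisfy $|C_{jk}|\le1-\eps$, so Assumption~\ref{ass:support-separation} holds for $A=A_\theta(w)$ with the same $\eps$. If some $w_s=0$, the support only shrinks, and the assumption still holds.

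Next, apply Corollary~\ref{cor:threshold} with $v_i=h_\tau(g_i)$. Since $G_\theta$ is $v^*A_\theta v$ minus a constant, it has the same variance, and
\[
\Var(G_\theta)=\Var\bigl(v^*A_\theta(w)v\bigr)\le C_\eps\kappa^2\Fnorm{A_\theta(w)}^2 .
\]
The constant $C_\eps$ is independent of $\tau$, of $\theta$, and of the entries of $A_\theta$.

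Then invoke \eqref{eq:freq-frobenius}. Each entry on $\Omega_s$ has modulus $q_s^{-1}|w_s|$, there are $q_s$ such pairs, and each is counted twice by the Hermitian completion. This gives
\[
\Fnorm{A_\theta(w)}^2=2\sum_{s=1}^{d-1}|w_s|^2q_s^{-1}\le 2W^2\varphi(\Omega).
\]
This bound does not depend on $\theta$, because the phases $e^{2\pi i s\theta}$ have unit modulus. Taking the supremum over $\theta\in[0,1]$ and absorbing the factor $2$ into $C_\eps$ yields \eqref{eq:ruler-variance}.

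There is no real obstacle, since all the probabilistic content is in Theorem~\ref{thm:contraction}. The points to check are bookkeeping:
\begin{itemize}
\item The complex Hermitian matrix is handled by noting $v^*Av=v^\top(\operatorname{Re}A)v$ with $\Fnorm{\operatorname{Re}A}\le\Fnorm{A}$; alternatively, Theorem~\ref{thm:contraction} already covers Hermitian $A$ directly.
\item Support separation must hold uniformly in $\theta$, which it does because the support of $A_\theta(w)$ does not depend on $\theta$.
\end{itemize}
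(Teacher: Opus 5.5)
Your proposal is correct and matches the paper's argument: apply the contraction theorem, in its threshold-sign form, to the hollow Hermitian frequency matrix $A_\theta(w)$ at each fixed $\theta$, whose support does not depend on $\theta$. Then bound $\Fnorm{A_\theta(w)}^2\le 2W^2\varphi(\Omega)$ uniformly in $\theta$ via \eqref{eq:freq-frobenius}.
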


The sparse-ruler corollary follows by applying Theorem~\ref{thm:contraction} to the frequency matrix $A_\theta(w)$, whose Frobenius norm is controlled by \eqref{eq:freq-frobenius}.
Theorem~\ref{thm:toeplitz} uses it through the chain
\[
\Fnorm{A_\theta(w)}^2\le2W^2\ph(\Om)
\quad\Longrightarrow\quad
\sup_{\theta\in[0,1]}\Var\{G_\theta(w)\}\le C_\eps\kobs^2W^2\ph(\Om);
\]
Bernstein's inequality, the trigonometric grid and the inverse-link Taylor expansion then complete the argument.
In the Toeplitz upper-bound proof, the separation condition is imposed only on observed nonzero-lag pairs.
Under the buffered regime of Section~\ref{sec:estimator} it holds there with margin $2\eps$, inside the domain where the centered inverse link is stable.

The proof of Theorem~\ref{thm:contraction} is given in the supplement.
Its two ingredients are a pairwise Hoeffding projection \citep{Hoeffding1948} and a Bessel-type inequality for a family of degenerate pair-chaos subspaces, in the spirit of fusion frames \citep{CasazzaKutyniokLi2008}.
The projection separates the residual one-vertex component from the genuinely degenerate two-vertex chaoses; centering makes the one-vertex projection vanish at independence and remain summable under Gaussian dependence.

To state the second ingredient, let $H$ be the Gaussian Hilbert space generated by $g_1,\ldots,g_m$, so that $g_i=W(e_i)$ for an isonormal process $W$ with $\ip{e_i}{e_j}_H=C_{ij}$.
For $N\ge2$, let $H^{\odot N}$ be the $N$-fold symmetric tensor power equipped with the homogeneous Fock norm $\norm{F}_{(N)}^2=N!\,\norm{F}_{H^{\odot N}}^2$, under which the multiple integral $I_N$ is an isometry onto the $N$th Wiener chaos \citep{Janson1997,Nualart2006,PeccatiTaqqu2011}.
For an ordered edge $(i,j)$, define the degenerate pair-chaos subspace
\[
    K_{ij,N}
    =\Span\{e_i,e_j\}^{\odot N}
     \ominus\Span\{e_i^{\odot N},e_j^{\odot N}\},
\]
and let $P^{\rm int}_{ij,N}$ denote the orthogonal projection onto $K_{ij,N}$.
The subtracted pure tensors are exactly the one-vertex directions that can add coherently across edges sharing a vertex.

\begin{proposition}[Fusion-frame bound for degenerate pair chaoses]
\label{prop:fusion-frame-main}
Let $E$ be a finite ordered edge set with $|C_{ij}|\le1-\eps$ for every $(i,j)\in E$.
Then, for every $N\ge2$ and every $F\in H^{\odot N}$,
\begin{equation}
    \sum_{(i,j)\in E}\norm{P^{\rm int}_{ij,N}F}_{(N)}^2
    \le C_\eps\kappa^2\norm{F}_{(N)}^2,
\label{eq:fusion-main}
\end{equation}
and, equivalently, for all $f_{ij}\in K_{ij,N}$,
\begin{equation}
    \norm{\sum_{(i,j)\in E}f_{ij}}_{(N)}^2
    \le C_\eps\kappa^2\sum_{(i,j)\in E}\norm{f_{ij}}_{(N)}^2.
\label{eq:fusion-main-synthesis}
\end{equation}
The constant $C_\eps$ depends only on $\eps$; it is independent of the chaos order $N$, the edge set, its maximum degree, and the dimension.
\end{proposition}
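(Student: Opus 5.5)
The plan is to prove the analysis form \eqref{eq:fusion-main} directly; the synthesis form \eqref{eq:fusion-main-synthesis} then follows by the usual duality between $\sum_{(i,j)}P^{\rm int}_{ij,N}$ and the synthesis map, since both constants equal the norm of the synthesis operator $\bigoplus_{(i,j)\in E} K_{ij,N}\to H^{\odot N}$. Throughout I work with annihilation operators $a(x)$, $x\in H$, acting from $H^{\odot N}$ to $H^{\odot(N-1)}$. With the Fock normalisation $\norm{\cdot}_{(N)}$ they satisfy $\norm{a(x)F}_{(N-1)}^2\le N\norm{x}^2\norm{F}_{(N)}^2$. On the $N$th chaos, the number operator restricted to a two-dimensional subspace $L=\Span\{e_i,e_j\}$ is $\sum_{\alpha}a^*(\tilde e_\alpha)a(e_\alpha)$, where $\tilde e_\alpha$ is the dual basis of $L$. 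Separation $|C_{ij}|\le1-\eps$ makes the Gram matrix of $\{e_i,e_j\}$ uniformly invertible, so $\norm{\tilde e_i},\norm{\tilde e_j}\le C_\eps$.

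\emph{Step 1: a local pair-annihilation representation of $P^{\rm int}_{ij,N}$.}
Inside $L^{\odot N}$, which is isomorphic to binary forms of degree $N$, the subspace $K_{ij,N}$ is the orthocomplement of the two pure powers. I claim that, with a constant $C_\eps$ independent of $N$,
\[
\norm{P^{\rm int}_{ij,N}F}_{(N)}^2\le \frac{C_\eps}{N(N-1)}\,\norm{a(\tilde e_i)a(\tilde e_j)P_{L^{\odot N}}F}_{(N-2)}^2 .
\]
The mechanism is as follows. $a(\tilde e_i)a(\tilde e_j)$ kills exactly the pure powers $e_i^{\odot N}$ and $e_j^{\odot N}$, because $\tilde e_i\perp e_j$ and $\tilde e_j\perp e_i$. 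On $K_{ij,N}$ it is therefore injective, and its adjoint composed with it is, up to lower-order terms, the product of the two coordinate number operators $\mathcal N_i\mathcal N_j$ in the dual coordinates. Each of these is at least $1$ on $K_{ij,N}$, and their sum equals $N$ there. The creation--annihilation factor $N(N-1)$ produced by two annihilations cancels against this two-coordinate number operator, provided the lower bound on $\mathcal N_i\mathcal N_j$ relative to $N(N-1)$ can be made uniform in $N$. This is the step I expect to be the main obstacle. On monomials $e_i^{\odot p}\odot e_j^{\odot (N-p)}$ with $1\le p\le N-1$, the naive ratio $p(N-p)/(N(N-1))$ is as small as $1/N$, so the bare inequality fails. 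I would therefore not bound $P^{\rm int}$ by $a(\tilde e_i)a(\tilde e_j)$ alone. Instead I would use the full pair-interaction operator $\mathcal N_i\mathcal N_j$ as a weight, and prove the fusion inequality for the weighted projections $(\mathcal N_i\mathcal N_j/N^2)^{1/2}P^{\rm int}$ together with the one-vertex remainders. The first thing to try is to check, in the binary-form (Bargmann) model, that the non-orthogonality of $e_i,e_j$ controlled by $\eps$ leaves only geometrically decaying coefficients $|C_{ij}|^{k}$ for the mixing between the weighted and unweighted pieces. Those coefficients are summable uniformly in $N$, which is what would yield an $N$-free constant.

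\emph{Step 2: global summation over edges.}
Given Step 1, and bounding $P_{L^{\odot N}}$ by the identity in the sense that $a(\tilde e_i)a(\tilde e_j)F$ can be expressed through annihilators of vectors lying in $L$, it remains to bound
\[
\sum_{(i,j)\in E}\norm{a(x_{ij})a(y_{ij})F}^2_{(N-2)}, \qquad x_{ij},y_{ij}\in\Span\{e_i,e_j\},\ \norm{x_{ij}},\norm{y_{ij}}\le C_\eps .
\]
Writing $x_{ij}=\alpha e_i+\beta e_j$, and similarly for $y_{ij}$, this reduces to at most four sums of the form $\sum_{i,j}\norm{a(e_i)a(e_j)F}^2$, over all ordered pairs rather than only edges; dropping the restriction to $E$ is what makes the bound degree-free. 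Now $\sum_{i,j}\norm{a(e_i)a(e_j)F}^2=\ip{F}{\,d\Gamma_2(S)F}$, where $S=\sum_i e_i\otimes e_i^*$ is the frame operator of $\{e_i\}$ on $H$, with $\norm{S}=\norm{C}_2=\kappa$. The second quantisation of $S\otimes S$ on the $N$th chaos has norm at most $N(N-1)\kappa^2$. The $N(N-1)$ cancels the $1/(N(N-1))$ from Step 1, leaving $C_\eps\kappa^2\norm{F}_{(N)}^2$.

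\emph{Step 3: bookkeeping.}
Finally I would verify that every constant depends only on $\eps$, through the dual-basis norms and the geometric series in $|C_{ij}|\le 1-\eps$, and on nothing else: not on $N$, $|E|$, the degree, or $m$. Then I would record the equivalence of \eqref{eq:fusion-main} and \eqref{eq:fusion-main-synthesis} via the adjoint of the synthesis map.
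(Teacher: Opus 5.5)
There is a genuine gap, at the step you flagged yourself, and your proposed remedy does not reach it.

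\textbf{Step~1 loses a factor $N$ even without any mixing.} Take $C=I$, so that $\tilde e_i=e_i$ and $\tilde e_j=e_j$. The vector $f=e_i\vee e_j^{\odot(N-1)}$ lies in $K_{ij,N}$ and satisfies $\norm{a(e_i)a(e_j)f}_{(N-2)}^2=(N-1)\norm{f}_{(N)}^2$. Hence the sharp two-annihilation comparison on $K_{ij,N}$ is $\norm{f}_{(N)}^2\le(N-1)^{-1}\norm{a(e_i)a(e_j)f}_{(N-2)}^2$. Fed into Step~2, this gives only $\sum_{(i,j)\in E}\norm{P^{\rm int}_{ij,N}F}_{(N)}^2\le N\norm{F}_{(N)}^2$. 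Yet at $C=I$ the subspaces $K_{ij,N}$ are mutually orthogonal up to reversal of an edge, so the true constant there is at most $2$.

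Showing that non-orthogonality enters through geometrically decaying coefficients $|C_{ij}|^k$ therefore cannot recover the missing factor. The loss is intrinsic to comparing $P^{\rm int}_{ij,N}$ with a second-order annihilator. It sits on the components of $K_{ij,N}$ that carry a single residual direction, such as $e_i^{\odot(N-1)}\vee r_{j|i}$, which are the near-one-vertex directions shared by all edges at $i$. Your ``weighted projections together with the one-vertex remainders'' are named but never bounded. Bounding those remainders uniformly over the edges at a vertex is the whole difficulty.

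\textbf{Step~2 is also incorrect as written.} For general $x_{ij},y_{ij}\in\Span\{e_i,e_j\}$, the diagonal terms $a(e_i)a(e_i)$ are summed over $(i,j)\in E$ with multiplicity equal to the out-degree of $i$. A star at $C=I$ with $x_{ij}=y_{ij}=e_i$ and $F=e_i^{\odot N}$ produces a factor equal to the degree. So dropping the restriction to $E$ does not make these terms degree-free. For the dual vectors this is rescued only because their diagonal coefficients are $O(|C_{ij}|)$ and $\sum_jC_{ij}^2\le\kappa$ (Lemma~\ref{supp-lem:schur}).

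\textbf{How the paper closes the argument.} It avoids the second-order comparison and uses a single annihilation per endpoint. On the whole two-coordinate chaos $\Span\{e_i,e_j\}^{\odot N}$, not only on $K_{ij,N}$, one has $\norm{a(e_i)f}_{(N-1)}^2+\norm{a(e_j)f}_{(N-1)}^2=\ip{f}{d\Gamma(S_{ij})f}_{(N)}\ge N\eps\norm{f}_{(N)}^2$, where $S_{ij}=e_i\otimes e_i+e_j\otimes e_j$. This loses nothing on the near-one-vertex components.

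The price is that $\sum_{(i,j)\in E}\norm{a(e_i)P^{\rm int}_{ij,N}F}_{(N-1)}^2$ has no automatic degree-free bound; without the projection it is a degree-weighted sum. The paper bounds it by $C_\eps N\kappa^2\norm{F}_{(N)}^2$ through the adjoint $U\mapsto\sum_{(i,j)\in E}P^{\rm int}_{ij,N}c(e_i)U_{ij}$, in three moves:
\begin{itemize}
\item The inputs may be taken in $R_{j|i,N-1}$ (Lemma~\ref{supp-lem:wlog-residual}).
\item The main term $\sum_ic(e_i)\sum_jU_{ij}$ costs $N\kappa$ from the creation bound times $C_\eps\kappa$ from the one-sided residual synthesis. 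At residual degree $b$, the Gram matrix of the directions at vertex $i$ is the Schur power $R_i^{\circ b}$ of the residual correlation matrix, and $\norm{R_i^{\circ b}}_2\le\eps^{-1}\kappa$.
\item The pure-tensor correction is controlled by $\sum_jC_{ij}^2\le\kappa$ and $\norm{C^{\circ N}}_2\le\kappa$.
\end{itemize}
This vertex-wise residual Gram bound is the ingredient missing from your plan. The factor $N$ it leaves is then cancelled exactly by the $1/(N\eps)$ from the number operator.
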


The uniformity in $N$ is what matters.
Creation--annihilation estimates produce a factor $N$, while the number operator on the two-coordinate chaos returns a factor $1/(N\eps)$, and their product removes any dependence on the Hermite order of the transform.
In the proof of Theorem~\ref{thm:contraction}, the interaction part of $h(g_i)h(g_j)$ is expanded over chaoses, each chaos component lies in $K_{ij,N}$, and \eqref{eq:fusion-main-synthesis} is applied with coefficients $A_{ij}$ at every order simultaneously.
Throughout, $H$ and its chaoses are real Hilbert spaces.
For complex coefficients $A_{ij}$ the synthesis bound extends to the complexification by applying the real estimate separately to the real and imaginary parts of the coefficients.
In the proof of Theorem~\ref{thm:contraction} a Hermitian $A$ is in any case reduced to its real symmetric part before the chaos argument is invoked.

\begin{remark}[Coordinate-dependent transforms]
\label{rem:heterogeneous}
Theorem~\ref{thm:contraction} holds verbatim for $v_i=h_i(g_i)$ with coordinate-dependent centered transforms $h_i\in L^2(\gamma)\cap L^\infty(\gamma)$, with $K$ depending only on $\eps$, $\sup_i\norm{h_i}_{L^2(\gamma)}$ and $\sup_i\norm{h_i}_\infty$: no step of the proof uses that the transforms at the two endpoints of an edge coincide (see the closing remark of Section~S1 of the supplement).
In particular, centered threshold signs with coordinate-dependent thresholds $\tau_i$ in a compact interval are covered, which supplies the one-snapshot variance input for the threshold-mismatch extension mentioned in Section~\ref{sec:discussion}.
\end{remark}

This theorem is not a direct consequence of standard quadratic-form tools.
Dependent Hanson--Wright bounds control quadratic forms of the original coordinates, or smooth concentration classes, rather than hollow forms of nonsmooth transforms $h(g_i)$ with infinite Hermite expansions \citep{RudelsonVershynin2013,Adamczak2015}.
Fixed-degree dependent concentration and direct Wick or diagram expansions have order-dependent constants \citep{GotzeSambaleSinulis2019}; decoupling for $U$-statistics is useful for tail comparison but erases the coupled vertex reuse that creates the obstruction in Section~\ref{sec:centering} \citep{DeLaPenaGine1999}.
Asymptotic theory for nonlinear functionals of Gaussian sequences \citep{BreuerMajor1983,Arcones1994} gives central limit theorems at fixed Hermite rank, and Toeplitz-weighted quadratic forms of nonlinear functions of stationary Gaussian sequences have a classical asymptotic theory under summability or regular-variation conditions \citep{TerrinTaqqu1990,GiraitisTaqqu1998}.
The estimator here instead requires a non-asymptotic variance inequality on arbitrary hollow supports, with constants depending only on the correlation operator norm and the local separation, uniformly over chaos order; Proposition~\ref{prop:fusion-frame-main} provides this uniformity.

\FloatBarrier
\section{Operator-norm bounds and coverage lower bound}
\label{sec:estimator}

We now prove operator-norm guarantees for the estimator defined in Section~\ref{sec:sampling-estimator}.
The proof treats the centered sparse-pair fluctuation, the nonlinear inverse-link Taylor error, and the pooled marginal calibration separately.
The contraction theorem controls the first directly and, re-applied lag by lag, the inverse-link Taylor error as well; the pooled marginal calibration is the one separate scalar effect.
The construction turns the centered sparse-pair geometry into a one-bit Toeplitz estimator with pooled marginal plug-in calibration and an operator-norm risk bound, whose leading coverage term is matched by the lower bound of Section~\ref{subsec:lower-bounds}.

We use the estimator and notation of Section~\ref{sec:sampling-estimator}.
The scalar centered link is regular on the buffered inverse-link domain introduced in Assumption~\ref{ass:inverse} below, which also defines the derivative bounds $L_1$ and $L_2$.

\subsection{Upper bounds over natural covariance classes}
We state the main covariance result over standard Toeplitz covariance classes rather than through an abstract calibration certificate.
The constants depend on compact threshold and inverse-link regularity domains but not on $d$, $n$ or the ruler geometry.
Subscripts as in $C_{\tau,\eps}$ record dependence on the threshold window $[\tau_{\min},\tau_{\max}]$ and the buffer $\eps$ of Assumption~\ref{ass:inverse}, uniformly over $\tau$ in the window, not on an individual threshold value.

\begin{assumption}[Buffered regular inverse-link regime]
\label{ass:inverse}
There exist $\eps\in(0,1/4)$ and $0<\tau_{\min}<\tau_{\max}<\infty$ such that
\[
    |\rho_s|\le1-2\eps,\qquad s=1,\ldots,d-1,
\]
and $\tau\in[\tau_{\min},\tau_{\max}]$, while the estimator clips to the larger compact inverse-link domain $\calI_{\tau'}=c([-1+\eps,1-\eps];\tau')$.
Set
\[
    L_k=1\vee\sup\bigl\{|\partial_u^k\psi(u;\tau')|:\
    \tau'\in[\tau_{\min}/2,2\tau_{\max}],\ u\in\calI_{\tau'}\bigr\},
    \qquad k=1,2,
\]
so that $L_1,L_2\ge1$ are uniform inverse-link derivative bounds on the buffered domain.
The two-layer structure (true lags in $[-1+2\eps,1-2\eps]$, clipping at the $\eps$ layer) keeps every Taylor expansion of the plug-in analysis inside the clipping domain; the supplement quantifies the resulting buffer.
\end{assumption}

Let
\[
    \kappa_{\rm obs}=\norm{\Gamma_{\Omega,\Omega}}_2/\gamma_0,
    \qquad
    S_1(d;\rho)=\sum_{s=1}^{d-1}|\rho_s|.
\]
The empirical centered pair fluctuation is governed by $\kappa_{\rm obs}$.
The plug-in population shift is controlled by the short-memory size $S_1(d;\rho)$.

\begin{assumption}[Short-memory Toeplitz class]
\label{ass:short-memory}
The normalized Toeplitz lags satisfy
\[
    S_1(d;\rho)=\sum_{s=1}^{d-1}|\rho_s|\le S_\star,
\]
where $S_\star$ is independent of $d$ and $n$.
\end{assumption}

\begin{theorem}[Sparse-ruler one-bit Toeplitz upper bound]
\label{thm:toeplitz}
Assume Assumption~\ref{ass:inverse}.
Let $t=\log(Cd/\delta)$ and $m=|\Omega|$.
There are constants $C,c,\eta_0>0$, depending only on the regularity domains in Assumption~\ref{ass:inverse}, such that the following hold.

\noindent\textup{(a) Oracle estimator.}
If the scale and sign mean are known, then with probability at least $1-\delta$,
\begin{equation}
    \norm{\widehat \Gamma_{\rm or}-\Gamma}_2
      \le C_\eps\gamma_0L_1\kappa_{\rm obs}
          \sqrt{\frac{\varphi(\Omega)t}{n}}
       + C\gamma_0L_1d\frac{t}{n}
       + C_\eps\gamma_0L_2\left\{
         \min\{\kappa_{\rm obs}^2\varphi(\Omega),d\}\frac{t}{n}
         +d\frac{t^2}{n^2}\right\}.
\label{eq:oracle-rate}
\end{equation}

\noindent\textup{(b) Plug-in estimator.}
In addition assume Assumption~\ref{ass:short-memory}.
Define
\[
    r_\mu=C_\tau\left\{\sqrt{\frac{\kappa_{\rm obs}t}{nm}}+\frac{t}{n}\right\}.
\]
If
\[
    n\ge C\frac{mt}{\kappa_{\rm obs}},
    \qquad
    r_\mu\le c\eta_0,
\]
then with probability at least $1-\delta$,
\begin{align}
    \norm{\widehat \Gamma_{\rm plug}-\Gamma}_2
      &\le C_\eps\gamma_0L_1\kappa_{\rm obs}
          \sqrt{\frac{\varphi(\Omega)t}{n}}
        + C\gamma_0L_1d\frac{t}{n}
        + C_{\tau,\eps}\gamma_0L_2\left\{
          \min\{\kappa_{\rm obs}^2\varphi(\Omega),d\}\frac{t}{n}
          +d\frac{t^2}{n^2}\right\} \nonumber\\
      &\quad
        + C_{\tau,\eps}\gamma_0\{1+S_1(d;\rho)\}r_\mu .
\label{eq:plugin-rate}
\end{align}
In particular, uniformly over the short-memory class $S_1(d;\rho)\le S_\star$, the last term is at most
\[
    C_{\tau,\eps,S_\star}\gamma_0
    \left\{\sqrt{\frac{\kappa_{\rm obs}t}{nm}}+\frac{t}{n}\right\}.
\]
\end{theorem}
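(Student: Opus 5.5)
The plan is to write the error as a Toeplitz matrix and bound its operator norm by the sup of a trigonometric polynomial. Since $\widehat\Gamma-\Gamma$ is symmetric Toeplitz with zero diagonal (oracle) or diagonal $\widehat\gamma_0-\gamma_0$ (plug-in), its operator norm is at most $|\widehat\gamma_0-\gamma_0|+2\sup_\theta|\sum_{s\ge1}(\widehat\gamma_s-\gamma_s)e^{2\pi is\theta}|$. The sup can be reduced to a grid of $O(d)$ frequencies by Bernstein's inequality for trigonometric polynomials of degree $d-1$; this grid and the $d-1$ lags account for the factor $Cd$ inside $t=\log(Cd/\delta)$.

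\textbf{Oracle estimator.} On the event where no clipping is active, Taylor-expand $\psi$ at $c_s=c(\rho_s;\tau)$:
\[
\widehat\rho_s-\rho_s=\psi'(c_s)(\widehat c_s-c_s)+R_s,\qquad |R_s|\le \tfrac12 L_2(\widehat c_s-c_s)^2 .
\]
\emph{Linear term.} Take weights $w_s=\psi'(c_s)$, so $|w_s|\le L_1$. Then $\sum_s w_s(\widehat c_s-c_s)e^{2\pi is\theta}$ is, up to a factor $1/2$, the average over $n$ snapshots of $G_\theta(w)$. Corollary~\ref{cor:ruler-variance} bounds each summand's variance by $C_\eps\kobs^2L_1^2\ph(\Om)$. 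Each summand is also bounded in absolute value by $\sum_s|w_s|\cdot 4\le 4L_1 d$, because $|V_jV_k|\le 4$ and each $\widehat c_s$ is an average. Bernstein's inequality at each grid point plus a union bound then gives the first two terms, $C_\eps\gamma_0L_1\kobs\sqrt{\ph t/n}+C\gamma_0L_1dt/n$.

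\emph{Curvature term.} By Cauchy--Schwarz over lags, $\sup_\theta|\sum_s R_se^{2\pi is\theta}|\le L_2\sum_s(\widehat c_s-c_s)^2$. For each single lag, Theorem~\ref{thm:contraction} with $A$ supported on $\Om_s$ gives $\Var(\widehat c_s)\lesssim \kobs^2/(nq_s)$, and trivially $\Var(\widehat c_s)\lesssim 1/n$. Bernstein's inequality per lag then yields $(\widehat c_s-c_s)^2\lesssim \min\{\kobs^2/q_s,1\}t/n+t^2/n^2$. Summing over $s$ gives $\min\{\kobs^2\ph,d\}t/n+dt^2/n^2$, where the bound $\sum_s\min\{a_s,1\}\le\min\{\sum_s a_s,d\}$ is used.

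\emph{Clipping.} The same per-lag bounds show $|\widehat c_s-c_s|$ is smaller than the gap between $c(\pm(1-2\eps))$ and $c(\pm(1-\eps))$ whenever that quantity is small. When it is not small, the claimed bound exceeds $C\gamma_0 d$, which dominates the trivial bound $\|\widehat\Gamma-\Gamma\|_2\le 2\gamma_0 d$, since clipped lags satisfy $|\widehat\rho_s|\le1$.

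\textbf{Plug-in estimator.} First, $\widehat q_{\rm raw}$ is an average of $n$ independent snapshot means. Each snapshot mean has variance at most $\|\Cov(\one\{\cdot\})\|_2/m$. Using Hermite expansions, the indicator covariance matrix satisfies $\|\Cov\|\lesssim\kobs$, because the entrywise powers $C^{\circ k}$ have operator norm at most $\kobs$. Bernstein's inequality gives $|\widehat q-q|\lesssim r_\mu$, and since $Q^{-1}$ is smooth on the window, $|\widehat\tau-\tau|$, $|\widehat\mu-\mu|$ and $|\widehat\gamma_0/\gamma_0-1|$ are all $\lesssim r_\mu$. The condition $r_\mu\le c\eta_0$ keeps $\widehat\tau$ inside $[\tau_{\min}/2,2\tau_{\max}]$ and makes the projection inactive.

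Next, expand $\widehat V_j\widehat V_k=V_jV_k-(\widehat\mu-\mu)(V_j+V_k)+(\widehat\mu-\mu)^2$. The cross term has lag average $\lesssim|\widehat\mu-\mu|\cdot|\bar V|$-type quantities of lower order, using the requirement $n\ge Cmt/\kobs$. Write $\widehat\rho_s=\psi(\widehat c_s;\widehat\tau)$ and compare it with $\psi(c(\rho_s;\widehat\tau);\widehat\tau)$. The population shift $\psi(c(\rho_s;\tau);\widehat\tau)$ versus $\rho_s$ is governed by $\partial_\tau c(\rho;\tau)$, which vanishes at $\rho=0$ and is $O(|\rho|)$. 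This gives a shift of size $|\rho_s|r_\mu$, whose lag sum is controlled by $S_1 r_\mu$. The additive $(\widehat\mu-\mu)^2$ piece and the diagonal error $|\widehat\gamma_0-\gamma_0|\lesssim\gamma_0r_\mu$ supply the remaining $\gamma_0 r_\mu$, and rescaling by $\widehat\gamma_0$ contributes $\gamma_0 r_\mu(1+S_1)$.

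\textbf{Main obstacle.} The hardest step is controlling the sampling error under the data-dependent $\widehat\mu$ and $\widehat\tau$ without losing the $\ph$ rate. The issue is that the cross term $(\widehat\mu-\mu)\sum_s w_s q_s^{-1}\sum_{\Om_s}(V_j+V_k)$ reintroduces the row-sum geometry. I would first bound it crudely, using $|\widehat\mu-\mu|\le r_\mu$ and $|\sum_{\Om_s}(V_j+V_k)|/q_s\le$ a row average bounded by $\max_j|\bar V_j|$-type concentration. If that crude bound is insufficient, the fallback is to absorb the cross term into the calibration term via the sample-size condition $n\ge Cmt/\kobs$.
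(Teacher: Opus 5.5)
Your architecture matches the paper's. You use the Toeplitz symbol bound with an $O(d)$ trigonometric grid and a Taylor expansion of $\psi$. The linear spectral polynomial is handled by Corollary~\ref{cor:ruler-variance} plus scalar Bernstein, and the curvature term by a square-sum bound. For the plug-in you use Hermite--Schur control of the pooled exceedance average, the recentering identity, and an $O(|\rho_s|r_\mu)$ population shift summed by the row-sum bound.

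The oracle part is correct. Your per-lag Bernstein bound $(\widehat c_s-c_s)^2\lesssim\min\{\kappa_{\rm obs}^2/q_s,1\}t/n+t^2/n^2$, union-bounded over the $d-1$ lags, is a valid and more elementary substitute for the paper's Hilbert-space (Pinelis) Bernstein inequality. It gives the same $\min\{\kappa_{\rm obs}^2\varphi(\Omega),d\}t/n+dt^2/n^2$.

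Your ``crude'' bound on the recentering cross term also works, and is simpler than the paper's row-energy lemma. Set $\bar V_j=n^{-1}\sum_\ell V_j^{(\ell)}$; Hoeffding and a union bound over $j\in\Omega$ give $\max_j|\bar V_j|\le C\sqrt{t/n}$. Each lag average $\ell_{s,n}=(nq_s)^{-1}\sum_\ell\sum_{(j,k)\in\Omega_s}(V_j^{(\ell)}+V_k^{(\ell)})$ is at most $2\max_j|\bar V_j|$, so
\[
|\widehat\mu-\mu|\,\sup_{\theta}\Bigl|2\operatorname{Re}\sum_{s=1}^{d-1}w_s\ell_{s,n}e^{2\pi is\theta}\Bigr|
\le C\gamma_0L_1d\sqrt{\frac tn}\Bigl(\sqrt{\frac{\kappa_{\rm obs}t}{nm}}+\frac tn\Bigr)
\le C\gamma_0L_1d\,\frac tn .
\]
The last step uses $\kappa_{\rm obs}\le m$ (trace bound) and $t\le n$; the case $t>n$ is covered by the deterministic $C_\tau\gamma_0d$ bound. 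So this piece lands in the boundedness term rather than at ``lower order,'' and it does not need $n\ge Cmt/\kappa_{\rm obs}$. Drop your fallback: that linear polynomial is not $O(1+S_1(d;\rho))$, so the cross term cannot be absorbed into the calibration term.

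There are two genuine errors in the plug-in bookkeeping.

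\emph{The $(\widehat\mu-\mu)^2$ piece.} It enters every lag statistic with the same sign. After linearization it contributes $(\widehat\mu-\mu)^2T_d^0(w)$ with $w_s=\gamma_0\psi'(c_s;\tau)\ge c\gamma_0>0$. Testing against $\mathbf 1/\sqrt d$ shows this matrix has operator norm of order $\gamma_0 d(\widehat\mu-\mu)^2$. That is not $\lesssim\gamma_0 r_\mu$ unless $d\,r_\mu\lesssim1$, and the hypotheses do not give this. It must be charged to the boundedness term, via $(\widehat\mu-\mu)^2\le Cr_\mu^2\le C\{\kappa_{\rm obs}t/(nm)+t^2/n^2\}\le Ct/n$. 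Its contributions, and the cross term's, to $\sum_s\delta_s^2$ (with $\delta_s$ the plug-in entry error) must likewise be placed at the curvature scale. This uses $(d-1)\kappa_{\rm obs}/m\le\min\{\kappa_{\rm obs}^2\varphi(\Omega),d\}$, which follows from $\varphi(\Omega)\ge(d-1)/m$ and $1\le\kappa_{\rm obs}\le m$.

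\emph{Random linear weights.} Expanding $\widehat\gamma_0\psi(\widehat c_s;\widehat\tau)$ at threshold $\widehat\tau$ gives the linear coefficient $\widehat\gamma_0\psi'(\,\cdot\,;\widehat\tau)$. This is random and built from the same bits. Corollary~\ref{cor:ruler-variance} and the Bernstein-plus-grid step require deterministic weights, and your proposal never addresses this.

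The paper fixes it with a joint expansion in the entry error $e_s$ and the scale error $\eta=\widehat\gamma_0/\gamma_0-1$, using $\widehat\tau=\tau(1+\eta)^{-1/2}$ exactly. This keeps the deterministic $w_s$, isolates the population shift $\gamma_0a_\eta(\rho_s)$, and leaves a remainder $C\gamma_0(e_s^2+|\eta||e_s|)$. The cross part is then controlled by $|\eta|\sum_s|\delta_s|\le\tfrac12(d-1)\eta^2+\tfrac12\sum_s\delta_s^2$, with $(d-1)\eta^2$ bounded exactly like $(d-1)(\widehat\mu-\mu)^2$ above.

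With these two repairs, plus the buffered-domain check that all Taylor segments stay in $\mathcal I_{\widehat\tau}$ (which you only gesture at), your plug-in argument closes.
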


\begin{corollary}[Sobolev spectral-density class]
\label{cor:sobolev-main}
Assume the conditions of Theorem~\ref{thm:toeplitz}\textup{(b)}.
If
\[
    \sum_{s=1}^{d-1}s^{2\beta}|\rho_s|^2\le A_\beta^2
\]
for some $\beta>1/2$, then $S_1(d;\rho)\le C_\beta A_\beta$.
Hence the plug-in term in \eqref{eq:plugin-rate} is bounded by
\[
    C_{\tau,\eps,\beta}\gamma_0(1+A_\beta)
    \left\{
    \sqrt{\frac{\kappa_{\rm obs}t}{n|\Omega|}}+\frac{t}{n}
    \right\}.
\]
\end{corollary}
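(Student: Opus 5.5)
The statement has two parts. The first is the deterministic summability claim $S_1(d;\rho)\le C_\beta A_\beta$; the second plugs that claim into Theorem~\ref{thm:toeplitz}(b). For the first part I will use Cauchy--Schwarz with the weight $s^{\beta}$:
\[
    S_1(d;\rho)=\sum_{s=1}^{d-1}s^{-\beta}\cdot s^{\beta}|\rho_s|
    \le\Bigl(\sum_{s=1}^{d-1}s^{-2\beta}\Bigr)^{1/2}
       \Bigl(\sum_{s=1}^{d-1}s^{2\beta}|\rho_s|^2\Bigr)^{1/2}
    \le\Bigl(\sum_{s=1}^{\infty}s^{-2\beta}\Bigr)^{1/2}A_\beta .
\]
Because $\beta>1/2$, the series $\sum_{s\ge1}s^{-2\beta}=\zeta(2\beta)$ converges. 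This gives $S_1(d;\rho)\le C_\beta A_\beta$ with $C_\beta=\zeta(2\beta)^{1/2}$, and the constant does not depend on $d$ or $n$.

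For the second part, the plug-in term in \eqref{eq:plugin-rate} is $C_{\tau,\eps}\gamma_0\{1+S_1(d;\rho)\}r_\mu$. Substituting the bound just obtained gives $1+S_1(d;\rho)\le 1+C_\beta A_\beta\le \max\{1,C_\beta\}(1+A_\beta)$. I then expand $r_\mu=C_\tau\{\sqrt{\kappa_{\rm obs}t/(nm)}+t/n\}$ with $m=|\Omega|$. Collecting $C_{\tau,\eps}$, $C_\tau$ and $\max\{1,C_\beta\}$ into one constant $C_{\tau,\eps,\beta}$ yields exactly the displayed bound.

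The Sobolev condition also implies Assumption~\ref{ass:short-memory} with $S_\star=C_\beta A_\beta$. So the "in particular" clause of Theorem~\ref{thm:toeplitz}(b) applies directly, and no separate probabilistic argument is needed: the bound holds on the same probability-$(1-\delta)$ event as Theorem~\ref{thm:toeplitz}(b).

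The only point that needs care is that $C_\beta$ must be dimension-free. This rests entirely on $\beta>1/2$. At $\beta=1/2$ the harmonic sum would introduce a $\sqrt{\log d}$ factor, which the statement rules out.
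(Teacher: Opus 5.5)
Your proposal is correct and follows the paper's proof: the same weighted Cauchy--Schwarz bound $S_1(d;\rho)\le(\sum_{s\ge1}s^{-2\beta})^{1/2}A_\beta$, then direct substitution into the calibration term $C_{\tau,\eps}\gamma_0\{1+S_1(d;\rho)\}r_\mu$ of \eqref{eq:plugin-rate}. Your step $1+C_\beta A_\beta\le\max\{1,C_\beta\}(1+A_\beta)$ spells out the constant adjustment that the paper leaves implicit.
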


Stable exponential-decay and finite-memory classes follow by the same substitution and are recorded in the supplement.

\begin{remark}[Why short memory enters only the plug-in bound]
\label{rem:short-memory-role}
Part \textup{(a)} requires only Assumption~\ref{ass:inverse}.
Part \textup{(b)} adds Assumption~\ref{ass:short-memory} because pooled calibration perturbs every lag through the nonlinear inverse link, and the resulting lagwise population perturbation is controlled in operator norm by a Toeplitz row-sum bound proportional to $1+S_1(d;\rho)$.
A bounded spectral density controls $\norm{\Gamma}_2/\gamma_0$ but not this perturbation, because a scalar nonlinear transformation of the lag sequence need not preserve a dimension-free Toeplitz spectral norm.
Short-memory and Sobolev-type classes supply the required summability.
On operator-norm balls, however, the row-sum bound can be bypassed altogether; see Corollary~\ref{cor:plugin-ball}.
The two conditions are incomparable: the operator-norm ball admits lag sequences with divergent $S_1(d;\rho)$, since a bounded Toeplitz spectral norm does not force absolutely summable lags, while a short-memory sequence can have operator norm above $c_0$.
\end{remark}

\begin{corollary}[Dropping short memory on operator-norm balls]
\label{cor:plugin-ball}
Assume Assumption~\ref{ass:inverse} and suppose in addition that $\norm{T_d^0(\rho)}_2\le c_0$ for some $c_0\le1-2\eps$, where $T_d^0(\rho)$ is the hollow Toeplitz matrix of the normalized lags; Assumption~\ref{ass:short-memory} is \emph{not} assumed.
Under the sample-size conditions of Theorem~\ref{thm:toeplitz}\textup{(b)}, after decreasing the constant $c$ in the admissibility condition $r_\mu\le c\eta_0$, the plug-in estimator satisfies, with probability at least $1-\delta$, the bound \eqref{eq:plugin-rate} with the calibration term $C_{\tau,\eps}\gamma_0\{1+S_1(d;\rho)\}r_\mu$ replaced by $C_{\tau,\eps}(1+B)\gamma_0 r_\mu$, where $B=B(c_0,\tau_{\min},\tau_{\max})$ is a finite constant independent of $d$, $n$ and the ruler.
\end{corollary}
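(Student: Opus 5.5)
The plan is to rerun the proof of Theorem~\ref{thm:toeplitz}(b) unchanged except at the one place where $S_1(d;\rho)$ enters: the lagwise population perturbation caused by pooled calibration. The estimation error splits into three parts:
\[
\widehat\Gamma_{\rm plug}-\Gamma=(\widehat\Gamma_{\rm plug}-\widetilde\Gamma)+(\widetilde\Gamma-\Gamma)+\text{(scale term)} .
\]
Here $\widetilde\Gamma$ is the estimator evaluated with the true calibration $(\gamma_0,\tau,\mu)$.
\begin{itemize}
\item The oracle part $\widetilde\Gamma-\Gamma$ is bounded exactly as in \eqref{eq:oracle-rate}; that argument used only Assumption~\ref{ass:inverse}.
\item The scale term $(\widehat\gamma_0-\gamma_0)\Toep_d(1,\widehat\rho)$ is handled the same way.
\item The remaining piece is the perturbation from replacing $(\tau,\mu)$ by $(\widehat\tau,\widehat\mu)$.
\end{itemize}
On the calibration event $|\widehat\tau-\tau|+|\widehat\mu-\mu|\lesssim r_\mu\le c\eta_0$, which holds with probability at least $1-\delta/2$ by Bernstein on the $nm$ marginal bits with variance proxy $\kappa_{\rm obs}$, this last piece is a Toeplitz matrix whose lags are
\[
\Delta_s=\psi(c_s^{\rm emp}(\widehat\mu);\widehat\tau)-\psi(c_s^{\rm emp}(\mu);\tau),
\]
up to second-order remainders. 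Those remainders are absorbed exactly as before.

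For the first-order part, write
\[
c_s^{\rm emp}(\widehat\mu)=c_s^{\rm emp}(\mu)-(\widehat\mu-\mu)(\bar V_s^{+}+\bar V_s^{-})+(\widehat\mu-\mu)^2 ,
\]
where $\bar V_s^{\pm}$ are centered vertex averages. The vertex-average part is of higher order and is controlled by the linear Bernstein step already present in the proof. The leading population perturbation is then
\[
\Delta_s^{\rm pop}=\partial_\tau\bigl[\psi(c(\rho_s;\tau);\tau')\bigr]_{\tau'=\tau}(\widehat\tau-\tau)+O(r_\mu^2) .
\]
The key observation is that $\psi(c(\rho;\tau);\tau)=\rho$ for every $\tau$. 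Differentiating this identity gives
\[
\partial_{\tau'}\psi(c(\rho;\tau);\tau')\big|_{\tau'=\tau}=-\partial_u\psi\,\partial_\tau c(\rho;\tau)=:D(\rho;\tau),
\]
a smooth function of $\rho$ on $[-1+2\eps,1-2\eps]$. Moreover $D(0;\tau)=0$, because $c(0;\tau)=0$ for all $\tau$. Thus the first-order population perturbation is the hollow Toeplitz matrix $T_d^0(D(\rho;\tau))$ times $(\widehat\tau-\tau)$, plus a diagonal scale correction, plus quadratic terms. Those quadratic terms are entrywise $O(r_\mu^2)\,|\rho_s|$-weighted and are absorbed after decreasing $c$.

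\textbf{Main obstacle.} The obstacle is bounding $\|T_d^0(D(\rho;\tau))\|_2$ by a constant $B$ depending only on $c_0,\tau_{\min},\tau_{\max}$, using only $\|T_d^0(\rho)\|_2\le c_0$. The row-sum bound used before gives $\sup|D'|\,S_1$, which we must avoid. I would expand $D(\cdot;\tau)$ in a power series
\[
D(\rho;\tau)=\sum_{k\ge1}a_k(\tau)\rho^k ,
\]
which is valid because $c(\cdot;\tau)$ is real-analytic via Mehler's expansion, $c(\rho;\tau)=\sum_{k\ge1}\tfrac{4\phi(\tau)^2He_{k-1}(\tau)^2}{k!}\rho^k$. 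The inverse $\psi$ and its $\tau$-derivative are analytic near $0$, but not necessarily on the whole interval. Restricting to the ball is what saves us: $|\rho_s|\le c_0\le1-2\eps$. I would try the Schur product theorem route, which needs positive semidefiniteness. To get it, write
\[
T_d^0(\rho)=\Gamma/\gamma_0-I,\qquad 0\preceq\Gamma/\gamma_0\preceq(1+c_0)I .
\]
By the Schur product theorem, $\|(\Gamma/\gamma_0)^{\circ k}\|_2\le(1+c_0)^k$. So $T_d^0(\rho^{\circ k})$ equals $(\Gamma/\gamma_0)^{\circ k}-I$, with norm at most $(1+c_0)^k+1$. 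This yields
\[
\|T_d^0(D)\|_2\le\sum_k|a_k|\{(1+c_0)^k+1\}=:B ,
\]
which is finite provided the power series of $D(\cdot;\tau)$ has radius of convergence exceeding $1+c_0$, uniformly over $\tau$ in the window. That radius condition is the delicate point. If it fails, I would first try replacing $\Gamma/\gamma_0$ by its correlation rescaling about a shifted center, or else use a Chebyshev/Bernstein polynomial approximation of $D$ on $[-c_0,c_0]$ with error controlled entrywise. If neither works, the constant $B$ may have to be stated through this series, and $c$ decreased accordingly.

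Multiplying by $|\widehat\tau-\tau|\lesssim r_\mu$ gives the replacement term $C_{\tau,\eps}(1+B)\gamma_0r_\mu$, and a union bound finishes the proof.
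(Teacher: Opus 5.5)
Your localization is right: the first-order population perturbation from $\widehat\tau\neq\tau$ is the only place $S_1(d;\rho)$ enters. Expanding the link coefficient in a power series and controlling Hadamard powers of $T_d^0(\rho)$ is also the paper's plan. But the key estimate is not established, and the route you propose for it provably fails.

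Your bound needs the Maclaurin series of $D(\cdot;\tau)=-\partial_\tau c/\partial_\rho c$ to converge beyond radius $1+c_0>1$. By Plackett's formula, $\partial_\rho c(\rho;\tau)=\frac{2}{\pi}e^{-\tau^2/(1+\rho)}(1-\rho^2)^{-1/2}$, so $D(\rho;\tau)=2\tau\sqrt{1-\rho^2}\,e^{\tau^2/(1+\rho)}\int_0^\rho e^{-\tau^2/(1+u)}(1+u)^{-1}(1-u^2)^{-1/2}\,du$. This behaves like a nonzero multiple of $\sqrt{1-\rho}$ as $\rho\to1^-$. The radius of convergence is therefore exactly $1$, and your $B$ is infinite. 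Stating $B$ ``through this series'' does not give the finite constant the corollary asserts.

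The underlying problem is that Hadamard powers of $\Gamma/\gamma_0$ keep their unit diagonal. Even the sharp Schur contraction $\|(\Gamma/\gamma_0)^{\circ k}\|_2\le1+c_0$ only gives $\|T_d^0(\rho^k)\|_2\le1$, with no decay in $k$. A polynomial approximation of $D$ with entrywise error control does not help either: it brings $d$ or $S_1$ back through the row-sum bound.

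Two ingredients are missing.
\begin{itemize}
\item \emph{Geometric decay with base $c_0<1$.} The Hadamard product is a compression of the Kronecker product: $A\circ B=P^*(A\otimes B)P$ with the isometry $Pe_i=e_i\otimes e_i$. Hence $\|A\circ B\|_2\le\|A\|_2\|B\|_2$ for arbitrary matrices. Since $T_d^0(\rho)^{\circ k}=T_d^0(\rho^k)$, this gives $\|T_d^0(\rho^k)\|_2\le c_0^k$ without any positivity. If you want to stay within positivity, the same bound follows from the Schur contraction applied to the rescaled correlation matrix $I+c_0^{-1}T_d^0(\rho)$. Its $k$th Hadamard power $I+c_0^{-k}T_d^0(\rho^k)$ is positive semidefinite with norm at most $2$. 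That is what your ``shifted center'' remark would have to become, but you do not carry it out.
\item \emph{Analyticity of $D(\cdot;\tau)$ on the whole open unit disk,} not merely near $0$ as you suggest. By the Plackett integral, $\partial_\tau c(\cdot;\tau)$ is analytic there, and $\partial_\rho c(\cdot;\tau)$ is analytic and zero-free there. Cauchy estimates on $|z|=(1+c_0)/2$, uniform over the compact $\tau$-window, then give $\sup_\tau\sum_k|a_k(\tau)|c_0^k<\infty$.
\end{itemize}
The paper runs this argument with $b_\tau(r)=r+\frac{\tau}{2}\partial_\tau c/\partial_r c=\partial_\eta a_\eta(r)|_{\eta=0}$, which also absorbs your separate scale term.

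A smaller point: your quadratic remainder is not ``absorbed after decreasing $c$''. Row-summing entries of size $r_\mu^2|\rho_s|$ gives $r_\mu^2S_1(d;\rho)$, and $S_1$ is uncontrolled on the ball. Shrinking $c$ only turns this into $c\eta_0\,r_\mu S_1$, which still carries $S_1$ at the calibration scale. Instead, bound the remainder uniformly by $C\eta^2$, so the row-sum bound yields $Cd\,r_\mu^2$. Then absorb it into the curvature term via $d\,r_\mu^2\le Cd\{\kappa_{\rm obs}t/(nm)+t^2/n^2\}\le C\{\min\{\kappa_{\rm obs}^2\varphi(\Omega),d\}t/n+d\,t^2/n^2\}$, using $\varphi(\Omega)\ge(d-1)/m$ and $1\le\kappa_{\rm obs}\le m$.
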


The mechanism is recorded in the supplement.
When pooled calibration misestimates the scale by a relative error $\eta$, the recovered lag is shifted from $\rho_s$ to $\rho_s+a_\eta(\rho_s)$, where the perturbation $a_\eta$ is determined by the link.
This lagwise perturbation is expanded to first order in $\eta$.
The first-order coefficient is an explicit function of the link, analytic on the unit disk by Plackett's integral representation \citep{Plackett1954}, and its Maclaurin expansion converts the perturbation matrix into a series of entrywise powers $\{T_d^0(\rho)\}^{\circ k}$.
Each entrywise power contracts geometrically, $\norm{\{T_d^0(\rho)\}^{\circ k}}_2\le c_0^k$, because the Hadamard product is a compression of the Kronecker product \citep{Bhatia2007}.
Summing the series gives the constant $B$, and the quadratic remainder is absorbed by the curvature term.
The row-sum bound, and with it the short-memory size, never enters.

\begin{remark}[No positive-semidefinite projection]
\label{rem:no-psd}
The Toeplitz completion produced by link inversion is not projected onto the positive semidefinite cone, and $\widehat\Gamma_{\rm or}$ and $\widehat\Gamma_{\rm plug}$ need not be positive semidefinite, since clipping only keeps each lag estimate in the regular inverse-link range.
The loss is operator norm, so the risk bounds of Theorem~\ref{thm:toeplitz} do not require positive semidefiniteness.
If a positive semidefinite estimate is needed, projection costs at most a factor two.
For any $\widetilde\Gamma$ minimizing $\norm{\widehat\Gamma-M}_2$ over positive semidefinite Toeplitz matrices $M$, the matrix $\Gamma$ itself lies in that cone, so $\norm{\widehat\Gamma-\widetilde\Gamma}_2\le\norm{\widehat\Gamma-\Gamma}_2$ and hence
\[
\norm{\widetilde\Gamma-\Gamma}_2
\le\norm{\widetilde\Gamma-\widehat\Gamma}_2+\norm{\widehat\Gamma-\Gamma}_2
\le2\norm{\widehat\Gamma-\Gamma}_2 .
\]
\end{remark}

Theorem~\ref{thm:toeplitz} gives the same decomposition at the risk level.
The oracle term is controlled by centered pair coverage through $\varphi(\Omega)$.
The curvature of the nonlinear fixed-threshold inverse link enters at the coverage scale $\min\{\kobs^2\varphi(\Omega),d\}t/n+d(t/n)^2$, because the contraction theorem that controls the spectral polynomial also bounds the per-lag deviations; the only remainder at the $d$ scale is the boundedness term $L_1dt/n$ from the spectral Bernstein step.
The plug-in term reflects estimation of $(\gamma_0,\mu,\tau)$ from $n|\Omega|$ marginal bits before these quantities are reused across Toeplitz lags.

The proof in the supplementary material makes this separation explicit.
The oracle part linearizes the inverse link and applies the contraction theorem of Section~\ref{sec:contraction} to a sparse-ruler spectral polynomial, whose Frobenius scale is $\varphi(\Omega)$.
A trigonometric grid and Bernstein inequality convert this one-snapshot variance bound into operator-norm concentration.
Applied lag by lag, the same contraction theorem bounds the sum of squared lag deviations, which places the second-order Taylor term at the coverage scale.
The plug-in part combines pooled marginal calibration, empirical recentering control and a deterministic short-memory row-sum bound.
The supplement records standard substitutions for common short-memory classes.

\subsection{Lower bounds for deterministic coverage complexity}
\label{subsec:lower-bounds}

The lower bound below isolates the design-dependent part of the oracle rate.
Working in a known-scale identity-neighborhood submodel removes marginal calibration and conditioning effects, so the result certifies the intrinsic coverage complexity of deterministic sparse-pair sampling; combined with the upper bounds, it yields minimax rate optimality over this submodel for the known-scale oracle estimator and, over its unknown-scale extension, for the plug-in estimator (Corollaries~\ref{cor:regime-minimax} and~\ref{cor:plugin-minimax}).

In the known-scale real one-bit Toeplitz model, define
\begin{equation}
\mathcal P_{\mathbb R}(c_0)=
\left\{
\Gamma=\gamma_0\{I_d+T_d^0(\rho)\}:
I_d+T_d^0(\rho)\succeq0,\ 
\norm{T_d^0(\rho)}_2\le c_0
\right\},
\label{eq:global-parameter-class}
\end{equation}
where $0<c_0<1/2$ is fixed and observations are restricted to the sparse ruler $\Omega$.
The corresponding minimax risk is
\begin{equation}
\mathfrak R_n(\Omega,\mathcal P_{\mathbb R}(c_0))
=
\inf_{\widehat \Gamma}
\sup_{\Gamma\in\mathcal P_{\mathbb R}(c_0)}
\E_\Gamma\norm{\widehat \Gamma-\Gamma}_2.
\label{eq:minimax-risk}
\end{equation}

\begin{theorem}[Spectral-packing lower bound for the oracle sparse-pair submodel]
\label{thm:spectral-packing-minimax}
Let $\mathcal V=\{b^1,\ldots,b^M\}\subset\mathbb R^{d-1}$ with $M\ge3$, and define
\begin{align}
D_\Omega(\mathcal V)
&=
2\max_{1\le a\le M}\sum_{s=1}^{d-1}q_s(b_s^a)^2,\\
R_T(\mathcal V)
&=
\max_{1\le a\le M}\norm{T_d^0(b^a)}_2,\\
\Delta_T(\mathcal V)
&=
\min_{a\ne a'}\norm{T_d^0(b^a-b^{a'})}_2.
\end{align}
There is a constant $c>0$, depending only on $c_0$, such that
\begin{equation}
\mathfrak R_n(\Omega,\mathcal P_{\mathbb R}(c_0))
\ge
c\gamma_0
\Delta_T(\mathcal V)
\min\left\{
\frac{1}{R_T(\mathcal V)},\,
\sqrt{\frac{\log M}{nD_\Omega(\mathcal V)}}
\right\}.
\label{eq:spectral-packing-bound}
\end{equation}
Consequently, the same lower bound holds after taking the supremum over all finite packings $\mathcal V$.
\end{theorem}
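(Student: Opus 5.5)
Via Fano's method over a scaled packing, with the one-bit minimax risk bounded below by the full-precision sparse-ruler risk through data processing. Since $Y=\sign(X_\Omega-\lambda)$ is a deterministic function of $X_\Omega$, any estimator based on the bits is also an estimator based on the real-valued observed coordinates. It therefore suffices to lower bound the minimax risk when one observes $n$ i.i.d.\ copies of $X_\Omega\sim\mathcal N(0,\gamma_0\{I+T_d^0(\rho)\}_{\Omega,\Omega})$ with $\gamma_0$ known. Without loss of generality take $\gamma_0=1$.

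\textbf{Hypotheses.} Given $\mathcal V$, set $\alpha=c_1\min\{1/R_T(\mathcal V),\sqrt{\log M/(nD_\Omega(\mathcal V))}\}$ with $c_1\le c_0$ small, and define $\Gamma_a=I_d+\alpha T_d^0(b^a)$. Then $\norm{\alpha T_d^0(b^a)}_2\le c_1\le c_0<1/2$, so each $\Gamma_a$ is positive definite with spectrum in $[1/2,3/2]$ and lies in $\mathcal P_{\mathbb R}(c_0)$. The pairwise separation is $\norm{\Gamma_a-\Gamma_{a'}}_2=\alpha\norm{T_d^0(b^a-b^{a'})}_2\ge\alpha\Delta_T(\mathcal V)$.

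\textbf{KL bound.} The observed laws are $\mathcal N(0,\Sigma_a)$ with $\Sigma_a=I_m+\alpha B_a$, where $B_a=T_d^0(b^a)_{\Omega,\Omega}$ is hollow with $\norm{B_a}_2\le R_T$, since compressions do not increase the operator norm. I compare each $\Sigma_a$ to the reference $I_m$ rather than pairwise, which is what Fano's inequality needs with the mutual information bounded by the average divergence to a center. For spectra in $[1/2,3/2]$,
\[
\mathrm{KL}\bigl(\mathcal N(0,\Sigma_a)\,\|\,\mathcal N(0,I_m)\bigr)=\tfrac12\bigl\{\operatorname{tr}(\Sigma_a-I)-\log\det\Sigma_a\bigr\}\le C\Fnorm{\Sigma_a-I}^2 .
\]
The trace term vanishes because $B_a$ is hollow, and the bound follows from the elementary estimate $x-\log(1+x)\le Cx^2$ for $|x|\le1/2$. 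By the sparse-pair Frobenius identity (the analogue of \eqref{eq:freq-frobenius} with lag weights $b_s^a$),
\[
\Fnorm{B_a}^2=2\sum_{s}q_s(b^a_s)^2\le D_\Omega(\mathcal V).
\]
Hence, over $n$ samples, the mutual information is at most $Cn\alpha^2D_\Omega(\mathcal V)\le Cc_1^2\log M$.

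\textbf{Conclusion.} Choosing $c_1$ small, Fano's inequality with $M\ge3$ gives a testing error bounded below by a constant. The standard reduction from estimation to testing then yields
\[
\mathfrak R_n\ge c\,\alpha\,\Delta_T(\mathcal V)/2 ,
\]
which is \eqref{eq:spectral-packing-bound}. The final sentence of the statement follows because the constants do not depend on $\mathcal V$.

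The main point to check is that the quadratic KL bound is uniform, requiring only $\norm{\alpha B_a}_2\le1/2$ and no dimension or degree dependence. This is exactly why $1/R_T$ appears in the minimum: it is the constraint $\norm{\alpha T_d^0(b^a)}_2\le c_1$ that keeps every hypothesis in the class and inside the quadratic regime of the KL estimate.
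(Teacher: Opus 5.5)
Your proposal is correct and follows essentially the same route as the paper: scaled hypotheses $\gamma_0\{I_d+uT_d^0(b^a)\}$ with $u$ proportional to $\min\{1/R_T(\mathcal V),\sqrt{\log M/(nD_\Omega(\mathcal V))}\}$, a quadratic Gaussian KL bound against the identity reference in the sparse Frobenius metric $2\sum_s q_s(b_s^a)^2$, and Fano's inequality with the mutual information bounded by the average divergence to $P_0$. The only cosmetic difference is where you apply data processing: you apply it to estimators (bit-based estimators are measurable functions of $X_\Omega$, so the one-bit minimax risk dominates the full-precision one), whereas the paper applies it to the KL divergences, and both are valid.
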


Theorem~\ref{thm:spectral-packing-minimax} is a deterministic spectral-packing principle.
The sparse observation law is controlled by the weighted Frobenius metric $D_\Omega(\mathcal V)$, whereas the loss is governed by the Toeplitz operator separation $\Delta_T(\mathcal V)$.
The bound is well defined for every packing of distinct vectors.
The ruler is complete, so $q_s\ge1$ for all lags, and distinct vectors force $D_\Omega(\mathcal V)$, $R_T(\mathcal V)$ and $\Delta_T(\mathcal V)$ to be strictly positive.
The proof, given in the supplement, combines data processing from the unquantized observed Gaussian vector, a local Gaussian KL bound and Fano's inequality \citep{CoverThomas2006,Tsybakov2009}.

\begin{corollary}[Coverage-log lower bound under balanced real spectral packing]
\label{cor:coverage-log-lower}
Suppose there exist a lag set $S\subset\{1,\ldots,d-1\}$, a frequency set $\Theta\subset[0,1]$ with $|\Theta|=M\ge3$, and constants $a_0,b_0\in(0,1)$ and $\zeta\in(0,1/3)$ such that
\[
\varphi_S(\Omega):=\sum_{s\in S}q_s^{-1}\ge a_0\varphi(\Omega),
\qquad
\Phi_S(\Omega):=\sum_{s\in S}\Bigl(1-\frac sd\Bigr)q_s^{-1}\ge b_0\varphi_S(\Omega),
\]
and such that, with the taper kernel
\[
K_S(u)=\sum_{s\in S}\Bigl(1-\frac sd\Bigr)q_s^{-1}e^{2\pi i s u},
\qquad u\in[0,1],
\]
the coherence bounds
\[
\max_{\theta\in\Theta}\frac{|K_S(2\theta)|}{\Phi_S(\Omega)}\le\zeta,
\qquad
\max_{\theta\ne\theta'\in\Theta}
\max\left\{
\frac{|K_S(\theta-\theta')|}{\Phi_S(\Omega)},\,
\frac{|K_S(\theta+\theta')|}{\Phi_S(\Omega)}
\right\}\le\zeta
\]
hold.
Then
\begin{equation}
\mathfrak R_n(\Omega,\mathcal P_{\mathbb R}(c_0))
\ge
c\gamma_0
\min\left\{
1,\,
\sqrt{\frac{\varphi(\Omega)\log M}{n}}
\right\},
\label{eq:coverage-log-lower}
\end{equation}
where $c>0$ depends only on $a_0,b_0,\zeta$ and $c_0$.
In particular, if $M\ge d^\alpha$ for a fixed $\alpha>0$, then
\begin{equation}
\mathfrak R_n(\Omega,\mathcal P_{\mathbb R}(c_0))
\ge
c\gamma_0
\min\left\{
1,\,
\sqrt{\frac{\varphi(\Omega)\log d}{n}}
\right\}
\label{eq:coverage-logd-lower}
\end{equation}
where in this display $c=c(a_0,b_0,\zeta,c_0,\alpha)>0$.
\end{corollary}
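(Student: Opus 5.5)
We apply Theorem~\ref{thm:spectral-packing-minimax} to an explicit cosine packing indexed by $\Theta$. For each $\theta\in\Theta$ we take lag vectors
\[
b^\theta_s=\beta\,\Bigl(1-\frac sd\Bigr)^{0}\,q_s^{-1}\cos(2\pi s\theta)\,\one\{s\in S\},
\]
with an amplitude $\beta>0$ to be tuned. If the stated bounds come out cleaner with the taper factor $(1-s/d)$ placed inside $b$, we use that variant instead; the kernel $K_S$ suggests that the relevant quadratic forms carry exactly the weights $(1-s/d)q_s^{-1}$. The task then splits into three estimates: an upper bound on $D_\Omega(\mathcal V)$, an upper bound on $R_T(\mathcal V)$, and a lower bound on the separation $\Delta_T(\mathcal V)$.

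\textbf{The Frobenius metric.} This term is direct:
\[
\sum_s q_s(b^\theta_s)^2\le\beta^2\sum_{s\in S}q_s^{-1}=\beta^2\varphi_S(\Omega),
\]
so $D_\Omega\le2\beta^2\varphi_S(\Omega)\le2\beta^2\varphi(\Omega)$.

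\textbf{The operator norm.} For $R_T$ we use the crude bound $\|T_d^0(b)\|_2\le2\sum_s|b_s|\le2\beta\varphi_S(\Omega)$. This also keeps $\|T_d^0(b)\|_2\le c_0$, so each $\Gamma$ lies in $\mathcal P_{\mathbb R}(c_0)$, provided $\beta\varphi(\Omega)\lesssim c_0$.

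\textbf{The separation, which is the main step.} We test $T_d^0(b^\theta-b^{\theta'})$ against the normalized Fourier vector $x_j=d^{-1/2}e^{2\pi ij\theta}$. A direct computation gives
\[
x^*T_d^0(c)x=2\sum_s c_s(1-s/d)\cos(2\pi s\theta).
\]
Substituting $c=b^\theta-b^{\theta'}$ and expanding each product of cosines via $\cos a\cos b=\tfrac12[\cos(a-b)+\cos(a+b)]$ yields
\[
\beta\,\mathrm{Re}\bigl[\Phi_S+K_S(2\theta)-K_S(\theta-\theta')-K_S(\theta+\theta')\bigr].
\]
The coherence bounds control the three kernel terms, so this is at least $\beta\Phi_S(1-3\zeta)\ge\beta b_0(1-3\zeta)\varphi_S$. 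Therefore
\[
\Delta_T\ge c\,\beta\,\varphi_S(\Omega)\ge c\,a_0b_0(1-3\zeta)\,\beta\,\varphi(\Omega).
\]
Here $\zeta<1/3$ is exactly what makes the constant positive, and this bookkeeping is the delicate part of the argument.

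\textbf{Plugging in.} Theorem~\ref{thm:spectral-packing-minimax} gives
\[
\mathfrak R_n\gtrsim\gamma_0\,\beta\varphi\,\min\Bigl\{\frac{1}{\beta\varphi},\ \sqrt{\frac{\log M}{n\beta^2\varphi}}\Bigr\}=\gamma_0\min\Bigl\{1,\ \sqrt{\frac{\varphi\log M}{n}}\Bigr\}.
\]
The choice $\beta=c_0'/\varphi(\Omega)$ satisfies the admissibility constraint. The positive semidefiniteness condition $I+T_d^0\succeq0$ holds because $\|T_d^0\|_2\le c_0<1/2$. The bound is then independent of $\beta$ up to the constants $a_0,b_0$.

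The main obstacle is that the crude $\ell_1$ bound on $R_T$ must match the separation scale $\beta\varphi_S$ up to constants. This holds here because both quantities are of order $\beta\varphi_S$, so the first term in the minimum is of constant order; the positive-cone condition also holds as noted. Finally, substituting $\log M\ge\alpha\log d$ gives \eqref{eq:coverage-logd-lower}, with $\alpha$ absorbed into $c$.
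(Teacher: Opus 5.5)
Your proposal is correct and follows the paper's proof. You use the same untapered cosine packing $b^\theta_s=q_s^{-1}\cos(2\pi s\theta)\one\{s\in S\}$, the bounds $D_\Omega(\mathcal V),R_T(\mathcal V)\lesssim\varphi_S(\Omega)$, the separation $\Delta_T(\mathcal V)\ge(1-3\zeta)b_0\varphi_S(\Omega)$ from the normalized Fourier test vector, and then Theorem~\ref{thm:spectral-packing-minimax} with $\varphi_S\ge a_0\varphi$. The amplitude $\beta$ and your class-membership check are harmless but unnecessary, because the bound in Theorem~\ref{thm:spectral-packing-minimax} does not change when the packing is rescaled, and the theorem secures membership in $\mathcal P_{\mathbb R}(c_0)$ through its own internal scaling $u$ (placing the taper inside $b$, which you contemplate, would give weights $(1-s/d)^2q_s^{-1}$ and would not match $K_S$).
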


Comparing Corollary~\ref{cor:coverage-log-lower} with Theorem~\ref{thm:toeplitz}\textup{(a)} matches the dependence on $\varphi(\Omega)$, $n$ and $\log d$ in the leading oracle term.
On the lower-bound class this comparison sharpens to minimax rate optimality of the coverage rate.

\begin{corollary}[Regime-restricted oracle minimax rate]
\label{cor:regime-minimax}
Fix $c_0\in(0,1/2)$ and suppose that the balanced real spectral-packing condition of Corollary~\ref{cor:coverage-log-lower} holds with $M\ge d^\alpha$ for some fixed $\alpha>0$.
There is a constant $C$, depending only on $c_0$, $\alpha$, the threshold window in Assumption~\ref{ass:inverse} and the packing constants $a_0,b_0,\zeta$, such that, whenever
\[
    n\ge C
    \max\left\{\varphi(\Omega),\,\frac{d^2}{\varphi(\Omega)}\right\}\log d,
\]
the known-scale oracle minimax risk \eqref{eq:minimax-risk} over the identity-neighborhood class obeys
\[
    \mathfrak R_n(\Omega,\mathcal P_{\mathbb R}(c_0))
    \asymp
    \gamma_0\sqrt{\frac{\varphi(\Omega)\log d}{n}},
\]
with implied constants of the same dependence, and the oracle sparse-ruler estimator attains this rate.
\end{corollary}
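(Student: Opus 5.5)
The lower bound comes directly from Corollary~\ref{cor:coverage-log-lower}. Under the balanced spectral-packing condition with $M\ge d^\alpha$, display \eqref{eq:coverage-logd-lower} gives
\[
\mathfrak R_n\ge c\gamma_0\min\{1,\sqrt{\varphi(\Omega)\log d/n}\}.
\]
Since $n\ge C\varphi(\Omega)\log d$ with $C\ge1$, the square root is at most one, so the minimum equals $\sqrt{\varphi(\Omega)\log d/n}$. That settles the lower half. For the upper half I will show that the oracle estimator $\widehat\Gamma_{\rm or}$ attains this rate in expectation, uniformly over $\mathcal P_{\mathbb R}(c_0)$.

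First I check that every $\Gamma\in\mathcal P_{\mathbb R}(c_0)$ satisfies the hypotheses of Theorem~\ref{thm:toeplitz}(a). Each entry of a symmetric matrix is bounded by its operator norm, so $|\rho_s|\le\norm{T_d^0(\rho)}_2\le c_0<1/2$. Choosing $\eps=\min\{1/8,(1-c_0)/4\}$ then gives $|\rho_s|\le1-2\eps$. In the known-scale model the threshold $\tau=\lambda/\sqrt{\gamma_0}$ is fixed, so it lies in the window. Hence $L_1,L_2$ and all constants depend only on $c_0$ and the window. Also, $\Gamma_{\Omega,\Omega}/\gamma_0$ is a compression of $I+T_d^0(\rho)$, so $\kobs\le1+c_0\le 3/2$.

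Next I bound each term of \eqref{eq:oracle-rate} by the leading one, using $\kobs\asymp1$ and $t\asymp\log d$ for a fixed $\delta$. Write $\varphi=\varphi(\Omega)$.
\begin{itemize}[leftmargin=*]
\item The leading term is $\gamma_0\sqrt{\varphi t/n}$.
\item The boundedness term satisfies $d\,t/n\le\sqrt{\varphi t/n}$ if and only if $n\ge d^2t/\varphi$. This holds by the hypothesis $n\ge C d^2\log d/\varphi$.
\item The curvature term is at most $\kobs^2\varphi t/n\lesssim\sqrt{\varphi t/n}\cdot\sqrt{\varphi t/n}\le\sqrt{\varphi t/n}$, since $n\ge C\varphi\log d$.
\item The last term satisfies $d\,t^2/n^2=(dt/n)(t/n)\le dt/n$, so it is absorbed by the previous bound.
\end{itemize}
Thus, with probability at least $1-\delta$,
\[
\norm{\widehat\Gamma_{\rm or}-\Gamma}_2\le C\gamma_0\sqrt{\varphi\log(Cd/\delta)/n}.
\]

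The main obstacle is converting this high-probability bound into a bound in expectation, because the logarithm in $t$ depends on $\delta$. I will integrate the tail. Taking $\delta=e^{-u}$ gives $t=\log(Cd)+u$, and the deviation bound scales like $\gamma_0\{\sqrt{\varphi(\log d+u)/n}+d(\log d+u)/n+\dots\}$. Integrating over $u\ge0$ yields the same order, $\gamma_0\sqrt{\varphi\log d/n}$, as long as the lower-order terms remain dominated. For large $u$ this may fail, but clipping makes the error deterministically bounded: each estimated lag lies in $[-1,1]$, so
\[
\norm{\widehat\Gamma_{\rm or}-\Gamma}_2\le 2\gamma_0 d.
\]
I therefore split at $\delta_0=d^{-2}$. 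The high-probability event contributes $C\gamma_0\sqrt{\varphi\log d/n}$, since $\log(Cd/\delta_0)\asymp\log d$. The complement contributes at most $2\gamma_0 d\cdot d^{-2}=2\gamma_0/d$. To absorb this remainder I need $1/d\lesssim\sqrt{\varphi\log d/n}$. If that fails, I will refine the split with $\delta=(n/\varphi)^{-1}d^{-1}$, which adds only a further $\log$ factor bounded by $\log d$ whenever $n\le d^{O(1)}$. Otherwise I integrate the tail in $u$ directly, using the sub-exponential form of the bound. This gives matching constants with the stated dependence, and together with the lower half completes the proof.
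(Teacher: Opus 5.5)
Your lower bound, your check that every $\Gamma\in\mathcal P_{\mathbb R}(c_0)$ satisfies Assumption~\ref{ass:inverse} with $1\le\kobs\le1+c_0$, and your term-by-term domination at $t\asymp\log d$ all agree with the paper. The gap is the passage to expectation, which you do not complete for all $n$ in the regime.

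\begin{itemize}
\item Splitting at $\delta_0=d^{-2}$ and paying the clipping bound $2\gamma_0 d$ on the complement leaves a remainder $2\gamma_0/d$. This exceeds $\gamma_0\sqrt{\varphi(\Omega)\log d/n}$ once $n\gg d^2\varphi(\Omega)\log d$, and the corollary places no upper limit on $n$.
\item Your refined split at $\delta=\varphi(\Omega)/(nd)$ removes that remainder. However, it puts $t=\log(Cd^2n/\varphi(\Omega))$ into the leading term. This is $\asymp\log d$ only for $n\le d^{K}$ with a fixed $K$, so the constants depend on $K$. For superpolynomial $n$ it yields the weaker $\sqrt{\varphi(\Omega)\log n/n}$.
\item The remaining case is only asserted (``integrate the tail in $u$ directly''). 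It comes right after the mistaken worry that the lower-order terms might stop being dominated for large $u$.
\item Your collapsed display $C\gamma_0\sqrt{\varphi\log(Cd/\delta)/n}$ is justified only for $\log(1/\delta)\lesssim\log d$. The domination of $d\,t/n$ used the regime at $t\asymp\log d$, not at larger $t$.
\end{itemize}

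The missing observation is that domination is needed only at $t_0=\log(Cd)$; this is how the paper closes the proof, via Lemma~\ref{supp-lem:expectation-integration} with $T_*=\infty$. Theorem~\ref{thm:toeplitz}\textup{(a)} holds for every $\delta\in(0,1]$. Equivalently, $\Prb\{X>u(t)\}\le Cd\,e^{-t}$ for all $t\ge t_0$, where $X=\norm{\widehat\Gamma_{\rm or}-\Gamma}_2$ and $u(t)=a\sqrt t+bt+ct^2$ carries the uncollapsed coefficients of \eqref{eq:oracle-rate}.

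Substitute $x=u(t)$ in $\E X=\int_0^\infty\Prb\{X>x\}\,dx$, and use $Cd\,e^{-t_0}=1$ and $t_0\ge1$ (take $C\ge e$). This gives
\[
\E X\le u(t_0)+Cd\int_{t_0}^\infty e^{-t}\Bigl(\frac{a}{2\sqrt t}+b+2ct\Bigr)dt
\le u(t_0)+\frac{a}{2\sqrt{t_0}}+b+2c(t_0+1)\le 5u(t_0).
\]
The derivative of the linear term $bt$ integrates against $Cd\,e^{-t}$ to just $b\le bt_0$, however far the tail extends. So no case split and no deterministic clipping bound are needed.

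Your domination computation, applied to $u(t_0)$ with $t_0\asymp\log d$, then gives
\[
\sup_{\mathcal P_{\mathbb R}(c_0)}\E\norm{\widehat\Gamma_{\rm or}-\Gamma}_2\lesssim\gamma_0\sqrt{\varphi(\Omega)\log d/n}
\]
for every $n$ in the regime. Replacing your three-case argument with this one-line integration makes the proof complete.
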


The lower bound is Corollary~\ref{cor:coverage-log-lower}; its minimum equals the square-root branch because the stated regime implies $\varphi(\Omega)\log d/n\le C^{-1}$.
For the upper bound, every $\Gamma=\gamma_0\{I_d+T_d^0(\rho)\}$ in $\mathcal P_{\mathbb R}(c_0)$ has $|\rho_s|\le\norm{T_d^0(\rho)}_2\le c_0$, so Assumption~\ref{ass:inverse} holds with $\eps=(1-c_0)/4\in(1/8,1/4)$, since $c_0\le1-2\eps=(1+c_0)/2$, and $L_1,L_2$ are bounded in terms of $c_0$ and the threshold domain; and since $\Gamma\succeq0$ has diagonal $\gamma_0$, $1\le\kappa_{\rm obs}\le1+c_0$.
The oracle bound \eqref{eq:oracle-rate} does not involve $S_1(d;\rho)$ and follows from Assumption~\ref{ass:inverse} alone---the short-memory size enters Theorem~\ref{thm:toeplitz} only through the plug-in term.
Integrating the high-probability bound over $\delta$, using the bounded regularity domain and the clipped inverse-link estimator, gives $\sup_{\Gamma\in\mathcal P_{\mathbb R}(c_0)}\E_\Gamma\norm{\widehat\Gamma_{\rm or}-\Gamma}_2\lesssim_{c_0}\gamma_0\sqrt{\varphi(\Omega)\log d/n}+\gamma_0 d\log d/n+\gamma_0\varphi(\Omega)\log d/n+\gamma_0 d\log^2d/n^2$.
The last three terms are dominated in the stated regime: the boundedness terms because $n\gtrsim d^2\log d/\varphi(\Omega)$, and the curvature term because $n\gtrsim\varphi(\Omega)\log d$, the factor $\kobs^2\le(1+c_0)^2$ being a constant on this class.
The integration step is recorded as a separate lemma in the supplement.

The minimax statement extends from the known-scale oracle estimator to the plug-in estimator itself, on the unknown-scale version of the same class.

\begin{corollary}[Regime-restricted plug-in minimax rate, unknown scale]
\label{cor:plugin-minimax}
Fix $c_0\in(0,1/2)$ and $0<\tau_{\min}<\tau_{\max}<\infty$, let $\Lambda=[\lambda^2/\tau_{\max}^2,\,\lambda^2/\tau_{\min}^2]$ be the induced scale window with $\gamma_{\max}=\max\Lambda$, and define
\[
\mathcal Q
=\bigl\{\gamma\{I_d+T_d^0(\rho)\}:\ \gamma\in\Lambda,\
I_d+T_d^0(\rho)\succeq0,\ \norm{T_d^0(\rho)}_2\le c_0\bigr\}.
\]
Suppose the balanced real spectral-packing condition of Corollary~\ref{cor:coverage-log-lower} holds with $M\ge d^\alpha$ for some fixed $\alpha>0$.
There is a constant $C_\star$, depending only on $c_0$, $\alpha$, the threshold window and the packing constants, such that, for all $d\ge2$, whenever
\[
    n\ge C_\star
    \max\left\{\varphi(\Omega),\,\frac{d^2}{\varphi(\Omega)}\right\}\log d,
\]
the minimax risk over $\mathcal Q$ satisfies
\[
    \inf_{\widehat\Gamma}\sup_{\Gamma\in\mathcal Q}
    \E_\Gamma\norm{\widehat\Gamma-\Gamma}_2
    \asymp
    \gamma_{\max}\sqrt{\frac{\varphi(\Omega)\log d}{n}},
\]
and the plug-in estimator attains this rate.
The threshold $\lambda$ enters only through the scale window $\Lambda$.
\end{corollary}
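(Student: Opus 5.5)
The argument has two halves: a lower bound obtained by reduction to the known-scale submodel, and an upper bound obtained from Theorem~\ref{thm:toeplitz}(b), or its operator-norm-ball variant Corollary~\ref{cor:plugin-ball}, integrated over $\delta$.

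\textbf{Lower bound.} For each fixed $\gamma\in\Lambda$, the set $\{\gamma\{I_d+T_d^0(\rho)\}\}$ is a copy of $\mathcal P_{\mathbb R}(c_0)$ with scale $\gamma$, and it sits inside $\mathcal Q$. Giving an estimator extra information cannot raise the minimax risk, so the risk over $\mathcal Q$ is at least the known-scale risk at $\gamma=\gamma_{\max}$. Corollary~\ref{cor:coverage-log-lower} then gives the lower bound $c\gamma_{\max}\min\{1,\sqrt{\varphi(\Omega)\log d/n}\}$. That corollary rests on data processing from unquantized Gaussian samples, so it applies for any $\gamma$ and in particular at $\gamma_{\max}$. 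The regime condition gives $\varphi(\Omega)\log d/n\le C_\star^{-1}$, so the minimum is attained by the square-root branch.

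\textbf{Upper bound.} Fix $\Gamma=\gamma\{I_d+T_d^0(\rho)\}\in\mathcal Q$. Then $\tau=\lambda/\sqrt\gamma\in[\tau_{\min},\tau_{\max}]$, and as in the discussion after Corollary~\ref{cor:regime-minimax}, Assumption~\ref{ass:inverse} holds with $\eps=(1-c_0)/4$. Hence $L_1,L_2$ are bounded in terms of $c_0$ and the threshold window, and $1\le\kobs\le1+c_0$.

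Assumption~\ref{ass:short-memory} is not available on $\mathcal Q$, so I would use Corollary~\ref{cor:plugin-ball} instead. It applies because $\norm{T_d^0(\rho)}_2\le c_0\le1-2\eps$, and it replaces the calibration term by $C(1+B)\gamma r_\mu$ with $B=B(c_0,\tau_{\min},\tau_{\max})$.

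Next I check the sample-size hypotheses of Theorem~\ref{thm:toeplitz}(b) with $t\asymp\log(d/\delta)$.
\begin{itemize}
\item The condition $n\ge Cmt/\kobs$: since $q_s\le m$, we have $\varphi(\Omega)\ge(d-1)/m$. Also $m\le d$, so $m\le d^2/(d-1)\lesssim d^2/\varphi(\Omega)$ only after an extra step, namely using $\varphi(\Omega)\le d$, which is true because $q_s\ge1$. More directly, $\max\{\varphi,d^2/\varphi\}\ge d\ge m$. So the regime implies $n\gtrsim m\log d$, and for $\delta$ polynomial in $d$ this gives $n\ge Cmt$ after enlarging $C_\star$.
\item The condition $r_\mu\le c\eta_0$ then follows, because $t/n$ and $t/(nm)$ are small.
\end{itemize}

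\textbf{Integration over $\delta$.} On the high-probability event, the bound is $\gamma\sqrt{\varphi\log d/n}$ plus terms of order $d\log d/n$, $\varphi\log d/n$, $d\log^2 d/n^2$, and $\sqrt{\log d/(nm)}+\log d/n$. Each of these is dominated by the leading term:
\begin{itemize}
\item $d/n\le\sqrt{\varphi/n}$ is equivalent to $n\ge d^2/\varphi$;
\item the curvature term is dominated because $n\ge\varphi\log d$;
\item $\sqrt{1/(nm)}\le\sqrt{\varphi/n}$ because $\varphi\ge (d-1)/m\ge1/m$;
\item $\log d/n\le\sqrt{\varphi\log d/n}$ because $\varphi\ge1$ and $n\ge\log d$.
\end{itemize}
To pass to expectation, I would apply the integration lemma from the supplement used for Corollary~\ref{cor:regime-minimax}. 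Take $\delta=d^{-k}$ on the main event and bound the complement using boundedness of the clipped estimator. Every lag estimate lies in $[-1+\eps,1-\eps]$, the projected scale satisfies $\widehat\gamma_0\le\gamma_{\max}$, and therefore $\norm{\widehat\Gamma_{\rm plug}-\Gamma}_2\le 2d\gamma_{\max}$. With $\delta\le d^{-2}$ the contribution of the complement is $O(\gamma_{\max}/d)$, which the leading term dominates under the regime. Finally, bounding $\gamma\le\gamma_{\max}$ makes the bound uniform over $\mathcal Q$.

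\textbf{Main obstacle.} I expect the delicate step to be the uniformity of the plug-in analysis without short memory. This means confirming that Corollary~\ref{cor:plugin-ball} and the conditions of Theorem~\ref{thm:toeplitz}(b) hold with constants uniform over the whole window $\tau\in[\tau_{\min},\tau_{\max}]$. It also means confirming that the complement-event bound scales with $\gamma_{\max}$ and not with $\gamma$. Both should follow from the constants being uniform over the threshold window, as stipulated after Assumption~\ref{ass:inverse}.
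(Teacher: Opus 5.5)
Your lower bound (the known-scale slice at $\gamma_{\max}$), your use of Corollary~\ref{cor:plugin-ball} in place of short memory, and your term-by-term domination at $t\asymp\log d$ all match the paper's argument. The gap is in the passage to expectation. With a single confidence level $\delta=d^{-k}$ you get $\E\norm{\widehat\Gamma_{\rm plug}-\Gamma}_2\le u(t_\delta)+2d\gamma_{\max}\delta$, and the complement term $2\gamma_{\max}d^{1-k}$ does not depend on $n$. The regime only bounds $n$ from below. For fixed $d$ and $n\to\infty$ the target $\gamma_{\max}\sqrt{\varphi(\Omega)\log d/n}$ tends to zero while $\gamma_{\max}/d$ does not. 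Your claim that the leading term dominates $O(\gamma_{\max}/d)$ would need $n\lesssim d^2\varphi(\Omega)\log d$, which is the opposite direction. Letting $k$ grow with $n$ puts $\log n$ into $t$ and loses the rate. Nor can you use the integration lemma with $T_*=\infty$ as for the oracle. The hypotheses of Theorem~\ref{thm:toeplitz}(b), namely $n\ge Cmt/\kobs$ and $r_\mu(t)\le c\eta_0$, fail once $t\gtrsim n/m$, and you verified them only at $t\asymp\log d$.

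The missing step is an admissibility window followed by truncated integration, which is what the paper does.

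First, show the hypotheses hold for every $t\le T_*:=c_1n/m$. The first holds because $\kobs\ge1$ and $c_1\le1/C$. The second holds because $r_\mu(T_*)=C_\tau\{\sqrt{\kobs c_1}/m+c_1/m\}\le C_\tau\{\sqrt{(1+c_0)c_1}+c_1\}\le c\eta_0$ for small $c_1$.

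Second, check that the window is nonempty: $T_*\ge c_1C_\star\log d\ge t_0$, because the regime gives $n\ge C_\star d\log d$ and $m\le d$.

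Third, the truncated lemma then gives $\E\norm{\widehat\Gamma_{\rm plug}-\Gamma}_2\le5u(t_0)+C\gamma_{\max}d^2e^{-T_*}$. Your fixed-$t$ domination handles $u(t_0)$.

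Fourth, the tail needs its own check:
\begin{itemize}
\item If $n\le d^2$, then $T_*\ge c_1n/d\ge4\log d$, so $d^2e^{-T_*}\le d^{-2}\le\sqrt{\log d/n}$.
\item If $n>d^2$, then $T_*\ge c_1\sqrt n$, so $d^2e^{-T_*}\le ne^{-c_1\sqrt n}\le n^{-1/2}$ once $n$ exceeds a constant absorbed into $C_\star$.
\end{itemize}
In both cases $\varphi(\Omega)\ge1$ finishes the bound. Compared with this, the uniformity over the threshold window that you flagged as the main obstacle is routine.
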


The proof is given in the supplement.
The lower bound is inherited from Corollary~\ref{cor:coverage-log-lower} on the known-scale slice at $\gamma_{\max}$.
The upper bound combines Corollary~\ref{cor:plugin-ball} with a truncated integration to expectation.
The sample-size conditions of Theorem~\ref{thm:toeplitz}\textup{(b)} depend on the confidence level $t$, so the tail integral is cut at $T_*\asymp n/m$, where both conditions hold.
The remaining tail is controlled by the deterministic bound for the clipped estimator.

\begin{remark}[Quantization costs only constants in the balanced regime]
\label{rem:quantization-free}
The lower bound \eqref{eq:coverage-logd-lower} is proved by data processing from the unquantized Gaussian observations on $\Omega$, so it certifies that no estimator, even with access to the full-precision sparse-ruler samples, can improve on the coverage rate.
Since the one-bit oracle estimator attains this rate in the regime of Corollary~\ref{cor:regime-minimax}, and the plug-in estimator attains it with unknown scale (Corollary~\ref{cor:plugin-minimax}), fixed-threshold one-bit quantization costs only constant factors there.
The argument does not yield a channel-intrinsic one-bit lower bound, and it leaves open whether quantization incurs additional losses outside this regime.
\end{remark}

The supplement gives concrete sufficient conditions for the balanced packing.
For example, its effective-support certificate shows that it suffices for a constant fraction of the inverse-coverage mass to be spread over $d^\alpha$ nonboundary lags with bounded effective concentration.
These conditions exclude pathological profiles whose inverse-coverage mass is concentrated near aperture-boundary lags, where Toeplitz operator separation is weak.

The following consequence, proved in the supplement, makes the condition checkable for standard designs.

\begin{corollary}[Bounded-redundancy rulers]
\label{cor:bounded-redundancy}
Let $\Omega$ cover all lags and define its redundancy
\[
    R(\Omega)=\binom{m}{2}\big/(d-1),
    \qquad m=|\Omega| .
\]
Fix $\zeta\in(0,1/3)$.
If $R(\Omega)\le\bar R$ and $d\ge C(\bar R,\zeta)$, then the balanced packing condition of Corollary~\ref{cor:coverage-log-lower} holds with $M\ge c(\bar R,\zeta)\,d$, and hence Corollary~\ref{cor:regime-minimax} applies with $\alpha=1$ and constants depending only on $\bar R$, $\zeta$ and $c_0$.
\end{corollary}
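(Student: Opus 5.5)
We verify the hypotheses of Corollary~\ref{cor:coverage-log-lower} directly, taking $S$ to be a band of nonboundary lags, $S=\{s: s\le d/2\}$ (or $\{\lceil\eta d\rceil\le s\le \lfloor d/2\rfloor\}$ if small lags must be removed), and $\Theta$ a well-separated grid of $M\asymp d$ frequencies in $(\eta',1/2-\eta')$. Once the hypotheses hold, Corollary~\ref{cor:regime-minimax} applies with $\alpha=1$.

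\textbf{Step 1: bounded redundancy forces near-uniform coverage.} Every ordered pair of $\Omega$ with positive difference covers exactly one lag, so $\sum_{s=1}^{d-1}q_s=\binom m2=R(\Omega)(d-1)\le\bar R(d-1)$. Since each $q_s\ge1$, the number of lags with $q_s>k$ is at most $\bar R(d-1)/k$. Take $k=k(\bar R,\zeta)$ large. Then all but a fraction $\bar R/k$ of the lags have $1\le q_s\le k$. Hence $\varphi(\Omega)\asymp d$, with $(d-1)/\bar R\le\varphi(\Omega)\le d-1$ by Cauchy--Schwarz. Moreover, any band of $\asymp d$ lags carries inverse-coverage mass $\ge c(\bar R)\,d$. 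This gives $\varphi_S\ge a_0\varphi$. Since $1-s/d\ge1/2$ on $S$, we also get $\Phi_S\ge\varphi_S/2$, so $b_0=1/2$.

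\textbf{Step 2: coherence of the taper kernel.} This is the main obstacle, because $q_s^{-1}$ is an arbitrary sequence in $[1/k,1]$ on most lags and its Fourier transform need not be small. The plan is to avoid relying on $K_S$ itself being spread. Instead we exploit freedom in $S$. Write $w_s=(1-s/d)q_s^{-1}\mathbf 1_S(s)$, so that $K_S=\hat w$. Parseval gives $\int_0^1|K_S|^2=\sum w_s^2\le\sum w_s=\Phi_S$. Therefore the set of $u$ where $|K_S(u)|>\zeta\Phi_S$ has measure at most $1/(\zeta^2\Phi_S)\lesssim 1/(\zeta^2 d)$.

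We then choose $\Theta$ greedily from a fine grid of spacing $\asymp1/d^{2}$, avoiding the bad sets for $2\theta$, $\theta-\theta'$ and $\theta+\theta'$. Each bad set is a union of intervals: $K_S$ is a trigonometric polynomial of degree $<d$, so Bernstein's inequality controls its oscillation on scale $1/d$. Each accepted point therefore excludes a neighborhood of measure $O(1/(\zeta^2 d))$ per constraint. A counting or volume argument then yields $M\ge c(\bar R,\zeta)\,d$ admissible frequencies. The delicate point is that the pairwise constraints are quadratic in $M$. To handle this we note that the difference and sum conditions only exclude, for each accepted $\theta'$, sets of measure $O(1/(\zeta^2 d))$. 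Accepting $M=c\zeta^2 d$ points keeps the total excluded measure below $1/2$, and the last point can always be placed.

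\textbf{Step 3: conclude.} With $a_0,b_0$ from Step~1 and $\zeta$ and $M\ge c(\bar R,\zeta)d$ from Step~2, all hypotheses of Corollary~\ref{cor:coverage-log-lower} hold once $d\ge C(\bar R,\zeta)$, with $\alpha=1$. We absorb $M\ge cd$ into $M\ge d^{\alpha'}$ for large $d$, or use $\log M\asymp\log d$ directly. Invoking Corollary~\ref{cor:regime-minimax} then finishes the proof, with constants depending only on $\bar R,\zeta,c_0$.
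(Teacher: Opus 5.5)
Your proof is correct and follows essentially the paper's route. You use $\sum_s q_s\le\bar R(d-1)$ to find a lag set of non-boundary lags carrying inverse-coverage mass $\gtrsim d/\bar R$, bound the measure of $\{|K_S|>\zeta\Phi_S\}$ by $O(\bar R/(\zeta^2 d))$ via Parseval and Markov, and then build $\Theta$ greedily by avoiding the doubled preimage and the $\pm$ translates of that set, which is exactly the paper's large-spectrum avoidance and effective-support argument. The differences are cosmetic: you take the whole band $s\le d/2$ and use Cauchy--Schwarz together with $w_s\le1$ (so $N_{\rm eff}\ge\Phi_S$) instead of the paper's Markov-filtered flat set $\{s\le 3(d-1)/4:\ q_s\le4\bar R\}$, and your fine grid with Bernstein's inequality is unnecessary, since the greedy measure count works directly on the torus.
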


Minimum-redundancy rulers have $R(\Omega)$ bounded by an absolute constant \citep{Moffet1968,Leech1956}, and two-level nested rulers with balanced level sizes have $R(\Omega)\le2$ \citep{PalVaidyanathan2010}; both canonical families therefore satisfy the balanced packing condition.

\section{Discussion}
\label{sec:discussion}

At a nonzero threshold, raw products leak marginal bit fluctuations into all incident lag products and produce a row-sum variance term.
Centering removes this leakage and reveals the coverage complexity \(\varphi(\Omega)\) that governs the leading operator-norm risk.
Under balanced deterministic coverage geometry, the coverage term in the oracle operator-norm rate is intrinsic.
The plug-in analysis further shows that marginal scale calibration is compatible with the same leading pair-estimation rate over short-memory and Sobolev spectral-density classes, and, on operator-norm balls, without any summability assumption.
The theory therefore separates three effects: deterministic pair reuse, scalar inverse-link curvature and marginal scale learning.
In the non-saturated operating regime \(n\gtrsim\max\{\varphi(\Omega),d^2/\varphi(\Omega)\}\log d\) the oracle and plug-in estimators are minimax rate optimal over the identity-neighborhood class (Corollaries~\ref{cor:regime-minimax} and~\ref{cor:plugin-minimax}).
The supplement reports numerical checks of these rates.

Two quantitative gaps remain open.
The conditioning factor \(\kobs\) in the leading upper-bound term comes from the \(\kappa^2\) variance bound of Theorem~\ref{thm:contraction} applied to the observed correlation matrix, whereas the lower bound carries no such factor; the two match only because \(\kobs\le1+c_0\) on the identity-neighborhood class.
Whether the linear \(\kobs\) dependence is sharp over wider Toeplitz classes, or improvable to \(\sqrt{\kobs}\) or a weighted spectral-density functional, is open.
The inverse-link curvature remainder of Theorem~\ref{thm:toeplitz} sits at the coverage scale and the plug-in calibration remainder is dominated in this regime, so the only remainder above the coverage scale is the boundedness term \(\gamma_0L_1dt/n\) from the spectral Bernstein step.
On dense rulers this term drives the regime restriction; weakening the regime of Corollary~\ref{cor:regime-minimax} requires either improving this term or showing that it is unavoidable.

Different parts of the argument use Gaussianity in different ways.
The vertex-projection obstruction is a second-moment identity that needs only independent coordinates and mean-shifted bounded signs.
The link inversion needs an identifiable, strictly increasing bivariate sign-correlation link with a stable inverse, which is available for some non-Gaussian scatter models, for instance a centered bivariate Cauchy \citep[cf.][]{LinLiu2026}.
The contraction theorem is the genuinely Gaussian component, since its proof runs through Hermite expansions, the Mehler operator, and the chaos decomposition, and extending it beyond Gaussian snapshots is open.
Threshold mismatch and unknown thresholds \citep[cf.][]{SteinBarNossekTabrikian2018} are partially covered.
Remark~\ref{rem:heterogeneous} supplies the one-snapshot variance input for coordinate-dependent thresholds, while the corresponding estimator theory is not developed here.

The fixed one-bit channel is the extreme member of a family of fixed quantized channels to which the variance input of Theorem~\ref{thm:contraction} extends.
For any monotone quantizer \(Q\) with bounded output range, the centered transform \(Q(\cdot)-\E Q(G)\) is bounded and centered, so Theorem~\ref{thm:contraction} gives the same edge-Frobenius variance bound, with a constant depending on the output range but not on the number of levels.
The lower bound is proved from the unquantized observations and is channel-independent.
Decomposing \(Q\) into its jumps \(w_r>0\) at normalized thresholds \(\tau_r\) and applying Plackett's identity \citep{Plackett1954} gives
\[
\partial_\rho\Cov\{Q(G_1),Q(G_2)\}=\sum_{r,r'}w_r w_{r'}\,\phi_2(\tau_r,\tau_{r'};\rho)>0,
\]
with \(\phi_2\) the bivariate standard normal density, so the link is strictly increasing and, since every normalized threshold depends on the scale only through \(\gamma_0^{-1/2}\), calibration again reduces to the single parameter \(\gamma_0\).
The obstruction itself is a property of the channel mean, present exactly when the quantizer output is mean-shifted, so a symmetric one-bit channel avoids it but cannot recover scale.
Because Theorem~\ref{thm:contraction} and Proposition~\ref{prop:fusion-frame-main} place no Toeplitz restriction on the support, the same variance input applies to the multilevel and block-Toeplitz lag sets of planar sparse arrays; only the aggregation and grid arguments here are one-dimensional.
A detailed treatment of fixed few-bit quantizers, long-memory spectral classes, and adaptive thresholds is left to future work.

\FloatBarrier

\section*{Acknowledgments}
This work was supported by the Research Startup Fund of Chaohu University under Grant KYQD2024015.

\clearpage
{\centering\Large\bfseries Supplementary Material\par}
\medskip
\setcounter{section}{0}
\renewcommand{\thesection}{S\arabic{section}}

\paragraph{Terminology and proof order}
We use the terminology of the main paper.
The marginal bit budget is the $n|\Omega|$ one-coordinate signs used for plug-in calibration, while virtual pair coverage is the lag profile $q_s$ and the coefficient $\varphi(\Omega)=\sum_s q_s^{-1}$.

The proof of the main upper bound starts with the pairwise projection and fusion-frame estimates that give the contraction theorem.
That theorem supplies the one-snapshot variance input for oracle spectral concentration.
The plug-in proof then adds marginal-bit calibration, empirical recentering control and a deterministic population perturbation bound.
The lower bound is separate; it uses data processing, Gaussian KL control and deterministic Toeplitz spectral packing.

\section{Proof of the centered sparse-pair contraction theorem}
\label{sec:supp-contraction}

This section proves the contraction theorem from the main text.
The theorem is used only after the one-bit signs have been centered.
Its purpose is to show that, on an arbitrary deterministic hollow support, the variance of the centered sparse-pair statistic is controlled by the Frobenius norm of the edge weights and not by maximum degree or weighted row sums.
The constants may depend on the compact support-separation parameter $\varepsilon$ and on the bounded transform $h$, but not on dimension, support size, maximum degree or Hermite order.

The proof has two components.
First, a pairwise Hoeffding projection removes the one-coordinate parts of $h(g_i)h(g_j)-\E h(g_i)h(g_j)$; because $h$ is centered, the one-vertex projection is of order $|C_{ij}|$ and can be summed by Schur--Hadamard contractions.
Second, the remaining pure two-coordinate chaoses lie in degenerate pair spaces.
A fusion-frame Bessel bound controls the synthesis over all active edges uniformly over chaos order.
Combining these two estimates gives the contraction theorem.

Throughout this section, $C$ is a real Gaussian correlation matrix with $\norm{C}_2=\kappa$, $h:\R\to\R$ is real-valued, and $g_i=W(e_i)$ denotes the associated isonormal representation.
The support edges satisfy $|C_{ij}|\le1-\eps$ whenever the edge $(i,j)$ is active.
Constants denoted $C_\eps$ and $K_{h,\eps}$ may depend polynomially on $\eps^{-1}$ and on fixed bounds for $h$, but not on dimension, support size, maximum degree or chaos order.
\subsection*{Gaussian Hilbert space and Fock normalization}
We use standard Gaussian Hilbert-space and Wiener-chaos notation \citep{Janson1997,Nualart2006,PeccatiTaqqu2011}.
Let $H$ be the real Gaussian Hilbert space generated by $g_1,\ldots,g_m$.
Thus $g_i=W(e_i)$ for an isonormal Gaussian process $W$ over $H$, with
\begin{equation}
  \ip{e_i}{e_j}_H=C_{ij},\qquad \norm{e_i}_H=1.
\end{equation}
For $N\ge0$, let $H^{\odot N}$ denote the $N$-fold symmetric tensor space.
We use the homogeneous Fock norm
\begin{equation}
  \norm{F}_{(N)}^2=N!\norm{F}_{H^{\odot N}}^2.
\end{equation}
With this normalization, the multiple integral map $I_N$ is an isometry from $(H^{\odot N},\norm{\cdot}_{(N)})$ into the $N$th Wiener chaos:
\begin{equation}
  \E|I_N(F)|^2=\norm{F}_{(N)}^2.
\end{equation}
For $u\in H$, define creation and annihilation operators
\begin{equation}
  c(u):H^{\odot(N-1)}\to H^{\odot N},\qquad c(u)U=u\vee U,
\end{equation}
and let $a(u)=c(u)^*$ with respect to the Fock norms.
If $\widehat e_i^{(N)}$ is the unit vector proportional to $e_i^{\odot N}$ in the $N$th Fock norm, then
\begin{equation}
  a(e_i)\widehat e_j^{(N)}=\sqrt{N}C_{ij}\widehat e_j^{(N-1)}.
\end{equation}

\subsection*{Schur--Hadamard tools}
We use two elementary consequences of positivity under Schur products; see, for example, \citet{Bhatia2007}.
\begin{lemma}[Schur contractions for Gaussian correlations]
\label{supp-lem:schur}
For every integer $r\ge1$,
\begin{equation}
  \norm{C^{\circ r}}_2\le\kappa.
\end{equation}
Moreover, for every row index $i$,
\begin{equation}
  \sum_j |C_{ij}|^{2r}\le\kappa.
\end{equation}
Consequently, writing $\one$ for the all-ones vector, for every matrix $B$ and every $r\ge1$,
\begin{equation}
  \norm{(B\circ C^{\circ r})\one}_2^2\le\kappa\norm{B}_F^2.
\end{equation}
\end{lemma}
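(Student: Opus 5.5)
The plan is to derive all three claims from the Schur product theorem applied to the positive semidefinite correlation matrix $C$. Since $C\succeq0$, every Hadamard power $C^{\circ r}$ is positive semidefinite. It is also a correlation matrix, because its diagonal entries are $C_{ii}^r=1$. The first claim will follow from a standard fact: for positive semidefinite $P$ and $Q$, $\norm{P\circ Q}_2\le\max_i P_{ii}\,\norm{Q}_2$. One way to see this is to note that $Q\preceq\norm{Q}_2 I$ and that Hadamard multiplication by $P\succeq0$ preserves the Loewner order, so $P\circ Q\preceq\norm{Q}_2\,P\circ I=\norm{Q}_2\diag(P)$. Applying this with $P=C^{\circ(r-1)}$, whose diagonal is identically one, and $Q=C$ gives $\norm{C^{\circ r}}_2\le\norm{C}_2=\kappa$ for all $r\ge1$. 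This is an induction on $r$, with $r=1$ trivial, and it matches the compression-of-Kronecker remark in the main text.

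For the row bound, I will use that for a positive semidefinite matrix $P$ one has $\sum_j P_{ij}^2=(P^2)_{ii}\le\norm{P}_2P_{ii}$. Indeed, $(P^2)_{ii}=\norm{P^{1/2}P^{1/2}e_i}^2\le\norm{P}_2\norm{P^{1/2}e_i}^2=\norm{P}_2P_{ii}$. Taking $P=C^{\circ r}$, with $P_{ii}=1$ and $\norm{P}_2\le\kappa$ from the first step, gives $\sum_j|C_{ij}|^{2r}=\sum_j(C^{\circ r})_{ij}^2\le\kappa$.

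For the third claim, I will expand the squared norm as $\norm{(B\circ C^{\circ r})\one}_2^2=\sum_i\bigl(\sum_jB_{ij}C_{ij}^r\bigr)^2$. Cauchy--Schwarz in $j$ bounds each inner square by $\bigl(\sum_jB_{ij}^2\bigr)\bigl(\sum_j|C_{ij}|^{2r}\bigr)\le\kappa\sum_jB_{ij}^2$, using the row bound. Summing over $i$ gives $\kappa\norm{B}_F^2$. If $B$ is complex the same computation goes through with absolute values.

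No step is a real obstacle. The only point needing care is the first claim: the naive bound $\norm{C\circ C}_2\le\norm{C}_2^2=\kappa^2$ would be too weak. The unit diagonal of the Hadamard factor is what removes the extra power of $\kappa$, and the Loewner-order argument above is exactly the way to exploit it.
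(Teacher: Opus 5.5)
Your proof is correct and follows the paper's argument step for step: your Loewner-order inequality $P\circ Q\preceq\norm{Q}_2\,\diag(P)$ is exactly why the paper's unital positive Schur multiplier $M\mapsto M\circ C^{\circ(r-1)}$ contracts the operator norm, and the diagonal bound $(D^2)_{ii}\le\norm{D}_2D_{ii}$ and the row-wise Cauchy--Schwarz step are the same as in the paper. One side remark is wrong, though it does not affect the proof: the argument does not match the Kronecker-compression lemma, which only gives the product bound $\norm{A\circ B}_2\le\norm{A}_2\norm{B}_2$ (the weak $\kappa^2$ bound you yourself warn against), and the paper explicitly cautions against conflating the two.
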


\begin{proof}
The Schur product theorem gives $C^{\circ r}\succeq0$.
Since $C^{\circ(r-1)}$ is a correlation matrix, the Schur multiplier $M\mapsto M\circ C^{\circ(r-1)}$ is unital completely positive and hence an operator-norm contraction on Hermitian matrices.
Therefore
\begin{equation}
  \norm{C^{\circ r}}_2=\norm{C\circ C^{\circ(r-1)}}_2\le\norm{C}_2=\kappa.
\end{equation}
Let $D=C^{\circ r}$.
Since $0\preceq D\preceq\norm{D}_2I$ and $D_{ii}=1$,
\begin{equation}
  \sum_j |C_{ij}|^{2r}=(D^2)_{ii}\le\norm{D}_2D_{ii}\le\kappa.
\end{equation}
Finally, Cauchy--Schwarz row by row gives
\begin{align}
  \norm{(B\circ C^{\circ r})\one}_2^2
  &=\sum_i\left|\sum_jB_{ij}C_{ij}^r\right|^2\\
  &\le\sum_i\left(\sum_j|B_{ij}|^2\right)\left(\sum_j|C_{ij}|^{2r}\right)\\
  &\le\kappa\norm{B}_F^2.
\end{align}
\end{proof}

\begin{lemma}[Heterogeneous one-coordinate Gaussian contraction]
\label{supp-lem:centered-transform-covariance}
Let $g_\Omega\sim N(0,C_\Omega)$ be a standard Gaussian vector with $\norm{C_\Omega}_2=\kappa$.
For each $i\in\Omega$ let $\phi_i\in L^2(\gamma)$, where $\gamma$ is the standard $N(0,1)$ law, satisfy $\E\phi_i(G)=0$, with orthonormal Hermite expansion $\phi_i=\sum_{q\ge1}a_{i,q}H_q$; write $a_q=(a_{i,q})_{i\in\Omega}$ and set $v_i=\phi_i(g_i)$.
Then for every $b\in\mathbb{C}^{\Omega}$,
\begin{equation}
  \Var\Big(\sum_{i\in\Omega}b_iv_i\Big)
  =\sum_{q\ge1}(b\circ a_q)^*C_\Omega^{\circ q}(b\circ a_q)
  \le \kappa\sum_{i\in\Omega}|b_i|^2\norm{\phi_i}_{L^2(\gamma)}^2 ,
  \label{supp-eq:centered-transform-covariance}
\end{equation}
where $\circ$ denotes the entrywise product.
In particular, taking all $\phi_i=h$ gives $\Cov(v_\Omega)=\sum_{q\ge1}a_q^2C_\Omega^{\circ q}$ and $\norm{\Cov(v_\Omega)}_2\le\norm{h}_{L^2(\gamma)}^2\kappa$; for centered threshold signs $h_\tau(t)=\sign(t-\tau)-\E\sign(G-\tau)$ this yields $\norm{\Cov(h_\tau(g_i))_{i\in\Omega}}_2\le4\kappa$.
\end{lemma}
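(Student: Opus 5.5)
The plan is to expand each centered transform in Hermite polynomials and use the Gaussian covariance identity for Hermite polynomials of correlated coordinates. Since $\E\phi_i(G)=0$, the expansion $\phi_i=\sum_{q\ge1}a_{i,q}H_q$ has no constant term, and it converges in $L^2(\gamma)$. Because each $g_i$ is standard normal, $v_i=\sum_{q\ge1}a_{i,q}H_q(g_i)$ converges in $L^2(\Prb)$. With orthonormal (normalized) Hermite polynomials, the classical identity for jointly Gaussian standard coordinates reads
\[
\E\{H_q(g_i)H_{q'}(g_j)\}=\delta_{qq'}C_{ij}^q .
\]
Equivalently, $H_q(g_i)=I_q(\widehat e_i^{(q)})$ in the Fock notation above, and $\ip{\widehat e_i^{(q)}}{\widehat e_j^{(q)}}_{(q)}=C_{ij}^q$. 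By $L^2$ continuity of the covariance, this gives
\[
\Cov(v_i,v_j)=\sum_{q\ge1}a_{i,q}a_{j,q}C_{ij}^q .
\]

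Next, for $b\in\mathbb C^\Omega$ I will expand $\Var(\sum_i b_iv_i)=\sum_{i,j}\overline{b_i}b_j\Cov(v_i,v_j)$. Exchanging the finite double sum with the $q$-series gives
\[
\sum_q\sum_{i,j}\overline{(b_ia_{i,q})}\,C_{ij}^q\,(b_ja_{j,q})=\sum_q (b\circ a_q)^*C_\Omega^{\circ q}(b\circ a_q),
\]
using that the coefficients $a_{i,q}$ are real because $\phi_i$ is real. This is the stated identity. The exchange is justified by absolute convergence: $|C_{ij}|\le1$ and Cauchy--Schwarz in $q$ give $\sum_q|a_{i,q}a_{j,q}|\le\norm{\phi_i}\norm{\phi_j}$.

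For the inequality I will apply Lemma~\ref{supp-lem:schur}: $C_\Omega^{\circ q}\succeq0$ and $\norm{C_\Omega^{\circ q}}_2\le\kappa$ for every $q\ge1$, since $C_\Omega$ is itself a correlation matrix with norm $\kappa$. Each term is therefore at most
\[
\kappa\norm{b\circ a_q}_2^2=\kappa\sum_i|b_i|^2a_{i,q}^2 .
\]
Summing over $q$ and using Parseval, $\sum_{q\ge1}a_{i,q}^2=\norm{\phi_i}_{L^2(\gamma)}^2$, gives the bound.

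For the special cases I take all $\phi_i=h$, so that $a_q=a_q\one$ with a scalar $a_q$. Then $\Cov(v_\Omega)=\sum_q a_q^2C_\Omega^{\circ q}$, and the operator-norm bound is the quadratic-form bound taken over unit vectors $b$. For the threshold sign, $\norm{h_\tau}_{L^2(\gamma)}^2=1-\mu^2\le1\le4$, so the claimed constant $4\kappa$ follows (in fact $\kappa$ suffices).

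No step is a genuine obstacle. The only care needed is justifying the series interchange and keeping the normalization of the Hermite polynomials consistent with the isometry $\E|I_N(F)|^2=\norm{F}_{(N)}^2$.
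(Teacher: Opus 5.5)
Your proof is correct and follows the paper's argument essentially step for step: Hermite orthogonality $\E H_q(g_i)H_{q'}(g_j)=\delta_{qq'}C_{ij}^q$, the Schur-power bound $0\preceq C_\Omega^{\circ q}$, $\norm{C_\Omega^{\circ q}}_2\le\kappa$ from Lemma~\ref{supp-lem:schur}, and Parseval. Your computation $\norm{h_\tau}_{L^2(\gamma)}^2=1-\mu^2\le1$ is sharper than the paper's bound via $\norm{h_\tau}_\infty\le2$, so for threshold signs the stated constant $4\kappa$ can in fact be improved to $\kappa$.
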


\begin{proof}
For standard Gaussian coordinates, Hermite orthogonality gives $\E H_q(g_i)H_r(g_j)=\mathbf 1_{\{q=r\}}(C_\Omega)_{ij}^q$, so $\Cov(v_i,v_j)=\sum_{q\ge1}a_{i,q}a_{j,q}(C_\Omega)_{ij}^q$.
Hence, for any $b\in\mathbb{C}^{\Omega}$,
\[
  \Var\Big(\sum_{i\in\Omega}b_iv_i\Big)
  =\sum_{i,j}b_i\overline{b_j}\,\Cov(v_i,v_j)
  =\sum_{q\ge1}(b\circ a_q)^*C_\Omega^{\circ q}(b\circ a_q),
\]
the rearrangement being justified by $\sum_{q\ge1}\norm{a_q}_2^2=\sum_{i\in\Omega}\norm{\phi_i}_2^2<\infty$.
Each Schur power $C_\Omega^{\circ q}$ is positive semidefinite with $\norm{C_\Omega^{\circ q}}_2\le\kappa$ by Lemma~\ref{supp-lem:schur}, so
\[
  (b\circ a_q)^*C_\Omega^{\circ q}(b\circ a_q)
  \le\kappa\norm{b\circ a_q}_2^2
  =\kappa\sum_{i\in\Omega}|b_i|^2a_{i,q}^2 .
\]
Summing over $q$ and using $\sum_{q\ge1}a_{i,q}^2=\norm{\phi_i}_2^2$ gives \eqref{supp-eq:centered-transform-covariance}.
Taking all $\phi_i=h$ identifies $\Cov(v_\Omega)=\sum_{q\ge1}a_q^2C_\Omega^{\circ q}$ as the matrix of this quadratic form, and the threshold-sign bound follows from $\norm{h_\tau}_2\le\norm{h_\tau}_\infty\le2$.
\end{proof}

The bound \eqref{supp-eq:centered-transform-covariance} controls only additive one-coordinate statistics $\sum_i b_i\phi_i(g_i)$.
It does not bound the degenerate pair interactions $\sum_{i,j}A_{ij}\eta_{C_{ij}}^{\rm int}(g_i,g_j)$ of the contraction theorem, whose mixed chaos components do not lie in the pure one-coordinate Hermite subspaces; controlling those uniformly in the chaos order requires the fusion-frame estimate of Proposition~\ref{supp-prop:fusion-frame}.

\subsection*{Pairwise Hoeffding projection}
Let $(G_1,G_2)$ be standard bivariate Gaussian with correlation $\rho$, where $|\rho|\le1-\eps$.
Define
\begin{equation}
  \eta_\rho(G_1,G_2)=h(G_1)h(G_2)-\E[h(G_1)h(G_2)].
\end{equation}
Let $H_1=L_0^2(G_1)$ and $H_2=L_0^2(G_2)$ be the centered one-coordinate subspaces.
Let $\Pi_\rho$ be the orthogonal projection onto $H_1+H_2$, and write
\begin{equation}
  \Pi_\rho\eta_\rho=f_\rho(G_1)+\widetilde f_\rho(G_2),
  \qquad
  \eta_\rho^{\rm int}=\eta_\rho-\Pi_\rho\eta_\rho.
\end{equation}
Because $|\rho|<1$, $H_1\cap H_2=\{0\}$ inside centered $L^2$, so the representation is unique.

\begin{lemma}[Pairwise Hoeffding projection]
\label{supp-lem:pair-projection}
For $|\rho|\le1-\eps$,
\begin{equation}
  \norm{f_\rho}_2^2+\norm{\widetilde f_\rho}_2^2\le K_{h,\eps}\rho^2,
\end{equation}
and
\begin{equation}
  \norm{\eta_\rho^{\rm int}}_2\le2\norm{h}_\infty^2.
\end{equation}
\end{lemma}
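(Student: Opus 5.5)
The plan is to compute the one-coordinate projections explicitly through the Hermite expansion $h=\sum_{q\ge1}a_qH_q$, with orthonormal Hermite polynomials; there is no $q=0$ term because $\E h(G)=0$. The starting observation is that the one-coordinate parts of $\eta_\rho$ come from conditional expectations, but because $H_1$ and $H_2$ are not orthogonal when $\rho\ne0$, the projection onto $H_1+H_2$ is not simply $\E[\eta_\rho\mid G_1]+\E[\eta_\rho\mid G_2]$. I would therefore first compute the conditional expectations and then solve the $2\times2$ operator system that defines $\Pi_\rho$.

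\textbf{Step 1.} By Mehler's formula, $\E[h(G_2)\mid G_1]=\sum_q a_q\rho^qH_q(G_1)=(T_\rho h)(G_1)$. Hence
\[
\E[\eta_\rho\mid G_1]=h(G_1)\,(T_\rho h)(G_1)-\E[h\,T_\rho h]=:k_\rho(G_1).
\]
Since $\norm{T_\rho h}_\infty\le\norm{h}_\infty$ and $\norm{T_\rho h}_2^2=\sum_q a_q^2\rho^{2q}\le\rho^2\norm{h}_2^2$, we get
\[
\norm{k_\rho}_2\le\norm{h}_\infty\norm{T_\rho h}_2\le\norm{h}_\infty\norm{h}_2|\rho|.
\]
The same bound holds for the $G_2$ side. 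The factor $|\rho|$ here comes precisely from centering.

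\textbf{Step 2 (solve for the projection).} The normal equations for $\Pi_\rho\eta_\rho=f(G_1)+\widetilde f(G_2)$ read
\[
f+T_\rho\widetilde f=k_\rho,\qquad T_\rho f+\widetilde f=\widetilde k_\rho,
\]
where $T_\rho$ is the Mehler operator acting on centered $L^2(\gamma)$. On centered functions $\norm{T_\rho}\le|\rho|\le1-\eps$. The system therefore has the unique solution
\[
f=(I-T_\rho^2)^{-1}(k_\rho-T_\rho\widetilde k_\rho),
\]
and symmetrically for $\widetilde f$. Using $\norm{(I-T_\rho^2)^{-1}}\le 1/(1-(1-\eps)^2)\le 1/\eps$, this gives
\[
\norm{f}_2+\norm{\widetilde f}_2\le \frac{2}{\eps}\bigl(\norm{k_\rho}_2+\norm{\widetilde k_\rho}_2\bigr)\le \frac{4}{\eps}\norm{h}_\infty\norm{h}_2|\rho|.
\]
Squaring yields the first claim with $K_{h,\eps}=32\eps^{-2}\norm{h}_\infty^2\norm{h}_2^2$. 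This step is where the separation hypothesis $|\rho|\le1-\eps$ is used; without it the Friedrichs angle between $H_1$ and $H_2$ degenerates and no such bound is available.

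\textbf{Step 3 (interaction part).} Since $\Pi_\rho$ is an orthogonal projection, $I-\Pi_\rho$ is a contraction, so
\[
\norm{\eta_\rho^{\rm int}}_2\le\norm{\eta_\rho}_2\le\norm{h(G_1)h(G_2)}_2\le\norm{h}_\infty^2,
\]
which is stronger than the claimed bound $2\norm{h}_\infty^2$. The main point needing care is Step 2: one must check that $T_\rho$ maps centered functions to centered functions, so the contraction factor $|\rho|$ rather than $1$ applies, and that the normal equations are the right characterization of the projection. Both follow from the Hermite diagonalization of $T_\rho$.
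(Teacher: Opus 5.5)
Your proposal is correct and follows essentially the same route as the paper. Both arguments compute the conditional expectations through the Mehler operator (gaining the factor $|\rho|$ from centering), solve the normal equations $f+T_\rho\widetilde f=k_\rho$, $T_\rho f+\widetilde f=\widetilde k_\rho$ via $(I-T_\rho^2)^{-1}$ with $\norm{T_\rho}\le|\rho|\le1-\eps$ on centered $L^2(\gamma)$, and bound $\eta^{\rm int}_\rho$ by the contractivity of the orthogonal projection. Your constants are marginally sharper, since you bound a variance by a second moment where the paper uses the triangle inequality, but the structure is identical.
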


The factor $|\rho|$ is the key gain.
At independence the one-vertex projection vanishes; hence centered pair products do not create the coherent row-sum fluctuation seen for raw nonzero-threshold products.

\begin{proof}
Let $T_\rho$ be the Mehler operator.
Since $h$ is centered, if $h=\sum_{q\ge1}c_qH_q$ in the orthonormal Hermite basis, then
\begin{equation}
  \norm{T_\rho h}_2^2=\sum_{q\ge1}\rho^{2q}c_q^2\le\rho^2\norm{h}_2^2.
\end{equation}
The conditional expectation of $\eta_\rho$ given $G_1$ is
\begin{equation}
  a_\rho(G_1)=h(G_1)T_\rho h(G_1)-\langle h,T_\rho h\rangle.
\end{equation}
Thus
\begin{equation}
  \norm{a_\rho}_2\le \norm{h}_\infty\norm{T_\rho h}_2+\norm{h}_2\norm{T_\rho h}_2\le K_h|\rho|.
\end{equation}
The analogous conditional expectation $b_\rho(G_2)$ satisfies the same bound.
The normal equations for the projection are
\begin{equation}
  f_\rho+T_\rho\widetilde f_\rho=a_\rho,
  \qquad
  T_\rho f_\rho+\widetilde f_\rho=b_\rho.
\end{equation}
On centered $L^2(\gamma)$, $\norm{T_\rho}\le|\rho|\le1-\eps$, so
\begin{equation}
  \norm{(I-T_\rho^2)^{-1}}\le(1-\rho^2)^{-1}\le C_\eps.
\end{equation}
Solving gives $\norm{f_\rho}_2+\norm{\widetilde f_\rho}_2\le K_{h,\eps}|\rho|$.
The second claim follows because orthogonal projection is contractive and
\begin{equation}
  \norm{\eta_\rho}_2\le\norm{h(G_1)h(G_2)}_2+|\E h(G_1)h(G_2)|\le2\norm{h}_\infty^2.
\end{equation}
\end{proof}

\subsection*{One-vertex aggregate}
For a hollow real symmetric matrix $A$, define
\begin{equation}
  Q(A)=v^\top Av-\E[v^\top Av]=\sum_{i,j}A_{ij}\eta_{C_{ij}}(g_i,g_j).
\end{equation}
Use the pairwise projection to decompose
\begin{equation}
  Q(A)=Q^{(1)}(A)+Q^{(2)}(A),
\end{equation}
where
\begin{align}
  Q^{(1)}(A)&=\sum_{i,j}A_{ij}\{f_{C_{ij}}(g_i)+\widetilde f_{C_{ij}}(g_j)\},\\
  Q^{(2)}(A)&=\sum_{i,j}A_{ij}\eta_{C_{ij}}^{\rm int}(g_i,g_j).
\end{align}

\begin{lemma}[Aggregate bound for the one-vertex projection]
\label{supp-lem:one-vertex}
Under the support regularity $A_{ij}\neq0\Rightarrow |C_{ij}|\le1-\eps$,
\begin{equation}
  \Var(Q^{(1)}(A))\le K_{h,\eps}\kappa^2\norm{A}_F^2.
\end{equation}
\end{lemma}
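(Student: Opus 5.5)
The plan is to regroup $Q^{(1)}(A)$ as a linear statistic in one-coordinate functions and then bound it chaos by chaos.

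\textbf{Regrouping.} Collecting the terms that depend on each vertex gives
\[
Q^{(1)}(A)=\sum_i F_i(g_i),\qquad F_i=\sum_{j}A_{ij}f_{C_{ij}}+\sum_{j}A_{ji}\widetilde f_{C_{ji}},
\]
with both sums over $j\ne i$ and $A_{ij}\ne0$. Each $F_i$ is a centered function of the single coordinate $g_i$. This reduces the problem to the one-coordinate covariance formula of Lemma~\ref{supp-lem:centered-transform-covariance}, but used chaos by chaos rather than through its crude final bound.

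\textbf{Hermite coefficients.} I expand $f_\rho=\sum_{q\ge1}\alpha_q(\rho)H_q$, and similarly $\widetilde f_\rho$. The goal is the coefficientwise estimate
\[
|\alpha_q(\rho)|\le K_{h,\eps}\,|\rho|\,\beta_q,\qquad \sum_q\beta_q^2\le K_{h,\eps}.
\]
Lemma~\ref{supp-lem:pair-projection} gives only the aggregate statement $\sum_q\alpha_q(\rho)^2\le K\rho^2$, which is not quite enough. A stronger form would be a bound like $|\alpha_q(\rho)|\lesssim|\rho|^{r_q}\beta_q$, with some power of $\rho$ attached to each chaos. I would look for it by solving the normal equations explicitly in the Hermite basis:
\[
f_\rho=(I-T_\rho^2)^{-1}(a_\rho-T_\rho b_\rho).
\]
Here $T_\rho$ acts diagonally as $\rho^q$, and $a_\rho=h\,T_\rho h-\langle h,T_\rho h\rangle$. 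Every term of $a_\rho$ carries at least one factor $\rho$ from $T_\rho h$.

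\textbf{Variance bound.} With the coefficient vectors $(a_q)_i=\sum_jA_{ij}\alpha_q(C_{ij})+\dots$, the formula of Lemma~\ref{supp-lem:centered-transform-covariance} gives
\[
\Var Q^{(1)}=\sum_q a_q^\top C^{\circ q}a_q\le\kappa\sum_q\norm{a_q}_2^2 .
\]
Each $a_q$ is a row sum of $A$ weighted by $\alpha_q(C_{ij})$. If $\alpha_q(C_{ij})$ factors as $C_{ij}^{\,r}$ times a bounded coefficient, then the third part of Lemma~\ref{supp-lem:schur} gives
\[
\norm{(A\circ C^{\circ r})\one}_2^2\le\kappa\norm{A}_F^2 .
\]
Summing over $q$ with the square-summable weights $\beta_q^2$ then yields $K\kappa^2\norm{A}_F^2$. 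The two factors of $\kappa$ come from the two steps: one from the Schur power $C^{\circ q}$, one from the row-sum contraction.

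\textbf{Main obstacle.} The hard step is the factorization itself. $\alpha_q(\rho)$ is a genuinely nonlinear function of $\rho$ and not a single monomial $\rho^r$. To handle this I would expand $\alpha_q(\rho)=\sum_{r\ge1}\alpha_{q,r}\rho^r$ as a power series, which is legitimate because $|\rho|\le1-\eps$ and $(I-T_\rho^2)^{-1}$ is a convergent Neumann series. Then I would apply Lemma~\ref{supp-lem:schur} to each monomial and use Minkowski's inequality, reducing everything to the bound $\sum_q\bigl(\sum_r|\alpha_{q,r}|\bigr)^2\le K_{h,\eps}$.

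It is not clear this holds, and it is the point to check first. The Neumann series contributes a geometric factor $(1-\eps)^{2k}$. The product $h\,T_\rho h$, expanded via Hermite linearization, contributes coefficients of the form $\sum_p c_p\rho^p\langle hH_p,H_q\rangle$. These are controlled by $\norm{h}_\infty\norm{h}_2$, since $\sum_q\langle hH_p,H_q\rangle^2=\norm{hH_p}_2^2\le\norm{h}_\infty^2$. I expect this makes the coefficient sums finite, at the cost of constants polynomial in $\eps^{-1}$.
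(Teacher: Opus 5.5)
\textbf{There is a genuine gap: the central estimate is never proved, and as you formulate it, it probably cannot be.} Your regrouping $Q^{(1)}(A)=\sum_iF_i(g_i)$ and the bound $\Var Q^{(1)}=\sum_q a_q^\top C^{\circ q}a_q\le\kappa\sum_q\norm{a_q}_2^2$ are correct. But the proposal stops exactly where the work lies. You leave the coefficientwise factorization as ``not clear,'' so no bound on $\sum_q\norm{a_q}_2^2$ is established.

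The reduction itself is likely false. Lemma~\ref{supp-lem:schur} gives $\norm{(A\circ C^{\circ r})\one}_2^2\le\kappa\norm{A}_F^2$ with no decay in $r$. Your target $\sum_q(\sum_r|\alpha_{q,r}|)^2<\infty$ therefore no longer uses the constraint $|\rho|\le1-\eps$. Since $\sum_r|\alpha_{q,r}|\ge\sup_{|z|<1}|\alpha_q(z)|$, you would need each $\alpha_q$ bounded on the whole open unit disk. For identical transforms one has $b_\rho=a_\rho$, so your normal-equation solution becomes
\[
\alpha_q(\rho)=\frac{\hat a_q(\rho)}{1+\rho^q},\qquad \hat a_q(\rho)=\sum_p c_p\rho^p\ip{hH_p}{H_q}.
\]
This blows up at the $q$th roots of $-1$ on the unit circle unless $\hat a_q$ happens to vanish there, and there is no reason it should for a threshold sign with $\tau\ne0$. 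You could rescue the approach by keeping the support condition in the Schur step, $\sum_{j:A_{ij}\ne0}|C_{ij}|^{2r}\le(1-\eps)^{2(r-1)}\kappa$. That would still require verifying a weighted coefficient bound, which you have not done.

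\textbf{The missing observation is that the aggregate bound of Lemma~\ref{supp-lem:pair-projection}, which you set aside as insufficient, is all that is needed.} By Parseval, $\sum_q\norm{a_q}_2^2=\sum_i\norm{F_i}_{L^2(\gamma)}^2$, so there is no need to work chaos by chaos. The steps are:
\begin{enumerate}
\item Apply Minkowski in $L^2(\gamma)$ to the whole function: $\norm{\sum_jA_{ij}f_{C_{ij}}}_2\le\sum_j|A_{ij}|\,\norm{f_{C_{ij}}}_2\le K_{h,\eps}^{1/2}\sum_j|A_{ij}||C_{ij}|$.
\item Apply Cauchy--Schwarz over $j$ together with $\sum_jC_{ij}^2\le\kappa$ from Lemma~\ref{supp-lem:schur}. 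This is the same row-wise argument that proves that lemma's third part, with $\norm{f_{C_{ij}}}_2$ in place of $|C_{ij}|^r$. Treating the $\widetilde f$ sum identically gives $\norm{F_i}_2^2\le K_{h,\eps}\kappa\sum_jA_{ij}^2$.
\item Apply Lemma~\ref{supp-lem:centered-transform-covariance} with $\phi_i=F_i$ and $b_i=1$. That lemma allows coordinate-dependent transforms and needs only $\norm{F_i}_2$. It yields $\Var Q^{(1)}\le\kappa\sum_i\norm{F_i}_2^2\le K_{h,\eps}\kappa^2\norm{A}_F^2$.
\end{enumerate}
This is exactly the paper's proof.
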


\begin{proof}
Collect all one-coordinate terms depending on $g_i$:
\begin{equation}
  Q^{(1)}(A)=\sum_iF_i(g_i),
\end{equation}
where
\begin{equation}
  F_i=\sum_j A_{ij}f_{C_{ij}}+\sum_j A_{ji}\widetilde f_{C_{ji}}.
\end{equation}
The two sums are estimated identically; consider $\sum_j A_{ij}f_{C_{ij}}$.
By Lemma~\ref{supp-lem:pair-projection}, $\norm{f_{C_{ij}}}_2\le K_{h,\eps}^{1/2}|C_{ij}|$, so the triangle inequality followed by Cauchy--Schwarz over $j$ gives
\begin{equation}
  \norm{\sum_j A_{ij}f_{C_{ij}}}_2
  \le\sum_j|A_{ij}|\,\norm{f_{C_{ij}}}_2
  \le K_{h,\eps}^{1/2}\Big(\sum_j|A_{ij}|^2\Big)^{1/2}\Big(\sum_j|C_{ij}|^2\Big)^{1/2}.
\end{equation}
The companion sum $\sum_j A_{ji}\widetilde f_{C_{ji}}$ obeys the same bound, so
\begin{equation}
  \norm{F_i}_2^2\le K_{h,\eps}
  \left(\sum_j|A_{ij}|^2\right)
  \left(\sum_j|C_{ij}|^2\right)
  \le K_{h,\eps}\kappa\sum_j|A_{ij}|^2,
\end{equation}
where Lemma~\ref{supp-lem:schur} was used in the last step.
Each $F_i$ is a centered function of the single coordinate $g_i$, so Lemma~\ref{supp-lem:centered-transform-covariance}, applied with $\phi_i=F_i$ and $b_i=1$, gives
\begin{equation}
  \Var(Q^{(1)}(A))=\Var\Big(\sum_iF_i(g_i)\Big)
  \le\kappa\sum_i\norm{F_i}_2^2
  \le K_{h,\eps}\kappa^2\norm{A}_F^2,
\end{equation}
where the last step uses the bound $\norm{F_i}_2^2\le K_{h,\eps}\kappa\sum_j|A_{ij}|^2$ established above together with $\sum_i\sum_j|A_{ij}|^2=\norm{A}_F^2$.
\end{proof}

\subsection*{Degenerate pair-chaos fusion frame}
For $N\ge2$ and an ordered support edge $(i,j)$, define
\begin{equation}
  K_{ij,N}=\Span\{e_i,e_j\}^{\odot N}\ominus
  \Span\{e_i^{\odot N},e_j^{\odot N}\}
\end{equation}
inside the $N$th homogeneous Fock space.
Let $P_{ij,N}^{\rm int}$ be the orthogonal projection onto $K_{ij,N}$.

The subtraction of the two pure tensors is the geometric point of the argument.
The full two-coordinate space $\Span\{e_i,e_j\}^{\odot N}$ contains directions concentrated on a single vertex, and these directions can add coherently over many incident edges.
Those coherent one-vertex modes are exactly the modes already controlled by the Hoeffding projection.
After they are removed, every remaining vector contains a genuine residual component in the direction of $e_j$ orthogonal to $e_i$, or symmetrically in the direction of $e_i$ orthogonal to $e_j$.
The local separation $|C_{ij}|\le1-\eps$ makes these residual directions uniformly well conditioned.
The proof below turns this geometry into a Bessel bound by applying creation--annihilation estimates and Schur contractions to the residual Gram matrices.

\begin{proposition}[Fusion-frame bound for degenerate pair chaoses]
\label{supp-prop:fusion-frame}
Let $E$ be any finite ordered edge set satisfying
\begin{equation}
  (i,j)\in E\quad\Longrightarrow\quad |C_{ij}|\le1-\eps.
\end{equation}
Then, for every $N\ge2$ and every $F\in H^{\odot N}$,
\begin{equation}
  \sum_{(i,j)\in E}\norm{P_{ij,N}^{\rm int}F}_{(N)}^2
  \le C_\eps\kappa^2\norm{F}_{(N)}^2.
  \label{supp-eq:fusion-analysis}
\end{equation}
Equivalently, by Hilbert-space duality, for all $f_{ij}\in K_{ij,N}$,
\begin{equation}
  \norm{\sum_{(i,j)\in E} f_{ij}}_{(N)}^2
  \le C_\eps\kappa^2\sum_{(i,j)\in E}\norm{f_{ij}}_{(N)}^2.
  \label{supp-eq:fusion-synthesis}
\end{equation}
The same estimate with coefficients $A_{ij}$ follows by replacing $f_{ij}$ with $A_{ij}f_{ij}$; for complex $A_{ij}$, applying the real estimate separately to the real and imaginary parts of the coefficients gives \eqref{supp-eq:fusion-synthesis} on the complexified chaos with the same constant.
\end{proposition}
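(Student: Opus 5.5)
The plan is to prove the synthesis form \eqref{supp-eq:fusion-synthesis}; the analysis form \eqref{supp-eq:fusion-analysis} then follows by duality. The synthesis operator $S:\bigoplus_{(i,j)\in E}K_{ij,N}\to H^{\odot N}$, $(f_{ij})\mapsto\sum f_{ij}$, has adjoint $F\mapsto(P^{\rm int}_{ij,N}F)$, and $\norm{S}=\norm{S^*}$. To bound $\norm{S}$, I expand $\norm{\sum f_{ij}}_{(N)}^2=\sum_{(i,j),(k,l)}\ip{f_{ij}}{f_{kl}}_{(N)}$. The goal is to dominate the off-diagonal Gram entries by $\norm{f_{ij}}\norm{f_{kl}}\,M_{(ij),(kl)}$, with a kernel $M$ built from entries of $C$. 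I then bound the operator norm of $M$ on $\ell^2(E)$ by Schur-test and Schur--Hadamard arguments (Lemma~\ref{supp-lem:schur}), giving $C_\eps\kappa^2$.

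The key structural step is a ``residual factor'' representation of $K_{ij,N}$. Set $e_j^{\perp i}=(e_j-C_{ij}e_i)/\sqrt{1-C_{ij}^2}$, a unit vector orthogonal to $e_i$. In the basis $\{e_i,e_j^{\perp i}\}$, $\Span\{e_i,e_j\}^{\odot N}$ decomposes by the degree $p$ in $e_j^{\perp i}$. Removing $e_i^{\odot N}$ kills $p=0$. Removing $e_j^{\odot N}$ removes one further direction, which is well conditioned because $1-C_{ij}^2\ge\eps$. Every $f\in K_{ij,N}$ can therefore be written as $f=c(e_j^{\perp i})U+c(e_i^{\perp j})U'$ with $U,U'\in\Span\{e_i,e_j\}^{\odot(N-1)}$. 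The norm control should read $\norm{U}_{(N-1)}^2+\norm{U'}_{(N-1)}^2\le C_\eps N^{-1}\norm{f}_{(N)}^2$. This is where the two-coordinate number operator enters: on $\Span\{e_i,e_j\}^{\odot N}$, the operator $a(e_j^{\perp i})$ acts like the derivative in the second variable of a bivariate homogeneous polynomial. Its restriction to vectors with no pure $e_i^{\odot N}$ component satisfies $\norm{a(e_j^{\perp i})f}^2\ge\ip{\mathcal N_j f}{f}\ge\norm{f}^2$, where $\mathcal N_j$ counts the degree in $e_j^{\perp i}$. The extra subtraction of $e_j^{\odot N}$ is handled by the $\eps$-separation. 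With this, I choose $U=a(e_j^{\perp i})f/N$-type preimages and obtain the $1/N$ gain.

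Next I bound the cross inner products. For a creation vector, $\ip{c(u)U}{c(w)W}_{(N)}=\ip{u}{w}\ip{U}{W}_{(N-1)}+\ip{a(w)U}{a(u)W}_{(N-2)}$ (the CCR relation $a(u)c(w)=c(w)a(u)+\ip{u}{w}$ composed appropriately). For edges $(i,j)$, $(k,l)$ the inner products $\ip{e_j^{\perp i}}{e_l^{\perp k}}$ are linear combinations of $C_{jl},C_{il},C_{jk},C_{ik}$ with $\eps$-bounded coefficients. The annihilation term yields a factor $\sqrt N\,C_{\cdot\cdot}$ on each side, and more generally $a(u)$ acts on $\Span\{e_k,e_l\}^{\odot(N-1)}$ with norm $\lesssim\sqrt N\max(|\ip{u}{e_k}|,|\ip{u}{e_l}|)$. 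Combining with the $N^{-1/2}$ gain on each of $U,W$, the factors of $N$ cancel. The result is $|\ip{f_{ij}}{f_{kl}}|\le C_\eps\norm{f_{ij}}\norm{f_{kl}}\sum_{a\in\{i,j\},b\in\{k,l\}}|C_{ab}|\cdot(\text{another }|C_{a'b'}|)$. In other words, the kernel is a sum of products of two correlation entries coupling the endpoint sets. Bounding this kernel on $\ell^2(E)$ reduces, via Cauchy--Schwarz in vertex variables, to $\norm{|C|^{\circ}\,\cdot}$-type bounds. Each correlation factor contributes at most $\kappa$ through $\sum_b|C_{ab}|^2\le\kappa$ and $\norm{C\circ C}_2\le\kappa$ (Lemma~\ref{supp-lem:schur}), giving $C_\eps\kappa^2$.

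The main obstacle is this last kernel estimate. A naive Schur test needs $\sum_b|C_{ab}|$, not $\sum_b|C_{ab}|^2$, and that sum can be as large as the maximum degree. To avoid it, I would not bound entrywise. Instead I would keep the inner products in the operator form $\sum_{(i,j),(k,l)}\ip{u_{ij}}{w_{kl}}\ip{U_{ij}}{W_{kl}}$, recognize this as the Frobenius pairing of a Hadamard product of two Gram matrices (one of first-level vectors in $H$, one of the $(N-1)$-level residuals), and bound it by a Schur-product norm inequality. The Gram matrix of the vectors $e_j^{\perp i}$ is dominated by $\kappa$ times a block diagonal, since they are $\eps$-bounded combinations of the $e_i$. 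The annihilation term is handled by iterating this one level down, which produces the second factor $\kappa$.
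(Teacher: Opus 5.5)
Your architecture (a creation representation of each $f_{ij}$ with an $N^{-1}$ gain, a CCR expansion that costs a factor $N$, then Schur--Hadamard bounds) has the right shape. But the step that must deliver degree-independence is missing, and as stated it fails.

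First, your representation does not encode degeneracy at all. The frame operator of $\{e_j^{\perp i},e_i^{\perp j}\}$ on $\Span\{e_i,e_j\}$ has smallest eigenvalue $1-|C_{ij}|\ge\eps$, so its second quantization is $\succeq N\eps$ on the two-coordinate chaos. Hence every $f\in\Span\{e_i,e_j\}^{\odot N}$, including $e_i^{\odot N}$, can be written as $c(e_j^{\perp i})U+c(e_i^{\perp j})U'$ with $\norm{U}_{(N-1)}^2+\norm{U'}_{(N-1)}^2\le(N\eps)^{-1}\norm{f}_{(N)}^2$. Your own justification gives only a gain of $1$: the bound $\norm{a(e_j^{\perp i})f}^2\ge\norm{f}^2$ is attained by $f=e_i^{\odot(N-1)}\vee e_j\in K_{ij,N}$ when $C_{ij}=0$.

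So any argument using only this representation, the CCR and Schur bounds would also prove the synthesis inequality for the full pair spaces. That is false: for a star of degree $D$ at $i$ with $C=I$ and $f_{ij}=e_i^{\odot N}/\sqrt{N!}$, the ratio is $D$.

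The concrete failure is in your kernel step, on edges sharing a vertex. The creators $e_i^{\perp j}$, for $j$ in the star, are all close to $e_i$ (equal to it when $C=I$). Consequently:
\begin{itemize}
\item $\ip{e_i^{\perp j}}{e_i^{\perp l}}$ is of order one, not a product of two small correlations;
\item the Gram matrix of these creators has norm of order $D$, so it is not dominated by $\kappa$ times anything harmless;
\item after the Hadamard pairing you are left with $\norm{\sum_jU'_{ij}}_{(N-1)}^2$. For unconstrained $U'_{ij}\in\Span\{e_i,e_j\}^{\odot(N-1)}$, e.g.\ all equal to $e_i^{\odot(N-1)}$, this equals $D\sum_j\norm{U'_{ij}}_{(N-1)}^2$.
\end{itemize}

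The missing idea is to push the degeneracy into the inputs and to control star sums through residual Gram matrices. The paper works in the analysis direction with $F\mapsto\{a(e_i)P^{\rm int}_{ij,N}F\}$. Its adjoint $\sum P^{\rm int}_{ij,N}c(e_i)U_{ij}$ may be restricted to inputs $U_{ij}\in R_{j|i,N-1}$, i.e.\ orthogonal to $e_i^{\odot(N-1)}$ (Lemma~\ref{supp-lem:wlog-residual}). The remaining steps are:
\begin{itemize}
\item Split $R_{j|i,N-1}$ orthogonally by the degree $b\ge1$ in $r_{j|i}$. The level-$b$ Gram matrix is then exactly $R_i^{\circ b}$, with $\norm{R_i^{\circ b}}_2\le C_\eps\kappa$, so $\norm{\sum_jU_{ij}}^2\le C_\eps\kappa\sum_j\norm{U_{ij}}^2$ (Lemma~\ref{supp-lem:one-sided}).
\item Combined with the creation bound $N\kappa$ (Lemma~\ref{supp-lem:creation}), this gives $C_\eps N\kappa^2$.
\item The part removed by projecting off $\Span\{e_i^{\odot N},e_j^{\odot N}\}$ carries a factor $C_{ij}$. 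It is summed via Cauchy--Schwarz and $\sum_jC_{ij}^2\le\kappa$ (Lemma~\ref{supp-lem:pure}).
\item Only then does $d\Gamma(S_{ij})\succeq N\eps$ cancel the factor $N$.
\end{itemize}

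If you want to keep your residual creators, the natural constraints are $U\perp e_j^{\odot(N-1)}$ and $U'\perp e_i^{\odot(N-1)}$; indeed $K_{ij,N}=c(e_j^{\perp i})c(e_i^{\perp j})\Span\{e_i,e_j\}^{\odot(N-2)}$. You would then have to prove two things: the $N^{-1}$ gain under these constraints, and a residual star-sum estimate like the one above. The proposal supplies neither.
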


All Hilbert spaces here are real; complex coefficients enter only through the complexification just described, and in the proof of the contraction theorem a Hermitian $A$ is in any case reduced to its real symmetric part first.
The estimate is uniform in the chaos order $N$.
The apparent factor $N$ from creation--annihilation bounds is canceled by the $1/N$ lower bound from the number operator on the two-coordinate chaos; this cancellation is why the final theorem has no dependence on the Hermite order.

\begin{lemma}[Creation synthesis bound]
\label{supp-lem:creation}
For arbitrary $U_i\in H^{\odot(N-1)}$,
\begin{equation}
  \norm{\sum_i c(e_i)U_i}_{(N)}^2
  \le N\kappa\sum_i\norm{U_i}_{(N-1)}^2.
\end{equation}
\end{lemma}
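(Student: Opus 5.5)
The plan is to avoid any commutation relations and to pass through the unsymmetrized tensor power $H^{\otimes N}$, where the creation map is a plain tensor product and the Gram structure factors as $C\otimes\Id$. I would fix the convention $u\vee U=P_{\rm sym}(u\otimes U)$, where $P_{\rm sym}$ is the orthogonal symmetrization projection on $H^{\otimes N}$. This is the convention under which $c(e)e^{\odot(N-1)}=e^{\odot N}$. It is also the one consistent with the stated identity $a(e_i)\widehat e_j^{(N)}=\sqrt N C_{ij}\widehat e_j^{(N-1)}$: with $\widehat e^{(N)}=e^{\otimes N}/\sqrt{N!}$ one gets $c(e_j)\widehat e_j^{(N-1)}=\sqrt N\,\widehat e_j^{(N)}$. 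I would verify this normalization first, since the factor $N$ in the claim is exactly the ratio of Fock normalizations.

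With this convention the argument has three steps.

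\emph{Step 1 (symmetrization is contractive).} Since $P_{\rm sym}$ is an orthogonal projection in the unnormalized Hilbert norm,
\[
\Bigl\|\sum_i P_{\rm sym}(e_i\otimes U_i)\Bigr\|_{H^{\otimes N}}^2\le\Bigl\|\sum_i e_i\otimes U_i\Bigr\|_{H^{\otimes N}}^2 .
\]

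\emph{Step 2 (expand the unsymmetrized norm).} The right-hand side equals
\[
\sum_{i,j}\ip{e_i}{e_j}_H\ip{U_i}{U_j}=\sum_{i,j}C_{ij}\ip{U_i}{U_j}=\ip{(C\otimes\Id)\mathbf U}{\mathbf U},
\]
where $\mathbf U=(U_i)_i\in\R^m\otimes H^{\otimes(N-1)}$. Because $\norm{C\otimes\Id}_2=\norm{C}_2=\kappa$, this is at most $\kappa\sum_i\norm{U_i}_{H^{\otimes(N-1)}}^2$. One could equivalently expand each $U_i$ in an orthonormal basis $\{w_k\}$ of $H^{\odot(N-1)}$ and apply $C\succeq0$, $\norm{C}_2\le\kappa$ coordinatewise to the vectors $(\ip{U_i}{w_k})_i$.

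\emph{Step 3 (convert to Fock norms).} Using $\norm{F}_{(N)}^2=N!\norm{F}^2=N\cdot(N-1)!\norm{F}^2$, we get
\[
\Bigl\|\sum_i c(e_i)U_i\Bigr\|_{(N)}^2\le N\kappa\,(N-1)!\sum_i\norm{U_i}^2=N\kappa\sum_i\norm{U_i}_{(N-1)}^2 .
\]

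The only delicate point is the normalization bookkeeping in Step 3. If the paper's $\vee$ instead carried an extra combinatorial factor, the constant would change. So the main check is that $\vee$ is exactly the projection of $u\otimes U$, and I would confirm this against the displayed action of $a(e_i)$ on $\widehat e_j^{(N)}$.

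As a cross-check of the factor $N$, take a single term $i=j$ with $U=\widehat e^{(N-1)}$. Then $\norm{c(e)U}_{(N)}^2=N$, so the bound is attained with $\kappa$ replaced by $1$. This shows the factor $N$ is sharp and cannot be avoided at this stage; in the main text it is later cancelled by the number-operator lower bound.
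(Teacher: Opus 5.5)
Your proof is correct. Your normalization check is also right: the convention $u\vee U=P_{\rm sym}(u\otimes U)$ is the one forced by the displayed action $a(e_i)\widehat e_j^{(N)}=\sqrt N C_{ij}\widehat e_j^{(N-1)}$. It is also forced by the Fock-normalization identity the paper uses later in the one-sided residual lemma. So the factor $N!/(N-1)!=N$ in your Step 3 is exactly the constant in the statement.

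Your route differs from the paper's. The paper works on the adjoint (analysis) side. It writes $\|B\|^2=\|B^*\|^2=\sup_{\|F\|_{(N)}=1}\sum_i\|a(e_i)F\|_{(N-1)}^2$ and identifies this sum with the second-quantized quadratic form $\langle F,d\Gamma(S)F\rangle_{(N)}$ of the frame operator $S=\sum_i e_i\otimes e_i$, whose norm is $\|C\|_2=\kappa$. It then invokes $d\Gamma(S)\preceq N\kappa$ on the $N$th chaos. You instead bound the synthesis map directly: you lift to $H^{\otimes N}$, discard the symmetrizer as a contraction, and reduce to the Gram inequality for $C\otimes\Id$.

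The two arguments are dual faces of one computation. The bound $d\Gamma(S)\preceq N\|S\|$ is itself usually proved by writing $d\Gamma(S)=\sum_k \Id\otimes\cdots\otimes S\otimes\cdots\otimes\Id$ restricted to symmetric tensors. Your version is more elementary and needs no second-quantization language. It also makes explicit where the constant depends on the convention for $\vee$, whereas the paper's identity $\sum_i\|a(e_i)F\|^2=\langle F,d\Gamma(S)F\rangle_{(N)}$ relies on that convention silently. Your synthesis form is also exactly how the lemma is used, namely to bound $\|S_0\|$.

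What the paper's formulation buys is structural. The same object $d\Gamma(\cdot)$ reappears in the proof of the fusion-frame proposition as the lower bound $d\Gamma(S_{ij})\succeq N\eps$ on the two-coordinate chaos. So the factor $N$, which your single-vertex example correctly shows is sharp here, cancels as the upper and lower spectral bounds of one quadratic form.
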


\begin{proof}
Let $B\{U_i\}=\sum_i c(e_i)U_i$.
Its adjoint is $B^*F=\{a(e_i)F\}_i$.
Hence
\begin{equation}
  \norm{B}^2=\norm{B^*}^2
  =\sup_{\norm{F}_{(N)}=1}\sum_i\norm{a(e_i)F}_{(N-1)}^2.
\end{equation}
The last sum is $\langle F,d\Gamma(S)F\rangle_{(N)}$, where $d\Gamma(\cdot)$ is the second-quantization (number) operator on Fock space and $S=\sum_i e_i\otimes e_i$ is the frame operator of the family $\{e_i\}$.
The operator norm of $S$ is $\norm{C}_2=\kappa$, so $d\Gamma(S)\preceq N\kappa I$ on the $N$th homogeneous Fock space.
\end{proof}

For a fixed edge $(i,j)$, set
\begin{equation}
  r_{j|i}=\frac{e_j-C_{ij}e_i}{(1-C_{ij}^2)^{1/2}}.
\end{equation}
The support condition gives $1-C_{ij}^2\ge c_\eps>0$.
Define the one-sided residual subspace
\begin{equation}
  R_{j|i,N-1}=\Span\{e_i,r_{j|i}\}^{\odot(N-1)}\ominus
  \Span\{e_i^{\odot(N-1)}\}.
\end{equation}

\begin{lemma}[One-sided residual synthesis]
\label{supp-lem:one-sided}
For each fixed $i$ and arbitrary $U_{ij}\in R_{j|i,N-1}$,
\begin{equation}
  \norm{\sum_j U_{ij}}_{(N-1)}^2
  \le C_\eps\kappa\sum_j\norm{U_{ij}}_{(N-1)}^2.
\end{equation}
\end{lemma}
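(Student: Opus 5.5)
The plan is to use the orthogonal splitting $H=\R e_i\oplus H_i'$ with $H_i'=e_i^{\perp}$. It reduces the synthesis bound to a Gram-matrix estimate for Schur powers of the conditional correlation matrix given $g_i$. That estimate is then controlled by Lemma~\ref{supp-lem:schur}.

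\emph{Grading.} Since $r_{j|i}\perp e_i$ and $\norm{r_{j|i}}_H=1$, the space $\Span\{e_i,r_{j|i}\}^{\odot(N-1)}$ has the orthogonal monomial basis $e_i^{\odot(N-1-k)}\vee r_{j|i}^{\odot k}$, $k=0,\ldots,N-1$. Removing $\Span\{e_i^{\odot(N-1)}\}$ removes exactly the $k=0$ monomial. Hence every $U_{ij}\in R_{j|i,N-1}$ can be written as
\[
U_{ij}=\sum_{k=1}^{N-1}\alpha_{jk}\,e_i^{\odot(N-1-k)}\vee r_{j|i}^{\odot k}.
\]
The global decomposition $H^{\odot(N-1)}=\bigoplus_{k}e_i^{\odot(N-1-k)}\vee (H_i')^{\odot k}$ is orthogonal. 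Its $k$th summand is isometric, up to a factor $c_{N,k}>0$ depending only on $N$ and $k$, to $(H_i')^{\odot k}$ under the map $X\mapsto e_i^{\odot(N-1-k)}\vee X$. Both $\norm{\sum_jU_{ij}}^2$ and $\sum_j\norm{U_{ij}}^2$ split over $k$. The constants $c_{N,k}$ appear on both sides and cancel. It therefore suffices to show, for each fixed $k\ge1$ and scalars $\alpha_j$,
\[
\Bigl\|\sum_j\alpha_j r_{j|i}^{\odot k}\Bigr\|_{(k)}^2\le C_\eps\kappa\sum_j|\alpha_j|^2\,\norm{r_{j|i}^{\odot k}}_{(k)}^2 .
\]
I expect the main care point to be this step: checking that the Fock-norm factorization holds exactly with $e_i\perp H_i'$, so that the constants really cancel and no $N$-dependence leaks in.

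\emph{Gram estimate.} We have $\ip{r_{j|i}^{\odot k}}{r_{l|i}^{\odot k}}_{(k)}=k!\,G_{jl}^k$, where $G_{jl}=\ip{r_{j|i}}{r_{l|i}}_H$. So the left side equals $k!\,\alpha^*G^{\circ k}\alpha$, and the right side is $k!\,C_\eps\kappa\norm{\alpha}_2^2$. A direct computation gives $G=D^{-1}(C-cc^\top)D^{-1}$, restricted to the indices $j$ with $(i,j)\in E$. Here $c=(C_{ij})_j$ and $D=\diag\bigl((1-C_{ij}^2)^{1/2}\bigr)$.

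The matrix $C-cc^\top$ is the conditional covariance of $g$ given $g_i$. It is therefore positive semidefinite and satisfies $C-cc^\top\preceq C\preceq\kappa I$. The support condition gives $D^2\succeq c_\eps I$ with $c_\eps=1-(1-\eps)^2$, so $0\preceq G\preceq (\kappa/c_\eps) I$. Moreover $G$ has unit diagonal, so $G$ is a correlation matrix. The first part of Lemma~\ref{supp-lem:schur} then applies with $C$ replaced by $G$ and gives $\norm{G^{\circ k}}_2\le\norm{G}_2\le\kappa/c_\eps$ for every $k\ge1$. This yields the claim with $C_\eps=c_\eps^{-1}$, uniformly in $k$ and $N$.

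Repeated indices $j$, or coinciding residual directions, cause no difficulty, since the argument works with an arbitrary index family.
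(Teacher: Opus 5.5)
Your proposal is correct and follows the paper's proof essentially step for step. Both arguments grade $\Span\{e_i,r_{j|i}\}^{\odot(N-1)}$ orthogonally by the degree in $e_i^\perp$, reduce at each degree to the Gram matrix of the residuals (your $G$, the paper's $R_i$) with its Schur powers, bound its operator norm by $C_\eps\kappa$ via $0\preceq C_{J,J}-C_{J,i}C_{i,J}\preceq C_{J,J}$ together with the support separation, and finish with the Schur-power contraction. The only cosmetic difference is that the paper normalizes the graded monomials to unit vectors through the explicit Fock factor $(n-b)!\,b!$, whereas you let the degree-dependent constant cancel between the two sides.
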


\begin{proof}
Write $n=N-1$.
By construction $\ip{r_{j|i}}{e_i}_H=0$, $\norm{r_{j|i}}_H=1$ and
\begin{equation}
  \ip{r_{j|i}}{r_{k|i}}_H
  =\frac{C_{jk}-C_{ij}C_{ik}}{(1-C_{ij}^2)^{1/2}(1-C_{ik}^2)^{1/2}}
  =:(R_i)_{jk},
\end{equation}
the correlation matrix of the residual vectors.
Under the decomposition $H=\Span\{e_i\}\oplus e_i^\perp$, the symmetric power splits orthogonally as $H^{\odot n}=\bigoplus_{b=0}^{n}e_i^{\odot(n-b)}\vee(e_i^\perp)^{\odot b}$, and accordingly
\begin{equation}
  R_{j|i,n}=\bigoplus_{b=1}^{n}
  \Span\{e_i^{\odot(n-b)}\vee r_{j|i}^{\odot b}\}.
\end{equation}
We first record the Fock normalization.
If $u\perp x$, $u\perp y$ and $\norm{u}_H=\norm{x}_H=\norm{y}_H=1$, then in the homogeneous Fock norm
\begin{equation}
  \ip{u^{\odot(n-b)}\vee x^{\odot b}}{u^{\odot(n-b)}\vee y^{\odot b}}_{(n)}
  =(n-b)!\,b!\,\ip{x}{y}_H^{\,b},
  \label{supp-eq:fock-normalization}
\end{equation}
because the Gram permanent factors over the two orthogonal blocks.
In particular $\norm{e_i^{\odot(n-b)}\vee r_{j|i}^{\odot b}}_{(n)}^2=(n-b)!\,b!$, so the vectors
\begin{equation}
  E_{j,b}^{(i,n)}
  =\frac{e_i^{\odot(n-b)}\vee r_{j|i}^{\odot b}}{\sqrt{(n-b)!\,b!}}
\end{equation}
are unit vectors satisfying
\begin{equation}
  \ip{E_{j,b}^{(i,n)}}{E_{k,b}^{(i,n)}}_{(n)}=(R_i)_{jk}^{\,b},
  \qquad
  \ip{E_{j,b}^{(i,n)}}{E_{k,b'}^{(i,n)}}_{(n)}=0
  \quad(b\ne b').
\end{equation}
At fixed $b$ the Gram matrix of $\{E_{j,b}^{(i,n)}\}_j$ is therefore exactly $R_i^{\circ b}$, with no residual combinatorial factor.

Expand $U_{ij}=\sum_{b=1}^{n}z_{ijb}E_{j,b}^{(i,n)}$, so that $\norm{U_{ij}}_{(n)}^2=\sum_{b=1}^n|z_{ijb}|^2$.
Orthogonality across $b$ gives
\begin{equation}
  \norm{\sum_jU_{ij}}_{(n)}^2
  =\sum_{b=1}^{n}\norm{\sum_jz_{ijb}E_{j,b}^{(i,n)}}_{(n)}^2
  =\sum_{b=1}^{n}z_{ib}^*R_i^{\circ b}z_{ib},
  \qquad z_{ib}=(z_{ijb})_j .
\end{equation}
Moreover,
\begin{equation}
  R_i=D_i^{-1}(C_{J,J}-C_{J,i}C_{i,J})D_i^{-1},
  \qquad D_i=\diag\{(1-C_{ij}^2)^{1/2}\}_j.
\end{equation}
Because $0\preceq C_{J,J}-C_{J,i}C_{i,J}\preceq C_{J,J}$ and $\norm{D_i^{-1}}_2\le\eps^{-1/2}$ by the support condition, we have $\norm{R_i}_2\le C_\eps\kappa$.
Since $R_i$ is a correlation matrix, so is $R_i^{\circ(b-1)}$ for every $b\ge1$ by the Schur product theorem, and writing $R_i^{\circ b}=R_i\circ R_i^{\circ(b-1)}$ exhibits $R_i^{\circ b}$ as the image of $R_i$ under the unital completely positive Schur multiplier $M\mapsto M\circ R_i^{\circ(b-1)}$, which is an operator-norm contraction on Hermitian matrices, exactly as in the proof of Lemma~\ref{supp-lem:schur}.
Hence $\norm{R_i^{\circ b}}_2\le\norm{R_i}_2\le C_\eps\kappa$ for every $b\ge1$.
Therefore
\begin{equation}
  \norm{\sum_jU_{ij}}_{(n)}^2
  \le C_\eps\kappa\sum_{b=1}^{n}\norm{z_{ib}}_2^2
  =C_\eps\kappa\sum_j\norm{U_{ij}}_{(n)}^2.
\end{equation}
No factor of $N$ appears because the sum over $b$ is an orthogonal sum.
\end{proof}

\begin{lemma}[Pure projection correction]
\label{supp-lem:pure}
For arbitrary $U_{ij}\in R_{j|i,N-1}$,
\begin{equation}
  \norm{\sum_{(i,j)\in E}P_{P_{ij,N}}c(e_i)U_{ij}}_{(N)}^2
  \le C_\eps N\kappa^2\sum_{(i,j)\in E}\norm{U_{ij}}_{(N-1)}^2,
\end{equation}
where
\begin{equation}
  P_{ij,N}=\Span\{e_i^{\odot N},e_j^{\odot N}\}.
\end{equation}
\end{lemma}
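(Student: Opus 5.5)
The plan is to compute the projection onto the two-dimensional space $P_{ij,N}$ explicitly, edge by edge, and then resum the resulting pure tensors vertex by vertex using the Schur contraction of Lemma~\ref{supp-lem:schur}. The factor $N$ will come from a single annihilation step. The factor $\kappa^2$ will come from two uses of $\kappa$: once through the row sums $\sum_j C_{ij}^2\le\kappa$, and once through $\norm{C^{\circ N}}_2\le\kappa$.

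\emph{Step 1: projection coefficients.} Fix an edge $(i,j)$ and put $n=N-1$. The unit vectors $\widehat e_i^{(N)}$ and $\widehat e_j^{(N)}$ span $P_{ij,N}$. By the Fock normalization their Gram matrix is $\begin{pmatrix}1&C_{ij}^N\\ C_{ij}^N&1\end{pmatrix}$. Its smallest eigenvalue is $1-|C_{ij}|^N\ge 1-|C_{ij}|\ge\eps$, so the inverse Gram matrix has norm at most $\eps^{-1}$. The projection of $c(e_i)U_{ij}$ is therefore $\alpha_{ij}\widehat e_i^{(N)}+\beta_{ij}\widehat e_j^{(N)}$, with $|\alpha_{ij}|+|\beta_{ij}|\le C_\eps(|p_{ij}|+|p'_{ij}|)$, where $p_{ij}$ and $p'_{ij}$ are the inner products of $c(e_i)U_{ij}$ with $\widehat e_i^{(N)}$ and $\widehat e_j^{(N)}$. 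Using $a=c^*$ and the formula $a(e_i)\widehat e_k^{(N)}=\sqrt N C_{ik}\widehat e_k^{(N-1)}$, we get
\[
p_{ij}=\sqrt N\,\ip{\widehat e_i^{(n)}}{U_{ij}}_{(n)}=0,
\qquad
p'_{ij}=\sqrt N\,C_{ij}\ip{\widehat e_j^{(n)}}{U_{ij}}_{(n)}.
\]
The first vanishes because $U_{ij}\in R_{j|i,n}\perp e_i^{\odot n}$. Cauchy--Schwarz against the unit vector $\widehat e_j^{(n)}$ then gives the key coefficient bound
\[
|\alpha_{ij}|+|\beta_{ij}|\le C_\eps\sqrt N\,|C_{ij}|\,\norm{U_{ij}}_{(n)}.
\]
The factor $|C_{ij}|$ here plays the same role as the $|\rho|$ gain in Lemma~\ref{supp-lem:pair-projection}: it is what lets the sum over many edges at a vertex stay bounded.

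\emph{Step 2: resummation over vertices.} Regroup the sum over edges as $\sum_k x_k\widehat e_k^{(N)}$, where
\[
x_k=\sum_{j:(k,j)\in E}\alpha_{kj}+\sum_{i:(i,k)\in E}\beta_{ik}.
\]
The Gram matrix of $\{\widehat e_k^{(N)}\}_k$ is $C^{\circ N}$, so by Lemma~\ref{supp-lem:schur} the squared norm of the sum is $x^\top C^{\circ N}x\le\kappa\norm{x}_2^2$. For the outgoing part, Cauchy--Schwarz over $j$ together with $\sum_jC_{kj}^2\le\kappa$ gives
\[
\Bigl(\sum_j|\alpha_{kj}|\Bigr)^2\le C_\eps N\kappa\sum_j\norm{U_{kj}}_{(n)}^2 .
\]
The incoming part is handled the same way, using the column sums $\sum_iC_{ik}^2\le\kappa$ (valid since $C$ is symmetric). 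Summing over $k$ yields $\norm{x}_2^2\le C_\eps N\kappa\sum_{(i,j)\in E}\norm{U_{ij}}_{(n)}^2$, and combining with the factor $\kappa$ from $C^{\circ N}$ gives the claimed bound $C_\eps N\kappa^2\sum_{(i,j)\in E}\norm{U_{ij}}_{(N-1)}^2$.

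\emph{Main obstacle.} The delicate point is Step 1. One has to check that the $e_i$-coefficient vanishes exactly, which is where membership $U_{ij}\in R_{j|i,N-1}$ is used. One also has to check that the $2\times2$ Gram inversion costs only a constant depending on $\eps$ and not on $N$, which holds because $|C_{ij}|^N\le|C_{ij}|\le1-\eps$. The bound deliberately keeps the factor $N$; it is meant to be cancelled later by the number-operator lower bound, as described after Proposition~\ref{supp-prop:fusion-frame}.
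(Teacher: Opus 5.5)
Your proposal is correct and follows the paper's proof essentially step for step. The shared steps are: the vanishing $e_i$-coefficient from $U_{ij}\perp e_i^{\odot(N-1)}$; the $N$-uniform $2\times2$ Gram inversion, giving coefficients of size $C_\eps\sqrt N\,|C_{ij}|\,\norm{U_{ij}}_{(N-1)}$; vertex-wise regrouping into outgoing and incoming parts, each bounded by Cauchy--Schwarz with $\sum_j C_{kj}^2\le\kappa$; and the final factor $\kappa$ from $\norm{C^{\circ N}}_2\le\kappa$. The only cosmetic difference is that you bound the smallest Gram eigenvalue below by $1-|C_{ij}|\ge\eps$, whereas the paper uses $1-(1-\eps)^2$; both are valid.
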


\begin{proof}
Let $\widehat e_i^{(N)}=e_i^{\odot N}/\sqrt{N!}$ be the unit pure tensor in the Fock norm.
Since $U_{ij}\in R_{j|i,N-1}$, we have $U_{ij}\perp\widehat e_i^{(N-1)}$, whence
\begin{equation}
  \langle c(e_i)U_{ij},\widehat e_i^{(N)}\rangle_{(N)}=0,
  \qquad
  |\langle c(e_i)U_{ij},\widehat e_j^{(N)}\rangle_{(N)}|
  =\sqrt N |C_{ij}|\,u_{ij},
\end{equation}
where
\begin{equation}
  u_{ij}
  =|\langle U_{ij},\widehat e_j^{(N-1)}\rangle_{(N-1)}|
  \le\norm{U_{ij}}_{(N-1)} .
\end{equation}
The two unit pure tensors $\widehat e_i^{(N)}$ and $\widehat e_j^{(N)}$ have inner product $C_{ij}^N$, so the projection
\begin{equation}
  P_{P_{ij,N}}c(e_i)U_{ij}=\alpha_{ij}\widehat e_i^{(N)}+\beta_{ij}\widehat e_j^{(N)}
\end{equation}
has coefficients solving the normal equations
\begin{equation}
\begin{pmatrix}1&C_{ij}^N\\ C_{ij}^N&1\end{pmatrix}
\begin{pmatrix}\alpha_{ij}\\ \beta_{ij}\end{pmatrix}
=
\begin{pmatrix}0\\ \sqrt N\,C_{ij}\langle U_{ij},\widehat e_j^{(N-1)}\rangle_{(N-1)}\end{pmatrix}.
\end{equation}
Since $N\ge2$ and $|C_{ij}|\le1-\eps$, the Gram matrix on the left has inverse of operator norm at most $\{1-(1-\eps)^2\}^{-1}\le C_\eps$, whence
\begin{equation}
  |\alpha_{ij}|\le C_\eps\sqrt N |C_{ij}|\,u_{ij},
  \qquad
  |\beta_{ij}|\le C_\eps\sqrt N |C_{ij}|\,u_{ij}.
\end{equation}
Write the total correction as $\sum_pc_p\widehat e_p^{(N)}$ with $c_p=a_p+b_p$, where
\begin{equation}
  a_p=\sum_{j:(p,j)\in E}\alpha_{pj},
  \qquad
  b_p=\sum_{i:(i,p)\in E}\beta_{ip}.
\end{equation}
For the outgoing part, Cauchy--Schwarz and Lemma~\ref{supp-lem:schur} with $r=1$ yield
\begin{align}
  \sum_p|a_p|^2
  &\le C_\eps N\sum_p
  \left(\sum_{j:(p,j)\in E} |C_{pj}|\,u_{pj}\right)^2\\
  &\le C_\eps N\sum_p\left(\sum_j|C_{pj}|^2\right)
  \left(\sum_{j:(p,j)\in E}u_{pj}^2\right)
  \le C_\eps N\kappa\sum_{(p,j)\in E}\norm{U_{pj}}_{(N-1)}^2.
\end{align}
For the incoming part, the same chain with the roles of the two indices exchanged gives
\begin{align}
  \sum_p|b_p|^2
  &\le C_\eps N\sum_p
  \left(\sum_{i:(i,p)\in E} |C_{ip}|\,u_{ip}\right)^2\\
  &\le C_\eps N\sum_p\left(\sum_i|C_{ip}|^2\right)
  \left(\sum_{i:(i,p)\in E}u_{ip}^2\right)
  \le C_\eps N\kappa\sum_{(i,p)\in E}\norm{U_{ip}}_{(N-1)}^2.
\end{align}
Therefore $\sum_p|c_p|^2\le2\sum_p|a_p|^2+2\sum_p|b_p|^2\le C_\eps N\kappa\sum_{(i,j)\in E}\norm{U_{ij}}_{(N-1)}^2$.
The Gram matrix of the pure tensors $\{\widehat e_p^{(N)}\}$ is $C^{\circ N}$, whose operator norm is at most $\kappa$.
Thus
\begin{equation}
  \norm{\sum_pc_p\widehat e_p^{(N)}}_{(N)}^2
  =c^*C^{\circ N}c
  \le\kappa\sum_p|c_p|^2
  \le C_\eps N\kappa^2\sum_{(i,j)\in E}\norm{U_{ij}}_{(N-1)}^2.
\end{equation}
\end{proof}

The duality step in the proposition uses the following reduction, which we record separately.

\begin{lemma}[Reduction to residual inputs]
\label{supp-lem:wlog-residual}
Fix $N\ge2$ and an ordered edge $(i,j)$ with $|C_{ij}|<1$, and let $\Pi_{ij}$ denote the orthogonal projection of $H^{\odot(N-1)}$ onto $R_{j|i,N-1}$.
Then, for every $U\in H^{\odot(N-1)}$,
\begin{equation}
  P_{ij,N}^{\rm int}\,c(e_i)U
  =P_{ij,N}^{\rm int}\,c(e_i)\Pi_{ij}U .
  \label{supp-eq:wlog-residual}
\end{equation}
\end{lemma}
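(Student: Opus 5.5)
The identity is equivalent to $P_{ij,N}^{\rm int}\,c(e_i)(I-\Pi_{ij})U=0$ for every $U\in H^{\odot(N-1)}$. By linearity it therefore suffices to show that $P_{ij,N}^{\rm int}c(e_i)$ annihilates the orthogonal complement of $R_{j|i,N-1}$ in $H^{\odot(N-1)}$. The plan is to describe that complement explicitly and check the two resulting pieces separately.

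\textbf{Describing the complement.} Since $|C_{ij}|<1$, the vectors $e_i$ and $r_{j|i}$ are well defined and
\[
  \Span\{e_i,r_{j|i}\}=\Span\{e_i,e_j\}=:V_{ij}.
\]
Hence $W:=\Span\{e_i,r_{j|i}\}^{\odot(N-1)}$ equals $V_{ij}^{\odot(N-1)}$. By definition $R_{j|i,N-1}=W\ominus\Span\{e_i^{\odot(N-1)}\}$, so
\[
  H^{\odot(N-1)}\ominus R_{j|i,N-1}
  =\Span\{e_i^{\odot(N-1)}\}\oplus W^{\perp}.
\]
Here $W^\perp$ is the orthogonal complement of $W$ in $H^{\odot(N-1)}$ with the Fock inner product. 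Because the Fock norm is a constant multiple of the tensor norm on each homogeneous level, orthogonality does not depend on the normalization.

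\textbf{The pure-tensor piece.} For $U=e_i^{\odot(N-1)}$ we have $c(e_i)U=e_i\vee e_i^{\odot(N-1)}$. This is a scalar multiple of $e_i^{\odot N}$, so it lies in $\Span\{e_i^{\odot N},e_j^{\odot N}\}$, which is orthogonal to $K_{ij,N}$. Hence $P^{\rm int}_{ij,N}$ kills it.

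\textbf{The piece $U\in W^\perp$.} Here I will show that $c(e_i)U\perp V_{ij}^{\odot N}$. Since $K_{ij,N}\subset V_{ij}^{\odot N}$, this gives $P^{\rm int}_{ij,N}c(e_i)U=0$. By adjointness in the Fock norms,
\[
  \ip{c(e_i)U}{F}_{(N)}=\ip{U}{a(e_i)F}_{(N-1)}\quad\text{for }F\in V_{ij}^{\odot N},
\]
so it suffices that $a(e_i)$ maps $V_{ij}^{\odot N}$ into $V_{ij}^{\odot(N-1)}=W$. To verify this, I would use the spanning set $\{x^{\odot N}:x\in V_{ij}\}$ of $V_{ij}^{\odot N}$, which spans by polarization. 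On it, by the same computation as the displayed formula for $a(e_i)\widehat e_j^{(N)}$,
\[
  a(e_i)x^{\odot N}=N\ip{e_i}{x}_H\,x^{\odot(N-1)}\in W.
\]
Equivalently, $a(u)$ contracts one tensor slot against $u$, so it preserves $V^{\odot\cdot}$ for any subspace $V$.

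\textbf{Main obstacle.} The only delicate step is justifying the annihilation formula and the adjointness under the homogeneous Fock normalization, including the factor $N$ versus $\sqrt N$. The constant is irrelevant here, since only the range inclusion $a(e_i)V_{ij}^{\odot N}\subset W$ is used. Combining the two pieces gives \eqref{supp-eq:wlog-residual}.
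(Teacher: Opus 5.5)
Your proof is correct and follows essentially the same route as the paper. The paper also splits $U$ into its $R_{j|i,N-1}$ component, its $e_i^{\odot(N-1)}$ component and a part orthogonal to $\Span\{e_i,e_j\}^{\odot(N-1)}$. It kills the second because $c(e_i)$ sends it into $\Span\{e_i^{\odot N}\}$, and the third by adjointness together with $a(e_i)\Span\{e_i,e_j\}^{\odot N}\subset\Span\{e_i,e_j\}^{\odot(N-1)}$. Your polarization argument justifies that inclusion, which the paper simply asserts.
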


\begin{proof}
Write $U=\Pi_{ij}U+U_1+U_2$, where $U_1$ is the component along $e_i^{\odot(N-1)}$ and $U_2$ is orthogonal to $\Span\{e_i,e_j\}^{\odot(N-1)}$; this decomposition is orthogonal because $\Span\{e_i,e_j\}^{\odot(N-1)}=\Span\{e_i^{\odot(N-1)}\}\oplus R_{j|i,N-1}$.
For every $W\in\Span\{e_i,e_j\}^{\odot N}$,
\[
\ip{c(e_i)U_2}{W}_{(N)}=\ip{U_2}{a(e_i)W}_{(N-1)}=0,
\]
since $a(e_i)W\in\Span\{e_i,e_j\}^{\odot(N-1)}$.
As $K_{ij,N}\subset\Span\{e_i,e_j\}^{\odot N}$, this gives $P_{ij,N}^{\rm int}c(e_i)U_2=0$.
Finally, $c(e_i)U_1$ is a multiple of $e_i^{\odot N}$, which lies in the subtracted space $\Span\{e_i^{\odot N},e_j^{\odot N}\}$, so $P_{ij,N}^{\rm int}c(e_i)U_1=0$.
\end{proof}

\begin{proof}[Proof of Proposition~\ref{supp-prop:fusion-frame}]
Fix $N\ge2$.
Equip the Hilbert direct sum $\mathcal H_\oplus=\bigoplus_{(i,j)\in E}H^{\odot(N-1)}$ with the norm $\norm{U}_\oplus^2=\sum_{(i,j)\in E}\norm{U_{ij}}_{(N-1)}^2$, and define the oriented analysis operator
\begin{equation}
  T\colon H^{\odot N}\to\mathcal H_\oplus,
  \qquad
  TF=\bigl\{a(e_i)P_{ij,N}^{\rm int}F\bigr\}_{(i,j)\in E},
\end{equation}
whose adjoint is the synthesis map $T^*U=\sum_{(i,j)\in E}P_{ij,N}^{\rm int}c(e_i)U_{ij}$ on arbitrary inputs $U\in\mathcal H_\oplus$.
By Lemma~\ref{supp-lem:wlog-residual}, $T^*U=T^*\Pi U$, where $(\Pi U)_{ij}=\Pi_{ij}U_{ij}$ and $\norm{\Pi U}_\oplus\le\norm{U}_\oplus$; since residual families are themselves admissible inputs,
\begin{equation}
  \norm{T^*}
  =\sup\Bigl\{\norm{\textstyle\sum_{(i,j)\in E}P_{ij,N}^{\rm int}c(e_i)U_{ij}}_{(N)}
  :\ \norm{U}_\oplus\le1,\ U_{ij}\in R_{j|i,N-1}\Bigr\}.
\end{equation}
It therefore suffices to prove, for residual inputs $U_{ij}\in R_{j|i,N-1}$, the synthesis bound
\begin{equation}
  \norm{\sum_{(i,j)\in E}P_{ij,N}^{\rm int}c(e_i)U_{ij}}_{(N)}^2
  \le C_\eps N\kappa^2\sum_{(i,j)\in E}\norm{U_{ij}}_{(N-1)}^2.
  \label{supp-eq:oriented-synthesis}
\end{equation}
Since $c(e_i)U_{ij}\in \Span\{e_i,e_j\}^{\odot N}$,
\begin{equation}
  P_{ij,N}^{\rm int}c(e_i)U_{ij}
  =c(e_i)U_{ij}-P_{P_{ij,N}}c(e_i)U_{ij}.
\end{equation}
The main term is
\begin{equation}
  S_0=\sum_{i,j}c(e_i)U_{ij}=\sum_i c(e_i)U_i,
  \qquad U_i=\sum_jU_{ij}.
\end{equation}
By Lemmas~\ref{supp-lem:creation} and \ref{supp-lem:one-sided},
\begin{equation}
  \norm{S_0}_{(N)}^2\le N\kappa\sum_i\norm{U_i}_{(N-1)}^2
  \le C_\eps N\kappa^2\sum_{i,j}\norm{U_{ij}}_{(N-1)}^2.
\end{equation}
The correction term
\begin{equation}
  S_1=\sum_{(i,j)\in E}P_{P_{ij,N}}c(e_i)U_{ij}
\end{equation}
is bounded by Lemma~\ref{supp-lem:pure}.
Therefore \eqref{supp-eq:oriented-synthesis} follows from $\norm{S_0-S_1}^2\le2\norm{S_0}^2+2\norm{S_1}^2$.
Consequently $\norm{T}^2=\norm{T^*}^2\le C_\eps N\kappa^2$, which is the oriented analysis estimate
\begin{equation}
  \sum_{(i,j)\in E}\norm{a(e_i)P_{ij,N}^{\rm int}F}_{(N-1)}^2
  \le C_\eps N\kappa^2\norm{F}_{(N)}^2.
  \label{supp-eq:oriented-analysis}
\end{equation}
The same bound holds with the roles of $i$ and $j$ interchanged:
\begin{equation}
  \sum_{(i,j)\in E}\norm{a(e_j)P_{ij,N}^{\rm int}F}_{(N-1)}^2
  \le C_\eps N\kappa^2\norm{F}_{(N)}^2.
\end{equation}
Now let $f=P_{ij,N}^{\rm int}F\in K_{ij,N}$.
On $\Span\{e_i,e_j\}$ the frame operator
\begin{equation}
  S_{ij}=e_i\otimes e_i+e_j\otimes e_j
\end{equation}
has smallest eigenvalue $1-|C_{ij}|\ge\eps$.
Hence $d\Gamma(S_{ij})\succeq N\eps I$ on $\Span\{e_i,e_j\}^{\odot N}$, while
\begin{equation}
  \ip{f}{d\Gamma(S_{ij})f}_{(N)}
  =\norm{a(e_i)f}_{(N-1)}^2+\norm{a(e_j)f}_{(N-1)}^2 ,
\end{equation}
so that
\begin{equation}
  \norm{f}_{(N)}^2
  \le \frac{1}{N\eps}
  \{\norm{a(e_i)f}_{(N-1)}^2+\norm{a(e_j)f}_{(N-1)}^2\}.
\end{equation}
Summing over $(i,j)\in E$ and inserting the two oriented analysis bounds gives \eqref{supp-eq:fusion-analysis}: the factor $N$ from \eqref{supp-eq:oriented-analysis} is canceled exactly by the $1/(N\eps)$ from the number operator.
The synthesis form \eqref{supp-eq:fusion-synthesis} follows by Hilbert-space duality.
\end{proof}

\subsection*{Final assembly}
\paragraph*{Two-vertex residual}
The fusion-frame proposition gives the corresponding bound for the genuinely two-vertex part.
Fix a support pair $(i,j)$ and write $\eta^{\rm int}_{ij}=\eta_{C_{ij}}^{\rm int}(g_i,g_j)$.
Since $\eta^{\rm int}_{ij}\in L^2(\sigma(g_i,g_j))$, it has a chaos expansion $\eta^{\rm int}_{ij}=\sum_{N\ge0}I_N(F_{ij,N})$ with kernels $F_{ij,N}\in\Span\{e_i,e_j\}^{\odot N}$.
Centering kills the constant term, $F_{ij,0}=0$.
Orthogonality of $\eta^{\rm int}_{ij}$ to $L_0^2(g_i)+L_0^2(g_j)$ forces, for every $N\ge1$,
\begin{equation}
  F_{ij,N}\perp\Span\{e_i^{\odot N},e_j^{\odot N}\}.
\end{equation}
At $N=1$ this reads $F_{ij,1}\perp\Span\{e_i,e_j\}$, while also $F_{ij,1}\in\Span\{e_i,e_j\}$, so $F_{ij,1}=0$.
For $N\ge2$,
\begin{equation}
  F_{ij,N}\in\Span\{e_i,e_j\}^{\odot N}\ominus\Span\{e_i^{\odot N},e_j^{\odot N}\}
  =K_{ij,N}.
\end{equation}
Hence
\begin{equation}
  \eta_{ij}^{\rm int}=\sum_{N\ge2}I_N(F_{ij,N}),
  \qquad F_{ij,N}\in K_{ij,N}.
\end{equation}
Define the truncations $Q_M^{(2)}(A)=\sum_{(i,j)\in E}A_{ij}\sum_{N=2}^MI_N(F_{ij,N})$.
For each $M$, chaos orthogonality and the synthesis form \eqref{supp-eq:fusion-synthesis} with coefficients $A_{ij}$ give
\begin{align}
  \Var(Q_M^{(2)}(A))
  &=\sum_{N=2}^M\norm{\sum_{i,j}A_{ij}F_{ij,N}}_{(N)}^2\\
  &\le C_\eps\kappa^2\sum_{i,j}|A_{ij}|^2
  \sum_{N=2}^M\norm{F_{ij,N}}_{(N)}^2\\
  &\le C_\eps\kappa^2\sum_{i,j}|A_{ij}|^2
  \norm{\eta_{ij}^{\rm int}}_2^2.
\end{align}
By Lemma~\ref{supp-lem:pair-projection}, $\norm{\eta_{ij}^{\rm int}}_2\le2\norm{h}_\infty^2$.
Because the edge set is finite and $\sum_{N\ge2}\norm{F_{ij,N}}_{(N)}^2=\norm{\eta^{\rm int}_{ij}}_2^2<\infty$ for each edge,
\begin{equation}
  \norm{Q^{(2)}(A)-Q_M^{(2)}(A)}_2
  \le\sum_{(i,j)\in E}|A_{ij}|\,
  \norm{\eta^{\rm int}_{ij}-\sum_{N=2}^MI_N(F_{ij,N})}_2
  \longrightarrow0,
\end{equation}
so $Q_M^{(2)}(A)\to Q^{(2)}(A)$ in $L^2$ and, since all terms are centered, $\Var(Q_M^{(2)}(A))\to\Var(Q^{(2)}(A))$.
Passing to the limit yields
\begin{equation}
  \Var(Q^{(2)}(A))\le K_{h,\eps}\kappa^2\norm{A}_F^2.
  \label{eq:two-vertex-main-bound}
\end{equation}

\paragraph*{Assembly}
\begin{proof}[Proof of the contraction theorem]
First assume that $A$ is real symmetric and hollow.
The decomposition
\begin{equation}
  Q(A)=Q^{(1)}(A)+Q^{(2)}(A)
\end{equation}
gives
\begin{equation}
  \Var(Q(A))\le2\Var(Q^{(1)}(A))+2\Var(Q^{(2)}(A)).
\end{equation}
Lemma~\ref{supp-lem:one-vertex} and \eqref{eq:two-vertex-main-bound} yield
\begin{equation}
  \Var(v^\top Av)\le K_{h,\eps}\kappa^2\norm{A}_F^2.
\end{equation}
For Hermitian $A$, $\operatorname{Re}A$ is real symmetric and $\operatorname{Im}A$ is real skew-symmetric.
Since $v$ is real-valued,
\begin{equation}
  v^*Av=v^\top(\operatorname{Re}A)v,
  \qquad v^\top(\operatorname{Im}A)v=0.
\end{equation}
The matrix $\operatorname{Re}A$ is real symmetric and hollow, and $\norm{\operatorname{Re}A}_F\le\norm{A}_F$.
Applying the real symmetric result to $\operatorname{Re}A$ proves the theorem.
\end{proof}

\begin{remark}[Coordinate-dependent transforms]
\label{supp-rem:heterogeneous}
The proof above never uses that the two transforms at the endpoints of an edge coincide.
If $v_i=h_i(g_i)$ with centered $h_i\in L^2(\gamma)\cap L^\infty(\gamma)$, then the pairwise projection bounds of Lemma~\ref{supp-lem:pair-projection} hold for $\eta(G_1,G_2)=h_i(G_1)h_j(G_2)-\E[h_i(G_1)h_j(G_2)]$, with the constant determined by $\sup_i\norm{h_i}_{L^2(\gamma)}$ and $\sup_i\norm{h_i}_\infty$; the one-vertex aggregate of Lemma~\ref{supp-lem:one-vertex} is already heterogeneous in $i$; the inclusion $F_{ij,N}\in K_{ij,N}$ uses only orthogonality to $L_0^2(g_i)+L_0^2(g_j)$ and chaos grading; and Proposition~\ref{supp-prop:fusion-frame} does not involve the transforms.
Hence the contraction theorem holds verbatim for coordinate-dependent centered transforms, in particular for threshold signs $h_{\tau_i}$ with coordinate-dependent thresholds $\tau_i$ in a compact interval.
\end{remark}

\section{Proof of the one-bit sparse-ruler Toeplitz upper bound}
\label{sec:supp-upper}

\paragraph{Proof map for the main upper-bound theorem}
The proof follows the estimator decomposition in Section~2 of the main paper.
For the oracle estimator, Theorem~4.1 and the identity $\norm{A_\theta(w)}_F^2\le2W^2\varphi(\Omega)$ give pointwise variance control for centered sparse-ruler spectral polynomials.
A trigonometric grid and Bernstein's inequality then turn pointwise concentration into Toeplitz operator-norm control.
A Taylor expansion of the inverse centered link $\psi(\cdot;\tau)$ gives the first-order coverage term; a per-lag application of the same contraction theorem bounds the sum of squared lag deviations and places the second-order curvature term at the coverage scale, leaving the spectral boundedness term as the only $d$-scale remainder.

The plug-in proof uses the same oracle bound after accounting for marginal-bit calibration of $(\gamma_0,\mu,\tau)$ from $n|\Omega|$ signs, empirical recentering by $\widehat\mu$, and the deterministic population perturbation controlled by $1+S_1(d;\rho)$; on operator-norm balls a Hadamard-power argument removes the short-memory factor from this last term.

The assumptions and notation below are kept aligned with the main theorem so the supplement is separately compilable and can be checked against the main-text statements it imports.

\subsection{Upper bounds over natural covariance classes}
We state the main covariance result over standard Toeplitz covariance classes rather than through an abstract calibration certificate.
The constants depend on compact threshold and inverse-link regularity domains but not on $d$, $n$ or the ruler geometry.

\begin{assumption}[Buffered regular inverse-link regime]
\label{ass:inverse-s}
There exist $\eps\in(0,1/4)$ and $0<\tau_{\min}<\tau_{\max}<\infty$ such that
\[
    |\rho_s|\le1-2\eps,\qquad s=1,\ldots,d-1,
\]
and $\tau\in[\tau_{\min},\tau_{\max}]$, while the estimator clips to the larger compact inverse-link domain
\[
    \mathcal I_{\tau'}=c([-1+\eps,1-\eps];\tau').
\]
Set
\[
    L_k=1\vee\sup\bigl\{|\partial_u^k\psi(u;\tau')|:\
    \tau'\in[\tau_{\min}/2,2\tau_{\max}],\ u\in\mathcal I_{\tau'}\bigr\},
    \qquad k=1,2,
\]
so that $L_1,L_2\ge1$ are uniform inverse-link derivative bounds on the buffered domain; they are finite by Lemma~\ref{supp-lem:inverse-link}(i).
The two-layer structure (true lags in $[-1+2\eps,1-2\eps]$, clipping at the $\eps$ layer) keeps every Taylor expansion of the plug-in analysis inside the clipping domain (Lemma~\ref{supp-lem:inverse-link}(ii)).
\end{assumption}

Let
\[
    \kappa_{\rm obs}=\norm{\Gamma_{\Omega,\Omega}}_2/\gamma_0,
    \qquad
    S_1(d;\rho)=\sum_{s=1}^{d-1}|\rho_s|.
\]
The empirical centered pair fluctuation is governed by $\kappa_{\rm obs}$.
The plug-in population shift is controlled by the short-memory size $S_1(d;\rho)$.

\begin{assumption}[Short-memory Toeplitz class]
\label{ass:short-memory-s}
The normalized Toeplitz lags satisfy
\[
    S_1(d;\rho)=\sum_{s=1}^{d-1}|\rho_s|\le S_\star,
\]
where $S_\star$ is independent of $d$ and $n$.
\end{assumption}

\paragraph{Main-text theorem notation}
Assume Assumption~\ref{ass:inverse-s}.
Let $t=\log(Cd/\delta)$ and $m=|\Omega|$.
There are constants $C,c,\eta_0>0$, depending only on the regularity domains in Assumption~\ref{ass:inverse-s}, such that the following hold.

\noindent\textup{(a) Oracle estimator.}
If the scale and sign mean are known, then with probability at least $1-\delta$,
\begin{equation}
    \norm{\widehat\Gamma_{\rm or}-\Gamma}_2
      \le C_\eps\gamma_0L_1\kappa_{\rm obs}
          \sqrt{\frac{\varphi(\Omega)t}{n}}
       + C\gamma_0L_1d\frac{t}{n}
       + C_\eps\gamma_0L_2\left\{
         \min\{\kappa_{\rm obs}^2\varphi(\Omega),d\}\frac{t}{n}
         +d\frac{t^2}{n^2}\right\}.
\label{eq:oracle-rate-s}
\end{equation}

\noindent\textup{(b) Plug-in estimator.}
In addition assume Assumption~\ref{ass:short-memory-s}.
Define
\[
    r_\mu=C_\tau\left\{\sqrt{\frac{\kappa_{\rm obs}t}{nm}}+\frac{t}{n}\right\}.
\]
If
\[
    n\ge C\frac{mt}{\kappa_{\rm obs}},
    \qquad
    r_\mu\le c\eta_0,
\]
then with probability at least $1-\delta$,
\begin{align}
    \norm{\widehat\Gamma_{\rm plug}-\Gamma}_2
      &\le C_\eps\gamma_0L_1\kappa_{\rm obs}
          \sqrt{\frac{\varphi(\Omega)t}{n}}
        + C\gamma_0L_1d\frac{t}{n}
        + C_{\tau,\eps}\gamma_0L_2\left\{
          \min\{\kappa_{\rm obs}^2\varphi(\Omega),d\}\frac{t}{n}
          +d\frac{t^2}{n^2}\right\} \nonumber\\
      &\quad
        + C_{\tau,\eps}\gamma_0\{1+S_1(d;\rho)\}r_\mu .
\label{eq:plugin-rate-s}
\end{align}
In particular, uniformly over the short-memory class $S_1(d;\rho)\le S_\star$, the last term is at most
\[
    C_{\tau,\eps,S_\star}\gamma_0
    \left\{\sqrt{\frac{\kappa_{\rm obs}t}{nm}}+\frac{t}{n}\right\}.
\]

The bound separates the centered pair-coverage term through $\varphi(\Omega)$, the spectral boundedness term, the curvature term at the coverage scale, and the marginal-bit calibration term from $n|\Omega|$ one-coordinate signs.
Over short-memory spectral classes the calibration term remains lower-dimensional.

The lower bound below isolates the design-dependent part of the leading oracle term.
It matches the $\sqrt{\varphi(\Omega)\log d/n}$ dependence in a known-scale identity-neighborhood submodel under balanced spectral packing, without asserting sharpness of the correlation-stability, inverse-link, Taylor-remainder, or plug-in calibration factors.

\subsection*{Proof of the main upper-bound theorem}

The oracle proof uses inverse-link stability and sparse-ruler concentration.
The plug-in proof then adds pooled marginal calibration and a deterministic population-calibration perturbation bound over short-memory lag sequences.

\subsection{Marginal calibration and inverse-link stability}
\begin{lemma}[Pooled marginal calibration]
\label{supp-lem:pooled-calibration}
Let $S_{j}^{(\ell)}=\sign(x_j^{(\ell)}-\lambda)$ be threshold signs generated by vectors $\vect x^{(\ell)}$ and observed on $\Omega$, where each snapshot is independent and the observed correlation matrix on $\Omega$ has spectral norm at most $C\kappa_{\rm obs}$.
If $\tau\in[\tau_{\min},\tau_{\max}]$, then with probability at least $1-e^{-t}$,
\begin{equation}
  |\widehat\tau-\tau|+
  |\widehat\mu-\mu|+
  |\widehat\gamma_0/\gamma_0-1|
  \le C_\tau\left(
  \sqrt{\frac{\kappa_{\rm obs} t}{n|\Omega|}}+\frac{t}{n}
  \right).
  \label{supp-eq:pooled-cal-event}
\end{equation}
\end{lemma}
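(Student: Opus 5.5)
The plan is to reduce everything to concentration of the pooled exceedance frequency $\widehat q_{\rm raw}$ and then transfer that bound to $(\widehat\tau,\widehat\mu,\widehat\gamma_0)$ by smooth maps with bounded derivatives on the compact window.

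\emph{Step 1: concentration of $\widehat q_{\rm raw}$.} Write $\widehat q_{\rm raw}-q=\frac1n\sum_{\ell=1}^n Z_\ell$ with
\[
Z_\ell=\frac{1}{|\Omega|}\sum_{j\in\Omega}\bigl(\one\{Y^{(\ell)}_j=1\}-q\bigr).
\]
The $Z_\ell$ are independent, centered, and satisfy $|Z_\ell|\le1$. Since $\one\{Y_j=1\}-q=\tfrac12 h_\tau(g_j)$, the heterogeneous one-coordinate contraction, Lemma~\ref{supp-lem:centered-transform-covariance}, applies with $b_i=1/|\Omega|$ and $\phi_i=\tfrac12 h_\tau$. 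It gives
\[
\Var(Z_\ell)\le \kappa_{\rm obs}\,|\Omega|\cdot|\Omega|^{-2}\cdot\norm{\tfrac12 h_\tau}_2^2\le \frac{C\kappa_{\rm obs}}{|\Omega|},
\]
with the assumed correlation-norm bound $C\kappa_{\rm obs}$ in place of $\kappa$. Bernstein's inequality for bounded independent summands then yields, with probability at least $1-e^{-t}$,
\[
|\widehat q_{\rm raw}-q|\le C\Bigl(\sqrt{\tfrac{\kappa_{\rm obs}t}{n|\Omega|}}+\tfrac tn\Bigr).
\]

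\emph{Step 2: transfer through the projection and the calibration maps.} Because $q=Q(\tau)\in Q_\tau$ and $\Pi_{Q_\tau}$ is 1-Lipschitz, $|\widehat q-q|\le|\widehat q_{\rm raw}-q|$, and $\widehat q$ always lies in $Q_\tau$. The map $Q^{-1}$ is smooth on $Q_\tau$, with derivative $-1/\phi(Q^{-1}(p))$. This derivative is bounded in absolute value by $1/\phi(\tau_{\max})$, since $|\tau'|\le\tau_{\max}$ there. Hence
\[
|\widehat\tau-\tau|\le C_\tau|\widehat q-q|,
\]
where $\widehat\tau,\tau\in[\tau_{\min},\tau_{\max}]$. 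Next, $\mu=1-2\Phi(\tau)$ is $2\phi(0)$-Lipschitz, which controls $|\widehat\mu-\mu|$. Finally, $\widehat\gamma_0/\gamma_0=(\tau/\widehat\tau)^2$. The map $x\mapsto(\tau/x)^2$ is Lipschitz on $[\tau_{\min},\tau_{\max}]$ with constant $2\tau_{\max}^2/\tau_{\min}^3$, which gives
\[
|\widehat\gamma_0/\gamma_0-1|\le C_\tau|\widehat\tau-\tau|.
\]
Summing the three bounds proves \eqref{supp-eq:pooled-cal-event}.

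\emph{Main obstacle.} The only nontrivial point is obtaining the $\kappa_{\rm obs}/|\Omega|$ variance rather than the trivial bound $1$. The coordinates within a snapshot are dependent, so naive averaging does not give the $1/|\Omega|$ gain. That gain comes entirely from Lemma~\ref{supp-lem:centered-transform-covariance}, whose Schur-power bound $\norm{C^{\circ q}}_2\le\kappa$ handles all Hermite orders at once. Everything after that is one-dimensional Lipschitz calculus on compacts. The projection onto $Q_\tau$ removes any need for a separate event on which $\widehat q$ stays inside the window.
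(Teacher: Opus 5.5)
Your proposal is correct and follows essentially the same route as the paper's proof. You bound the one-snapshot variance of the pooled indicator average by the heterogeneous one-coordinate contraction lemma with $b_j=|\Omega|^{-1}$ (your $\tfrac12 h_\tau$ is the paper's centered indicator $\one\{t>\tau\}-q$ almost everywhere), apply Bernstein across snapshots (the two-sided factor $2e^{-t}$ is absorbed into constants, as in the paper), and transfer through the non-expansive projection and the Lipschitz maps $Q^{-1}$, $1-2\Phi$ and $(\lambda/\cdot)^2$ on the compact window.
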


\begin{proof}
Let
\[
    \widehat q_{\rm raw}=(n|\Omega|)^{-1}\sum_{\ell,j}\mathbf 1\{S_j^{(\ell)}=1\},
    \qquad
    \mathcal Q_\tau=[Q(\tau_{\max}),Q(\tau_{\min})],
    \qquad
    \widehat q=\Pi_{\mathcal Q_\tau}(\widehat q_{\rm raw}),
\]
and let $q=Q(\tau)$.
For one snapshot, write $I_j=\mathbf 1\{S_j=1\}=\phi(g_j)+q$ with the centered indicator $\phi(t)=\mathbf 1\{t>\tau\}-q$, which is bounded with $\norm{\phi}_{L^2(\gamma)}^2=q(1-q)\le\tfrac14$.
Since the observed correlation matrix has spectral norm at most $C\kappa_{\rm obs}$, Lemma~\ref{supp-lem:centered-transform-covariance}, applied with $\phi_j=\phi$ and $b_j=|\Omega|^{-1}$, gives
\begin{equation}
\Var\left(|\Omega|^{-1}\sum_{j\in\Omega}I_j\right)
  \le C\kappa_{\rm obs}\norm{\phi}_{L^2(\gamma)}^2|\Omega|^{-1}
  \le C_\tau\frac{\kappa_{\rm obs}}{|\Omega|}.
\end{equation}
The summands are bounded and snapshots are independent.
Bernstein's inequality therefore yields
\begin{equation}
|\widehat q_{\rm raw}-q|
  \le C_\tau\left(
  \sqrt{\frac{\kappa_{\rm obs} t}{n|\Omega|}}+\frac{t}{n}
  \right)
\end{equation}
with probability at least $1-e^{-t}$.
Since the true $q=Q(\tau)$ belongs to $\mathcal Q_\tau$, the projection step is non-expansive:
\[
    |\widehat q-q|
    =
    |\Pi_{\mathcal Q_\tau}(\widehat q_{\rm raw})-q|
    \le |\widehat q_{\rm raw}-q|.
\]
Define $\widehat\tau=Q^{-1}(\widehat q)$.
On $\mathcal Q_\tau$, the map $q\mapsto Q^{-1}(q)$ has bounded derivative, and $\widehat\tau\in[\tau_{\min},\tau_{\max}]$.
The maps $\tau\mapsto1-2\Phi(\tau)$ and $\tau\mapsto(\lambda/\tau)^2$ are also Lipschitz on this compact interval. The mean-value theorem therefore transfers the displayed bound to $\widehat\tau$, $\widehat\mu$ and $\widehat\gamma_0/\gamma_0$.
\end{proof}

\begin{lemma}[Quantitative inverse-link calculus]
\label{supp-lem:inverse-link}
Write $\psi(\cdot;\tau')=c(\cdot;\tau')^{-1}$ and $\mathcal I_{\tau'}=c([-1+\eps,1-\eps];\tau')$.
By Plackett's identity \citep{Plackett1954}, $\partial_\rho c(\rho;\tau')=4\phi_2(\tau',\tau';\rho)>0$, where $\phi_2$ is the bivariate standard normal density, so on the compact set $\rho\in[-1+\eps,1-\eps]$, $\tau'\in[\tau_{\min}/2,2\tau_{\max}]$ the link $c$ is $C^2$ with $\partial_\rho c$ bounded above and below by positive constants and bounded second derivatives.
The following then hold, with all constants depending only on $\eps$, $\tau_{\min}$ and $\tau_{\max}$.
\begin{enumerate}
\item[(i)] $\psi(\cdot;\tau')$ has first and second derivatives on $\mathcal I_{\tau'}$ bounded by constants.
\item[(ii)] There exist $b_\eps,\eta_\eps>0$ such that, if $\tau\in[\tau_{\min},\tau_{\max}]$, $|\tau'-\tau|\le\eta_\eps$ and $|r|\le1-2\eps$, then
\begin{equation}
\operatorname{dist}\bigl\{c(r;\tau),\,\mathcal I_{\tau'}^{\,c}\bigr\}\ge b_\eps;
\label{supp-eq:buffer}
\end{equation}
if moreover $|u-c(r;\tau)|\le b_\eps/2$, the segment $\{c(r;\tau)+\theta(u-c(r;\tau)):0\le\theta\le1\}$ lies in $\mathcal I_{\tau'}$.
\item[(iii)] With $\tau_\eta=\tau(1+\eta)^{-1/2}$ and $a_\eta(r)=(1+\eta)\psi(c(r;\tau);\tau_\eta)-r$, there exist $C_{\tau,\eps},\eta_0>0$ such that $a_\eta$ is $C^2$ jointly in $(\eta,r)$ on $|\eta|\le\eta_0$, $|r|\le1-2\eps$, with $a_0\equiv0$, and
\begin{equation}
|a_\eta(r)|\le C_{\tau,\eps}|\eta|\,|r|.
\label{eq:scalar-pop-cal}
\end{equation}
\end{enumerate}
\end{lemma}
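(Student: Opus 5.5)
The plan is to derive all three parts from the regularity of the link $c(\rho;\tau')$ on the compact set $\mathcal K=[-1+\eps,1-\eps]\times[\tau_{\min}/2,2\tau_{\max}]$, combined with the inverse function theorem. By Plackett's identity, $\partial_\rho c=4\phi_2(\tau',\tau';\rho)$. The bivariate density $\phi_2(\tau',\tau';\rho)=\{2\pi\sqrt{1-\rho^2}\}^{-1}\exp\{-\tau'^2/(1+\rho)\}$ is smooth and strictly positive on $\mathcal K$. Hence there are constants $0<m_\eps\le M_\eps$ with $m_\eps\le\partial_\rho c\le M_\eps$, and all mixed partial derivatives up to order three are bounded on $\mathcal K$. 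The $\tau'$-derivative is obtained by differentiating $4\{\Prb(G_1>\tau',G_2>\tau')-Q(\tau')^2\}$.

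\textbf{Part (i).} Differentiate the identity $c(\psi(u;\tau');\tau')=u$. This gives $\partial_u\psi=1/\partial_\rho c$ and $\partial_u^2\psi=-\partial_\rho^2c/(\partial_\rho c)^3$, both evaluated at $\rho=\psi(u;\tau')$. On $\mathcal I_{\tau'}$ these are bounded by $m_\eps^{-1}$ and $M'_\eps m_\eps^{-3}$ respectively.

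\textbf{Part (ii).} The set $\mathcal I_{\tau'}$ is the interval $[c(-1+\eps;\tau'),c(1-\eps;\tau')]$. For $|r|\le1-2\eps$, the mean value theorem in $\rho$ gives
\[
c(1-\eps;\tau)-c(r;\tau)\ge m_\eps\eps,
\]
and the same bound holds at the lower endpoint. Since $\partial_{\tau'}c$ is bounded by $B_\eps$ on $\mathcal K$, moving from $\tau$ to $\tau'$ shifts each endpoint by at most $B_\eps|\tau'-\tau|$. Choose $\eta_\eps\le\min\{\tau_{\min}/2,\ m_\eps\eps/(2B_\eps)\}$; this keeps $\tau'$ inside the window. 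Then the distance from $c(r;\tau)$ to the complement of $\mathcal I_{\tau'}$ is at least $b_\eps:=m_\eps\eps/2$. The segment claim follows because an interval is convex and the ball of radius $b_\eps/2$ around $c(r;\tau)$ lies inside it.

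\textbf{Part (iii).} For $|\eta|\le\eta_0$ small, $\tau_\eta=\tau(1+\eta)^{-1/2}$ satisfies $|\tau_\eta-\tau|\le C\tau_{\max}|\eta|\le\eta_\eps$. By (ii), $c(r;\tau)\in\mathcal I_{\tau_\eta}$, so $a_\eta(r)$ is well defined. Since $\psi$ is $C^2$ jointly in $(u,\tau')$ by the implicit function theorem, $a_\eta(r)$ is jointly $C^2$ in $(\eta,r)$. At $\eta=0$ we have $\tau_0=\tau$, so $a_0(r)=\psi(c(r;\tau);\tau)-r=0$.

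For the bound \eqref{eq:scalar-pop-cal}, write $a_\eta(r)=\eta\int_0^1\partial_\eta a_{\theta\eta}(r)\,d\theta$. It therefore suffices to show $|\partial_\eta a_\eta(r)|\le C|r|$. The key point is that $a_\eta(0)=0$ for every $\eta$: at $r=0$ the pair $(G_1,G_2)$ is independent, so $c(0;\tau')=0$ for every $\tau'$. Hence $\psi(0;\tau_\eta)=0$, and therefore $a_\eta(0)=(1+\eta)\cdot0-0=0$. It follows that $\partial_\eta a_\eta(0)=0$. The function $\partial_\eta a_\eta(r)$ is $C^1$ in $r$ with a bounded derivative, because $a$ is jointly $C^2$. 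Applying the mean value theorem in $r$ then gives $|\partial_\eta a_\eta(r)|\le C_{\tau,\eps}|r|$.

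\textbf{Main obstacle.} The step needing the most care is the joint $C^2$ regularity in (iii), specifically bounding the mixed derivative $\partial_r\partial_\eta a$. I would write this derivative out explicitly using $\partial_{\tau'}\psi=-\partial_{\tau'}c/\partial_\rho c$. Its boundedness then reduces to boundedness of the third-order partial derivatives of $c$ on $\mathcal K$, which holds by the smoothness of $\phi_2$ away from $|\rho|=1$.
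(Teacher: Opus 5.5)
Your proposal is correct and matches the paper's proof essentially step for step: the same inverse-function bounds in (i), the same endpoint buffer with $b_\eps=m_\eps\eps/2$ and $\eta_\eps\le\min\{\tau_{\min}/2,\,m_\eps\eps/(2B_\eps)\}$ in (ii), and in (iii) the same use of $a_0\equiv0$ together with $a_\eta(0)=0$ (from $c(0;\tau')=0$). Your integral in $\eta$ followed by the mean value theorem in $r$ is the paper's double integral $a_\eta(r)=\int_0^\eta\int_0^r\partial_{\eta r}a_u(v)\,dv\,du$ taken in two steps; bounding that mixed derivative needs only second-order derivatives of $c$, not third.
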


\begin{proof}
The Plackett expression and compactness give the stated two-sided bound on $\partial_\rho c$ and the bound on its second derivatives; the inverse-function theorem then yields (i).

For (ii), $c(\cdot;\tau')$ increasing gives $\mathcal I_{\tau'}=[c(-1+\eps;\tau'),c(1-\eps;\tau')]$.
With
\[
m_\eps=\inf_{\mathcal K}\partial_\rho c,
\qquad
M_\eps=\sup_{\mathcal K}|\partial_{\tau'} c|,
\qquad
\mathcal K=[-1+\eps,1-\eps]\times[\tau_{\min}/2,2\tau_{\max}],
\]
set $\eta_\eps=\min\{\tau_{\min}/2,\,m_\eps\eps/(2(M_\eps\vee1))\}$ and $b_\eps=m_\eps\eps/2$.
If $|\tau'-\tau|\le\eta_\eps$ then $\tau'\in[\tau_{\min}/2,2\tau_{\max}]$, and for $|r|\le1-2\eps$ integrating $\partial_\rho c(\cdot;\tau)\ge m_\eps$ gives $c(r;\tau)-c(-1+\eps;\tau)\ge m_\eps\eps$ and $c(1-\eps;\tau)-c(r;\tau)\ge m_\eps\eps$, while threshold perturbation moves each endpoint by at most $|c(\pm(1-\eps);\tau')-c(\pm(1-\eps);\tau)|\le M_\eps|\tau'-\tau|\le m_\eps\eps/2$.
Subtracting gives \eqref{supp-eq:buffer}, and the segment statement follows since every segment point is within $b_\eps/2$ of $c(r;\tau)$.

For (iii), shrink $\eta_0$ so that $|\tau_\eta-\tau|\le\eta_\eps$ for $|\eta|\le\eta_0$; then (ii) places $c(r;\tau)$ in $\mathcal I_{\tau_\eta}$ with margin $b_\eps$, so by (i) the map $a_\eta(r)$ is $C^2$ jointly in $(\eta,r)$.
Moreover $a_0(r)=\psi(c(r;\tau);\tau)-r=0$, and $a_\eta(0)=0$ because $c(0;\tau)=0$ and $\psi(0;\tau_\eta)=0$.
The two-variable fundamental theorem of calculus gives $a_\eta(r)=\int_0^\eta\int_0^r\partial_{\eta r}a_u(v)\,dv\,du$, and the mixed derivative is uniformly bounded on the compact domain, proving \eqref{eq:scalar-pop-cal}.
\end{proof}

\begin{corollary}[Short-memory population calibration]
\label{lem:population-calibration}
Under Assumption~\ref{ass:inverse-s}, for any Toeplitz lag sequence $\rho_1,\ldots,\rho_{d-1}$ with $|\rho_s|\le1-2\eps$ and all $|\eta|\le\eta_0$, the scale perturbation $a_\eta$ of Lemma~\ref{supp-lem:inverse-link}(iii) satisfies
\begin{equation}
    \norm{\eta\Id_d+\Toep_d^0\{a_\eta(\rho_s)\}_{s=1}^{d-1}}_2
    \le
    C_{\tau,\eps}\{1+S_1(d;\rho)\}|\eta|.
    \label{eq:short-memory-pop-cal}
\end{equation}
\end{corollary}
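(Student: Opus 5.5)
The bound splits into a diagonal part and a hollow Toeplitz part, each handled by an elementary estimate. Write the matrix as $\eta\Id_d+T$ with $T=\Toep_d^0\{a_\eta(\rho_s)\}_{s=1}^{d-1}$, the hollow symmetric Toeplitz matrix with entry $a_\eta(\rho_{|j-k|})$ at position $(j,k)$, $j\ne k$. The triangle inequality gives
\[
\norm{\eta\Id_d+T}_2\le|\eta|+\norm{T}_2 ,
\]
so the diagonal contributes the constant term $|\eta|$ and only $\norm{T}_2$ remains to be controlled.

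For the hollow part I use the Schur test for a symmetric matrix: its spectral norm is at most its maximal absolute row sum. Row $j$ of $T$ contains $a_\eta(\rho_s)$ at most twice for each lag $s\in\{1,\ldots,d-1\}$, once at column $j+s$ and once at column $j-s$ when these indices exist. Hence
\[
\norm{T}_2\le\max_j\sum_{k\ne j}|a_\eta(\rho_{|j-k|})|\le 2\sum_{s=1}^{d-1}|a_\eta(\rho_s)|.
\]

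Each summand is controlled by Lemma~\ref{supp-lem:inverse-link}(iii). That lemma applies because, under Assumption~\ref{ass:inverse-s}, $|\rho_s|\le1-2\eps$ for every lag, and the hypothesis gives $|\eta|\le\eta_0$. It yields $|a_\eta(\rho_s)|\le C_{\tau,\eps}|\eta|\,|\rho_s|$, and summing over $s$ gives $\norm{T}_2\le 2C_{\tau,\eps}|\eta|\,S_1(d;\rho)$. Combining with the diagonal term,
\[
\norm{\eta\Id_d+T}_2\le|\eta|+2C_{\tau,\eps}|\eta|\,S_1(d;\rho)\le C_{\tau,\eps}\{1+S_1(d;\rho)\}|\eta|
\]
after enlarging the constant, which is \eqref{eq:short-memory-pop-cal}.

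The substantive input is the linear-in-$|r|$ bound \eqref{eq:scalar-pop-cal}, which already uses the vanishing of $a_\eta$ at both $\eta=0$ and $r=0$. Since that lemma is available, the corollary is routine. One point needs care: the constants and $\eta_0$ must be uniform over $\tau\in[\tau_{\min},\tau_{\max}]$ and over all lags, so that the constant in \eqref{eq:short-memory-pop-cal} does not depend on $d$. Lemma~\ref{supp-lem:inverse-link} supplies this, since all its constants depend only on $\eps$, $\tau_{\min}$ and $\tau_{\max}$.
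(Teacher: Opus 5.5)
Your proof is correct and is essentially the paper's argument. The paper also bounds the hollow part by the Toeplitz row-sum bound $2\sum_s|a_\eta(\rho_s)|$, applies \eqref{eq:scalar-pop-cal} to get $2C_{\tau,\eps}|\eta|S_1(d;\rho)$, and adds $\norm{\eta\Id_d}_2=|\eta|$, adjusting constants at the end.
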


\begin{proof}
The row-sum bound and \eqref{eq:scalar-pop-cal} give
\[
    \norm{\Toep_d^0\{a_\eta(\rho_s)\}_{s=1}^{d-1}}_2
    \le
    2\sum_{s=1}^{d-1}|a_\eta(\rho_s)|
    \le
    2C_{\tau,\eps}|\eta|S_1(d;\rho),
\]
and $\norm{\eta\Id_d}_2=|\eta|$ proves \eqref{eq:short-memory-pop-cal} after adjusting constants.
\end{proof}

The oracle and plug-in upper bounds both reduce to one deterministic step: expanding the clipped, scale-perturbed inverse link around the true lag.
The next lemma performs this expansion once, in the two variables (entry error, scale error) simultaneously, so that the oracle case is the zero-scale specialization.

\begin{lemma}[Perturbed-threshold completion]
\label{supp-lem:completion}
Let $\gamma_0>0$, fix a lag $s$ with $|\rho_s|\le1-2\eps$ and $c_s=c(\rho_s;\tau)$, set $w_s=\gamma_0\psi'(c_s;\tau)$, and let $a_\eta$ be the scale perturbation of Lemma~\ref{supp-lem:inverse-link}(iii).
There are constants $C_{\tau,\eps},\eta_0>0$ such that, for every $|\eta|\le\eta_0$ and every entry error $e_s$ for which the segment from $c_s$ to $c_s+e_s$ lies in $\mathcal I_{\tau_\eta}$,
\begin{equation}
\gamma_0\bigl[(1+\eta)\psi(c_s+e_s;\tau_\eta)-\rho_s\bigr]
=w_se_s+\gamma_0a_\eta(\rho_s)+r_s,
\qquad
|r_s|\le C_{\tau,\eps}\gamma_0\bigl(e_s^2+|\eta|\,|e_s|\bigr).
\label{supp-eq:completion}
\end{equation}
\end{lemma}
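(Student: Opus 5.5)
The plan is to split the left side of \eqref{supp-eq:completion} into a pure scale part and a pure entry part, and to Taylor expand each with the bounds of Lemma~\ref{supp-lem:inverse-link}. Write
\[
(1+\eta)\psi(c_s+e_s;\tau_\eta)-\rho_s
=\bigl[(1+\eta)\psi(c_s;\tau_\eta)-\rho_s\bigr]
+(1+\eta)\bigl[\psi(c_s+e_s;\tau_\eta)-\psi(c_s;\tau_\eta)\bigr].
\]
Since $c_s=c(\rho_s;\tau)$, the first bracket is exactly $a_\eta(\rho_s)$ by the definition in Lemma~\ref{supp-lem:inverse-link}(iii). Multiplied by $\gamma_0$, it gives the middle term of \eqref{supp-eq:completion} with no remainder. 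It remains to treat the second bracket.

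For the second bracket, shrink $\eta_0$ so that $|\tau_\eta-\tau|\le\eta_\eps$ for $|\eta|\le\eta_0$. Then $\tau_\eta\in[\tau_{\min}/2,2\tau_{\max}]$, and by Lemma~\ref{supp-lem:inverse-link}(ii) the point $c_s$ lies in $\mathcal I_{\tau_\eta}$. The hypothesis puts the whole segment from $c_s$ to $c_s+e_s$ in $\mathcal I_{\tau_\eta}$, so Taylor's theorem with Lagrange remainder applies to $\psi(\cdot;\tau_\eta)$ along that segment. Using the second-derivative bound of Lemma~\ref{supp-lem:inverse-link}(i),
\[
\psi(c_s+e_s;\tau_\eta)-\psi(c_s;\tau_\eta)=\psi'(c_s;\tau_\eta)e_s+O(e_s^2).
\]
The linear coefficient is at the perturbed threshold, so I then replace $\psi'(c_s;\tau_\eta)$ by $\psi'(c_s;\tau)$. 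This costs $|\psi'(c_s;\tau_\eta)-\psi'(c_s;\tau)|\,|e_s|\le C|\tau_\eta-\tau|\,|e_s|\le C_{\tau,\eps}|\eta|\,|e_s|$, using $|\tau_\eta-\tau|\le C_\tau|\eta|$.

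The step I expect to be the main obstacle is this Lipschitz dependence of $\psi'(u;\tau')$ on $\tau'$, uniformly for $u$ near $c_s$. I would get it by differentiating the identity $\psi'(u;\tau')=1/\partial_\rho c(\psi(u;\tau');\tau')$ in $\tau'$. This uses that $c$ is $C^2$ jointly in $(\rho,\tau')$ on the compact set $\mathcal K$, which follows from Plackett's formula $4\phi_2(\tau',\tau';\rho)$ being smooth in both arguments. It also uses $\partial_{\tau'}\psi=-\partial_{\tau'}c/\partial_\rho c$, which is bounded on $\mathcal K$.

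Finally, the factor $(1+\eta)$ multiplying the entry term adds $|\eta|\,|\psi'|\,|e_s|+|\eta|\,O(e_s^2)$. The first piece is at most $C|\eta|\,|e_s|$, and the second is absorbed into $O(e_s^2)$ since $|\eta|\le\eta_0$. Collecting the pieces and multiplying by $\gamma_0$ gives the main term $w_se_s$ with $w_s=\gamma_0\psi'(c_s;\tau)$ and a remainder $|r_s|\le C_{\tau,\eps}\gamma_0(e_s^2+|\eta|\,|e_s|)$. All constants come from Lemma~\ref{supp-lem:inverse-link} and depend only on $\eps,\tau_{\min},\tau_{\max}$.
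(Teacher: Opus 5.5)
Your proposal is correct and follows essentially the paper's route: the paper also writes the left side as $G(e_s,\eta)$ with $G(0,\eta)=a_\eta(\rho_s)$ and Taylor-expands in $e$ at fixed $\eta$ using the $L_2$ bound. It then controls $\partial_eG(0,\eta)-\partial_eG(0,0)=(1+\eta)\psi'(c_s;\tau_\eta)-\psi'(c_s;\tau)$ through a bounded mixed derivative, which is your split into the $\eta\,\psi'$ piece and the $\tau'$-Lipschitz piece; your derivation of that Lipschitz bound from $\psi'(u;\tau')=1/\partial_\rho c(\psi(u;\tau');\tau')$ and the joint smoothness of Plackett's density, with Lemma~\ref{supp-lem:inverse-link}(ii) keeping $c_s$ inside $\mathcal I_{\tau'}$ for every $\tau'$ between $\tau$ and $\tau_\eta$, supplies a step the paper only asserts.
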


\begin{proof}
Write $G(e,\eta)=(1+\eta)\psi(c_s+e;\tau_\eta)-\rho_s$, so the left side of \eqref{supp-eq:completion} is $\gamma_0G(e_s,\eta)$.
Every evaluation $\psi(c_s+\theta e_s;\tau_\eta)$, $\theta\in[0,1]$, lies in $\mathcal I_{\tau_\eta}$, where by Lemma~\ref{supp-lem:inverse-link}(i) $\psi(\cdot;\tau_\eta)$ is $C^2$ with $|\psi'|\le L_1$ and $|\psi''|\le L_2$; hence $G$ is $C^2$ jointly in $(e,\eta)$ for $|\eta|\le\eta_0$.
Now $G(0,\eta)=(1+\eta)\psi(c_s;\tau_\eta)-\rho_s=a_\eta(\rho_s)$ because $c_s=c(\rho_s;\tau)$, and $\partial_eG(0,0)=\psi'(c_s;\tau)$.
Taylor's theorem in $e$ at fixed $\eta$ gives $G(e_s,\eta)=a_\eta(\rho_s)+\partial_eG(0,\eta)e_s+\tfrac12\partial_e^2G(\zeta,\eta)e_s^2$ with $|\partial_e^2G|\le(1+\eta_0)L_2$, while $|\partial_eG(0,\eta)-\partial_eG(0,0)|=\bigl|\int_0^\eta\partial_\eta\partial_eG(0,u)\,du\bigr|\le C_{\tau,\eps}|\eta|$ since $\partial_\eta\partial_eG$ is bounded on the compact domain.
Collecting terms, $G(e_s,\eta)=a_\eta(\rho_s)+\psi'(c_s;\tau)e_s+R$ with $|R|\le C_{\tau,\eps}(e_s^2+|\eta||e_s|)$; multiplying by $\gamma_0$ gives \eqref{supp-eq:completion}.
\end{proof}

\subsection{Population perturbation on operator-norm balls}
\label{subsec:supp-hadamard-perturbation}

The row-sum bound \eqref{eq:short-memory-pop-cal} is the only step of the plug-in analysis where the short-memory size enters.
On operator-norm balls $\norm{T_d^0(\rho)}_2\le c_0$ it can be replaced by a bound carrying no lag-summability factor.
Two ingredients are needed.

\begin{lemma}[Kronecker-compression bound for the Hadamard product]
\label{supp-lem:kronecker-hadamard}
For all real or complex $d\times d$ matrices $A$ and $B$,
\[
\opnorm{A\circ B}\le\opnorm{A}\,\opnorm{B},
\qquad\text{and hence}\qquad
\opnorm{A^{\circ k}}\le\opnorm{A}^{k}\ \ (k\ge1),
\]
where $A^{\circ k}$ denotes the entrywise $k$-th power.
\end{lemma}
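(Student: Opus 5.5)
The plan is the one the paper's own remark points to: realize the Hadamard product as a compression of the Kronecker product, then use multiplicativity of the spectral norm under tensor products. Concretely, I would introduce the isometry $J:\mathbb C^d\to\mathbb C^d\otimes\mathbb C^d=\mathbb C^{d^2}$ given by $Je_i=e_i\otimes e_i$. Since the vectors $e_i\otimes e_i$ are orthonormal, $J^*J=\Id_d$ and $\opnorm{J}=1$.

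The first step is the algebraic identity
\[
A\circ B=J^*(A\otimes B)J .
\]
I would check it entrywise. We have $\ip{e_i\otimes e_i}{(A\otimes B)(e_j\otimes e_j)}=\ip{e_i}{Ae_j}\ip{e_i}{Be_j}=A_{ij}B_{ij}$. This holds for real and complex matrices alike; for complex matrices the inner product is the sesquilinear one, and the real case is included.

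The second step is the norm bound. Using $\opnorm{A\otimes B}=\opnorm{A}\opnorm{B}$, we get
\[
\opnorm{A\circ B}\le\opnorm{J^*}\,\opnorm{A\otimes B}\,\opnorm{J}=\opnorm{A}\,\opnorm{B}.
\]
The multiplicativity of the tensor norm is standard. To justify it I would write $A\otimes B=(A\otimes\Id)(\Id\otimes B)$. The factor $A\otimes\Id$ is unitarily a block-diagonal repetition of $A$, and $\Id\otimes B$ is block diagonal with copies of $B$, so each factor has norm equal to $\opnorm{A}$ or $\opnorm{B}$. This gives the $\le$ direction, which is all that is needed. Alternatively, the singular value decomposition $A\otimes B=(U_A\otimes U_B)(\Sigma_A\otimes\Sigma_B)(V_A\otimes V_B)^*$ gives equality directly.

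The power bound then follows by induction on $k$. The case $k=1$ is trivial. For the inductive step, $A^{\circ k}=A\circ A^{\circ(k-1)}$, so $\opnorm{A^{\circ k}}\le\opnorm{A}\,\opnorm{A^{\circ(k-1)}}\le\opnorm{A}^k$.

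None of these steps is a real obstacle. The only point needing care is stating the compression identity with the correct isometry and confirming that it requires neither positivity nor symmetry of $A$ and $B$. That matters because the lemma is later applied to the hollow, indefinite matrix $T_d^0(\rho)$.
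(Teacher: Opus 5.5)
Your proposal is correct and follows the paper's own proof: the paper also realizes $A\circ B$ as the compression $P^*(A\otimes B)P$ by the isometry $Pe_i=e_i\otimes e_i$, uses $\opnorm{A\otimes B}=\opnorm{A}\opnorm{B}$, and iterates to get the power bound. Your added justification of the tensor-norm identity and your remark that no positivity or symmetry is needed are consistent with the paper's comment following the lemma.
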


\begin{proof}
Let $P\colon\mathbb C^d\to\mathbb C^{d^2}$ be the isometry with $Pe_i=e_i\otimes e_i$.
Then $(P^*(A\otimes B)P)_{ij}=(e_i\otimes e_i)^*(A\otimes B)(e_j\otimes e_j)=A_{ij}B_{ij}$, so $A\circ B=P^*(A\otimes B)P$ is a compression of $A\otimes B$, and $\opnorm{A\circ B}\le\opnorm{A\otimes B}=\opnorm{A}\opnorm{B}$.
Iterating gives the second claim.
\end{proof}

No positivity is required, in contrast with the Schur-multiplier contraction of Lemma~\ref{supp-lem:schur}; the two statements should not be conflated.
Since the entrywise power of a hollow Toeplitz matrix is the hollow Toeplitz matrix of the powered lags,
\begin{equation}
T_d^0(\rho_s^k)=\bigl\{T_d^0(\rho)\bigr\}^{\circ k},
\qquad
\opnorm{T_d^0(\rho_s^k)}\le\opnorm{T_d^0(\rho)}^{k}\le c_0^{\,k},
\qquad k\ge1 ,
\label{supp-eq:hollow-hadamard-power}
\end{equation}
and this geometric decay in $k$ is what makes the series argument below unconditional on the ball.

The second ingredient is an analytic continuation of the centered link.
Plackett's identity $\partial_rc(r;\tau)=4\phi_2(\tau,\tau;r)$ gives, for real $r\in(-1,1)$,
\begin{equation}
c(r;\tau)
=\frac2\pi\int_0^r
\exp\Bigl(-\frac{\tau^2}{1+u}\Bigr)(1-u^2)^{-1/2}\,du .
\label{supp-eq:plackett-integral}
\end{equation}
Fix on the unit disk $D(0,1)\subset\mathbb C$ the analytic branch of $(1-z^2)^{-1/2}$ equal to $1$ at $z=0$; the singularities $z=\pm1$ of the integrand lie on the boundary.
The integrand is then analytic on the simply connected domain $D(0,1)$, so the integral in \eqref{supp-eq:plackett-integral}, taken along any path from $0$ to the argument inside $D(0,1)$, is path-independent and extends $c(\cdot;\tau)$ analytically to $D(0,1)$, with
\[
\partial_r c(r;\tau)
=\frac2\pi e^{-\tau^2/(1+r)}(1-r^2)^{-1/2}\ne0
\quad\text{on }D(0,1),
\qquad
\partial_\tau c(r;\tau)
=-\frac{4\tau}\pi\int_0^r\frac{e^{-\tau^2/(1+u)}}{(1+u)\sqrt{1-u^2}}\,du ,
\]
the latter analytic on $D(0,1)$ with $\partial_\tau c(0;\tau)=0$; differentiation under the integral sign is justified by uniform convergence of the integrand and its $\tau$-derivative on compact subsets of $D(0,1)\times(0,\infty)$.

\begin{lemma}[Population perturbation on operator-norm balls]
\label{supp-lem:hadamard-perturbation}
Assume Assumption~\ref{ass:inverse-s} and let $c_0\le1-2\eps$.
For $\tau\in[\tau_{\min},\tau_{\max}]$ define
\begin{equation}
b_\tau(r)
:=r+\frac\tau2\cdot
\frac{\partial_\tau c(r;\tau)}{\partial_r c(r;\tau)} .
\label{supp-eq:btau}
\end{equation}
Then the following hold.
\begin{enumerate}
\item[(i)] $b_\tau$ is analytic on $D(0,1)$ with $b_\tau(0)=0$, and, writing $b_{k,\tau}$ for its Maclaurin coefficients and setting $\varrho=(1+c_0)/2$,
\begin{equation}
B:=\sup_{\tau\in[\tau_{\min},\tau_{\max}]}
\sum_{k\ge1}|b_{k,\tau}|\,c_0^{\,k}
\;\le\;
\Bigl(\sup_{\tau}\sup_{|z|=\varrho}|b_\tau(z)|\Bigr)
\frac{c_0/\varrho}{1-c_0/\varrho}
\;<\;\infty .
\label{supp-eq:B-const}
\end{equation}
\item[(ii)] $b_\tau(r)=\partial_\eta a_\eta(r)\big|_{\eta=0}$ for real $|r|\le1-2\eps$.
\item[(iii)] There exist $\eta_0'\in(0,\eta_0]$ and $C_{\tau,\eps}$, depending only on the regularity domains and $c_0$, such that for all $|\eta|\le\eta_0'$ and all lag sequences with $\opnorm{T_d^0(\rho)}\le c_0$,
\begin{equation}
\bigl\lVert
\eta \Id_d+\Toep_d^0\{a_\eta(\rho_s)\}_{s=1}^{d-1}
\bigr\rVert_2
\;\le\;
(1+B)\,|\eta|
\;+\;
C_{\tau,\eps}\,d\,\eta^2 .
\label{supp-eq:no-S1}
\end{equation}
\end{enumerate}
\end{lemma}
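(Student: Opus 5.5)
The plan is to treat the three parts in order, with (iii) carrying the weight.

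For (i), analyticity on $D(0,1)$ follows from the continuation just constructed. Both $\partial_\tau c$ and $\partial_r c$ are analytic there, and $\partial_r c=\frac2\pi e^{-\tau^2/(1+r)}(1-r^2)^{-1/2}$ never vanishes, so the quotient in \eqref{supp-eq:btau} is analytic. Since $\partial_\tau c(0;\tau)=0$, we get $b_\tau(0)=0$. For the coefficient bound, Cauchy's estimate on the circle $|z|=\varrho$ gives $|b_{k,\tau}|\le M\varrho^{-k}$, where $M=\sup_\tau\sup_{|z|=\varrho}|b_\tau(z)|$. Summing the geometric series $\sum_{k\ge1}(c_0/\varrho)^k$ then yields \eqref{supp-eq:B-const}. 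Finiteness of $M$ holds because $(z,\tau)\mapsto b_\tau(z)$ is jointly continuous on the compact set $\{|z|=\varrho\}\times[\tau_{\min},\tau_{\max}]$, with $\varrho<1$ keeping us away from the branch points $\pm1$ and from the denominator's zeros (there are none).

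For (ii), I differentiate the identity $c\bigl(\psi(c(r;\tau);\tau_\eta);\tau_\eta\bigr)=c(r;\tau)$ in $\eta$ at $\eta=0$, using $\partial_\eta\tau_\eta|_{0}=-\tau/2$. This gives
\[
\partial_\eta\psi(c(r;\tau);\tau_\eta)\big|_{0}=\frac{\tau}{2}\,\frac{\partial_\tau c}{\partial_r c}.
\]
Hence $\partial_\eta a_\eta(r)|_0=\psi(c(r;\tau);\tau)+\frac\tau2\,\partial_\tau c/\partial_r c=b_\tau(r)$, which is (ii).

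For (iii), Lemma~\ref{supp-lem:inverse-link}(iii) makes $a_\eta$ jointly $C^2$ with $a_0\equiv0$. Taylor expansion in $\eta$ then gives $a_\eta(r)=\eta\,b_\tau(r)+R_\eta(r)$ with $|R_\eta(r)|\le C_{\tau,\eps}\eta^2$ uniformly over $|r|\le1-2\eps$; this range contains every lag on the ball because $|\rho_s|\le\opnorm{T_d^0(\rho)}\le c_0\le1-2\eps$. The remainder matrix $\Toep_d^0\{R_\eta(\rho_s)\}$ has operator norm at most its maximal row sum, $2\sum_s|R_\eta(\rho_s)|\le C\,d\,\eta^2$, which is the second term in \eqref{supp-eq:no-S1}. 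For the linear term, $|\rho_s|\le c_0<\varrho$ means the Maclaurin series converges absolutely, so $b_\tau(\rho_s)=\sum_k b_{k,\tau}\rho_s^k$ entrywise. Linearity of the hollow Toeplitz map and \eqref{supp-eq:hollow-hadamard-power} then give
\[
\opnorm{\Toep_d^0\{b_\tau(\rho_s)\}}\le\sum_k|b_{k,\tau}|\,\opnorm{T_d^0(\rho)^{\circ k}}\le\sum_k|b_{k,\tau}|c_0^k\le B.
\]
Adding $\opnorm{\eta I_d}=|\eta|$ completes the bound. If needed, I shrink to $\eta_0'\le\eta_0$ so that the Taylor remainder constant is uniform.

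I expect the main obstacle to be the uniform remainder bound in (iii), specifically establishing bounded $\partial_\eta^2a_\eta$ on the compact domain. It follows from the $C^2$ regularity in Lemma~\ref{supp-lem:inverse-link}(iii) together with compactness, but tracking the dependence on $c_0$ and the buffer requires care. A secondary point is justifying that the series interchange commutes with the $\Toep_d^0$ map in operator norm. This holds because the partial sums converge absolutely in operator norm, bounded by the geometric tail.
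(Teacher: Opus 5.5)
Your proposal is correct and follows the paper's proof essentially step for step: Cauchy's estimate on $|z|=\varrho$ plus compactness in $\tau$ for (i), and implicit differentiation of $c(\psi(w;\tau');\tau')=w$ combined with $\frac{d}{d\eta}\tau_\eta|_{\eta=0}=-\tau/2$ for (ii). For (iii), you likewise split $a_\eta=\eta\,b_\tau+R_\eta$, control the linear part through the operator-norm-convergent Hadamard-power series with $\opnorm{T_d^0(\rho)^{\circ k}}\le c_0^k$, and control the $O(\eta^2)$ remainder by the crude row-sum bound.
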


\begin{proof}
\emph{(i).}
The denominator $\partial_rc(\cdot;\tau)$ is analytic and zero-free on $D(0,1)$ and the numerator $\partial_\tau c(\cdot;\tau)$ is analytic there, so $b_\tau$ is analytic on $D(0,1)$; $b_\tau(0)=0$ because $\partial_\tau c(0;\tau)=0$.
Cauchy's estimate on $|z|=\varrho$ gives $|b_{k,\tau}|\le M_\tau(\varrho)\varrho^{-k}$ with $M_\tau(\varrho)=\sup_{|z|=\varrho}|b_\tau(z)|$, and summing the geometric series yields \eqref{supp-eq:B-const}.
The supremum over $\tau$ is finite because $(z,\tau)\mapsto b_\tau(z)$ is continuous on the compact set $\{|z|=\varrho\}\times[\tau_{\min},\tau_{\max}]$; in particular the denominator is bounded below there, $|\partial_rc(z;\tau)|\ge\tfrac2\pi e^{-\tau_{\max}^2/(1-\varrho)}(1+\varrho^2)^{-1/2}>0$.

\emph{(ii).}
Write $w=c(r;\tau)$, which does not depend on $\eta$, and $g(\eta)=(1+\eta)\psi(w;\tau_\eta)$, so that $a_\eta(r)=g(\eta)-r$ and $g(0)=\psi(w;\tau)=r$.
Differentiating the identity $c(\psi(w;\tau');\tau')=w$ in $\tau'$ at $\tau'=\tau$ gives $\partial_\tau\psi(w;\tau)=-\partial_\tau c(r;\tau)/\partial_rc(r;\tau)$.
Since $\frac{d}{d\eta}\tau_\eta\big|_{\eta=0}=-\tau/2$, the chain rule gives
\[
g'(0)=\psi(w;\tau)+\partial_\tau\psi(w;\tau)\cdot\Bigl(-\frac\tau2\Bigr)
=r+\frac\tau2\cdot
\frac{\partial_\tau c(r;\tau)}{\partial_r c(r;\tau)}
=b_\tau(r).
\]

\emph{(iii).}
Since $|\rho_s|\le\opnorm{T_d^0(\rho)}\le c_0<\varrho$, the series $b_\tau(\rho_s)=\sum_{k\ge1}b_{k,\tau}\rho_s^k$ converges absolutely for every $s$, and by \eqref{supp-eq:hollow-hadamard-power} the matrix series $\sum_{k\ge1}b_{k,\tau}\,T_d^0(\rho_s^k)$ converges in operator norm; its limit agrees entrywise with $\Toep_d^0\{b_\tau(\rho_s)\}$, so
\[
\bigl\lVert \Toep_d^0\{b_\tau(\rho_s)\}\bigr\rVert_2
\le\sum_{k\ge1}|b_{k,\tau}|\,c_0^{\,k}\le B .
\]
For the remainder $R_\eta(r):=a_\eta(r)-\eta\,b_\tau(r)$, the joint $C^2$ regularity of $(\eta,r)\mapsto a_\eta(r)$ on $\{|\eta|\le\eta_0'\}\times\{|r|\le1-2\eps\}$ --- established in the proof of Lemma~\ref{supp-lem:inverse-link}(iii), all evaluations staying inside the clipping domain by Lemma~\ref{supp-lem:inverse-link}(ii) --- together with $a_0\equiv0$ and \textup{(ii)} gives, by Taylor's theorem in $\eta$,
\[
\sup_{|r|\le1-2\eps}|R_\eta(r)|\le C_{\tau,\eps}\,\eta^2,
\qquad |\eta|\le\eta_0' .
\]
The crude row-sum bound suffices for this quadratically small part, $\opnorm{\Toep_d^0\{R_\eta(\rho_s)\}}\le2(d-1)C_{\tau,\eps}\eta^2$.
Adding the diagonal $\eta\Id_d$ proves \eqref{supp-eq:no-S1}.
\end{proof}

\begin{remark}
\label{supp-rem:btau}
Expanding at $r=0$ gives $b_\tau'(0)=1-\tau^2$: the first-order leakage of scale calibration into the lags changes sign at $\tau=1$.
The lemma uses $\opnorm{T_d^0(\rho)}\le c_0<1$ only through \eqref{supp-eq:hollow-hadamard-power}; positivity of $\Id+T_d^0(\rho)$ is not needed.
On general short-memory classes, where no operator-norm ball is assumed, Lemma~\ref{lem:population-calibration} remains the relevant tool.
\end{remark}

\subsection{Oracle sparse-ruler spectral concentration}
We prove the oracle statement for the real one-bit Toeplitz model used in the main theorem.
Let $c_s=c(\rho_s;\tau)$.
For each lag $s$ and snapshot $\ell$, define the snapshot-level lag average
\begin{equation}
Y_s^{(\ell)}=\frac1{q_s}
  \sum_{(j,k)\in\Omega_s} v_j^{(\ell)}v_k^{(\ell)},
  \qquad
\widehat c_s=\frac1n\sum_{\ell=1}^nY_s^{(\ell)}.
\end{equation}
Every observed pair in $\Omega_s$ has Gaussian correlation $\rho_s$, so $\E Y_s^{(\ell)}=c_s$.
Since $|v_j|\le2$, each snapshot-level average satisfies $|Y_s^{(\ell)}|\le4$.
For fixed $s$, the variables $Y_s^{(1)},\ldots,Y_s^{(n)}$ are independent because the snapshots are independent; no independence among the $q_s$ pair products inside a single snapshot is used.

\begin{lemma}[Trigonometric grid]
\label{supp-lem:grid}
Let $p(\theta)=\sum_{|s|\le d-1}a_s e^{2\pi i s\theta}$ be a trigonometric polynomial of degree at most $d-1$.
There is a grid $\mathcal G\subset[0,1]$ with $|\mathcal G|\le C d$ such that
\begin{equation}
\sup_{\theta\in[0,1]}|p(\theta)|
  \le 2\max_{\theta\in\mathcal G}|p(\theta)|.
\end{equation}
\end{lemma}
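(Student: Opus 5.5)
The plan is to use Bernstein's inequality for trigonometric polynomials together with a uniform grid. Write $M=\sup_{\theta\in[0,1]}|p(\theta)|$. Since $p$ has degree at most $d-1$, Bernstein's inequality gives
\[
\sup_\theta|p'(\theta)|\le 2\pi(d-1)M .
\]
This holds for complex coefficients as well. If one prefers to avoid that, split $p$ into real and imaginary parts: each is a real trigonometric polynomial of the same degree with sup norm at most $M$, and the derivative bound for $|p|$ follows with an extra absolute factor, which is absorbed into $C$.

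Take the uniform grid $\mathcal G=\{k/N:k=0,\ldots,N-1\}$ with $N=\lceil 4\pi d\rceil$, so $|\mathcal G|\le Cd$. Choose $\theta^\ast$ with $|p(\theta^\ast)|=M$; it exists by continuity and periodicity. There is a grid point $\theta_k$ with $|\theta^\ast-\theta_k|\le 1/(2N)$. The mean value theorem, applied along the short arc and using periodicity, gives
\[
|p(\theta_k)|\ge M-\frac{2\pi(d-1)M}{2N}\ge M-\frac{M}{4}=\frac{3M}{4}.
\]
Hence $M\le\tfrac43\max_{\mathcal G}|p|\le 2\max_{\mathcal G}|p|$.

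The only nontrivial input is Bernstein's inequality, a classical fact that I would cite rather than prove. The rest is bookkeeping: choose $N$ proportional to $d$ so that the derivative loss over half a mesh width is at most $M/2$. For the application to $G_\theta$, a real trigonometric polynomial of degree $d-1$, the real-valued version suffices directly.
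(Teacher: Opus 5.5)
Your proposal is correct and follows the paper's proof: the classical Bernstein bound $\sup_\theta|p'(\theta)|\le 2\pi(d-1)\sup_\theta|p(\theta)|$ plus a uniform grid of $O(d)$ points, fine enough that the loss between the maximizer and the nearest grid point is at most a quarter of the maximum. The differences are cosmetic: you use $N=\lceil 4\pi d\rceil$ points and half-mesh distance, where the paper uses mesh $(8\pi d)^{-1}$ and full-mesh distance. For complex-valued $p$, the step you call the mean value theorem should be read as the integral bound $|p(a)-p(b)|\le\sup|p'|\,|a-b|$, which is all you need.
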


\begin{proof}
Bernstein's inequality for trigonometric polynomials gives $\norm{p'}_\infty\le 2\pi(d-1)\norm{p}_\infty$.
Take an equally spaced grid with mesh at most $(8\pi d)^{-1}$.
If $\theta_*$ maximizes $|p|$, choose $\theta_g$ within one mesh width.
Then $|p(\theta_*)-p(\theta_g)|\le \norm{p}_\infty/2$, which proves the claim.
\end{proof}

Both Toeplitz proofs convert lagwise errors into operator norms through the following standard symbol bound, recorded here so that every step is auditable.

\begin{lemma}[Toeplitz symbol domination]
\label{supp-lem:toeplitz-symbol}
Let $e_1,\ldots,e_{d-1}\in\R$, let $\Toep_d^0(e)$ be the hollow real symmetric Toeplitz matrix with off-diagonal lags $e_s$, and define the symbol
\[
f_e(\theta)=2\operatorname{Re}\sum_{s=1}^{d-1}e_se^{2\pi is\theta}
=2\sum_{s=1}^{d-1}e_s\cos(2\pi s\theta),
\qquad\theta\in[0,1].
\]
Then, for every $x\in\mathbb C^d$,
\begin{equation}
x^*\Toep_d^0(e)\,x
=\int_0^1f_e(\theta)\,
\Bigl|\sum_{j=0}^{d-1}x_je^{2\pi ij\theta}\Bigr|^2\,d\theta,
\label{supp-eq:symbol-identity}
\end{equation}
and consequently
\begin{equation}
\norm{\Toep_d^0(e)}_2\le\sup_{\theta\in[0,1]}|f_e(\theta)| .
\label{supp-eq:symbol-bound}
\end{equation}
\end{lemma}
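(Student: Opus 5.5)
The identity \eqref{supp-eq:symbol-identity} is a direct computation with the Fourier orthogonality relation $\int_0^1 e^{2\pi i k\theta}\,d\theta=\mathbf 1\{k=0\}$ for integers $k$, and \eqref{supp-eq:symbol-bound} then follows from Parseval.

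First expand the right-hand side of \eqref{supp-eq:symbol-identity}. Write $p_x(\theta)=\sum_{j=0}^{d-1}x_je^{2\pi ij\theta}$, so that
\[
|p_x(\theta)|^2=\sum_{j,k}\overline{x_j}\,x_k\,e^{2\pi i(k-j)\theta}.
\]
Next use $f_e(\theta)=\sum_{1\le|s|\le d-1}e_{|s|}e^{2\pi is\theta}$, where the cosine is split into two exponentials and $e_{-s}:=e_s$. Multiply the two finite sums and integrate term by term. Only the terms with $s+k-j=0$ survive, i.e.\ $s=j-k$, which gives
\[
\sum_{j\ne k}\overline{x_j}\,x_k\,e_{|j-k|}.
\]
The terms with $j=k$ drop out because $f_e$ has no constant term. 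This matches the hollow Toeplitz structure, since $(\Toep_d^0(e))_{jk}=e_{|j-k|}$ for $j\ne k$ and the diagonal is $0$. The sum is therefore exactly $x^*\Toep_d^0(e)\,x$. The index range $|j-k|\le d-1$ is automatically covered, so every needed $s$ appears in $f_e$.

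For \eqref{supp-eq:symbol-bound}, bound the integrand pointwise using $|f_e(\theta)|\le\sup_\theta|f_e|$. By Parseval,
\[
\int_0^1|p_x(\theta)|^2\,d\theta=\norm{x}_2^2,
\]
so $|x^*\Toep_d^0(e)\,x|\le\sup_\theta|f_e(\theta)|\,\norm{x}_2^2$. Since $\Toep_d^0(e)$ is real symmetric, hence Hermitian, its operator norm equals $\sup_{\norm{x}_2=1}|x^*\Toep_d^0(e)\,x|$, and the bound follows.

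No step is a real obstacle. The only point needing care is the sign and orientation convention in the exponentials, which determines whether the surviving index is $s=j-k$ or $s=k-j$. This is harmless because $e_{|s|}$ is symmetric in $s$, and $f_e$ is real-valued, so no conjugation issue arises.
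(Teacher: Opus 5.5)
Your proposal is correct and follows essentially the same route as the paper. Both arguments expand $|p_x(\theta)|^2$, write the symbol as the two-sided exponential sum with $e_{-s}=e_s$ and $e_0=0$, and use Fourier orthogonality to recover the hollow Toeplitz quadratic form; the bound then comes from Parseval together with the fact that a real symmetric matrix has operator norm $\sup_{\norm{x}_2=1}|x^*\Toep_d^0(e)\,x|$.
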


\begin{proof}
Write $p_x(\theta)=\sum_{j=0}^{d-1}x_je^{2\pi ij\theta}$ and expand $|p_x(\theta)|^2=\sum_{j,k}\bar x_jx_ke^{2\pi i(k-j)\theta}$.
With $e_{-s}=e_s$ and $e_0=0$, so that $f_e(\theta)=\sum_{0<|s|\le d-1}e_{|s|}e^{2\pi is\theta}$, the orthogonality relation $\int_0^1e^{2\pi i(s+k-j)\theta}\,d\theta=\one\{s=j-k\}$ gives
\[
\int_0^1f_e(\theta)|p_x(\theta)|^2\,d\theta
=\sum_{j\ne k}e_{|j-k|}\,\bar x_jx_k
=x^*\Toep_d^0(e)\,x ,
\]
which is \eqref{supp-eq:symbol-identity}.
By Parseval, $\int_0^1|p_x(\theta)|^2\,d\theta=\norm{x}_2^2$, so $|x^*\Toep_d^0(e)\,x|\le\sup_\theta|f_e(\theta)|\,\norm{x}_2^2$.
Since $\Toep_d^0(e)$ is real symmetric, its operator norm equals the supremum of $|x^*\Toep_d^0(e)\,x|$ over unit vectors, which proves \eqref{supp-eq:symbol-bound}.
\end{proof}

The next lemma records the entrywise deviations; its second part applies the contraction theorem lag by lag and places the second-order Taylor term at the coverage scale.

\begin{lemma}[Entry deviations and the coverage-scale square sum]
\label{supp-lem:per-lag}
Assume Assumption~\ref{ass:inverse-s} and let $t=\log(Cd/\delta)$.
With probability at least $1-e^{-t}$, the following hold simultaneously:
\begin{equation}
  \max_{1\le s\le d-1}|\widehat c_s-c_s|\le C\sqrt{\frac tn},
  \label{supp-eq:entry-event-oracle}
\end{equation}
and
\begin{equation}
  \sum_{s=1}^{d-1}(\widehat c_s-c_s)^2
  \le C_\eps\left\{\min\{\kobs^2\ph(\Om),\,d\}\,\frac tn+d\,\frac{t^2}{n^2}\right\}.
  \label{supp-eq:per-lag-sum}
\end{equation}
\end{lemma}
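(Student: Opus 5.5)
The first bound \eqref{supp-eq:entry-event-oracle} is the easy part. For each fixed lag $s$, the snapshot averages $Y_s^{(1)},\ldots,Y_s^{(n)}$ are independent and bounded by $4$, with mean $c_s$. Hoeffding's inequality therefore gives $|\widehat c_s-c_s|\le C\sqrt{t'/n}$ with probability $1-2e^{-t'}$. A union bound over the $d-1$ lags, with $t'=t+\log(2d)$, absorbs the factor $d$ into $t=\log(Cd/\delta)$ after adjusting $C$.

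The crude branch of \eqref{supp-eq:per-lag-sum} follows at once: on the same event, $\sum_s(\widehat c_s-c_s)^2\le (d-1)\max_s|\widehat c_s-c_s|^2\le Cd\,t/n$. The work is in the coverage branch $\kobs^2\ph(\Om)t/n+d\,t^2/n^2$. Here I would apply Theorem~\ref{thm:contraction} lag by lag. For each $s$, let $A^{(s)}$ be the hollow symmetric matrix on $\Om$ with entries $(2q_s)^{-1}$ at the pairs of $\Om_s$ and at their transposes. Then $Y_s^{(\ell)}=v^{(\ell)\top}A^{(s)}v^{(\ell)}$ and $\Fnorm{A^{(s)}}^2=1/(2q_s)$. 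The correlation matrix of $g=X_\Om/\sqrt{\gamma_0}$ has operator norm $\kobs$, and its entries on active pairs equal $\rho_s$ with $|\rho_s|\le1-2\eps$. Corollary~\ref{cor:threshold} then gives
\[
\sigma_s^2:=\Var(Y_s^{(1)})\le C_\eps\kobs^2/q_s .
\]

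Next I would use Bernstein's inequality for each lag, with the variance $\sigma_s^2$ and the bound $|Y_s^{(\ell)}-c_s|\le8$. Combined with a union bound over the lags, this gives, simultaneously for all $s$ with probability at least $1-e^{-t}$,
\[
|\widehat c_s-c_s|\le C\Bigl(\sigma_s\sqrt{t/n}+t/n\Bigr).
\]
Squaring and summing over $s$ yields
\[
\sum_s(\widehat c_s-c_s)^2\le C\Bigl(\frac tn\sum_s\sigma_s^2+d\,\frac{t^2}{n^2}\Bigr)\le C_\eps\Bigl(\kobs^2\ph(\Om)\,\frac tn+d\,\frac{t^2}{n^2}\Bigr),
\]
since $\sum_s q_s^{-1}=\ph(\Om)$. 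Taking the minimum of this and the crude $d\,t/n$ bound, both valid on the intersection of the events (again adjusting constants in $t$), gives \eqref{supp-eq:per-lag-sum}. The $\min$ form is legitimate because $\min\{a,b\}t/n+dt^2/n^2\ge\min\{at/n+dt^2/n^2,\,bt/n\}$.

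The only delicate point is the contraction step. One must check that the same $\eps$ in Assumption~\ref{ass:support-separation} applies: the Toeplitz margin is $2\eps$, which is stronger than needed. One must also use the operator norm of the observed correlation matrix rather than of the full one, which is exactly what Theorem~\ref{thm:contraction} needs since only coordinates in $\Om$ enter. Everything else is a routine scalar Bernstein-plus-union-bound argument, and no intra-snapshot independence is needed.
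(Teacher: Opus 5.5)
Your proof is correct. It differs from the paper's only in the concentration device used for the square sum.

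\emph{The paper's route.} The paper stacks one snapshot's lag deviations into the vector $Z^{(\ell)}=(Y_s^{(\ell)}-c_s)_{s=1}^{d-1}$. It bounds
\[
\E\norm{Z^{(\ell)}}_2^2=\sum_s\Var(Y_s^{(\ell)})\le C_\eps\sum_s\min\{\kobs^2q_s^{-1},1\}\le C_\eps\min\{\kobs^2\ph(\Om),d\}.
\]
Here each lag variance is truncated by boundedness, so the minimum appears in one step. Together with $\norm{Z^{(\ell)}}_2\le8\sqrt{d-1}$, it then applies Pinelis's Bernstein inequality for Hilbert-space-valued sums directly to $\norm{\widehat c-c}_2$.

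\emph{Your route.} You run a scalar Bernstein inequality lag by lag, using the same contraction-theorem input $\sigma_s^2\le C_\eps\kobs^2/q_s$. You take a union bound over the $d-1$ lags, then square and sum. You obtain the $d$ branch separately from the Hoeffding event and merge the two branches with your (valid) elementary inequality for the minimum.

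\emph{Comparison.} Your argument is more elementary and gives extra information: the lag-adaptive bound $|\widehat c_s-c_s|\lesssim\sigma_s\sqrt{t/n}+t/n$ for every $s$. Since $\sigma_s\le8$, this bound alone already implies the entry event. The price is the union bound, which raises the deviation level by $\log d$. That is harmless here only because $t=\log(Cd/\delta)$ already contains $\log d$. The paper's vector inequality needs no union bound, so its square-sum bound would in fact hold at a dimension-free confidence level. Nothing in this lemma or in its downstream use exploits that extra strength.
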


\begin{proof}
Hoeffding's inequality applied across the $n$ snapshots, followed by a union bound over $s=1,\ldots,d-1$, gives \eqref{supp-eq:entry-event-oracle} with probability at least $1-e^{-t}$ after adjusting constants.

For \eqref{supp-eq:per-lag-sum}, collect one snapshot's lag deviations into the vector $Z^{(\ell)}=(Y_s^{(\ell)}-c_s)_{s=1}^{d-1}\in\mathbb{R}^{d-1}$, where $Y_s^{(\ell)}=(v^{(\ell)})^\top A_sv^{(\ell)}$ and $A_s$ is the symmetric hollow matrix with entries $(A_s)_{jk}=(A_s)_{kj}=(2q_s)^{-1}$ on $(j,k)\in\Om_s$ and zero otherwise, so that $\Fnorm{A_s}^2=(2q_s)^{-1}$ and $\widehat c_s-c_s=n^{-1}\sum_\ell(Y_s^{(\ell)}-c_s)$.
Every active pair of $A_s$ has Gaussian correlation $\rho_s$ with $|\rho_s|\le1-2\eps$, so the contraction theorem of the main text, in its threshold-sign form, gives $\Var(Y_s^{(\ell)})\le C_\eps\kobs^2q_s^{-1}$; since also $|Y_s^{(\ell)}-c_s|\le8$, each lag satisfies $\Var(Y_s^{(\ell)})\le C_\eps\min\{\kobs^2q_s^{-1},1\}$.
Hence
\[
  \E\norm{Z^{(\ell)}}_2^2=\sum_{s=1}^{d-1}\Var(Y_s^{(\ell)})
  \le C_\eps\sum_{s=1}^{d-1}\min\{\kobs^2q_s^{-1},1\}
  \le C_\eps\min\{\kobs^2\ph(\Om),d\},
  \qquad
  \norm{Z^{(\ell)}}_2\le8\sqrt{d-1}.
\]
The $Z^{(\ell)}$ are independent and centered, so the Bernstein inequality for Hilbert-space-valued sums \citep{Pinelis1994} gives, with probability at least $1-e^{-t}$,
\[
  \norm{\widehat c-c}_2
  =\Big(\sum_{s=1}^{d-1}(\widehat c_s-c_s)^2\Big)^{1/2}
  \le C\left(\sqrt{\frac{\E\norm{Z^{(\ell)}}_2^2\,t}{n}}+\sqrt{d-1}\,\frac tn\right).
\]
Squaring and inserting the variance bound gives \eqref{supp-eq:per-lag-sum}; after adjusting the constant $C$ in $t=\log(Cd/\delta)$ both displayed events hold simultaneously with probability at least $1-e^{-t}$.
\end{proof}

\begin{proof}[Proof of the oracle upper bound]
Work on the event of Lemma~\ref{supp-lem:per-lag}.
We first dispose of large deviations.
If $C\sqrt{t/n}>b_\eps/2$, with $b_\eps$ the buffer constant of Lemma~\ref{supp-lem:inverse-link}(ii), then $t/n$ exceeds a fixed constant $c_\eps$.
The clipped oracle estimator satisfies $\widehat\rho_s^{\rm or}\in[-1+\eps,1-\eps]$ while $|\rho_s|\le1-2\eps$, so $|\widehat\rho_s^{\rm or}-\rho_s|\le2$, and the Toeplitz row-sum bound gives
\[
\norm{\widehat\Gamma_{\rm or}-\Gamma}_2
\le2\gamma_0\sum_{s=1}^{d-1}|\widehat\rho_s^{\rm or}-\rho_s|
\le4\gamma_0d
\le C_\eps\gamma_0 d\,\frac tn ,
\]
so the stated bound holds after enlarging the constant of the $L_1dt/n$ term.
We may therefore assume
\[
C\sqrt{t/n}\le b_\eps/2 .
\]
In that case Lemma~\ref{supp-lem:inverse-link}(ii) with $\tau'=\tau$ shows that, for every $s$ and every $\theta\in[0,1]$, the point $c_s+\theta(\widehat c_s-c_s)$ lies in $\mathcal I_\tau$, so the clipping is inactive, $\Pi_{\mathcal I_\tau}(\widehat c_s)=\widehat c_s$.
Lemma~\ref{supp-lem:completion} with $\eta=0$ and $e_s=\widehat c_s-c_s$ (so that $a_0\equiv0$) gives
\begin{equation}
  \widehat\gamma^{\rm or}_s-\gamma_s
  =\gamma_0\bigl[\psi(\widehat c_s;\tau)-\rho_s\bigr]
  =w_s(\widehat c_s-c_s)+r_s,
  \qquad
  |r_s|\le C_{\tau,\eps}\gamma_0(\widehat c_s-c_s)^2,
  \label{supp-eq:taylor-oracle}
\end{equation}
where $\widehat\gamma^{\rm or}_s=\gamma_0\psi(\widehat c_s;\tau)$, $\gamma_s=\gamma_0\rho_s$ and $w_s=\gamma_0\psi'(c_s;\tau)$.

The first term in \eqref{supp-eq:taylor-oracle} generates the polynomial
\begin{equation}
L_n(\theta)=\frac1n\sum_{\ell=1}^nG_\theta^{(\ell)},
\qquad
G_\theta^{(\ell)}
=2\operatorname{Re}\sum_{s=1}^{d-1}w_se^{2\pi is\theta}\bigl\{Y_s^{(\ell)}-c_s\bigr\},
\end{equation}
with deterministic weights $w_s=\gamma_0\psi'(c_s;\tau)$, so that $|w_s|\le\gamma_0L_1$.
Writing $A_\theta(w)$ for the hollow frequency matrix of the main text, $G_\theta^{(\ell)}=(v^{(\ell)})^{*}A_\theta(w)v^{(\ell)}-\E\,(v^{(\ell)})^{*}A_\theta(w)v^{(\ell)}$, so the sparse-ruler spectral variance corollary in the main text gives
\begin{equation}
\Var(G_\theta^{(\ell)})
  \le C_\eps \gamma_0^2L_1^2\kappa_{\rm obs}^2\varphi(\Omega),
\end{equation}
and boundedness gives $|G_\theta^{(\ell)}|\le C\gamma_0L_1d$.
Bernstein's inequality at a fixed $\theta$ yields
\begin{equation}
|L_n(\theta)|
  \le C\gamma_0L_1\kappa_{\rm obs}\sqrt{\frac{\varphi(\Omega)t}{n}}
    +C\gamma_0L_1d\frac{t}{n}.
\end{equation}
The boundedness term is retained in the theorem statement.
Applying Lemma~\ref{supp-lem:grid} with $t=\log(Cd/\delta)$ gives the same bound uniformly over $\theta$ with probability at least $1-\delta$.

The remainder term is controlled by the coverage-scale square sum \eqref{supp-eq:per-lag-sum}:
\begin{equation}
\sup_{\theta\in[0,1]}\left|
  \operatorname{Re}\sum_{s=1}^{d-1}
  r_s\, e^{2\pi is\theta}
  \right|
  \le \sum_{s=1}^{d-1}|r_s|
  \le C_{\tau,\eps}\gamma_0\sum_{s=1}^{d-1}(\widehat c_s-c_s)^2
  \le C_{\tau,\eps}\gamma_0\left\{\min\{\kobs^2\ph(\Om),d\}\frac{t}{n}+d\frac{t^2}{n^2}\right\}.
\end{equation}
Since the oracle estimator has exact diagonal, the error matrix is the hollow real symmetric Toeplitz matrix with lag coefficients $\widehat\gamma_s^{\rm or}-\gamma_s$, and the symbol bound of Lemma~\ref{supp-lem:toeplitz-symbol} converts the first- and second-order bounds into the oracle rate of the main upper-bound theorem.

For the probability accounting, the event of Lemma~\ref{supp-lem:per-lag} fails with probability at most $e^{-t}$, and the Bernstein--grid event for the first-order spectral polynomial fails with probability at most $Cde^{-t}$.
With $t=\log(Cd/\delta)$ and the numerical constant $C$ chosen large enough, their union fails with probability at most $\delta$, so the oracle bound holds with probability at least $1-\delta$.
\end{proof}

\subsection{Plug-in empirical centering and completion}
We prove part \textup{(b)}.
Let $\Delta_\mu=\widehat\mu-\mu$.
For a pair $(j,k)$,
\begin{equation}
  (u_j-\widehat\mu)(u_k-\widehat\mu)
  =v_jv_k-\Delta_\mu(v_j+v_k)+\Delta_\mu^2.
  \label{supp-eq:plugin-pair-identity}
\end{equation}
Averaged over the $n$ snapshots and the $q_s$ pairs of lag $s$, the linear term generates
\begin{equation}
  \ell_{s,n}
  =\frac{1}{nq_s}\sum_{\ell=1}^n\sum_{(j,k)\in\Omega_s}
  \bigl(v^{(\ell)}_j+v^{(\ell)}_k\bigr),
  \qquad
  |\ell_{s,n}|\le4 .
  \label{supp-eq:ell-def}
\end{equation}
No independence between $\Delta_\mu$ and the empirical row-sum process is used; the next lemma states the calibration bound and the recentering bound on a single event, on which the remaining argument is deterministic.

\begin{lemma}[Spectral plug-in centering control]
\label{supp-lem:plugin-centering}
Let $H_n(\theta)=2\operatorname{Re}\sum_{s=1}^{d-1}w_s\ell_{s,n}e^{2\pi is\theta}$ with deterministic weights $|w_s|\le W$.
Under the plug-in sample-size regime of the main upper-bound theorem, with probability at least $1-Ce^{-t}$ the calibration bound \eqref{supp-eq:pooled-cal-event} and the recentering bound
\begin{equation}
|\Delta_\mu|\sup_{\theta\in[0,1]}|H_n(\theta)|
\le
C W\kappa_{\rm obs}\sqrt{\frac{\varphi(\Omega)t}{n}}
  +C Wd\frac{t}{n}
\label{supp-eq:plugin-centering-bound}
\end{equation}
hold simultaneously.
\end{lemma}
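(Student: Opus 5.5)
The proof bounds $|\Delta_\mu|$ and $\sup_\theta|H_n(\theta)|$ separately, each with high probability, and multiplies the two bounds on the intersection event; independence between them is never needed. Lemma~\ref{supp-lem:pooled-calibration} gives, with probability at least $1-e^{-t}$,
\[
|\Delta_\mu|\le C_\tau\Bigl(\sqrt{\kobs t/(nm)}+t/n\Bigr)=r_\mu .
\]
Since $r_\mu\le c\eta_0$ in the plug-in regime, $|\Delta_\mu|$ is bounded by a constant. What remains is to control $\sup_\theta|H_n(\theta)|$.

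\textbf{Step 1: write $H_n$ as a linear statistic.} Write $H_n(\theta)=n^{-1}\sum_\ell\sum_{j\in\Omega}\beta_j(\theta)v_j^{(\ell)}$, where
\[
\beta_j(\theta)=2\operatorname{Re}\sum_s w_sq_s^{-1}e^{2\pi is\theta}\,\#\{\text{pairs in }\Omega_s\text{ touching }j\}.
\]
Each $H_n(\theta)$ is then a centered linear statistic in the centered signs. Lemma~\ref{supp-lem:centered-transform-covariance} gives the one-snapshot variance bound $4\kobs\norm{\beta(\theta)}_2^2$.

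\textbf{Step 2: bound $\norm{\beta(\theta)}_2^2$ crudely.} This is the row-sum quantity $\calR$ from Section~\ref{sec:centering}. By Cauchy--Schwarz over lags,
\[
\norm{\beta}_2^2\le 4W^2\sum_j\Bigl(\sum_s q_s^{-1}\deg_s(j)\Bigr)^2 .
\]
Using $\sum_j\deg_s(j)=2q_s$ and $\deg_s(j)\le2$, this is at most $CW^2 d\sum_s q_s^{-1}=CW^2d\,\ph(\Om)$. Also $\sup_\theta|H_n(\theta)|\le CWd$ deterministically.

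\textbf{Step 3: concentration.} Apply Bernstein at each grid point, then Lemma~\ref{supp-lem:grid}; $H_n$ is a trigonometric polynomial of degree $d-1$. With probability at least $1-Ce^{-t}$,
\[
\sup_\theta|H_n(\theta)|\le CW\sqrt{\kobs d\,\ph(\Om)t/n}+CWd\,t/n .
\]

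\textbf{Step 4: multiply.} Multiplying by $|\Delta_\mu|$ and using $r_\mu\lesssim1$ makes the $Wd\,t/n$ term immediate. The cross term is
\[
W\sqrt{\kobs d\,\ph t/n}\cdot\sqrt{\kobs t/(nm)} .
\]
It is absorbed into $W\kobs\sqrt{\ph t/n}$ when $d\,t/(nm)\lesssim1$. Otherwise $d\,t/(nm)>1$, and then $\sqrt{\ph t/n}\cdot\sqrt{d\,t/(nm)}\cdot\sqrt{d}\lesssim d\,t/n$ follows from $\ph\le d$ and $m\ge1$; more directly, one splits on whether $\sqrt{\ph\,d/m}\lesssim d$.

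\textbf{Main obstacle.} The hardest step is the square-root cross term. The naive row-sum bound in Step~2 carries an extra factor $d$, which must be beaten by the $1/m$ gain from pooled calibration. If the case split fails, I would refine Step~2: since $\deg_s(j)\le2$ and the ruler covers all lags, $\sum_s\deg_s(j)\le2(m-1)$. This gives $\norm{\beta}_2^2\lesssim W^2\sum_j\bigl(\sum_s q_s^{-1}\deg_s(j)\bigr)^2$, bounded via $\max_j\sum_s q_s^{-1}\deg_s(j)$ times $\ph$, aiming for $\norm{\beta}_2^2\lesssim W^2 m\,\ph$. The $m$ then cancels exactly against the $1/m$ in $|\Delta_\mu|^2$, yielding $W\kobs\sqrt{\ph t/n}\cdot\sqrt{t/n}$, which is dominated under $n\ge Cmt/\kobs$. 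I expect this sharper degree-weighted row-sum bound to be the key estimate.
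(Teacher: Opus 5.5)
Your Step~4 does not close, so the main line through Step~2's bound $\norm{\beta(\theta)}_2^2\lesssim W^2 d\,\varphi(\Omega)$ does not prove the lemma as stated. The cross term is $W\kappa_{\rm obs}\frac{t}{n}\sqrt{d\varphi(\Omega)/m}$. In the case $dt>nm$ your displayed inequality is off by a factor $\sqrt{\varphi(\Omega)/m}$, since its left side equals $d\frac tn\sqrt{\varphi(\Omega)/m}$. Even the corrected estimate $\frac tn\sqrt{d\varphi(\Omega)/m}\le d\frac tn$ leaves $\kappa_{\rm obs}$ multiplying $Wd\,t/n$, and the lemma does not allow that.

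No case split rescues this uniformly in $\kappa_{\rm obs}$. Constant lags $\rho_s=1/2$ satisfy the buffered inverse-link assumption with $\kappa_{\rm obs}=(1+m)/2$. Then $n\ge Cmt/\kappa_{\rm obs}$ and $r_\mu\le c\eta_0$ permit $t/n$ to be a small constant. For a bounded-redundancy ruler ($m\asymp\sqrt d$, $\varphi(\Omega)\asymp d$), your cross-term bound exceeds both $W\kappa_{\rm obs}\sqrt{\varphi(\Omega)t/n}$ and $Wd\,t/n$ by a factor of order $d^{1/4}$. When $\kappa_{\rm obs}$ is bounded, as on the short-memory class or an operator-norm ball, your crude route does close, but only with constants depending on that bound.

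The repair is the estimate you list as a fallback, $\norm{\beta(\theta)}_2^2\lesssim W^2m\,\varphi(\Omega)$. This is exactly the paper's key step, and it should be your main line.

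The mechanism you sketch for it is the wrong pairing. With $x_j=\sum_sq_s^{-1}\deg_s(j)$ one has $\sum_jx_j=2(d-1)$, not $\varphi(\Omega)$, so ``$\max_jx_j$ times the sum'' does not yield $m\varphi(\Omega)$.

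What works is Cauchy--Schwarz over the active lags of each vertex, i.e.\ your own Step~2 computation with $m-1$ in place of $d-1$. A vertex is paired with at most $m-1$ other vertices; that, not lag coverage, is the reason. Hence $x_j^2\le(m-1)\sum_sq_s^{-2}\deg_s(j)^2$. Combined with $\sum_j\deg_s(j)^2\le2\sum_j\deg_s(j)=4q_s$, this gives $\norm{\beta(\theta)}_2^2\le16(m-1)W^2\varphi(\Omega)$ uniformly in $\theta$.

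With this bound the cross term is exactly $W\kappa_{\rm obs}\sqrt{\varphi(\Omega)}\,t/n$. Because $t\le n$, it is at most $W\kappa_{\rm obs}\sqrt{\varphi(\Omega)t/n}$, and no case split is needed.

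You must also absorb the product $\frac tn\cdot W\sqrt{\kappa_{\rm obs}m\varphi(\Omega)t/n}$, which your proposal does not address. It is at most $W\kappa_{\rm obs}\sqrt{\varphi(\Omega)t/n}$, using $n\ge Cmt/\kappa_{\rm obs}$ and $\kappa_{\rm obs}\le m$. The products with $Wd\,t/n$ are handled by $r_\mu\lesssim1$, as you say.
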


\begin{proof}
Throughout the proof we may assume $t\le n$: the linear spectral polynomial has lag coefficients bounded by $4W$, because each lag average of $v_j+v_k$ is at most $4$ in absolute value and $|w_s|\le W$, so its supremum over $\theta$ is at most $8Wd$ deterministically, while $|\Delta_\mu|\le2$; hence for $t>n$ the claimed bound holds trivially after enlarging the constant of the $Wdt/n$ term.
We also use repeatedly that $\kappa_{\rm obs}\le m$, because the observed correlation matrix is positive semidefinite with unit diagonal, so its operator norm is at most its trace.

For one snapshot, the linear sparse-ruler polynomial is the exact row-sum statistic
\[
H^{(\ell)}(\theta)
=2\operatorname{Re}\sum_{s=1}^{d-1}\frac{w_se^{2\pi is\theta}}{q_s}
  \sum_{(j,k)\in\Omega_s}\bigl\{v^{(\ell)}_j+v^{(\ell)}_k\bigr\}
=\sum_{i\in\Omega}\alpha_i(\theta)v^{(\ell)}_i,
\]
where, writing $m=|\Omega|$,
\[
\alpha_i(\theta)=2\operatorname{Re}\sum_{s=1}^{d-1}
    \frac{w_s e^{2\pi i s\theta}}{q_s}c_{i,s},
\]
and $c_{i,s}\in\{0,1,2\}$ is the number of lag-$s$ observed pairs incident to $i$.
Let
\[
    A_i=\{s:c_{i,s}>0\}.
\]
Since a fixed vertex in $\Omega$ can be paired with at most $m-1$ other vertices,
\[
    |A_i|\le \sum_s c_{i,s}\le m-1 .
\]
Therefore, using $|2\operatorname{Re}z|\le2|z|$ and Cauchy--Schwarz over the active lags of vertex $i$,
\[
|\alpha_i(\theta)|^2
 \le 4|A_i|W^2\sum_s\frac{c_{i,s}^2}{q_s^2}
 \le 4(m-1)W^2\sum_s\frac{c_{i,s}^2}{q_s^2}.
\]
For each lag $s$, the total incidence count is $\sum_{i\in\Omega}c_{i,s}=2q_s$, and since $c_{i,s}\le2$,
\[
\sum_{i\in\Omega}c_{i,s}^2
   \le 2\sum_{i\in\Omega}c_{i,s}=4q_s.
\]
Consequently, uniformly in $\theta$,
\begin{equation}
\sum_{i\in\Omega}|\alpha_i(\theta)|^2
 \le 4(m-1)W^2\sum_{s=1}^{d-1}\frac{1}{q_s^2}
      \sum_{i\in\Omega}c_{i,s}^2
 \le 16(m-1)W^2\sum_{s=1}^{d-1}\frac1{q_s}
 \le C W^2|\Omega|\,\varphi(\Omega).
\label{supp-eq:active-lag-row-energy}
\end{equation}
By Lemma~\ref{supp-lem:centered-transform-covariance}, applied to the centered threshold-sign transform on the observed covariance, $\norm{\Cov(v_\Omega)}_2\le C\kappa_{\rm obs}$.
Together with \eqref{supp-eq:active-lag-row-energy}, this gives the one-snapshot variance bound $C W^2\kappa_{\rm obs}|\Omega|\varphi(\Omega)$.
Bernstein's inequality, followed by the trigonometric grid lemma, controls the averaged linear polynomial by
\begin{equation}
  \sup_{\theta\in[0,1]}|H_n(\theta)|
  \le
  C W\sqrt{\frac{\kappa_{\rm obs}|\Omega|\varphi(\Omega)t}{n}}
  +C Wd\frac{t}{n}.
  \label{supp-eq:linear-rowsum-intermediate}
\end{equation}
Combining \eqref{supp-eq:linear-rowsum-intermediate} with
\begin{equation}
|\Delta_\mu|
  \le C\left(\sqrt{\frac{\kappa_{\rm obs}t}{n|\Omega|}}+\frac{t}{n}\right)
\end{equation}
on the calibration event gives
\begin{equation}
|\Delta_\mu|\sup_{\theta\in[0,1]}|H_n(\theta)|
  \le
  C W
  \left(
  \sqrt{\frac{\kappa_{\rm obs}t}{n|\Omega|}}+\frac{t}{n}
  \right)
  \left(
  \sqrt{\frac{\kappa_{\rm obs}|\Omega|\varphi(\Omega)t}{n}}
  +d\frac{t}{n}
  \right),
\label{supp-eq:plugin-centering-products}
\end{equation}
where constants may change from line to line.
The leading product is
\[
\sqrt{\frac{\kappa_{\rm obs}t}{n|\Omega|}}
\sqrt{\frac{\kappa_{\rm obs}|\Omega|\varphi(\Omega)t}{n}}
=
\kappa_{\rm obs}\sqrt{\varphi(\Omega)}\,\frac{t}{n}
\le
\kappa_{\rm obs}\sqrt{\frac{\varphi(\Omega)t}{n}},
\]
because $t\le n$ on the stated sample-size event after adjusting constants.
The product involving the boundedness term satisfies
\[
\sqrt{\frac{\kappa_{\rm obs}t}{n|\Omega|}}\,
d\frac{t}{n}
\le
d\frac{t}{n},
\]
using $n\ge C|\Omega|t/\kappa_{\rm obs}$ and $\kappa_{\rm obs}\le|\Omega|$.
The remaining two terms are
\[
\frac{t}{n}
\sqrt{\frac{\kappa_{\rm obs}|\Omega|\varphi(\Omega)t}{n}}
\le
\kappa_{\rm obs}\sqrt{\frac{\varphi(\Omega)t}{n}},
\qquad
\frac{t}{n}\,d\frac{t}{n}
\le
d\frac{t}{n},
\]
where we use $n\ge C|\Omega|t/\kappa_{\rm obs}$, $t\le n$ and $\kappa_{\rm obs}\le |\Omega|$.
Substituting these four bounds into \eqref{supp-eq:plugin-centering-products} yields
\begin{equation}
|\Delta_\mu|\sup_{\theta\in[0,1]}|H_n(\theta)|
\le
C W\kappa_{\rm obs}\sqrt{\frac{\varphi(\Omega)t}{n}}
  +C Wd\frac{t}{n},
\end{equation}
which is \eqref{supp-eq:plugin-centering-bound}.
The calibration event fails with probability at most $e^{-t}$ and the Bernstein--grid event behind \eqref{supp-eq:linear-rowsum-intermediate} fails with probability at most $Ce^{-t}$, so the two bounds hold simultaneously with probability at least $1-Ce^{-t}$ after adjusting constants.
\end{proof}

\begin{proof}[Completion of proof of the plug-in upper bound]
Let $\Delta_\mu=\widehat\mu-\mu$ and $\eta=\widehat\gamma_0/\gamma_0-1$.
Work on the intersection of the calibration event \eqref{supp-eq:pooled-cal-event} and the event of Lemma~\ref{supp-lem:per-lag}; the proof of Lemma~\ref{supp-lem:pooled-calibration} bounds the intermediate threshold estimate as well, so on this event
\[
|\widehat\tau-\tau|+|\Delta_\mu|+|\eta|\le Cr_\mu\le Cc\eta_0 .
\]
For each lag, the unclipped plug-in lag statistic and the pair identity \eqref{supp-eq:plugin-pair-identity} give
\[
\delta_s
:=\widehat c^{\rm plug,raw}_s-c_s
=\Delta^{\rm or}_s-\Delta_\mu\ell_{s,n}+\Delta_\mu^2,
\qquad
\Delta^{\rm or}_s=\widehat c^{\rm or}_s-c_s,
\]
where
\[
\widehat c^{\rm plug,raw}_s
=\frac{1}{nq_s}\sum_{\ell=1}^n\sum_{(j,k)\in\Omega_s}
(Y^{(\ell)}_j-\widehat\mu)(Y^{(\ell)}_k-\widehat\mu)
\]
and $\ell_{s,n}$ is as in \eqref{supp-eq:ell-def}.
On the working event,
\[
\max_s|\delta_s|
\le C\sqrt{t/n}+4Cr_\mu+C^2r_\mu^2
\le C_0\bigl(\sqrt{t/n}+r_\mu\bigr),
\]
after decreasing $c$ so that $Cr_\mu\le1$.

\parhead{Large deviations and clipping} After further decreasing the constant $c$ in the condition $r_\mu\le c\eta_0$, depending only on the regularity domains, we may assume that on the working event $C_0r_\mu\le b_\eps/4$ and $|\widehat\tau-\tau|\le\eta_\eps$, with $b_\eps,\eta_\eps$ from Lemma~\ref{supp-lem:inverse-link}(ii).
Suppose first that $C_0\sqrt{t/n}>b_\eps/4$, so that $t/n$ exceeds a fixed constant.
Since $\widehat\tau\in[\tau_{\min},\tau_{\max}]$ by construction, $\widehat\gamma_0=\gamma_0(\tau/\widehat\tau)^2\le C_\tau\gamma_0$, while the clipped inverse link gives $|\widehat\rho^{\rm plug}_s|\le1-\eps$; hence every entry of $\widehat\Gamma_{\rm plug}$ is bounded by $\widehat\gamma_0\le C_\tau\gamma_0$ and every entry of $\Gamma$ by $\gamma_0$.
The row-sum bound for symmetric matrices then gives
\[
\norm{\widehat\Gamma_{\rm plug}}_2
\le d\max_{j,k}|(\widehat\Gamma_{\rm plug})_{jk}|
\le C_\tau\gamma_0 d,
\qquad
\norm{\Gamma}_2
\le d\max_{j,k}|\Gamma_{jk}|
\le\gamma_0 d,
\]
and therefore the deterministic bound
\[
\norm{\widehat\Gamma_{\rm plug}-\Gamma}_2
\le\norm{\widehat\Gamma_{\rm plug}}_2+\norm{\Gamma}_2
\le C_\tau\gamma_0 d
\le C_{\tau,\eps}\gamma_0 d\,\frac tn ,
\]
so the stated bound holds trivially.
The same deterministic bound settles the case $t>n$.
From now on we may therefore assume
\[
C_0\bigl(\sqrt{t/n}+r_\mu\bigr)\le b_\eps/2,
\qquad
t\le n .
\]
By Lemma~\ref{supp-lem:inverse-link}(ii) with $\tau'=\widehat\tau$, for every $s$ and every $\theta\in[0,1]$ the point $c_s+\theta\delta_s$ lies in $\mathcal I_{\widehat\tau}$, so the clipping is inactive, $\Pi_{\mathcal I_{\widehat\tau}}(\widehat c^{\rm plug,raw}_s)=\widehat c^{\rm plug,raw}_s$.
Since $\widehat\gamma_0=\gamma_0(1+\eta)$ and $\widehat\tau=\tau(1+\eta)^{-1/2}=\tau_\eta$ exactly, Lemma~\ref{supp-lem:completion} with $e_s=\delta_s$ gives
\begin{equation}
\widehat\gamma^{\rm plug}_s-\gamma_s
=w_s\delta_s+\gamma_0a_\eta(\rho_s)+r_s,
\qquad
|r_s|\le C_{\tau,\eps}\gamma_0\bigl(\delta_s^2+|\eta|\,|\delta_s|\bigr),
\label{supp-eq:plugin-taylor}
\end{equation}
where $\widehat\gamma^{\rm plug}_s=\widehat\gamma_0\psi(\widehat c^{\rm plug,raw}_s;\widehat\tau)$, $\gamma_s=\gamma_0\rho_s$, $w_s=\gamma_0\psi'(c_s;\tau)$ with $|w_s|\le\gamma_0L_1$, and the diagonal of the estimator contributes $\gamma_0\eta$ separately.

\parhead{Population term} On the working event $|\eta|\le Cr_\mu\le c\eta_0$, so Corollary~\ref{lem:population-calibration} gives
\begin{equation}
\norm{\gamma_0\eta \Id_d+\gamma_0\Toep_d^0\{a_\eta(\rho_s)\}_{s=1}^{d-1}}_2
  \le C_{\tau,\eps}\gamma_0\{1+S_1(d;\rho)\}r_\mu ,
\end{equation}
the calibration term in \eqref{eq:plugin-rate-s}; the diagonal contribution $\gamma_0\eta$ is included here.

\parhead{First-order term} The first-order term carries the true weights $w_s=\gamma_0\psi'(c_s;\tau)$, $|w_s|\le\gamma_0L_1$, the weight-estimation error having been folded into the remainder $r_s$.
Substituting $\delta_s=\Delta^{\rm or}_s-\Delta_\mu\ell_{s,n}+\Delta_\mu^2$,
\begin{align*}
2\operatorname{Re}\sum_{s=1}^{d-1}w_s\delta_se^{2\pi is\theta}
&=2\operatorname{Re}\sum_sw_s\Delta^{\rm or}_se^{2\pi is\theta}
-\Delta_\mu\,2\operatorname{Re}\sum_sw_s\ell_{s,n}e^{2\pi is\theta}
+\Delta_\mu^2\,2\operatorname{Re}\sum_sw_se^{2\pi is\theta}.
\end{align*}
The first term is exactly the oracle spectral polynomial and contributes the first two terms of \eqref{eq:plugin-rate-s}; the second is controlled by Lemma~\ref{supp-lem:plugin-centering} with $W=\gamma_0L_1$ and contributes the same two scales; the third has supremum over $\theta$ at most $2\gamma_0L_1d\,\Delta_\mu^2\le C\gamma_0L_1d\,t/n$, because $\Delta_\mu^2\le Cr_\mu^2\le C\{\kobs t/(nm)+t^2/n^2\}\le Ct/n$ using $\kobs\le m$ and $t\le n$.

\parhead{Remainder} By \eqref{supp-eq:plugin-taylor} the lag-$s$ remainder obeys $|r_s|\le C_{\tau,\eps}\gamma_0(\delta_s^2+|\eta||\delta_s|)$, so its Toeplitz symbol is at most $C_{\tau,\eps}\gamma_0(\sum_s\delta_s^2+|\eta|\sum_s|\delta_s|)$.
For the square sum, $\delta_s^2\le3(\Delta^{\rm or}_s)^2+3\Delta_\mu^2\ell_{s,n}^2+3\Delta_\mu^4$, so the coverage-scale square sum \eqref{supp-eq:per-lag-sum} together with $\sum_s\ell_{s,n}^2\le16(d-1)$ (as $|\ell_{s,n}|\le4$), $\ph(\Om)\ge(d-1)/m$ (as $q_s\le m$) and $\kobs\ge1$ gives
\[
\sum_{s=1}^{d-1}\delta_s^2
\le C\left\{\min\{\kobs^2\ph(\Om),d\}\frac tn+d\frac{t^2}{n^2}\right\},
\]
where $(d-1)\Delta_\mu^2\le C(d-1)\{\kobs t/(nm)+t^2/n^2\}$ enters the two branches through $(d-1)\kobs/m\le\kobs\ph(\Om)\le\kobs^2\ph(\Om)$ and $(d-1)\kobs/m\le d$, and the $\Delta_\mu^4$ term obeys the same bound since $\Delta_\mu^2\le C$.
For the cross term, Cauchy--Schwarz gives $|\eta|\sum_s|\delta_s|\le|\eta|\sqrt{d-1}\,(\sum_s\delta_s^2)^{1/2}\le\tfrac12(d-1)\eta^2+\tfrac12\sum_s\delta_s^2$; the second summand is already at the displayed scale, and $(d-1)\eta^2\le C(d-1)\{\kobs t/(nm)+t^2/n^2\}$ (from $\eta^2\le Cr_\mu^2$) obeys the same bound as the $(d-1)\Delta_\mu^2$ term above.
Therefore the remainder is at most $C_{\tau,\eps}\gamma_0\{\min\{\kobs^2\ph(\Om),d\}\,t/n+d\,t^2/n^2\}$.

\parhead{Combination} Combining the population term, the three first-order bounds and the remainder through the symbol bound of Lemma~\ref{supp-lem:toeplitz-symbol}, and recalling that the diagonal error $\gamma_0\eta$ is already counted in the population term, proves the real Toeplitz statement \eqref{eq:plugin-rate-s}.
\end{proof}

\paragraph{Probability accounting}
The plug-in proof works on the intersection of four events: the calibration event \eqref{supp-eq:pooled-cal-event}, the lag event of Lemma~\ref{supp-lem:per-lag}, the oracle spectral event produced by Bernstein's inequality and Lemma~\ref{supp-lem:grid}, and the recentering event of Lemma~\ref{supp-lem:plugin-centering}.
At level $t$, each fails with probability at most $C'de^{-t}$ for a constant $C'$ depending only on the regularity domains, so the intersection fails with probability at most $4C'de^{-t}$.
Choosing the constant $C$ in $t=\log(Cd/\delta)$ with $C\ge4C'$ makes this at most $\delta$.
All bounds in the proof hold simultaneously on this intersection.

\begin{proof}[Proof of the operator-norm-ball corollary of the main text]
The completion proof above uses Assumption~\ref{ass:short-memory-s} at exactly one point: the application of Lemma~\ref{lem:population-calibration} to the population perturbation $\gamma_0\eta\Id_d+\gamma_0\Toep_d^0\{a_\eta(\rho_s)\}$.
Replace that application by Lemma~\ref{supp-lem:hadamard-perturbation}, which applies because $\opnorm{T_d^0(\rho)}\le c_0\le1-2\eps$; after decreasing the constant $c$ in $r_\mu\le c\eta_0$ so that $Cr_\mu\le\eta_0'$ on the working event, the bound \eqref{supp-eq:no-S1} with $|\eta|\le Cr_\mu$ gives a contribution
\[
C(1+B)\,\gamma_0\,r_\mu
\;+\;
C_{\tau,\eps}\,\gamma_0\,d\,r_\mu^2 .
\]
For the quadratic part, $r_\mu^2\le C\{\kobs t/(nm)+t^2/n^2\}$, and exactly as in the second-order step --- using $\ph(\Om)\ge(d-1)/m$, $1\le\kobs\le m$ ---
\[
d\,r_\mu^2
\le Cd\left\{\frac{\kobs t}{nm}+\frac{t^2}{n^2}\right\}
\le C\left\{\min\{\kobs^2\ph(\Om),d\}\,\frac tn+d\,\frac{t^2}{n^2}\right\},
\]
which is absorbed into the curvature term of \eqref{eq:plugin-rate-s}.
No other step of the proof uses Assumption~\ref{ass:short-memory-s}, which proves the corollary.
\end{proof}

\begin{corollary}[Sobolev spectral-density class]
\label{cor:sobolev-plugin}
Assume the conditions of the main upper-bound theorem.
If, for some $\beta>1/2$,
\[
    \sum_{s=1}^{d-1}s^{2\beta}|\rho_s|^2\le A_\beta^2,
\]
then the plug-in bound \eqref{eq:plugin-rate-s} holds with the calibration term bounded by
\[
    C_{\tau,\eps,\beta}\gamma_0(1+A_\beta)
    \left\{\sqrt{\frac{\kappa_{\rm obs}t}{nm}}+\frac{t}{n}\right\}.
\]
\end{corollary}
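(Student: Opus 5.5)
The corollary follows from the plug-in bound \eqref{eq:plugin-rate-s} of the main upper-bound theorem once the short-memory size $S_1(d;\rho)$ is bounded in terms of $A_\beta$. Since all remaining terms of \eqref{eq:plugin-rate-s} are untouched, the only work is a deterministic estimate $S_1(d;\rho)\le C_\beta A_\beta$ with $C_\beta$ independent of $d$. Substituting this estimate into the calibration term $C_{\tau,\eps}\gamma_0\{1+S_1(d;\rho)\}r_\mu$, and recalling that $r_\mu=C_\tau\{\sqrt{\kobs t/(nm)}+t/n\}$, gives the displayed bound with constant $C_{\tau,\eps,\beta}$.

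For the deterministic estimate, apply Cauchy--Schwarz with the weights $s^{\beta}$ and $s^{-\beta}$:
\[
\sum_{s=1}^{d-1}|\rho_s|
=\sum_{s=1}^{d-1}s^{-\beta}\cdot s^{\beta}|\rho_s|
\le\Bigl(\sum_{s=1}^{d-1}s^{-2\beta}\Bigr)^{1/2}\Bigl(\sum_{s=1}^{d-1}s^{2\beta}|\rho_s|^2\Bigr)^{1/2}
\le\zeta(2\beta)^{1/2}A_\beta .
\]
Because $\beta>1/2$, we have $2\beta>1$, so the Riemann zeta value is finite and $C_\beta=\zeta(2\beta)^{1/2}$ does not depend on $d$. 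This is also the first statement of Corollary~\ref{cor:sobolev-main}.

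One point needs checking. Part (b) of the theorem is stated under Assumption~\ref{ass:short-memory-s}, which asks for a uniform bound $S_\star$. The Sobolev condition supplies such a bound with $S_\star=C_\beta A_\beta$, so the hypotheses of part (b) are met, and the other sample-size conditions are assumed verbatim. Moreover, the proof of \eqref{eq:plugin-rate-s} uses $S_1$ only through Corollary~\ref{lem:population-calibration}, and that corollary holds for each individual lag sequence with the actual value of $S_1(d;\rho)$. The bound therefore holds pointwise in $\rho$, and no uniformity issue arises.

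There is no real obstacle here. The only points to watch are that $C_\beta$ stays dimension-free, which needs $\beta>1/2$, and that the constants are tracked so the final constant depends only on $\tau,\eps,\beta$.
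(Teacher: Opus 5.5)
Your proposal is correct and matches the paper's proof. The paper applies Cauchy--Schwarz with weights $s^{\pm\beta}$ to get $S_1(d;\rho)\le(\sum_{s\ge1}s^{-2\beta})^{1/2}A_\beta=C_\beta A_\beta$, which is dimension-free since $2\beta>1$, and then substitutes this into the calibration term of \eqref{eq:plugin-rate-s}; your remark that $S_1$ enters only through the pointwise population-calibration bound is a harmless extra check the paper leaves implicit.
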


\begin{proof}
By Cauchy--Schwarz,
\[
    S_1(d;\rho)
    \le
    \left(\sum_{s=1}^{\infty}s^{-2\beta}\right)^{1/2}
    \left(\sum_{s=1}^{d-1}s^{2\beta}|\rho_s|^2\right)^{1/2}
    \le C_\beta A_\beta .
\]
Substitution into the main upper-bound theorem gives the claim.
\end{proof}

\paragraph{Other short-memory examples}
The same substitution gives the standard exponential and finite-memory cases.
If $|\rho_s|\le C_0a^s$ with $0<a<1$, then
\[
    S_1(d;\rho)\le \frac{C_0a}{1-a},
\]
so the calibration multiplier is $1+C_0a/(1-a)$.
If $\rho_s=0$ for all $s>K$, then
\[
    S_1(d;\rho)=\sum_{s=1}^{K}|\rho_s|\le K,
\]
so the calibration multiplier is at most $1+K$.
In each case the plug-in term is obtained by substituting the displayed bound on $S_1(d;\rho)$ into \eqref{eq:plugin-rate-s}.

\begin{remark}[Bounded spectrum is not enough for plug-in calibration]
\label{rem:bounded-spectrum}
Bounded spectral density controls $\norm{\Gamma}_2/\gamma_0$, but it does not by itself control nonlinear lagwise calibration perturbations.
A scalar nonlinear transformation of autocorrelation lags need not preserve Toeplitz positive definiteness or dimension-free spectral norm.
The plug-in theorem is therefore stated over short-memory and Sobolev-type classes, where the population calibration shift is controlled directly by a Toeplitz row-sum argument.
\end{remark}

The upper bound separates three effects: sparse pair sampling through $\varphi(\Omega)$, inverse-link conditioning through $L_1,L_2$, and plug-in scale calibration through the pooled marginal error $r_\mu$ multiplied by the short-memory size $1+S_1(d;\rho)$.
On standard short-memory, Sobolev, banded and stable ARMA classes, this calibration term is a low-dimensional pooled marginal estimation error rather than a sparse-ruler pair-sampling error.

\section{Proof of the real spectral-packing lower bound}
\label{supp-sec:lower-bounds}

The lower bounds prove only the intrinsic design complexity of the leading oracle term.
Throughout this section the scale is known and the parameter space is a small identity-neighborhood submodel.
Marginal calibration, inverse-link curvature and correlation conditioning are therefore absent by construction.
The proof consists of a KL upper bound in the sparse Frobenius metric and a deterministic spectral packing whose separation is measured in Toeplitz operator norm.

We first give a global operator-norm minimax lower bound in terms of a deterministic Toeplitz spectral-packing quantity.
A coverage-log corollary then recovers the scale $\gamma_0\sqrt{\varphi(\Omega)\log d/n}$ under a natural balanced real trigonometric packing condition on the aggregation profile.

For $b=(b_1,\ldots,b_{d-1})\in\R^{d-1}$, the hollow real symmetric Toeplitz matrix is
\begin{equation}
T_d^0(b)_{jk}=
\begin{cases}
b_{|j-k|},& j\ne k,\\
0,& j=k.
\end{cases}
\label{eq:hollow-real-toeplitz}
\end{equation}
In the known-scale real one-bit Toeplitz model, let
\begin{equation}
\mathcal P_{\mathbb R}(c_0)=
\left\{
\Gamma=\gamma_0\{I_d+T_d^0(\rho)\}:
I_d+T_d^0(\rho)\succeq0,\ 
\norm{T_d^0(\rho)}_2\le c_0
\right\},
\label{eq:global-parameter-class-s}
\end{equation}
where $0<c_0<1/2$ is fixed and observations are restricted to the sparse ruler $\Omega$.
The corresponding minimax risk is
\begin{equation}
\mathfrak R_n(\Omega,\mathcal P_{\mathbb R}(c_0))
=
\inf_{\widehat \Gamma}
\sup_{\Gamma\in\mathcal P_{\mathbb R}(c_0)}
\E_\Gamma\norm{\widehat \Gamma-\Gamma}_2.
\label{eq:minimax-risk-s}
\end{equation}

\begin{lemma}[Sparse-pair Hermitian lifting]
\label{supp-lem:lifting}
Let $\Omega$ be a ruler.
For lag weights $\beta=(\beta_s)_{s=1}^{d-1}\in\mathbb{C}^{d-1}$, define the upper-triangular lifting $U_\Omega(\beta)$ by $(U_\Omega(\beta))_{jk}=\beta_{k-j}$ when $j<k$ and $(j,k)\in\Omega_{k-j}$, and zero otherwise; set $H_\Omega(\beta)=U_\Omega(\beta)+U_\Omega(\beta)^*$.
Then $H_\Omega(\beta)$ is hollow Hermitian and
\begin{equation}
  \norm{H_\Omega(\beta)}_F^2=2\sum_{s=1}^{d-1}q_s|\beta_s|^2 .
  \label{supp-eq:lifting-frobenius}
\end{equation}
If $\beta$ is real, then $H_\Omega(\beta)$ is symmetric and its row sums $r=H_\Omega(\beta)\one$ satisfy
\begin{equation}
  \sum_{i\in\Omega}r_i=2\sum_{s=1}^{d-1}q_s\beta_s,
  \qquad
  \sum_{i\in\Omega}r_i^2\ge\frac1m\Big(\sum_{i\in\Omega}r_i\Big)^2,
  \qquad m=|\Omega| .
  \label{supp-eq:lifting-rowsum}
\end{equation}
\end{lemma}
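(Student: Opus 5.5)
The whole lemma reduces to bookkeeping over the pair sets $\Omega_s$, and I would carry it out in three short steps.

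\emph{Step 1: the matrix is hollow Hermitian.} By construction $U_\Omega(\beta)$ is strictly upper triangular, since only entries with $j<k$ are nonzero. Hence $U_\Omega(\beta)$ and $U_\Omega(\beta)^*$ have disjoint supports, one strictly above and one strictly below the diagonal. Their sum $H_\Omega(\beta)$ therefore has zero diagonal and is Hermitian. When $\beta$ is real, $U_\Omega(\beta)^*=U_\Omega(\beta)^\top$, so $H_\Omega(\beta)$ is symmetric.

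\emph{Step 2: the Frobenius identity.} Disjointness of the two supports gives
\[
\norm{H_\Omega(\beta)}_F^2=\norm{U_\Omega(\beta)}_F^2+\norm{U_\Omega(\beta)^*}_F^2=2\norm{U_\Omega(\beta)}_F^2 .
\]
Group the nonzero entries of $U_\Omega(\beta)$ by the lag $s=k-j$. Each pair $(j,k)\in\Omega_s$ contributes exactly one entry, equal to $\beta_s$. Each position $(j,k)$ with $j<k$ belongs to exactly one $\Omega_s$, so no entry is counted twice. Since $|\Omega_s|=q_s$, we get
\[
\norm{U_\Omega(\beta)}_F^2=\sum_{s=1}^{d-1}q_s|\beta_s|^2 ,
\]
which yields \eqref{supp-eq:lifting-frobenius}.

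\emph{Step 3: the row-sum relations.} Here I view $H_\Omega(\beta)$ as an $m\times m$ matrix indexed by $\Omega$. Positions outside $\Omega^2$ vanish anyway, so nothing is lost. For real $\beta$, the total $\sum_i r_i=\one^\top H_\Omega(\beta)\one$ is simply the sum of all entries. That sum equals twice the sum of the entries of $U_\Omega(\beta)$, which by the same grouping by lag is $2\sum_s q_s\beta_s$. The inequality $\sum_i r_i^2\ge m^{-1}(\sum_i r_i)^2$ is Cauchy--Schwarz applied to the vectors $r$ and $\one$ in $\R^\Omega$, where $\norm{\one}_2^2=m$.

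I expect no real obstacle. The only point needing care is in Step 2: each off-diagonal position must receive exactly one lag, so that the count over the sets $\Omega_s$ is exact and no position is counted twice.
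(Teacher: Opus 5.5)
Your proposal is correct and follows essentially the same route as the paper's proof. Both count the two conjugate entries $\beta_s$ and $\overline{\beta_s}$ per pair in $\Omega_s$ to obtain the Frobenius identity, both write the total row sum as twice the upper-triangular entry sum grouped by lag, and both apply Cauchy--Schwarz against $\one\in\R^{\Omega}$. Your Step 1, which checks hollowness and the Hermitian property explicitly, is a harmless addition that the paper leaves implicit.
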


\begin{proof}
Each unordered pair $(j,k)$ with $k-j=s$ places $\beta_s$ in entry $(j,k)$ and $\overline{\beta_s}$ in entry $(k,j)$, contributing $2|\beta_s|^2$ to $\norm{H_\Omega(\beta)}_F^2$; summing over the $q_s$ pairs at each lag gives \eqref{supp-eq:lifting-frobenius}.
For real $\beta$ the matrix is symmetric, $r_i=\sum_{j\ne i}(H_\Omega(\beta))_{ij}$, and $\sum_i r_i=2\sum_{i<j}(H_\Omega(\beta))_{ij}=2\sum_s q_s\beta_s$; the inequality is Cauchy--Schwarz over the $m$ entries of $r$.
\end{proof}

\begin{lemma}[KL control by the sparse Frobenius metric]
\label{lem:kl-sparse-frobenius}
Let $B=T_d^0(b)$ be hollow real symmetric Toeplitz.
Let $P_{uB}^{(n)}$ denote the law of $n$ one-bit sparse observations with covariance $\gamma_0(I_d+uB)$, and let $P_0^{(n)}$ denote the law under covariance $\gamma_0 I_d$.
If $|u|\norm{B}_2\le1/2$, then
\begin{equation}
D(P_{uB}^{(n)}\|P_0^{(n)})
\le
Cnu^2\norm{B_{\Omega,\Omega}}_F^2.
\label{eq:kl-sparse-frobenius}
\end{equation}
Moreover,
\begin{equation}
\norm{B_{\Omega,\Omega}}_F^2
=2\sum_{s=1}^{d-1}q_sb_s^2.
\label{eq:sparse-frobenius-toeplitz}
\end{equation}
\end{lemma}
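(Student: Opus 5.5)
The plan has three steps: pass to the unquantized observations, tensorize over snapshots, and bound a single Gaussian KL divergence. The one-bit observation $Y_\Omega^{(\ell)}=\sign(X_\Omega^{(\ell)}-\lambda)$ is a deterministic function of the full-precision restricted vector $X_\Omega^{(\ell)}$. By the data-processing inequality for KL divergence, $D(P_{uB}^{(n)}\|P_0^{(n)})$ is therefore at most the KL divergence between the laws of $(X_\Omega^{(1)},\ldots,X_\Omega^{(n)})$ under the two covariances. The snapshots are independent, so this divergence tensorizes into $n$ times a single-snapshot divergence between $\mathcal N(0,\gamma_0(I_m+uB_{\Omega,\Omega}))$ and $\mathcal N(0,\gamma_0 I_m)$. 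Here $B_{\Omega,\Omega}$ is the principal submatrix of $B$ on $\Omega$, and the common scale $\gamma_0$ cancels.

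For the single-snapshot term, write $M=uB_{\Omega,\Omega}$. This matrix is symmetric and hollow, and since it is a compression of $uB$ we have $\norm{M}_2\le|u|\norm{B}_2\le1/2$. The Gaussian KL formula gives
\[
D\bigl(\mathcal N(0,I+M)\,\|\,\mathcal N(0,I)\bigr)=\tfrac12\bigl\{\operatorname{tr}M-\log\det(I+M)\bigr\}=\tfrac12\sum_k\{\mu_k-\log(1+\mu_k)\},
\]
where $\mu_k$ are the eigenvalues of $M$, all satisfying $|\mu_k|\le1/2$. The scalar inequality $x-\log(1+x)\le x^2$ holds for $|x|\le1/2$; this follows from Taylor's theorem with remainder, since $(1+\xi)^{-2}\le4$ on this range. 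Applying it termwise bounds the single-snapshot divergence by $\tfrac12\sum_k\mu_k^2=\tfrac12\norm{M}_F^2=\tfrac12u^2\norm{B_{\Omega,\Omega}}_F^2$. Multiplying by $n$ gives \eqref{eq:kl-sparse-frobenius} with $C=1/2$. Note that hollowness (so $\operatorname{tr}M=0$) is not even needed for this bound.

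The Frobenius identity \eqref{eq:sparse-frobenius-toeplitz} is the real case of Lemma~\ref{supp-lem:lifting}. The entries of $B_{\Omega,\Omega}$ off the diagonal are $b_{|j-k|}$ at each pair $(j,k)\in\Omega^2$, so $B_{\Omega,\Omega}=H_\Omega(b)$. Each lag $s$ then contributes $2q_sb_s^2$ through the two orientations of its $q_s$ pairs.

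No step is a serious obstacle. The only points needing care are the eigenvalue control $\norm{B_{\Omega,\Omega}}_2\le\norm{B}_2$, which comes from interlacing for principal submatrices and keeps the logarithm within its quadratic regime, and stating the data-processing step at the level of the $n$-fold product experiment.
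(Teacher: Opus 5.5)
Your proposal is correct and follows the paper's proof essentially step for step: data processing to the unquantized Gaussian laws on $\Omega$, tensorization over snapshots, the eigenvalue form of the Gaussian KL divergence with a quadratic bound on $x-\log(1+x)$, and the lifting identity for $\norm{B_{\Omega,\Omega}}_F^2$. One harmless slip is that Taylor's theorem with $(1+\xi)^{-2}\le4$ only gives $x-\log(1+x)\le 2x^2$, hence $C=1$ rather than $C=1/2$; the sharper inequality with $x^2$ is true for $x\ge-1/2$, since $x^2-x+\log(1+x)$ has derivative $x(1+2x)/(1+x)$, but the difference is immaterial because the lemma only claims some constant $C$.
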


\begin{proof}
Let $\widetilde P_{uB}^{(n)}$ and $\widetilde P_0^{(n)}$ be the corresponding unquantized real Gaussian laws on the observed coordinates.
The one-bit observation is a measurable function of the unquantized vector, so the data-processing inequality gives
\begin{equation}
D(P_{uB}^{(n)}\|P_0^{(n)})
\le
D(\widetilde P_{uB}^{(n)}\|\widetilde P_0^{(n)}).
\end{equation}
The quantizer and threshold are fixed throughout this oracle submodel; the data-processing bound is threshold-independent because thresholding is a measurable map of the observed Gaussian vector.
For one snapshot, the real Gaussian KL divergence is given by the standard entropy formula \citep[see, e.g.,][]{CoverThomas2006}:
\begin{equation}
D\!\left(
\mathcal N(0,\gamma_0(I+uB_{\Omega,\Omega}))
\middle\|
\mathcal N(0,\gamma_0 I)
\right)
=
\frac12\left\{
\operatorname{tr}(uB_{\Omega,\Omega})
-\log\det(I+uB_{\Omega,\Omega})
\right\}.
\end{equation}
If $\lambda_1,\ldots,\lambda_m$ are the eigenvalues of $B_{\Omega,\Omega}$, then the hollow structure gives $\sum_{r=1}^m\lambda_r=\operatorname{tr}(B_{\Omega,\Omega})=0$ and hence
\begin{equation}
D
=
\frac12
\sum_{r=1}^m\{u\lambda_r-\log(1+u\lambda_r)\}.
\end{equation}
The condition $|u|\norm{B}_{2}\le1/2$ implies $|u\lambda_r|\le1/2$, and $x-\log(1+x)\le Cx^2$ on this interval. Thus one snapshot has KL divergence at most
\begin{equation}
Cu^2\sum_{r=1}^m\lambda_r^2
=
Cu^2\norm{B_{\Omega,\Omega}}_F^2.
\end{equation}
Independence over snapshots gives \eqref{eq:kl-sparse-frobenius}.
Finally, $B_{\Omega,\Omega}=H_\Omega(b)$ is the Hermitian lifting of the real lag weights $b$, so \eqref{supp-eq:lifting-frobenius} gives
\begin{equation}
\norm{B_{\Omega,\Omega}}_F^2
=
2\sum_{s=1}^{d-1}q_sb_s^2.
\end{equation}
\end{proof}

\begin{theorem}[Spectral-packing lower bound for the oracle sparse-pair submodel]
\label{thm:spectral-packing-minimax-s}
Let $\mathcal V=\{b^1,\ldots,b^M\}\subset\R^{d-1}$ with $M\ge3$, and define
\begin{align}
D_\Omega(\mathcal V)
&=
2\max_{1\le a\le M}\sum_{s=1}^{d-1}q_s(b_s^a)^2,\\
R_T(\mathcal V)
&=
\max_{1\le a\le M}\norm{T_d^0(b^a)}_2,\\
\Delta_T(\mathcal V)
&=
\min_{a\ne a'}\norm{T_d^0(b^a-b^{a'})}_2.
\end{align}
There is a constant $c>0$, depending only on $c_0$, such that
\begin{equation}
\mathfrak R_n(\Omega,\mathcal P_{\mathbb R}(c_0))
\ge
c\gamma_0
\Delta_T(\mathcal V)
\min\left\{
\frac{1}{R_T(\mathcal V)},\,
\sqrt{\frac{\log M}{nD_\Omega(\mathcal V)}}
\right\}.
\label{eq:spectral-packing-bound-s}
\end{equation}
Consequently, the same lower bound holds after taking the supremum over all finite packings $\mathcal V$.
\end{theorem}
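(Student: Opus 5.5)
We prove the bound by Fano's method on a scaled copy of the packing, choosing a single scale $u>0$ and the hypotheses
\[
\Gamma_a=\gamma_0\{I_d+uT_d^0(b^a)\},\qquad a=1,\ldots,M .
\]
The scale is
\[
u=c_1\min\left\{\frac{1}{R_T(\mathcal V)},\sqrt{\frac{\log M}{nD_\Omega(\mathcal V)}}\right\},
\]
with $c_1\le c_0$ small; note $c_0<1/2$.

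First we check membership in $\mathcal P_{\mathbb R}(c_0)$. Since $u\norm{T_d^0(b^a)}_2\le c_1\le c_0<1/2$, we have $I_d+uT_d^0(b^a)\succeq0$ and the operator-norm constraint holds. The loss separation is $\norm{\Gamma_a-\Gamma_{a'}}_2=\gamma_0u\norm{T_d^0(b^a-b^{a'})}_2\ge\gamma_0u\Delta_T(\mathcal V)$.

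Next we bound the information. We use the central reference $P_0^{(n)}$ with covariance $\gamma_0I_d$. Lemma~\ref{lem:kl-sparse-frobenius} applies because $u\norm{T_d^0(b^a)}_2\le1/2$, and with \eqref{eq:sparse-frobenius-toeplitz} it gives
\[
D(P_{a}^{(n)}\|P_0^{(n)})\le Cnu^2\cdot2\sum_sq_s(b_s^a)^2\le Cnu^2D_\Omega(\mathcal V)\le Cc_1^2\log M .
\]
The mutual information between a uniform index and the data is at most $\max_aD(P_a^{(n)}\|P_0^{(n)})$, since the mixture minimizes average KL. Choosing $c_1$ small therefore makes it at most $\tfrac14\log M$.

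We then apply Fano's inequality \citep{Tsybakov2009}. With $M\ge3$ and this information bound, any test has error probability bounded below by a constant $c_2>0$; the condition $M\ge3$ keeps the $\log2$ term in Fano harmless. The standard reduction from estimation to testing, using the minimum-distance test with separation $\gamma_0u\Delta_T(\mathcal V)$, gives
\[
\mathfrak R_n\ge\tfrac12c_2\gamma_0u\Delta_T(\mathcal V).
\]
Substituting $u$ yields \eqref{eq:spectral-packing-bound-s}. The supremum statement is immediate because the bound holds for every packing.

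The main care point is the min structure. The $1/R_T$ branch is not a KL consideration: it is forced by the admissibility constraints, namely positive semidefiniteness, the $c_0$-ball, and the $1/2$ condition of the KL lemma. Both constraints are met by the single choice of $u$ above, and $D_\Omega,R_T,\Delta_T>0$ for distinct vectors, so no case split is needed. The reduction to unquantized samples is already built into Lemma~\ref{lem:kl-sparse-frobenius}, so no threshold dependence enters and $c$ depends only on $c_0$.
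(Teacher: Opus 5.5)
Your proof is correct and follows the paper's argument essentially step for step. It uses the same scaled hypotheses $\Gamma_a=\gamma_0\{I_d+uT_d^0(b^a)\}$ with $u$ proportional to the same minimum, membership in $\mathcal P_{\mathbb R}(c_0)$ from $uR_T(\mathcal V)\le c_0$, the sparse-Frobenius KL lemma against the reference $\gamma_0 I_d$, the variational bound on mutual information, and Fano's inequality with the minimum-distance test. The only differences are cosmetic: you budget $\tfrac14\log M$ where the paper uses $\tfrac1{16}\log M$, and this still leaves the positive constant $1-\tfrac14-\log 2/\log 3$ in Fano's bound.
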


\begin{proof}
Set
\begin{equation}
u=\alpha
\min\left\{
\frac{1}{R_T(\mathcal V)},\,
\sqrt{\frac{\log M}{nD_\Omega(\mathcal V)}}
\right\},
\end{equation}
where $\alpha>0$ will be chosen sufficiently small.
For each $a=1,\ldots,M$, define
\begin{equation}
\Gamma_a=\gamma_0\{I_d+uT_d^0(b^a)\}.
\end{equation}
Since $uR_T(\mathcal V)\le\alpha$, taking $\alpha\le c_0$ ensures $\norm{uT_d^0(b^a)}_2\le c_0$ and $I_d+uT_d^0(b^a)\succeq0$.
Hence $\Gamma_a\in\mathcal P_{\mathbb R}(c_0)$.

Let $P_a^{(n)}$ be the one-bit sparse observation law induced by $\Gamma_a$, and let $P_0^{(n)}$ be the law under $\gamma_0 I_d$.
Lemma~\ref{lem:kl-sparse-frobenius} gives
\begin{align}
D(P_a^{(n)}\|P_0^{(n)})
&\le
Cnu^2\norm{T_d^0(b^a)_{\Omega,\Omega}}_F^2\\
&\le
Cnu^2D_\Omega(\mathcal V)
\le
C\alpha^2\log M.
\end{align}
Choose $\alpha$ so that $C\alpha^2\le1/16$.
Then
\begin{equation}
\frac1M\sum_{a=1}^M D(P_a^{(n)}\|P_0^{(n)})
\le
\frac1{16}\log M.
\end{equation}

For $a\ne a'$,
\begin{equation}
\norm{\Gamma_a-\Gamma_{a'}}_2
=
\gamma_0 u\norm{T_d^0(b^a-b^{a'})}_2
\ge
\gamma_0u\Delta_T(\mathcal V).
\end{equation}
Let $s=\gamma_0u\Delta_T(\mathcal V)/2$.
If an estimator satisfies $\norm{\widehat \Gamma-\Gamma_a}_2<s$, then the nearest-neighbor classifier
\[
\widehat a=\mathop{\rm argmin}_{1\le j\le M}\norm{\widehat \Gamma-\Gamma_j}_2
\]
is correct.
Therefore
\begin{equation}
\Pr_a\{\norm{\widehat \Gamma-\Gamma_a}_2\ge s\}
\ge
\Pr_a\{\widehat a\ne a\}.
\end{equation}
Let $a$ be uniform on $\{1,\ldots,M\}$ and let $I(a;Y)$ denote the mutual information between $a$ and the observed data $Y$ under the joint law.
For any fixed reference probability measure $Q$,
\begin{equation}
I(a;Y)\le\frac1M\sum_{a=1}^M D(P_a^{(n)}\|Q),
\end{equation}
because the mutual information equals the minimum of the right-hand side over $Q$ \citep[Chapter~2]{Tsybakov2009}.
Taking $Q=P_0^{(n)}$ gives $I(a;Y)\le\frac1{16}\log M$.
Fano's inequality \citep{CoverThomas2006} then yields
\begin{equation}
\inf_{\widehat a}\max_{1\le a\le M}\Pr_a\{\widehat a\ne a\}
\ge
1-\frac{I(a;Y)+\log2}{\log M}
\ge
1-\frac1{16}-\frac{\log2}{\log3}
=:c_1,
\end{equation}
and $c_1>0$ is an absolute constant because $M\ge3$.
Thus
\begin{equation}
\sup_{1\le a\le M}\E_a\norm{\widehat \Gamma-\Gamma_a}_2
\ge
c\gamma_0u\Delta_T(\mathcal V).
\end{equation}
Substituting the definition of $u$ proves \eqref{eq:spectral-packing-bound-s}.
\end{proof}

Theorem~\ref{thm:spectral-packing-minimax-s} is a deterministic spectral-packing principle.
The sparse observation law is controlled by the weighted Frobenius metric $D_\Omega(\mathcal V)$, whereas the loss is governed by the Toeplitz operator separation $\Delta_T(\mathcal V)$.
The coverage-log rate arises when this abstract packing is instantiated by real cosine alternatives.
The operator-separation calculation then creates difference-frequency, sum-frequency and doubled-frequency terms that must be controlled.

\begin{assumption}[Real balanced coverage spectral packing]
\label{ass:spectral-packing}
There exist a lag set $S\subset\{1,\ldots,d-1\}$, a frequency set $\Theta\subset[0,1]$ with $|\Theta|=M\ge3$, and constants $a_0,b_0\in(0,1)$ and $\zeta\in(0,1/3)$ such that
\begin{equation}
\varphi_S(\Omega):=\sum_{s\in S}q_s^{-1}\ge a_0\varphi(\Omega),
\qquad
\Phi_S(\Omega):=
\sum_{s\in S}\left(1-\frac{s}{d}\right)q_s^{-1}
\ge b_0\varphi_S(\Omega).
\end{equation}
Let
\begin{equation}
K_S(t)=
\sum_{s\in S}\left(1-\frac{s}{d}\right)q_s^{-1}e^{2\pi i s t}.
\end{equation}
Assume further that
\begin{equation}
\max_{\theta\in\Theta}
\frac{|K_S(2\theta)|}{\Phi_S(\Omega)}
\le \zeta,
\end{equation}
and
\begin{equation}
\max_{\theta\ne\theta'\in\Theta}
\max\left\{
\frac{|K_S(\theta-\theta')|}{\Phi_S(\Omega)},
\frac{|K_S(\theta+\theta')|}{\Phi_S(\Omega)}
\right\}
\le \zeta .
\end{equation}
\end{assumption}

\begin{corollary}[Real coverage-log lower bound]
\label{cor:coverage-log-lower-s}
Under Assumption~\ref{ass:spectral-packing},
\begin{equation}
\mathfrak R_n(\Omega,\mathcal P_{\mathbb R}(c_0))
\ge
c\gamma_0
\min\left\{
1,\,
\sqrt{\frac{\varphi(\Omega)\log M}{n}}
\right\},
\label{eq:coverage-log-lower-s}
\end{equation}
where $c>0$ depends only on $a_0,b_0,\zeta$ and $c_0$.
In particular, if $M\ge d^\alpha$ for a fixed $\alpha>0$, then
\begin{equation}
\mathfrak R_n(\Omega,\mathcal P_{\mathbb R}(c_0))
\ge
c\gamma_0
\min\left\{
1,\,
\sqrt{\frac{\varphi(\Omega)\log d}{n}}
\right\}
\label{eq:coverage-logd-lower-s}
\end{equation}
where in this display $c=c(a_0,b_0,\zeta,c_0,\alpha)>0$.
\end{corollary}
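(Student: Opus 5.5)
We apply Theorem~\ref{thm:spectral-packing-minimax-s} to a packing of tapered cosine lag sequences indexed by $\Theta$. For $\theta\in\Theta$ set
\[
b^\theta_s=q_s^{-1}\cos(2\pi s\theta)\,\one\{s\in S\},\qquad s=1,\ldots,d-1 .
\]
The factor $q_s^{-1}$ matches the sparse Frobenius metric. Since $\cos^2\le1$,
\[
D_\Omega(\mathcal V)=2\max_\theta\sum_{s\in S}q_s^{-1}\cos^2(2\pi s\theta)\le2\varphi_S(\Omega)\le2\varphi(\Omega).
\]
For the operator radius, Lemma~\ref{supp-lem:toeplitz-symbol} gives $R_T(\mathcal V)\le\sup_u 2\sum_{s\in S}q_s^{-1}\le2\varphi_S(\Omega)$.

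The main work is the separation lower bound $\Delta_T(\mathcal V)\gtrsim\Phi_S(\Omega)$. Take $\theta\ne\theta'$ and $e=b^\theta-b^{\theta'}$. We test $T_d^0(e)$ against the normalized Fourier vector $x_j=d^{-1/2}e^{2\pi ij\theta}$. A direct computation gives
\[
x^*T_d^0(b)x=2\operatorname{Re}\sum_s(1-s/d)\,b_s e^{2\pi is\theta}.
\]
Substituting $\cos(2\pi s\theta)=\tfrac12(e^{2\pi is\theta}+e^{-2\pi is\theta})$ turns each term into $K_S$ evaluated at $0$, $2\theta$, $\theta-\theta'$ or $\theta+\theta'$. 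This yields
\[
x^*T_d^0(b^\theta)x=\operatorname{Re}\{K_S(0)+K_S(2\theta)\},\qquad
x^*T_d^0(b^{\theta'})x=\operatorname{Re}\{K_S(\theta-\theta')+K_S(\theta+\theta')\}
\]
(the second up to conjugation of the arguments). Since $K_S(0)=\Phi_S(\Omega)$, the coherence assumptions give
\[
\norm{T_d^0(e)}_2\ge|x^*T_d^0(e)x|\ge(1-3\zeta)\Phi_S(\Omega)\ge(1-3\zeta)b_0\varphi_S(\Omega).
\]
The bookkeeping of the exponentials, including the sign conventions and complex conjugates, is the delicate step. The point to check is that exactly one unit-size term $K_S(0)$ survives and that the other three terms are each bounded by $\zeta\Phi_S$. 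This is why the hypothesis requires $\zeta<1/3$.

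Insert these bounds into \eqref{eq:spectral-packing-bound-s}. Using $\varphi_S\ge a_0\varphi$, the first branch is $\Delta_T/R_T\ge(1-3\zeta)b_0/2$. The second branch is
\[
\Delta_T\sqrt{\log M/(nD_\Omega)}\gtrsim\varphi_S\sqrt{\log M/(n\varphi_S)}=\sqrt{\varphi_S\log M/n}\ge\sqrt{a_0\varphi(\Omega)\log M/n}.
\]
Together these give \eqref{eq:coverage-log-lower-s} with $c$ depending on $a_0,b_0,\zeta,c_0$. For the final claim, $M\ge d^\alpha$ gives $\log M\ge\alpha\log d$, and the factor $\sqrt\alpha$ is absorbed into the constant.
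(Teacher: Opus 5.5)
Your proposal is correct and follows the paper's proof essentially step for step. Both use the same cosine packing $b^\theta_s=q_s^{-1}\cos(2\pi s\theta)\one\{s\in S\}$, the bounds $D_\Omega\le2\varphi_S(\Omega)$ and $R_T\le2\varphi_S(\Omega)$, and the same Fourier-vector test giving $\Phi_S(\Omega)+\operatorname{Re}K_S(2\theta)-\operatorname{Re}K_S(\theta-\theta')-\operatorname{Re}K_S(\theta+\theta')\ge(1-3\zeta)\Phi_S(\Omega)$, followed by substitution into the spectral-packing theorem. The only differences are cosmetic: you bound $R_T$ with the symbol lemma rather than the row-sum bound, and you expand the cosines into exponentials rather than using the product-to-sum identity.
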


\begin{proof}
For each $\theta\in\Theta$, define the real lag vector
\begin{equation}
b_s^\theta=q_s^{-1}\cos(2\pi s\theta)\mathbf 1_{\{s\in S\}},
\qquad s=1,\ldots,d-1,
\end{equation}
and let $\mathcal V=\{b^\theta:\theta\in\Theta\}$.
First,
\begin{equation}
D_\Omega(\mathcal V)
=
2\max_{\theta\in\Theta}
\sum_{s\in S}q_sq_s^{-2}\cos^2(2\pi s\theta)
\le
2\varphi_S(\Omega).
\end{equation}
Second, by the Toeplitz row-sum bound,
\begin{equation}
R_T(\mathcal V)
\le
2\sum_{s\in S}q_s^{-1}
=
2\varphi_S(\Omega).
\end{equation}

It remains to lower bound the operator separation.
Let
\[
x_\omega=d^{-1/2}
(1,e^{2\pi i\omega},\ldots,e^{2\pi i(d-1)\omega})^\top .
\]
For real $b$,
\begin{equation}
x_\omega^*T_d^0(b)x_\omega
=
2\sum_{s=1}^{d-1}
\left(1-\frac{s}{d}\right)b_s\cos(2\pi s\omega).
\end{equation}
For distinct $\theta,\theta'\in\Theta$, evaluate at $\omega=\theta$.
Using $2\cos A\cos B=\cos(A-B)+\cos(A+B)$ gives
\begin{align}
x_\theta^*T_d^0(b^\theta-b^{\theta'})x_\theta
&=
\Phi_S(\Omega)
+\operatorname{Re}K_S(2\theta) \nonumber\\
&\quad
-\operatorname{Re}K_S(\theta-\theta')
-\operatorname{Re}K_S(\theta+\theta').
\end{align}
Assumption~\ref{ass:spectral-packing} gives
\begin{equation}
x_\theta^*T_d^0(b^\theta-b^{\theta'})x_\theta
\ge
(1-3\zeta)\Phi_S(\Omega).
\end{equation}
Since $\zeta<1/3$,
\begin{equation}
\Delta_T(\mathcal V)
\ge
(1-3\zeta)\Phi_S(\Omega)
\ge
(1-3\zeta)b_0\varphi_S(\Omega).
\end{equation}

Substituting these three bounds into Theorem~\ref{thm:spectral-packing-minimax-s} yields
\begin{equation}
\mathfrak R_n(\Omega,\mathcal P_{\mathbb R}(c_0))
\ge
c\gamma_0
\min\left\{
1,\,
\sqrt{\frac{\varphi_S(\Omega)\log M}{n}}
\right\}.
\end{equation}
Since $\varphi_S(\Omega)\ge a_0\varphi(\Omega)$, \eqref{eq:coverage-log-lower-s} follows.
If $M\ge d^\alpha$, then $\log M\ge\alpha\log d$; absorbing the fixed factor $\sqrt{\alpha}$ into the constant gives \eqref{eq:coverage-logd-lower-s}.
\end{proof}

\begin{lemma}[Large-spectrum avoidance on the torus]
\label{supp-lem:avoidance}
Let $(a_s)_{s\in S}$ be nonnegative reals on a finite set $S\subset\mathbb Z\setminus\{0\}$, and set
\[
  K(t)=\sum_{s\in S}a_se^{2\pi i st}\ \ (t\in\mathbb T=\mathbb R/\mathbb Z),
  \qquad A=\sum_{s\in S}a_s,\qquad B=\sum_{s\in S}a_s^2,\qquad N=\frac{A^2}{B}.
\]
For $\zeta\in(0,1)$ put $\mathcal B_\zeta=\{t\in\mathbb T:|K(t)|>\zeta A\}$.
Then $\mathcal B_\zeta$ is symmetric, $|\mathcal B_\zeta|\le1/(\zeta^2N)$, and there is a set $\Theta\subset\mathbb T$ with $|\Theta|\ge c_\zeta N$ such that
\[
  2\theta\notin\mathcal B_\zeta
  \quad\text{and}\quad
  \theta\pm\theta'\notin\mathcal B_\zeta
  \qquad\text{for all }\theta,\theta'\in\Theta,
\]
provided $N$ exceeds a constant depending only on $\zeta$.
\end{lemma}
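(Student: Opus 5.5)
The plan has three parts: the symmetry and measure claims first, then a random construction of $\Theta$ with a deletion step.

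\emph{Symmetry and measure.} Symmetry of $\mathcal B_\zeta$ is immediate. The coefficients $a_s$ are real, so $K(-t)=\overline{K(t)}$ and hence $|K(-t)|=|K(t)|$. For the measure bound, use Chebyshev together with Parseval on $\mathbb T$. Since $S\subset\mathbb Z\setminus\{0\}$ consists of distinct frequencies, $\int_{\mathbb T}|K|^2=B$. Therefore
\[
|\mathcal B_\zeta|\le\frac{B}{\zeta^2A^2}=\frac1{\zeta^2N}.
\]

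\emph{Construction of $\Theta$.} Draw $L$ points $\theta_1,\ldots,\theta_L$ independently and uniformly on $\mathbb T$, with $L=\lfloor c'\zeta^2N\rfloor$ for a small absolute $c'$; then discard bad points. The map $t\mapsto 2t$ preserves Haar measure on $\mathbb T$, so each doubling constraint fails with probability
\[
\Prb(2\theta_a\in\mathcal B_\zeta)=|\mathcal B_\zeta|\le 1/(\zeta^2N).
\]
For $a\ne b$, the variables $\theta_a\pm\theta_b$ are each uniform on $\mathbb T$. So each pair constraint $\theta_a\pm\theta_b\in\mathcal B_\zeta$ fails with probability at most $2/(\zeta^2N)$.

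\emph{Counting and deletion.} Call index $a$ bad if $2\theta_a\in\mathcal B_\zeta$, and call a pair $\{a,b\}$ bad if $\theta_a+\theta_b$ or $\theta_a-\theta_b$ lies in $\mathcal B_\zeta$. By symmetry of $\mathcal B_\zeta$, the condition $\theta_b-\theta_a\in\mathcal B_\zeta$ is the same as $\theta_a-\theta_b\in\mathcal B_\zeta$, so orientation does not matter. The expected number of bad indices plus bad pairs is at most
\[
\frac{L}{\zeta^2N}+\frac{L^2}{\zeta^2N}\le \frac{L}{4}
\]
for $c'$ small, once $L\ge1$. Fix a realization achieving at most this count. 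Delete every bad index, and one endpoint of every bad pair. This leaves at least $3L/4$ indices with all constraints satisfied among the survivors.

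The condition for $\theta=\theta'$ also needs checking. Here $\theta+\theta=2\theta$ is already handled. The difference $\theta-\theta=0$ requires $0\notin\mathcal B_\zeta$, but $|K(0)|=A>\zeta A$, so $0\in\mathcal B_\zeta$ always. The statement must therefore mean $\theta\ne\theta'$ for the $\pm$ condition, exactly as in Assumption~\ref{ass:spectral-packing}, and I would read it that way.

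Finally, the survivors must be distinct points. This holds almost surely, since coincidences have probability zero. It is also automatic: distinct survivors satisfy $\theta_a-\theta_b\notin\mathcal B_\zeta$, which excludes $\theta_a-\theta_b=0$. This gives $|\Theta|\ge\tfrac34 L\ge c_\zeta N$ with $c_\zeta\asymp\zeta^2$. We need $N\ge C/\zeta^2$ so that $L\ge1$, or $L\ge3$ if $M\ge3$ is wanted.

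\emph{Main obstacle.} The main subtlety is the pair-independence bookkeeping. Uniformity of $\theta_a\pm\theta_b$ only requires $a\ne b$, and the expected-count bound needs only the union bound, not independence across pairs. The remaining care is the diagonal issue noted above, which forces reading the $\pm$ condition for distinct $\theta,\theta'$.
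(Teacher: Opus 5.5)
Your proof is correct, but it builds $\Theta$ differently from the paper. The symmetry and measure parts are the same as the paper's: $K(-t)=\overline{K(t)}$, then Parseval plus Markov.

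\emph{The paper's route.} It builds $\Theta$ greedily and deterministically. After $k$ points are chosen, the forbidden set for the next one is the preimage of $\mathcal B_\zeta$ under doubling together with $2k$ translates of $\mathcal B_\zeta$. Its measure is at most $(1+2k)/(\zeta^2N)$. So an admissible new point exists as long as $1+2k<\zeta^2N$, which yields roughly $\zeta^2N/2$ points.

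\emph{Your route.} You sample $L\asymp\zeta^2N$ independent uniform points. You bound the expected number of bad indices and bad pairs by $L/4$, then delete. This uses the same two measure facts: Haar invariance of doubling, and uniformity of $\theta_a\pm\theta_b$ for independent uniforms (the probabilistic counterpart of ``a single translate'').

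\emph{Comparison.} Both give $c_\zeta\asymp\zeta^2$. Yours has a smaller numerical constant and needs a little expectation bookkeeping; in return it shows that a constant fraction of a random sample already works. The greedy argument is shorter and avoids probability.

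\emph{The diagonal.} Your remark is right: $|K(0)|=A>\zeta A$, so $0\in\mathcal B_\zeta$, and the difference condition can only hold for $\theta\ne\theta'$. The paper's greedy proof likewise checks each new point only against previously chosen ones. Assumption~\ref{ass:spectral-packing} also states the condition for $\theta\ne\theta'$. So your reading matches how the lemma is actually proved and used downstream.
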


\begin{proof}
Parseval's identity gives $\int_{\mathbb T}|K|^2=B$, so Markov's inequality yields $|\mathcal B_\zeta|\le B/(\zeta A)^2=1/(\zeta^2N)$.
The $a_s$ are real, so $K(-t)=\overline{K(t)}$ and $\mathcal B_\zeta$ is symmetric.
Build $\Theta$ greedily: with $\theta_1,\ldots,\theta_k$ already chosen, a new $\theta$ is forbidden if $2\theta\in\mathcal B_\zeta$ or, for some $r\le k$, $\theta-\theta_r\in\mathcal B_\zeta$ or $\theta+\theta_r\in\mathcal B_\zeta$.
The doubling map preserves Haar measure, so $\{\theta:2\theta\in\mathcal B_\zeta\}$ has measure $|\mathcal B_\zeta|$, and each sum or difference constraint is a single translate of $\mathcal B_\zeta$; the forbidden measure is therefore at most $(1+2k)|\mathcal B_\zeta|$.
A further point exists while $(1+2k)|\mathcal B_\zeta|<1$, that is for $k$ up to a constant multiple of $\zeta^2N$, giving $|\Theta|\ge c_\zeta N$ after adjusting constants for small $N$.
\end{proof}

\begin{proposition}[Effective-support certificate for real spectral packing]
\label{prop:effective-support-packing}
Let $S\subset\{1,\ldots,d-1\}$ and set
\begin{equation}
a_s=\left(1-\frac{s}{d}\right)q_s^{-1},
\qquad s\in S.
\end{equation}
Define
\begin{equation}
A_S=\sum_{s\in S}a_s=\Phi_S(\Omega),
\qquad
B_S=\sum_{s\in S}a_s^2,
\qquad
N_{\rm eff}(S)=\frac{A_S^2}{B_S}.
\label{eq:effective-support}
\end{equation}
Fix $\zeta\in(0,1/3)$.
If
\begin{equation}
\varphi_S(\Omega)\ge a_0\varphi(\Omega),
\qquad
\Phi_S(\Omega)\ge b_0\varphi_S(\Omega),
\end{equation}
then there exists a frequency set $\Theta\subset[0,1]$ satisfying Assumption~\ref{ass:spectral-packing} and
\begin{equation}
|\Theta|\ge c_\zeta N_{\rm eff}(S),
\end{equation}
provided $N_{\rm eff}(S)$ is larger than a constant depending only on $\zeta$.
Consequently, if $N_{\rm eff}(S)\ge c_{\rm eff}d^\alpha$ for fixed $c_{\rm eff}>0$ and $\alpha>0$, then $\log M\gtrsim\log d$.
\end{proposition}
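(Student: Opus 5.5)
The plan is to apply Lemma~\ref{supp-lem:avoidance} directly to the weights $a_s=(1-s/d)q_s^{-1}$ on the lag set $S$. These weights are nonnegative; they are strictly positive for $s\le d-1$. With this choice, $A=A_S=\Phi_S(\Omega)$, $B=B_S$ and $N=N_{\rm eff}(S)$. The kernel of the lemma, $K(t)=\sum_{s\in S}a_se^{2\pi ist}$, coincides with the taper kernel $K_S$ of Assumption~\ref{ass:spectral-packing}.

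The lemma supplies a set $\Theta\subset\mathbb T$ with $|\Theta|\ge c_\zeta N_{\rm eff}(S)$ such that $2\theta\notin\mathcal B_\zeta$ and $\theta\pm\theta'\notin\mathcal B_\zeta$ for all $\theta,\theta'\in\Theta$, provided $N_{\rm eff}(S)$ exceeds a $\zeta$-dependent constant. By definition of $\mathcal B_\zeta$, this says $|K_S(2\theta)|\le\zeta\Phi_S(\Omega)$ and $\max\{|K_S(\theta-\theta')|,|K_S(\theta+\theta')|\}\le\zeta\Phi_S(\Omega)$. These are exactly the two coherence bounds of Assumption~\ref{ass:spectral-packing}, applied to distinct pairs. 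Identifying $\mathbb T$ with $[0,1)$ places $\Theta$ in $[0,1]$; $K_S$ is $1$-periodic, so evaluating arguments modulo $1$ is harmless.

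The two mass conditions $\varphi_S(\Omega)\ge a_0\varphi(\Omega)$ and $\Phi_S(\Omega)\ge b_0\varphi_S(\Omega)$ are hypotheses, so they transfer verbatim. The remaining requirement is $M=|\Theta|\ge3$. I will secure it by enlarging the threshold on $N_{\rm eff}(S)$ so that $c_\zeta N_{\rm eff}(S)\ge3$. If needed, I will also discard points of $\Theta$ to keep it finite; every property is preserved under taking subsets.

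For the consequence, if $N_{\rm eff}(S)\ge c_{\rm eff}d^\alpha$ then $M\ge c_\zeta c_{\rm eff}d^\alpha$, so $\log M\ge\alpha\log d-C$. This is $\gtrsim\log d$ once $d$ is large, and the largeness is already implicit in the proviso on $N_{\rm eff}$.

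The only point I expect to need care is the conversion from the strict-inequality definition of $\mathcal B_\zeta$ to the non-strict coherence bounds, and this is immediate. The greedy construction is fully handled by Lemma~\ref{supp-lem:avoidance}, so I do not anticipate any genuine obstacle.
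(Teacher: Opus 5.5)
Your proposal is correct and matches the paper's proof: apply the large-spectrum avoidance lemma to the weights $a_s=(1-s/d)q_s^{-1}$ on $S$, so that $A=A_S$, $B=B_S$ and $N=N_{\rm eff}(S)$, read the avoidance conclusions as the three coherence bounds on $K_S$, and carry over the two mass conditions unchanged. Your extra checks fill in details the paper leaves implicit: restricting the difference bounds to distinct pairs, enlarging the threshold on $N_{\rm eff}(S)$ to get $M\ge3$, and using the $1$-periodicity of $K_S$ to place $\Theta$ in $[0,1]$.
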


\begin{proof}
Apply Lemma~\ref{supp-lem:avoidance} with $a_s=(1-s/d)q_s^{-1}$ for $s\in S$, for which $A=A_S$, $B=B_S$ and $N=N_{\rm eff}(S)$, writing $K_S(t)=\sum_{s\in S}a_se^{2\pi i st}$ and $\mathcal B_\zeta=\{|K_S|>\zeta A_S\}$.
Since $\zeta\in(0,1/3)\subset(0,1)$, the lemma gives $\Theta\subset[0,1]$ with $|\Theta|\ge c_\zeta N_{\rm eff}(S)$ such that $2\theta$, $\theta-\theta'$ and $\theta+\theta'$ avoid $\mathcal B_\zeta$ for all $\theta,\theta'\in\Theta$; that is, $|K_S(2\theta)|,|K_S(\theta-\theta')|,|K_S(\theta+\theta')|\le\zeta A_S$, the three incoherence conditions of Assumption~\ref{ass:spectral-packing}.
Hence $\Theta$ satisfies that assumption with $M=|\Theta|\ge c_\zeta N_{\rm eff}(S)$, and when $N_{\rm eff}(S)\ge c_{\rm eff}d^\alpha$ this gives $\log M\gtrsim\log d$.
\end{proof}

\begin{corollary}[Flat coverage over many lags implies real spectral packing]
\label{cor:flat-coverage-packing}
Assume there exists $S\subset\{1,\ldots,d-1\}$ such that
\begin{equation}
\varphi_S(\Omega)\ge a_0\varphi(\Omega),
\qquad
S\subset\{1,\ldots,\lfloor(1-b_0)d\rfloor\},
\end{equation}
\begin{equation}
\max_{s\in S}q_s^{-1}
\le
C_{\rm flat}\frac{\varphi_S(\Omega)}{|S|},
\qquad
|S|\ge c_Sd^\alpha.
\end{equation}
Then Assumption~\ref{ass:spectral-packing} holds with $M\ge c d^\alpha$, where $c>0$ depends only on the displayed constants and $\zeta$.
\end{corollary}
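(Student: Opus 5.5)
The plan is to reduce Corollary~\ref{cor:flat-coverage-packing} to Proposition~\ref{prop:effective-support-packing}. The two balance conditions of Assumption~\ref{ass:spectral-packing} come from the support restriction. A lower bound $N_{\rm eff}(S)\gtrsim|S|$ then comes from the flatness hypothesis. The incoherence conditions and the frequency set $\Theta$ are produced by the effective-support certificate.

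\emph{Balance.} The hypothesis $\varphi_S(\Omega)\ge a_0\varphi(\Omega)$ is given directly. For every $s\in S$ we have $s\le(1-b_0)d$, hence $1-s/d\ge b_0$. Summing against $q_s^{-1}$ gives $\Phi_S(\Omega)\ge b_0\varphi_S(\Omega)$. If one wants the first balance constant to lie strictly in $(0,1)$, it can be shrunk slightly, which is harmless.

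\emph{Effective support.} Put $a_s=(1-s/d)q_s^{-1}$. Then $A_S=\Phi_S(\Omega)\ge b_0\varphi_S(\Omega)$. In the other direction, $a_s\le q_s^{-1}\le C_{\rm flat}\varphi_S(\Omega)/|S|$, so
\[
B_S=\sum_{s\in S}a_s^2\le\Bigl(\max_{s\in S}a_s\Bigr)\sum_{s\in S}a_s\le C_{\rm flat}\frac{\varphi_S(\Omega)}{|S|}\,A_S .
\]
It follows that
\[
N_{\rm eff}(S)=\frac{A_S^2}{B_S}\ge\frac{A_S|S|}{C_{\rm flat}\varphi_S(\Omega)}\ge\frac{b_0}{C_{\rm flat}}|S|\ge\frac{b_0c_S}{C_{\rm flat}}d^\alpha .
\]
This is the only quantitative step, and it is a one-line Hölder-type estimate.

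\emph{Conclusion.} Fix $\zeta\in(0,1/3)$. Once $d$ is so large that $(b_0c_S/C_{\rm flat})d^\alpha$ exceeds the threshold on $N_{\rm eff}$ required by Proposition~\ref{prop:effective-support-packing} (equivalently Lemma~\ref{supp-lem:avoidance}), that proposition gives $\Theta$ satisfying Assumption~\ref{ass:spectral-packing} with $M=|\Theta|\ge c_\zeta N_{\rm eff}(S)\ge c\,d^\alpha$.

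The main obstacle is bookkeeping for small $d$. The proposition also needs $M\ge3$ and a large-$N_{\rm eff}$ threshold, which only hold for $d$ large. I would handle this in one of two ways. One is to state the conclusion for $d\ge d_0(\text{constants},\zeta)$. The other is to note that for bounded $d$ we may take $M=3$ by absorbing constants, though this option requires checking that three incoherent frequencies exist, which is not automatic. I would therefore adopt the large-$d$ reading, consistent with how Corollary~\ref{cor:bounded-redundancy} is phrased.
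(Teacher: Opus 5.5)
Your proposal is correct and matches the paper's proof essentially line for line. The boundary restriction gives $A_S=\Phi_S(\Omega)\ge b_0\varphi_S(\Omega)$, flatness gives $B_S\le(\max_{s\in S}a_s)A_S\lesssim A_S^2/|S|$, so $N_{\rm eff}(S)\gtrsim|S|\gtrsim d^\alpha$, and Proposition~\ref{prop:effective-support-packing} then finishes. Your caveat is also legitimate: the proposition, and the requirement $M\ge3$, apply only once $N_{\rm eff}(S)$, and hence $d$, exceeds a constant depending on $\zeta$ and the displayed constants. The paper's proof leaves this implicit, and your large-$d$ reading matches how Corollary~\ref{cor:bounded-redundancy} is stated.
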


\begin{proof}
For $s\in S$, the boundary condition gives $a_s=(1-s/d)q_s^{-1}\ge b_0q_s^{-1}$, so $A_S\ge b_0\varphi_S(\Omega)$.
Also $a_s\le q_s^{-1}$ and the flatness assumption gives
\begin{equation}
B_S\le \left(\max_{s\in S}a_s\right)A_S
\le C\frac{A_S^2}{|S|}.
\end{equation}
Hence $N_{\rm eff}(S)\ge c|S|\ge cc_Sd^\alpha$.
Proposition~\ref{prop:effective-support-packing} applies.
\end{proof}

\begin{corollary}[Bounded-redundancy rulers]
\label{supp-cor:bounded-redundancy}
Let $\Omega$ cover all lags $1,\ldots,d-1$, let $m=|\Omega|$, and define the redundancy
\begin{equation}
R(\Omega)=\binom{m}{2}\Big/(d-1).
\label{eq:ruler-redundancy}
\end{equation}
Fix $\zeta\in(0,1/3)$.
If $R(\Omega)\le\bar R$ and $d\ge9$, then the conditions of Corollary~\ref{cor:flat-coverage-packing} hold with
\[
a_0=\frac{1}{16\bar R},
\qquad
b_0=\frac14,
\qquad
C_{\rm flat}=4\bar R,
\qquad
c_S=\frac18,
\qquad
\alpha=1 .
\]
Consequently, Assumption~\ref{ass:spectral-packing} holds with $M\ge c(\bar R,\zeta)\,d$ once $d\ge C(\bar R,\zeta)$, and the coverage-log lower bound \eqref{eq:coverage-logd-lower-s} applies with $\log M\asymp\log d$.
\end{corollary}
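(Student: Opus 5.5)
The plan is to verify the hypotheses of Corollary~\ref{cor:flat-coverage-packing} directly for the lag set
\[
S=\{1,\ldots,\lfloor 3d/4\rfloor\}\cap\{s:q_s\le 4\bar R\}
\]
and then invoke that corollary. Everything rests on two counting facts. First, the total number of observed pairs is $\sum_{s=1}^{d-1}q_s=\binom m2\le\bar R(d-1)$. Second, every lag is covered, so $q_s\ge1$ for all $s$.

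The first step is to show that $S$ is large. The constraint $s\le\lfloor3d/4\rfloor$ gives $(1-s/d)\ge1/4$, which is the required boundary condition with $b_0=1/4$. By Markov's inequality on the counts, the number of lags with $q_s>4\bar R$ is at most $\sum_s q_s/(4\bar R)\le(d-1)/4$. Hence
\[
|S|\ge\lfloor3d/4\rfloor-(d-1)/4\ge d/2-1\ge d/8
\]
once $d\ge9$, which gives $c_S=1/8$ and $\alpha=1$.

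The second step handles the coverage mass and flatness. Every $s\in S$ has $q_s^{-1}\ge1/(4\bar R)$, so
\[
\varphi_S(\Omega)\ge|S|/(4\bar R)\ge d/(32\bar R).
\]
On the other side, $\varphi(\Omega)\le d-1$ because $q_s\ge1$. Together these give
\[
\varphi_S(\Omega)\ge\varphi(\Omega)/(32\bar R).
\]
This is a factor $2$ off the stated $a_0=1/(16\bar R)$. To recover the stated constant I would sharpen the bound on $|S|$, using $|S|\ge d/2-1\ge d/4$ for $d\ge 9$, which gives $\varphi_S\ge d/(16\bar R)\ge\varphi/(16\bar R)$. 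Flatness follows because $\max_{s\in S}q_s^{-1}\le1$ while $\varphi_S(\Omega)/|S|\ge1/(4\bar R)$, so $C_{\rm flat}=4\bar R$ works.

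With these constants, Corollary~\ref{cor:flat-coverage-packing}, through Proposition~\ref{prop:effective-support-packing} and Lemma~\ref{supp-lem:avoidance}, produces $\Theta$ with $M\ge c(\bar R,\zeta)d$. It needs $N_{\rm eff}(S)$ to exceed a $\zeta$-dependent constant, and $N_{\rm eff}(S)\gtrsim|S|\gtrsim d$, so this holds once $d\ge C(\bar R,\zeta)$; this is also what guarantees $M\ge3$. Then $\log M\asymp\log d$, and Corollary~\ref{cor:coverage-log-lower-s} yields \eqref{eq:coverage-logd-lower-s}.

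The main obstacle is purely bookkeeping: making the floors and the $d\ge9$ threshold deliver exactly the displayed constants, in particular $a_0=1/(16\bar R)$, which needs $|S|\ge d/4$ rather than the cruder $d/8$. If the constants do not close exactly, I will state slightly weaker explicit constants, since only their dependence on $\bar R$ matters.
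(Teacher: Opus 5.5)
Your proposal is correct and follows the paper's argument essentially step for step. The steps match: Markov on $\sum_s q_s=\binom m2\le\bar R(d-1)$ to discard high-multiplicity lags, restriction to non-boundary lags with $q_s\le4\bar R$, $\varphi_S(\Omega)\ge|S|/(4\bar R)$, $\varphi(\Omega)\le d-1$, and flatness from $\max_{s\in S}q_s^{-1}\le1$. Your sharpened count $|S|\ge d/4$ (the paper uses $(d-1)/4$) yields $a_0=1/(16\bar R)$ exactly, so no weakening of the constants is needed.
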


\begin{proof}
Every observed pair contributes to exactly one positive lag, so
\[
\sum_{s=1}^{d-1}q_s=\binom{m}{2}=R(\Omega)(d-1)\le\bar R(d-1).
\]
By Markov's inequality, the set of lags with $q_s>4\bar R$ has cardinality at most $(d-1)/4$.
Let
\[
S=\bigl\{1\le s\le\lfloor 3(d-1)/4\rfloor:\ q_s\le4\bar R\bigr\},
\]
so that $|S|\ge 3(d-1)/4-1-(d-1)/4=(d-1)/2-1\ge(d-1)/4\ge d/8$ for $d\ge9$.
On $S$ each term satisfies $q_s^{-1}\ge1/(4\bar R)$, whence
\[
\varphi_S(\Omega)\ge\frac{|S|}{4\bar R},
\qquad
\frac{\varphi_S(\Omega)}{|S|}\ge\frac1{4\bar R},
\qquad
\max_{s\in S}q_s^{-1}\le1\le4\bar R\,\frac{\varphi_S(\Omega)}{|S|}.
\]
Since $q_s\ge1$ for every lag, $\varphi(\Omega)\le d-1$, and therefore
\[
\varphi_S(\Omega)\ge\frac{|S|}{4\bar R}\ge\frac{d-1}{16\bar R}\ge\frac{\varphi(\Omega)}{16\bar R}.
\]
Finally, $S\subset\{1,\ldots,\lfloor3d/4\rfloor\}$, so the boundary condition of Corollary~\ref{cor:flat-coverage-packing} holds with $b_0=1/4$.
All conditions of Corollary~\ref{cor:flat-coverage-packing} are now verified with the displayed constants, and the conclusion follows.
\end{proof}

For minimum-redundancy rulers the redundancy \eqref{eq:ruler-redundancy} is bounded by an absolute constant \citep{Moffet1968,Leech1956}.
For a two-level nested ruler with balanced level sizes $N_1=N_2=k$ \citep{PalVaidyanathan2010}, one has $m=2k$ and $d-1=k(k+1)-1$, so $R(\Omega)=(2k^2-k)/(k^2+k-1)\le2$.
Both canonical families are therefore covered by Corollary~\ref{supp-cor:bounded-redundancy}.

\begin{corollary}[Coverage-log lower bound under effective support]
\label{cor:effective-support-lower}
Assume the conditions of Proposition~\ref{prop:effective-support-packing} and $N_{\rm eff}(S)\ge c_{\rm eff}d^\alpha$ for fixed $c_{\rm eff}>0$ and $\alpha>0$.
Then
\begin{equation}
\mathfrak R_n(\Omega,\mathcal P_{\mathbb R}(c_0))
\ge
c\gamma_0
\min\left\{
1,\,
\sqrt{\frac{\varphi(\Omega)\log d}{n}}
\right\},
\label{eq:effective-support-logd-lower}
\end{equation}
where $c>0$ depends only on $a_0,b_0,c_{\rm eff},\alpha,\zeta$ and $c_0$.
\end{corollary}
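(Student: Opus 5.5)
The plan is to obtain this corollary directly from Corollary~\ref{cor:coverage-log-lower-s}, using Proposition~\ref{prop:effective-support-packing} to produce the frequency set. First, the hypotheses of Proposition~\ref{prop:effective-support-packing} are assumed to hold. These are $\varphi_S(\Omega)\ge a_0\varphi(\Omega)$ and $\Phi_S(\Omega)\ge b_0\varphi_S(\Omega)$ for a fixed $\zeta\in(0,1/3)$. Together with $N_{\rm eff}(S)\ge c_{\rm eff}d^\alpha$, they give a frequency set $\Theta$ satisfying Assumption~\ref{ass:spectral-packing} with $M=|\Theta|\ge c_\zeta N_{\rm eff}(S)\ge c_\zeta c_{\rm eff}d^\alpha$. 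This holds provided $N_{\rm eff}(S)$ exceeds the threshold $C_\zeta$ from Lemma~\ref{supp-lem:avoidance}, and also provided $M\ge3$.

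Second, I would apply \eqref{eq:coverage-log-lower-s} to obtain $\mathfrak R_n\ge c\gamma_0\min\{1,\sqrt{\varphi(\Omega)\log M/n}\}$, with $c=c(a_0,b_0,\zeta,c_0)$. Then $\log M\ge\alpha\log d+\log(c_\zeta c_{\rm eff})$. For $d$ larger than a constant depending on $\alpha,c_{\rm eff},\zeta$, this is at least $(\alpha/2)\log d$. The factor $\sqrt{\alpha/2}$ is absorbed into $c$.

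The only real obstacle is bookkeeping for small $d$, where $N_{\rm eff}(S)$ may fall below the threshold or $M<3$. Here I would use the fact that the bound is trivially true at bounded $d$ after shrinking $c$. Since $d$ is bounded by $D_0(\alpha,c_{\rm eff},\zeta)$ and $\varphi(\Omega)\ge1$, the target is at most a constant times $\min\{1,\sqrt{\varphi(\Omega)/n}\}$. For that quantity a two-point or small-packing lower bound is available from Theorem~\ref{thm:spectral-packing-minimax-s}, applied with three alternatives supported on a single lag $s$ with $b_s\in\{0,\pm q_s^{-1}\}$.

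Concretely, the set $\{0,\pm e_s q_s^{-1}\}$ gives three vectors. For it, $D_\Omega=2q_s^{-1}$ and $R_T\le2q_s^{-1}$. Also $\Delta_T\ge c q_s^{-1}$, because a hollow Toeplitz matrix with a single nonzero lag $b$ has operator norm at least $|b|$. Choosing the lag $s$ maximizing $q_s^{-1}$ gives $q_s^{-1}\ge\varphi(\Omega)/(d-1)\ge\varphi(\Omega)/D_0$. The theorem then yields $\mathfrak R_n\ge c\gamma_0\min\{1,\sqrt{q_s^{-1}/n}\}\ge c'\gamma_0\min\{1,\sqrt{\varphi(\Omega)\log d/n}\}$ for bounded $d$. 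Combining the two regimes proves \eqref{eq:effective-support-logd-lower} with $c$ depending only on $a_0,b_0,c_{\rm eff},\alpha,\zeta,c_0$.
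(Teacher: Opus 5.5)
Your proposal is correct and takes the same route as the paper. The paper's proof invokes Proposition~\ref{prop:effective-support-packing} to obtain Assumption~\ref{ass:spectral-packing} with $M\gtrsim d^\alpha$, then applies Corollary~\ref{cor:coverage-log-lower-s}, and leaves the small-$d$ case (where $N_{\rm eff}(S)$ may fall below the threshold or $M<3$) to ``after adjusting constants''; your three-point single-lag packing in Theorem~\ref{thm:spectral-packing-minimax-s} is a valid way to handle that case explicitly.
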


\begin{proof}
Proposition~\ref{prop:effective-support-packing} gives Assumption~\ref{ass:spectral-packing} with $M\ge c d^\alpha$.
Corollary~\ref{cor:coverage-log-lower-s} then gives \eqref{eq:effective-support-logd-lower} after adjusting constants.
\end{proof}

\begin{remark}[Boundary lags and the taper]
\label{rem:boundary-taper}
The taper condition is not cosmetic.
Lags near the aperture boundary carry small Rayleigh weight $1-s/d$.
A ruler whose inverse-coverage mass is concentrated almost entirely on such boundary lags may have a large $\varphi(\Omega)$ without having matching operator-norm spectral packing.
The effective-support certificate is therefore intended for balanced sparse rulers whose coverage difficulty is spread over many nonboundary lags.
\end{remark}

Thus, on the known-scale identity-neighborhood submodel, the factor $\varphi(\Omega)\log d$ in the leading term of the oracle upper bound is matched by the lower bound under real cosine spectral packing with $M\ge d^\alpha$, and the effective-support certificates above turn that packing requirement into a checkable coverage-spread condition.
This lower bound establishes the intrinsic coverage complexity of the aggregation map on this submodel.
It does not assert sharpness of the correlation-stability factor $\kappa_{\rm obs}$, the inverse-link constants, the Taylor remainder, or the plug-in calibration term.

\subsection{Expectation bound for the clipped oracle estimator}

The regime-restricted minimax corollary in the main text compares a lower bound stated in expectation with an oracle upper bound stated in high probability.
The following lemma records the integration step.

\begin{lemma}[From high probability to expectation, truncated form]
\label{supp-lem:expectation-integration}
Let $X\ge0$ be a random variable with $X\le D$ almost surely, let $u(t)=a\sqrt t+bt+ct^2$ with $a,b,c\ge0$, and suppose there are $C\ge e$ and $T_*\in[\,t_0,\infty]$, $t_0=\log(Cd)$, such that
\[
\Prb\bigl\{X>u(t)\bigr\}\le Cd\,e^{-t}
\qquad\text{for all }t\in[t_0,T_*].
\]
Then
\[
\E X\le 5\,u(t_0)+D\,Cd\,e^{-T_*},
\]
with the convention $e^{-\infty}=0$.
In particular, for the clipped oracle estimator the hypothesis holds with $T_*=\infty$ and $D=C_0\gamma_0d$, and taking $a=C_\eps\gamma_0L_1\kappa_{\rm obs}\sqrt{\varphi(\Omega)/n}$, $b=C\gamma_0\{L_1d+C_\eps L_2\min(\kappa_{\rm obs}^2\varphi(\Omega),d)\}/n$ and $c=C_\eps\gamma_0L_2d/n^2$ in the oracle bound \eqref{eq:oracle-rate-s} gives
\[
\E\norm{\widehat\Gamma_{\rm or}-\Gamma}_2
\le
C'\left(\gamma_0L_1\kappa_{\rm obs}\sqrt{\frac{\varphi(\Omega)\log d}{n}}
+\gamma_0\bigl\{L_1d+L_2\min(\kappa_{\rm obs}^2\varphi(\Omega),d)\bigr\}\,\frac{\log d}{n}
+\gamma_0L_2\,\frac{d\log^2d}{n^2}\right),
\]
with $C'$ depending only on the regularity domains.
The finite-$T_*$ form is used for the plug-in estimator, whose admissibility conditions depend on $t$.
\end{lemma}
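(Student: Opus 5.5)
The plan is the standard layer-cake argument, split at $t_0$ and $T_*$. Write $\E X=\int_0^D\Prb\{X>x\}\,dx$. Use the trivial bound $\Prb\le1$ on $[0,u(t_0)]$. On $[u(t_0),u(T_*)]$ change variables $x=u(t)$, with $t\in[t_0,T_*]$. Since $u$ is continuous, nondecreasing and $u(t_0)\ge0$, this gives
\[
\int_{u(t_0)}^{u(T_*)}\Prb\{X>x\}\,dx\le\int_{t_0}^{T_*}Cd\,e^{-t}\,u'(t)\,dt .
\]
Because $u$ may be only piecewise differentiable at $t=0$, I would work with $t\ge t_0\ge1$, where $u$ is smooth. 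Beyond $u(T_*)$ (only relevant if $T_*<\infty$), bound the probability by $\Prb\{X>u(T_*)\}\le Cde^{-T_*}$. Integrating this constant over a range of length at most $D$ gives the term $D\,Cd\,e^{-T_*}$.

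The main step is estimating the middle integral. Substitute $t=t_0+r$. Then $Cd\,e^{-t}=e^{-r}$ exactly, since $t_0=\log(Cd)$. The derivative is
\[
u'(t)=\tfrac{a}{2\sqrt t}+b+2ct .
\]
Because $t_0\ge1$ (using $C\ge e$ and $d\ge1$), we have $t_0+r\le t_0(1+r)$. This yields
\[
u'(t_0+r)\le\tfrac{a}{2\sqrt{t_0}}+b+2ct_0(1+r)\le\frac{u(t_0)}{t_0}(1+2+2r).
\]
Here I used $a/\sqrt{t_0}\le a\sqrt{t_0}/t_0$, $b=bt_0/t_0$ and $ct_0=ct_0^2/t_0$, each bounded by $u(t_0)/t_0$. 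Integrating against $e^{-r}$ gives at most $5u(t_0)/t_0\le 5u(t_0)$. Adding the first piece gives $u(t_0)+5u(t_0)$. To reach the stated constant $5$, I would tighten the bound to
\[
u'(t)\le\tfrac{a}{2\sqrt{t_0}}+b+2c(t_0+r)\le u(t_0)\bigl(\tfrac12+1+2\cdot\tfrac{t_0+r}{t_0^2}\bigr)/t_0 ,
\]
whose $e^{-r}$-integral is at most $(3.5+2/t_0)\,u(t_0)/t_0\le4u(t_0)$. Together with the first piece this gives $\E X\le5u(t_0)+DCde^{-T_*}$. Getting exactly $5$ is only bookkeeping, so I expect no real obstacle here.

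For the application to the oracle estimator, I use three facts.
\begin{itemize}
\item The clipped lags satisfy $|\widehat\rho_s^{\rm or}-\rho_s|\le2$. By the row-sum bound this gives $\norm{\widehat\Gamma_{\rm or}-\Gamma}_2\le4\gamma_0d=:D$ deterministically.
\item Theorem~\ref{thm:toeplitz}(a), with $\delta=Cde^{-t}$, supplies the tail hypothesis for every $t\ge t_0$, so $T_*=\infty$.
\item With the displayed $a,b,c$, the oracle bound reads exactly $u(t)$.
\end{itemize}
Then $t_0=\log(Cd)\lesssim\log d$ for $d\ge2$, and $u(t_0)$ gives the displayed expectation bound with $C'$ absorbing constants.

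The only care point is one assumption: the high-probability statement must hold uniformly in $t$ with fixed constants, not merely at the single level $\delta$. This holds because the constants in Theorem~\ref{thm:toeplitz} do not depend on $\delta$.
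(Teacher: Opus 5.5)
Your layer-cake argument is the paper's own. You use the same split of $\int_0^D\Prb\{X>x\}\,dx$ at $u(t_0)$ and $u(T_*)$, the change of variables $x=u(t)$ on the middle piece with $Cd\,e^{-t_0}=1$, the tail term $D\,Cd\,e^{-T_*}$, and the same specialization to the clipped oracle (deterministic bound $4\gamma_0 d$, $\delta=Cd\,e^{-t}$, $T_*=\infty$).

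The one step that fails as written is the bookkeeping for the constant $5$. In your ``tightened'' display the last factor should be $2(t_0+r)/t_0$, not $2(t_0+r)/t_0^2$; the latter would need $ct_0^3\le u(t_0)$. With the corrected factor the $e^{-r}$-integral is $(3.5+2/t_0)\,u(t_0)/t_0$, as you state. But this equals $5.5\,u(t_0)$ at $t_0=1$, which the hypothesis $C\ge e$ permits, and the coefficient $(3.5+2/t_0)/t_0$ exceeds $4$ for all $t_0$ below about $1.27$. So your computation only gives $\E X\le6u(t_0)+D\,Cd\,e^{-T_*}$ (from your first estimate), not the stated constant.

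The loss comes from bounding each of the three pieces of $u'$ by a multiple of the whole of $u(t_0)$ and then adding the multiples. Since $a\sqrt{t_0}$, $bt_0$ and $ct_0^2$ are separate nonnegative summands of $u(t_0)$, compare term by term and pay only the largest weight. This is what the paper does. It uses that $a/(2\sqrt t)+b$ is decreasing, that $\int_{t_0}^\infty te^{-t}\,dt=(t_0+1)e^{-t_0}$, and that $t_0\ge1$:
\[
Cd\int_{t_0}^\infty e^{-t}\Bigl(\frac{a}{2\sqrt t}+b+2ct\Bigr)dt
\le\frac{a}{2\sqrt{t_0}}+b+2c(t_0+1)
\le\tfrac12 a\sqrt{t_0}+bt_0+4ct_0^2
\le4u(t_0).
\]
Adding the first piece gives $5u(t_0)$. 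Equivalently, in your first estimate replace the sum of weights $3+2r$ by the maximum weight $2(1+r)$, which integrates to $4u(t_0)/t_0\le4u(t_0)$. With this correction your proof is complete.
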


\begin{proof}
The almost-sure bound makes $\E X$ finite.
Split $[0,D]$ at $u(t_0)\wedge D$ and $u(T_*)\wedge D$.
The first piece contributes at most its length $u(t_0)\wedge D\le u(t_0)$; on the last piece, which is empty when $u(T_*)\ge D$, monotonicity gives $\Prb\{X>u\}\le\Prb\{X>u(T_*)\}$ and the length is at most $D$.
Hence every term below is nonnegative and
\[
\E X=\int_0^{D}\Prb\{X>u\}\,du
\le u(t_0)
+\int_{u(t_0)\wedge D}^{u(T_*)\wedge D}\Prb\{X>u\}\,du
+D\,\Prb\{X>u(T_*)\}.
\]
The middle integral is empty unless $u(t_0)<D$, in which case its range is contained in $[u(t_0),u(T_*)]$; substituting $u=u(t)$ there and extending the limits to $[t_0,\infty)$,
\[
\int_{u(t_0)}^{u(T_*)}\Prb\{X>u\}\,du
\le Cd\int_{t_0}^\infty e^{-t}\Bigl(\frac{a}{2\sqrt t}+b+2ct\Bigr)dt .
\]
The factor $a/(2\sqrt t)+b$ is decreasing in $t$, and $\int_{t_0}^\infty te^{-t}\,dt=(t_0+1)e^{-t_0}\le2t_0e^{-t_0}$ for $t_0\ge1$, so, using $Cd\,e^{-t_0}=1$,
\[
Cd\int_{t_0}^\infty e^{-t}\Bigl(\frac{a}{2\sqrt t}+b+2ct\Bigr)dt
\le \frac{a}{2\sqrt{t_0}}+b+4ct_0
\le a+b+4ct_0\le4u(t_0) .
\]
The last term of the decomposition is at most $D\,\Prb\{X>u(T_*)\}\le D\,Cd\,e^{-T_*}$.
Hence $\E X\le5u(t_0)+DCde^{-T_*}$.
\end{proof}

\subsection{Minimax rate for the plug-in estimator}
\label{subsec:supp-plugin-minimax}

\begin{proof}[Proof of the unknown-scale plug-in minimax corollary of the main text]
Throughout, set $\eps=(1-c_0)/4$, so that $c_0\le1-2\eps$, and recall that on the class every induced threshold $\tau(\gamma)=\lambda/\sqrt\gamma$, $\gamma\in\Lambda$, lies in $[\tau_{\min},\tau_{\max}]$; Assumption~\ref{ass:inverse-s} therefore holds with constants depending only on $c_0$ and the threshold window, and $1\le\kobs\le1+c_0$, $m\ph(\Om)\ge d-1$; moreover $\ph(\Om)\ge1$, because the ruler is complete and only $d-s$ pairs exist at lag $s$, so $q_s\le d-s$ and $\ph(\Om)=\sum_{s=1}^{d-1}q_s^{-1}\ge\sum_{s=1}^{d-1}(d-s)^{-1}\ge1$.
Also throughout, $d\ge2$, and absolute thresholds in $n$ depending only on $c_1$ below are absorbed by enlarging $C_\star$.

\emph{Lower bound.}
The class $\mathcal Q$ contains the known-scale slice $\{\gamma_{\max}(\Id_d+T_d^0(\rho))\}$, so the coverage-log lower bound applies verbatim with $\gamma_0=\gamma_{\max}$, and on the non-saturated branch, which the stated regime guarantees, it reads $c\,\gamma_{\max}\sqrt{\ph(\Om)\log d/n}$.

\emph{Upper bound: high probability.}
By the operator-norm-ball corollary, for every $\Gamma\in\mathcal Q$ and every admissible $t=\log(Cd/\delta)$, with probability at least $1-\delta$,
\begin{equation}
\norm{\widehat\Gamma_{\rm plug}-\Gamma}_2
\le C\gamma\Bigl\{
\sqrt{\frac{\ph(\Om)t}{n}}
+d\,\frac tn
+\min\{\ph(\Om),d\}\,\frac tn
+d\,\frac{t^2}{n^2}
+(1+B)\,r_\mu(t)\Bigr\},
\label{supp-eq:plugin-class-bound}
\end{equation}
where $\gamma\le\gamma_{\max}$ and all constants, including $L_1,L_2,\kobs$ and $B$, have been absorbed using their uniform bounds on the class.
Here admissibility means
\begin{equation}
n\ge C\,\frac{mt}{\kobs},
\qquad
r_\mu(t)=C_\tau\Bigl\{\sqrt{\frac{\kobs t}{nm}}+\frac tn\Bigr\}\le c\eta_0 .
\label{supp-eq:admissibility}
\end{equation}

\emph{Domination at fixed $t$.}
By $m\ph(\Om)\ge d-1$ and $\kobs\le1+c_0$,
\[
\frac{r_\mu(t)}{\sqrt{\ph(\Om)t/n}}
\le C\Bigl(\sqrt{\frac{\kobs}{m\ph(\Om)}}+\sqrt{\frac{t}{n\ph(\Om)}}\Bigr)
\le \frac{C}{\sqrt d}+C\sqrt{\frac{t}{n\ph(\Om)}} ,
\]
so at $t\asymp\log d$ the calibration term is dominated by the leading term in the stated regime, since $\ph(\Om)\ge1$ makes the second ratio at most $C/\sqrt{C_\star}$.
The boundedness and curvature terms are dominated at $t_0\asymp\log d$ by the same two branches of the regime:
\[
\frac{d\,t_0/n}{\sqrt{\ph(\Om)t_0/n}}
=d\sqrt{\frac{t_0}{n\ph(\Om)}}
\le\frac{C}{\sqrt{C_\star}},
\qquad
\frac{\min\{\ph(\Om),d\}\,t_0/n}{\sqrt{\ph(\Om)t_0/n}}
\le\sqrt{\frac{\ph(\Om)t_0}{n}}
\le\frac{C}{\sqrt{C_\star}},
\]
the first using $n\ge C_\star d^2\log d/\ph(\Om)$ and the second using $n\ge C_\star\ph(\Om)\log d$, while $d\,t_0^2/n^2\le(d\,t_0/n)(t_0/n)\le d\,t_0/n$ because $t_0\le n$ in the regime.

\emph{Admissibility window.}
Set $T_*:=c_1n/m$ with $c_1=c_1(c_0,\tau_{\min},\tau_{\max},\eta_0)$ small, and let $t\le T_*$.
The first condition in \eqref{supp-eq:admissibility} holds because $\kobs\ge1$ and $c_1\le1/C$.
For the second, $r_\mu$ is nondecreasing in $t$, and, using $m\ge1$ and $\kobs\le1+c_0$,
\[
r_\mu(t)\le r_\mu(T_*)
=C_\tau\Bigl\{\sqrt{\frac{\kobs T_*}{nm}}+\frac{T_*}{n}\Bigr\}
=C_\tau\Bigl\{\frac{\sqrt{\kobs c_1}}{m}+\frac{c_1}{m}\Bigr\}
\le C_\tau\bigl\{\sqrt{(1+c_0)c_1}+c_1\bigr\},
\]
which is at most $c\eta_0$ once $c_1$ is small enough.
In the stated regime $n\ge C_\star d\log d$, since $\max\{\ph(\Om),d^2/\ph(\Om)\}\ge d$; with $m\le d$ this gives $T_*\ge c_1C_\star\log d\ge t_0=\log(Cd)$ once $C_\star$ is large enough.
Hence \eqref{supp-eq:plugin-class-bound} holds for all $t\in[t_0,T_*]$.

\emph{Integration to expectation.}
The clipped plug-in estimator obeys the deterministic bound $\norm{\widehat\Gamma_{\rm plug}-\Gamma}_2\le C_\tau\gamma_{\max}d=:D$, as in the large-deviation step of the plug-in proof.
Lemma~\ref{supp-lem:expectation-integration} with this $D$, $t_0\asymp\log d$, $T_*=c_1n/m$, and $a,b,c$ read off \eqref{supp-eq:plugin-class-bound} --- the calibration term $C\gamma(1+B)r_\mu(t)$ contributes $C'\gamma_{\max}/\sqrt{nm}$ to $a$ and $C'\gamma_{\max}/n$ to $b$, by $\kobs\le1+c_0$ --- gives
\[
\E\norm{\widehat\Gamma_{\rm plug}-\Gamma}_2
\le5u(t_0)+C_\tau\gamma_{\max}\,d\cdot Cd\,e^{-T_*} .
\]
The truncation tail is controlled in two cases.
If $n\le d^2$, then $T_*\ge c_1n/d\ge c_1C_\star\log d\ge4\log d$ for $C_\star$ large, so $d^2e^{-T_*}\le d^{-2}\le\sqrt{\log d/n}$.
If $n>d^2$, then $T_*\ge c_1n/d\ge c_1\sqrt n$, so $d^2e^{-T_*}\le ne^{-c_1\sqrt n}\le n^{-1/2}$ once $n\ge N(c_1)$, which the regime threshold guarantees after enlarging $C_\star$.
In both cases, using $\ph(\Om)\ge1$,
\[
C_\tau\gamma_{\max}\,d\cdot Cd\,e^{-T_*}
\le C\gamma_{\max}\sqrt{\frac{\ph(\Om)\log d}{n}} .
\]
Finally, $u(t_0)$ is dominated by the leading term in the regime by the fixed-$t$ domination step, so $\sup_{\mathcal Q}\E\norm{\widehat\Gamma_{\rm plug}-\Gamma}_2 \le C\gamma_{\max}\sqrt{\ph(\Om)\log d/n}$, matching the lower bound up to constants.
\end{proof}

\section{Proof of the vertex-projection obstruction}
\label{sec:supp-obstruction}

This section proves the two identity-covariance statements used in the main text to justify centering.

\begin{proposition}[Vertex-projection obstruction on weighted edge designs]
\label{supp-prop:raw-obstruction}
Let $G_i$, $i\in V$, be independent standard normal variables.
Let $u_i=\sign(G_i-\tau)=\mu+v_i$, where $\tau\ne0$, $\E v_i=0$ and $\Var(v_i)=\sigma_v^2$.
For real symmetric weights $a_{ij}=a_{ji}$ on an undirected edge design, define
\[
S_{\rm raw}=2\sum_{i<j}a_{ij}\{u_i u_j-\E(u_i u_j)\},
\qquad
S_{\rm cen}=2\sum_{i<j}a_{ij}v_i v_j,
\]
and $r_i=\sum_{j:j\ne i}a_{ij}$.
Then
\begin{equation}
S_{\rm raw}=S_{\rm cen}+2\mu\sum_i r_i v_i,
\label{supp-eq:weighted-raw-decomposition}
\end{equation}
and the two terms on the right are uncorrelated.
Consequently,
\begin{align}
\Var(S_{\rm raw})
&=4\sigma_v^4\sum_{i<j}a_{ij}^2
  +4\mu^2\sigma_v^2\sum_i r_i^2,
\label{supp-eq:weighted-raw-variance}\\
\Var(S_{\rm cen})
&=4\sigma_v^4\sum_{i<j}a_{ij}^2.
\label{supp-eq:weighted-centered-variance}
\end{align}
Equivalently, if $A$ is the associated symmetric hollow matrix, then
\begin{equation}
\Var(S_{\rm cen})=2\sigma_v^4\norm{A}_F^2.
\label{supp-eq:weighted-centered-frobenius}
\end{equation}
\end{proposition}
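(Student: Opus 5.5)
The plan is to expand the raw product algebraically and then compute variances using only independence and centering of the $v_i$. For each edge, $u_iu_j=(\mu+v_i)(\mu+v_j)=\mu^2+\mu(v_i+v_j)+v_iv_j$. Since the $G_i$ are independent and $i\ne j$, $\E(u_iu_j)=\mu^2$. Hence
\[
u_iu_j-\E(u_iu_j)=v_iv_j+\mu(v_i+v_j).
\]
Multiplying by $2a_{ij}$ and summing over $i<j$, the quadratic parts give $S_{\rm cen}$. For the linear parts, $\sum_{i<j}a_{ij}(v_i+v_j)=\sum_i v_i\sum_{j\ne i}a_{ij}=\sum_i r_iv_i$, using the symmetry $a_{ij}=a_{ji}$ to regroup each edge's contributions by endpoint. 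This gives \eqref{supp-eq:weighted-raw-decomposition}.

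For uncorrelatedness, I would compute $\E(v_iv_j\,v_k)$ with $i<j$ for an arbitrary vertex $k$. If $k\notin\{i,j\}$, independence factors out $\E v_k=0$. If $k=i$, the expectation is $\E(v_i^2)\E(v_j)=0$, and the case $k=j$ is symmetric. Both terms are centered, so their covariance vanishes.

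For the variances, I would first compute $\Var(S_{\rm cen})=4\sum_{i<j}\sum_{k<l}a_{ij}a_{kl}\E(v_iv_jv_kv_l)$. By independence and centering, the expectation is zero unless $\{i,j\}=\{k,l\}$ as unordered pairs. With the ordering $i<j$, $k<l$, this means $(i,j)=(k,l)$, where the expectation equals $\sigma_v^4$. This gives \eqref{supp-eq:weighted-centered-variance}. The linear term has variance $4\mu^2\sum_i r_i^2\sigma_v^2$ because the $v_i$ are independent. Adding the two variances, which is justified by uncorrelatedness, yields \eqref{supp-eq:weighted-raw-variance}. Finally, the hollow symmetric matrix $A$ with entries $a_{ij}$ has $\norm{A}_F^2=2\sum_{i<j}a_{ij}^2$, which gives \eqref{supp-eq:weighted-centered-frobenius}.

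There is no real obstacle in this argument. The only care needed is bookkeeping: the edge sum is over unordered pairs, and the row sums $r_i$ count each edge once at each of its two endpoints.
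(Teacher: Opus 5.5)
Your proposal is correct and follows essentially the same route as the paper. You expand $u_iu_j-\E(u_iu_j)=v_iv_j+\mu(v_i+v_j)$, regroup the linear terms by endpoint into $\sum_i r_iv_i$, show uncorrelatedness because the mixed third moments $\E(v_iv_jv_k)$ vanish, and observe that only identical edges survive in $\Var(S_{\rm cen})$. The paper phrases the last step as edge-pair covariances (disjoint edges independent, edges sharing one endpoint leaving a centered factor), while you phrase it as fourth moments, but this is the same computation.
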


\begin{proof}
The identity $u_i=\mu+v_i$ gives
\begin{equation}
u_i u_j-\E[u_i u_j]
  =v_i v_j-\E[v_i v_j]+\mu(v_i+v_j).
\end{equation}
Summing with weights gives \eqref{supp-eq:weighted-raw-decomposition}.
The centered quadratic term and the linear term are uncorrelated.
Indeed, every mixed moment has the form $\E[v_i v_jv_k]$ with $i\ne j$; if $k$ is distinct from both endpoints, independence gives a centered first moment, and if $k=i$ or $k=j$, the other endpoint still has centered first moment.

For the centered statistic, two distinct edges have zero covariance: disjoint edges are independent, and edges sharing exactly one endpoint again leave a centered first moment.
Only identical edges contribute, so
\[
\Var(S_{\rm cen})=4\sum_{i<j}a_{ij}^2\Var(v_i v_j)
=4\sigma_v^4\sum_{i<j}a_{ij}^2.
\]
The linear term has variance $4\mu^2\sigma_v^2\sum_i r_i^2$.
Combining these two uncorrelated contributions proves \eqref{supp-eq:weighted-raw-variance}.
Since $\norm{A}_F^2=2\sum_{i<j}a_{ij}^2$, \eqref{supp-eq:weighted-centered-frobenius} follows.
\end{proof}

\begin{corollary}[Sparse-ruler coverage and row-sum separation]
\label{supp-cor:sparse-ruler-obstruction}
Let $\Omega$ be any ruler covering all lags $1,\ldots,d-1$, set $m=|\Omega|$, and define
\[
    \zeta_s^{\rm raw}
      =q_s^{-1}\sum_{(j,k)\in\Omega_s}\{u_j u_k-\E(u_j u_k)\},
    \qquad
    \zeta_s^{\rm cen}=q_s^{-1}\sum_{(j,k)\in\Omega_s}v_jv_k .
\]
Then
\begin{equation}
    \Var\left(2\sum_{s=1}^{d-1}\zeta_s^{\rm raw}\right)
       \ge 16\mu^2\sigma_v^2\frac{(d-1)^2}{m},
\label{supp-eq:raw-lower-bound}
\end{equation}
whereas
\begin{equation}
    \Var\left(2\sum_{s=1}^{d-1}\zeta_s^{\rm cen}\right)
       =4\sigma_v^4\varphi(\Omega).
\label{supp-eq:centered-exact}
\end{equation}
\end{corollary}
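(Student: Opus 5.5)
The plan is to reduce both claims to Proposition~\ref{supp-prop:raw-obstruction} by choosing the right weighted edge design on the vertex set $\Omega$. Since $\sum_s \zeta_s$ sums over all observed unordered pairs $(j,k)$ with $k-j=s>0$, and each unordered pair of $\Omega$ has exactly one positive lag, we set the weights to
\[
a_{jk}=a_{kj}=\tfrac12\, q_{|k-j|}^{-1}
\]
for distinct $j,k\in\Omega$. With this choice,
\[
2\sum_s\zeta_s^{\rm cen}=2\sum_{j<k}2a_{jk}v_jv_k,
\]
and the analogous identity holds for the raw statistic. It is cleaner to use weights $a_{jk}=q_{|k-j|}^{-1}$ and compare with $S_{\rm raw}$ and $S_{\rm cen}$ directly: then $2\sum_s\zeta_s^{\rm raw}=S_{\rm raw}$ and $2\sum_s\zeta_s^{\rm cen}=S_{\rm cen}$, exactly as defined in the proposition. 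The coordinates here are at identity covariance, since the setting of Section~\ref{sec:centering} uses independent $G_j$.

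For the centered identity \eqref{supp-eq:centered-exact}, apply \eqref{supp-eq:weighted-centered-variance}:
\[
\Var(S_{\rm cen})=4\sigma_v^4\sum_{j<k}a_{jk}^2=4\sigma_v^4\sum_s q_s\cdot q_s^{-2}=4\sigma_v^4\varphi(\Omega).
\]
The inner sum is grouped by lag, with the $q_s$ pairs at lag $s$ each contributing $q_s^{-2}$. This step is immediate.

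For the raw lower bound \eqref{supp-eq:raw-lower-bound}, apply \eqref{supp-eq:weighted-raw-variance} and drop the nonnegative edge term. This leaves $\Var(S_{\rm raw})\ge4\mu^2\sigma_v^2\sum_i r_i^2$. The row sums satisfy
\[
\sum_{i\in\Omega}r_i=2\sum_{j<k}a_{jk}=2\sum_s q_s q_s^{-1}=2(d-1).
\]
By Cauchy--Schwarz over the $m$ vertices, the same inequality recorded in \eqref{supp-eq:lifting-rowsum} of Lemma~\ref{supp-lem:lifting} with $\beta_s=q_s^{-1}$, we get $\sum_i r_i^2\ge 4(d-1)^2/m$. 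Hence $\Var(S_{\rm raw})\ge16\mu^2\sigma_v^2(d-1)^2/m$.

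The only real obstacle is bookkeeping of factors of two. One must check that the proposition's statistics carry a factor $2$ over $i<j$, matching the corollary's factor $2$ in front of the lag sums, and that the weight choice $a_{jk}=q_s^{-1}$ (not $\tfrac12 q_s^{-1}$) yields the stated constants $4\sigma_v^4$ and $16\mu^2\sigma_v^2$. Both constants check out with this choice, so no further input beyond the proposition and Cauchy--Schwarz is needed.
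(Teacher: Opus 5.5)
Your proposal is correct and follows the paper's proof essentially step for step. The paper likewise takes $a_{ij}=q_{|i-j|}^{-1}$ (via the lifting $H_\Omega(\beta)$ with $\beta_s=q_s^{-1}$), obtains $\sum_{i<j}a_{ij}^2=\varphi(\Omega)$ and $\sum_i r_i=2(d-1)$, and concludes from Proposition~\ref{supp-prop:raw-obstruction} and Cauchy--Schwarz. The initial detour through $\tfrac12 q_s^{-1}$ is harmless, since your final argument uses $a_{jk}=q_{|k-j|}^{-1}$.
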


\begin{proof}
Assign to each undirected covered edge $\{i,j\}$ the weight $a_{ij}=q_{|i-j|}^{-1}$; the associated hollow symmetric matrix is the lifting $H_\Omega(\beta)$ with $\beta_s=q_s^{-1}$.
By Lemma~\ref{supp-lem:lifting},
\[
\sum_{i<j}a_{ij}^2=\tfrac12\norm{H_\Omega(\beta)}_F^2=\sum_{s=1}^{d-1}q_s q_s^{-2}=\varphi(\Omega),
\]
and the row sums satisfy
\[
  \sum_{i\in\Omega}r_i=2\sum_{s=1}^{d-1}q_s\beta_s=2(d-1),
  \qquad
  \sum_{i\in\Omega}r_i^2\ge\frac1m\Big(\sum_{i\in\Omega}r_i\Big)^2=\frac{4(d-1)^2}{m}.
\]
The conclusion follows from Proposition~\ref{supp-prop:raw-obstruction}.
\end{proof}

\section{Numerical details}
\label{sec:experiments}

The numerical experiments are rate checks, not algorithmic benchmarks.
They separately track the row-sum obstruction before centering, the oracle $n^{-1/2}$ behavior, the $\sqrt{\varphi(\Omega)}$ coverage dependence and the marginal plug-in calibration gap.
The constants are not calibrated to the theorem statements: every figure below is a rate diagnostic measured against the theorem's upper-bound scales, not a fit to an equality model.
Code reproducing all numerical experiments and figures, including the fixed random seeds, will be made publicly available upon publication.

Experiments are implemented in standardized coordinates: snapshots are drawn with unit marginal variance and correlation matrix $C$, the threshold is applied at the normalized value $\tau$, and the scale factor $\gamma_0=2$ is multiplied back when reporting covariance-scale errors.
Because the threshold signs satisfy $\sign(X_j-\lambda)=\sign(X_j/\sqrt{\gamma_0}-\tau)$ with $\lambda=\tau\sqrt{\gamma_0}$, this is an exact reparametrization of the experiment at scale $\gamma_0$.
Error bars and fitted slopes are computed from independent Monte Carlo repetitions.
The main figures report medians; the saved experiment tables also contain replicate-level errors, normalized errors, clipping frequencies and coverage summaries.
The paper-scale run used 2,500 Monte Carlo draws for the pure variance experiment, 200 repetitions for the oracle rate experiment, 80 repetitions per ruler for the coverage experiment, 160 repetitions for plug-in calibration and 100 repetitions per threshold cell.

\begin{table}[t]
\centering
\caption{Summary of the numerical rate checks.}
\label{tab:rate-checks}
\begin{tabular}{@{}lll@{}}
\toprule
quantity & theoretical prediction & empirical check\\
\midrule
Centered variance & $n\,{\rm Var}/\varphi(\Omega)=O(1)$ & median normalized value $0.983$\\
Raw products & no $\varphi(\Omega)$ collapse & raw/coverage median $19.44$\\
Raw row-sum scale & row-sum normalization & raw/row median $1.064$\\
Oracle rate in $n$ & $n^{-1/2}$ & slopes $-0.496$ to $-0.512$\\
Coverage geometry & error $\propto\sqrt{\varphi(\Omega)}$ & coverage slope $0.538$\\
Plug-in gap & $(1+S_1)r_\mu$ & slope $0.936$\\
Global regression & $\beta_n=-1/2$, $\beta_\varphi=1/2$ & $-0.502$, $0.462$\\
\bottomrule
\end{tabular}
\end{table}

\subsection{Raw-product row sums and coverage variance}

The obstruction diagnostic is shown in the main text.
Across apertures $d\in\{64,128,256,512\}$, sample sizes $n\in\{50,\ldots,3200\}$ and four ruler families, the median normalized centered variance is $0.983$, close to the predicted value one.
Raw products do not collapse under $\varphi(\Omega)$ normalization: the median raw-to-coverage ratio is $19.44$, while the row-sum-normalized median ratio is $1.064$.

\subsection{Oracle operator-norm rate}

\begin{figure}[t]
\centering
\includegraphics[width=.95\linewidth]{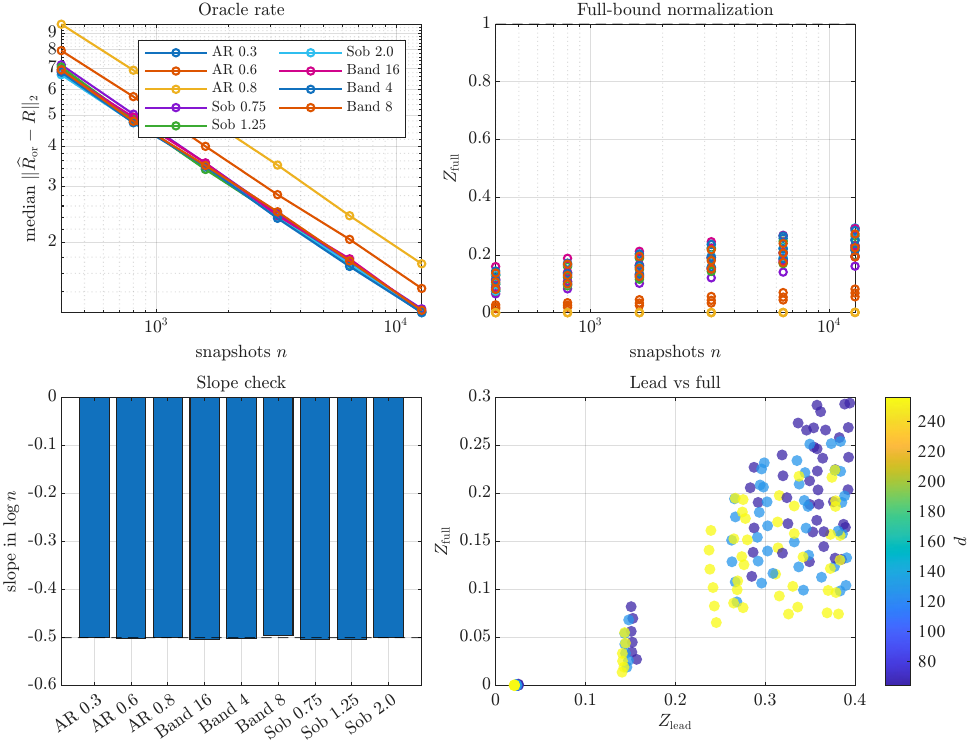}
\caption{Oracle operator-norm rate.  The panels show log-log error decay, full-bound normalized errors, per-class slopes in $\log n$, and the relation between leading-term and full-bound normalizations.}
\label{fig:cov-recovery}
\end{figure}

Figure~\ref{fig:cov-recovery} provides a diagnostic check of the oracle term in the main upper-bound theorem.
We used AR(1) correlations with $a\in\{0.3,0.6,0.8\}$, Sobolev spectral densities with $\beta\in\{0.75,1.25,2.0\}$, and banded short-memory correlations with bandwidths $K\in\{4,8,16\}$.
For these nine classes, the fitted slopes of $\log\norm{\widehat\Gamma_{\rm or}-\Gamma}_2$ against $\log n$ range from $-0.496$ to $-0.512$.
The median slopes are therefore indistinguishable, at the scale of the simulation, from the theoretical $n^{-1/2}$ exponent.
The normalized error
\[
Z_{\rm full}=
\frac{\norm{\widehat\Gamma_{\rm or}-\Gamma}_2}
{\gamma_0L_1\kappa_{\rm obs}\sqrt{\varphi(\Omega)\log d/n}
  +\gamma_0(L_1+L_2)d\log d/n}
\]
does not show systematic growth over the tested $n$ range, which supports the combined leading-plus-Taylor normalization.
The normalization uses the conservative remainder $(L_1+L_2)d\log d/n$, which dominates the curvature term $\min\{\kappa_{\rm obs}^2\varphi(\Omega),d\}\log d/n$ of the theorem, so $Z_{\rm full}$ is a valid upper-bound diagnostic.

\subsection{Virtual coverage geometry}

\begin{figure}[t]
\centering
\includegraphics[width=.95\linewidth]{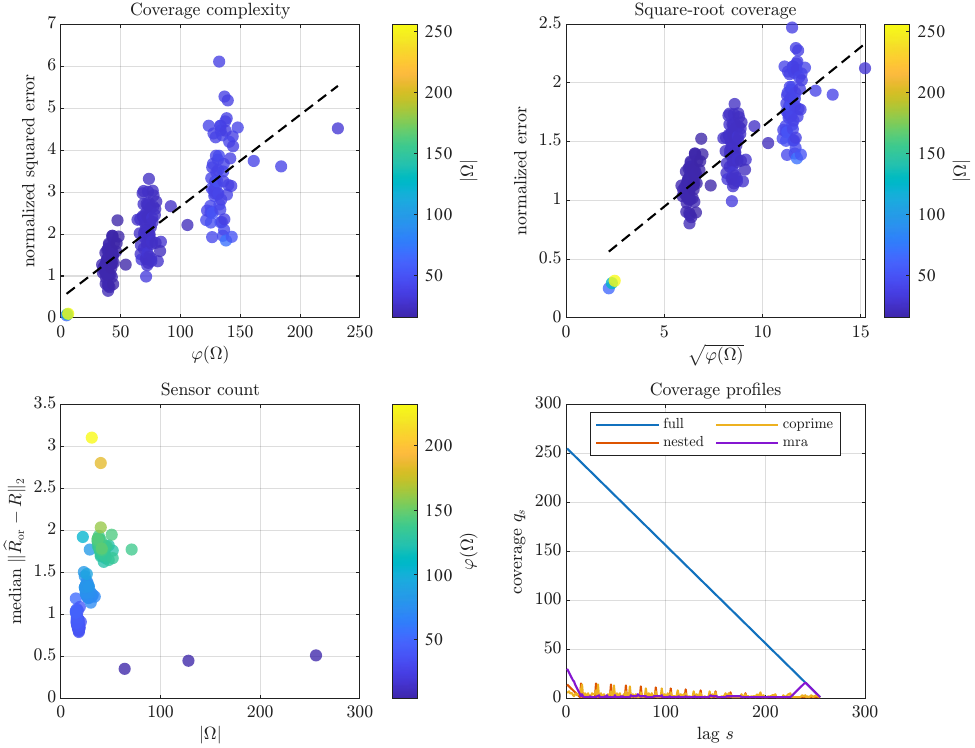}
\caption{Coverage geometry aligns with operator error more closely than sensor count.  The first two panels plot normalized oracle errors against $\varphi(\Omega)$ and $\sqrt{\varphi(\Omega)}$, the third plots the raw median operator error against $|\Omega|$, and the fourth shows representative coverage profiles $q_s$.}
\label{fig:geometry}
\end{figure}

Figure~\ref{fig:geometry} varies the design at fixed covariance by using full, nested, co-prime, minimum-redundancy-style and randomly generated complete rulers.
All rulers cover every lag by construction: the nested, co-prime and minimum-redundancy-style seed geometries are completed greedily, adding observation points until every lag $1,\ldots,d-1$ is covered and then pruning redundant points.
In particular, the standard co-prime geometry, whose difference set alone does not cover all lags, enters only as a seed.
A regression of $\log\norm{\widehat\Gamma_{\rm or}-\Gamma}_2$ on $\log\varphi(\Omega)$ gives slope $0.538$, close to the theoretical square-root dependence.
The competing scatter against $|\Omega|$ is visibly less aligned because rulers with comparable numbers of observed coordinates can have very different coverage profiles.
This experiment supports the interpretation of $\varphi(\Omega)$ as the design complexity that enters the operator-norm risk.

\subsection{Marginal bit calibration}

\begin{figure}[!t]
\centering
\includegraphics[width=.95\linewidth]{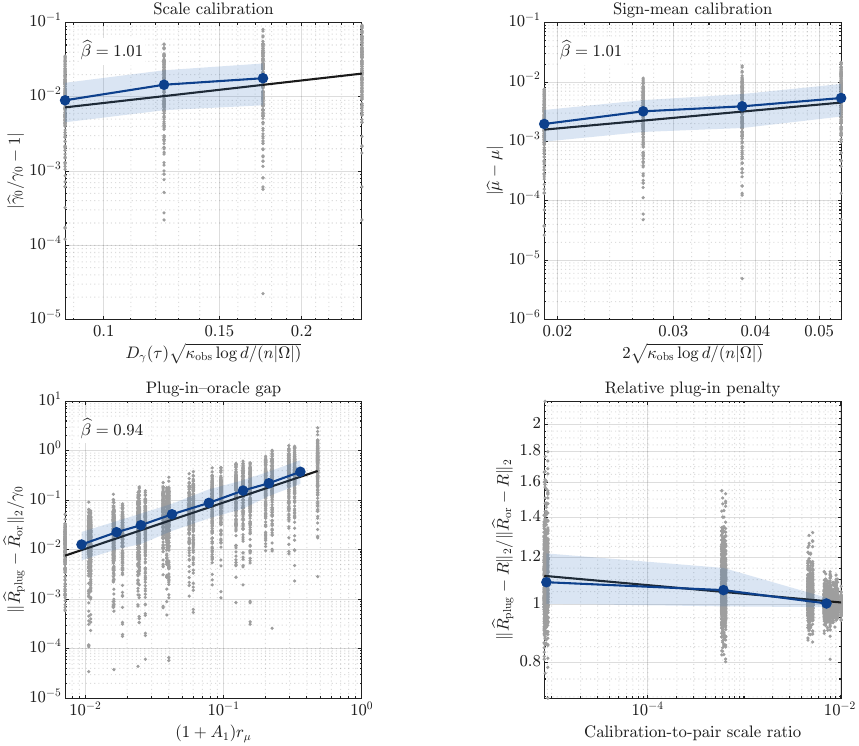}
\caption{Plug-in calibration rate diagnostic.  The top two panels compare the marginal scale and sign-mean calibration errors with their predicted pooled marginal rates.  Points are Monte Carlo replicates, solid curves are binned medians, and bands show interquartile ranges.  The bottom-left panel compares the plug-in--oracle operator gap with the short-memory calibration scale $(1+S_1)r_\mu$.  The bottom-right panel shows that the relative plug-in penalty remains close to one when the calibration scale is small relative to the centered pair-estimation scale.}
\label{fig:plugin-calibration}
\end{figure}

Figure~\ref{fig:plugin-calibration} isolates marginal calibration from centered pair estimation.
The scale and sign-mean errors are plotted against their pooled marginal rates on logarithmic axes, with binned medians and interquartile ranges.
For AR(1) correlations with $a\in\{0.1,0.3,0.5,0.7,0.85,0.9\}$, the plug-in--oracle operator gap has fitted log-log slope $0.936$ against $(1+S_1)r_\mu$.
The fitted slope is consistent with plug-in calibration behaving as a lower-dimensional marginal estimation component rather than another sparse-pair averaging error.

\subsection{Threshold conditioning}

\begin{figure}[!t]
\centering
\includegraphics[width=.95\linewidth]{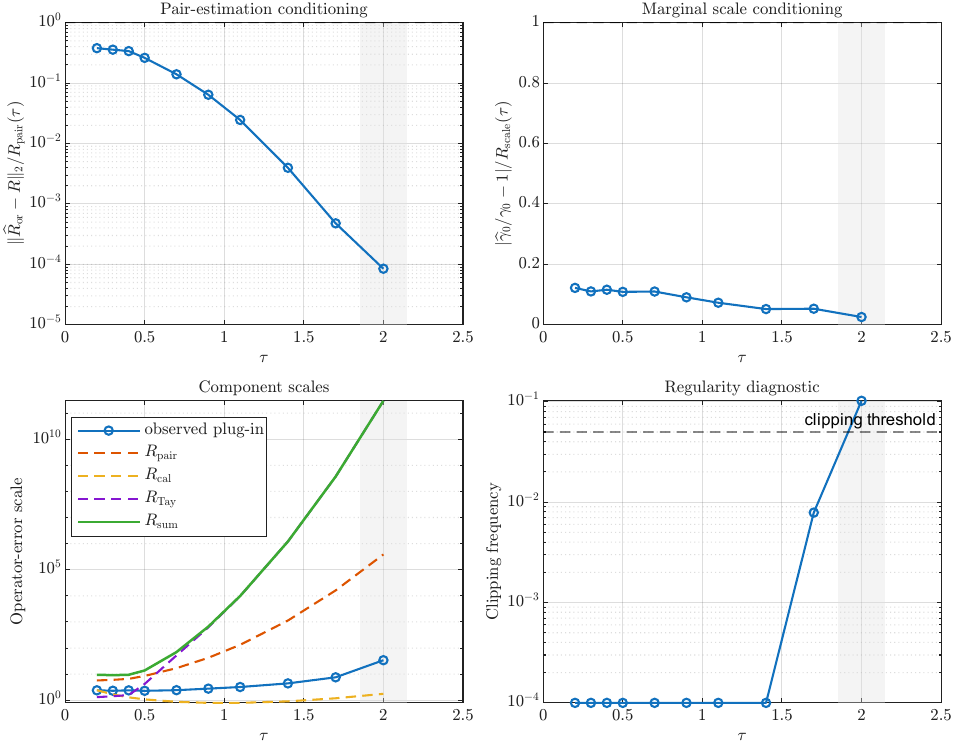}
\caption{Threshold-conditioning decomposition.  The top-left panel normalizes the oracle operator error by the inverse-link pair-estimation scale.  The top-right panel normalizes the marginal scale error by the pooled scale-calibration sensitivity.  The bottom-left panel shows the observed plug-in error and the theoretical component scales on a logarithmic axis.  The bottom-right panel reports the clipping frequency used to identify the regular threshold range; shaded regions are not used for rate interpretation.}
\label{fig:threshold-conditioning}
\end{figure}

Figure~\ref{fig:threshold-conditioning} decomposes threshold sensitivity into pair-estimation, marginal-calibration and curvature components.
We interpret the decomposition only on the regular threshold set where the expected number of exceedances is not too small and inverse-link clipping is rare.
Over the regular threshold range, the observed errors remain below the corresponding component-normalized scales; outside this range, clipping and rare-event effects dominate.
The experiment is not intended to select a universal optimal threshold.

\subsection{Global rate collapse}

Pooling the oracle cells from the rate and geometry experiments, we fit
\[
\log\norm{\widehat\Gamma_{\rm or}-\Gamma}_2
=\alpha+\beta_n\log n+\beta_\varphi\log\varphi(\Omega)
  +\beta_{\log d}\log\log d+\beta_\kappa\log\kappa_{\rm obs}
  +\varepsilon .
\]
The two primary exponents are close to the theorem, with $\widehat\beta_n=-0.502$ and $\widehat\beta_\varphi=0.462$.
The $\log\log d$ coefficient is noisier, as expected from the limited number of tested dimensions, and the $\kappa_{\rm obs}$ coefficient is only diagnostic because the experiment does not independently vary conditioning.
Across the pooled diagnostics, centered variance collapses under $\varphi(\Omega)/n$, oracle error decays at $n^{-1/2}$, coverage geometry enters through $\sqrt{\varphi(\Omega)}$, and the plug-in--oracle gap scales with $(1+S_1)r_\mu$.

\FloatBarrier

\bibliographystyle{plainnat}
\bibliography{references}

@book{Janson1997,
  author = {Janson, Svante},
  title = {Gaussian Hilbert Spaces},
  publisher = {Cambridge University Press},
  year = {1997},
  doi = {10.1017/CBO9780511526169}
}

@book{Nualart2006,
  author = {Nualart, David},
  title = {The Malliavin Calculus and Related Topics},
  edition = {2},
  publisher = {Springer},
  year = {2006},
  doi = {10.1007/3-540-28329-3}
}

@book{PeccatiTaqqu2011,
  author = {Peccati, Giovanni and Taqqu, Murad S.},
  title = {Wiener Chaos: Moments, Cumulants and Diagrams: A Survey With Computer Implementation},
  publisher = {Springer Milan},
  year = {2011},
  doi = {10.1007/978-88-470-1679-8}
}

@book{Bhatia2007,
  author = {Bhatia, Rajendra},
  title = {Positive Definite Matrices},
  publisher = {Princeton University Press},
  year = {2007},
  doi = {10.1515/9781400827787}
}

@article{Hoeffding1948,
  author = {Hoeffding, Wassily},
  title = {A Class of Statistics With Asymptotically Normal Distribution},
  journal = {The Annals of Mathematical Statistics},
  volume = {19},
  number = {3},
  pages = {293--325},
  year = {1948},
  doi = {10.1214/aoms/1177730196}
}

@book{DeLaPenaGine1999,
  author = {de la Pe{\~n}a, Victor H. and Gin{\'e}, Evarist},
  title = {Decoupling: From Dependence to Independence},
  publisher = {Springer},
  year = {1999},
  doi = {10.1007/978-1-4612-0537-1}
}

@article{BreuerMajor1983,
  author = {Breuer, P{\'e}ter and Major, P{\'e}ter},
  title = {Central Limit Theorems for Non-Linear Functionals of {Gaussian} Fields},
  journal = {Journal of Multivariate Analysis},
  volume = {13},
  number = {3},
  pages = {425--441},
  year = {1983},
  doi = {10.1016/0047-259X(83)90019-2}
}

@article{Arcones1994,
  author = {Arcones, Miguel A.},
  title = {Limit Theorems for Nonlinear Functionals of a Stationary {Gaussian} Sequence of Vectors},
  journal = {The Annals of Probability},
  volume = {22},
  number = {4},
  pages = {2242--2274},
  year = {1994},
  doi = {10.1214/aop/1176988503}
}

@article{CasazzaKutyniokLi2008,
  author = {Casazza, Peter G. and Kutyniok, Gitta and Li, Shidong},
  title = {Fusion Frames and Distributed Processing},
  journal = {Applied and Computational Harmonic Analysis},
  volume = {25},
  number = {1},
  pages = {114--132},
  year = {2008},
  doi = {10.1016/j.acha.2007.10.001}
}

@techreport{Bussgang1952,
  author = {Bussgang, Julian J.},
  title = {Crosscorrelation Functions of Amplitude-Distorted {Gaussian} Signals},
  institution = {Research Laboratory of Electronics, Massachusetts Institute of Technology},
  number = {216},
  pages = {1--14},
  year = {1952},
  url = {http://hdl.handle.net/1721.1/4847}
}

@article{Plackett1954,
  author = {Plackett, R. L.},
  title = {A Reduction Formula for Normal Multivariate Integrals},
  journal = {Biometrika},
  volume = {41},
  number = {3--4},
  pages = {351--360},
  year = {1954},
  doi = {10.1093/biomet/41.3-4.351}
}

@article{Price1958,
  author = {Price, Robert},
  title = {A Useful Theorem for Nonlinear Devices Having {Gaussian} Inputs},
  journal = {IRE Transactions on Information Theory},
  volume = {4},
  number = {2},
  pages = {69--72},
  year = {1958},
  doi = {10.1109/TIT.1958.1057444}
}

@article{VanVleckMiddleton1966,
  author = {Van Vleck, J. H. and Middleton, D.},
  title = {The Spectrum of Clipped Noise},
  journal = {Proceedings of the IEEE},
  volume = {54},
  number = {1},
  pages = {2--19},
  year = {1966},
  doi = {10.1109/PROC.1966.4567}
}

@article{DirksenMalyRauhut2022,
  author = {Dirksen, Sjoerd and Maly, Johannes and Rauhut, Holger},
  title = {Covariance Estimation Under {One-Bit} Quantization},
  journal = {The Annals of Statistics},
  volume = {50},
  number = {6},
  pages = {3538--3562},
  year = {2022},
  doi = {10.1214/22-AOS2239}
}

@article{DirksenMaly2024,
  author = {Dirksen, Sjoerd and Maly, Johannes},
  title = {Tuning-Free {One-Bit} Covariance Estimation Using Data-Driven Dithering},
  journal = {IEEE Transactions on Information Theory},
  volume = {70},
  number = {7},
  pages = {5228--5247},
  year = {2024},
  doi = {10.1109/TIT.2024.3358994}
}

@article{BickelLevina2008Regularized,
  author = {Bickel, Peter J. and Levina, Elizaveta},
  title = {Regularized Estimation of Large Covariance Matrices},
  journal = {The Annals of Statistics},
  volume = {36},
  number = {1},
  pages = {199--227},
  year = {2008},
  doi = {10.1214/009053607000000758}
}

@article{BickelLevina2008Thresholding,
  author = {Bickel, Peter J. and Levina, Elizaveta},
  title = {Covariance Regularization by Thresholding},
  journal = {The Annals of Statistics},
  volume = {36},
  number = {6},
  pages = {2577--2604},
  year = {2008},
  doi = {10.1214/08-AOS600}
}

@article{CaiZhangZhou2010,
  author = {Cai, T. Tony and Zhang, Cun-Hui and Zhou, Harrison H.},
  title = {Optimal Rates of Convergence for Covariance Matrix Estimation},
  journal = {The Annals of Statistics},
  volume = {38},
  number = {4},
  pages = {2118--2144},
  year = {2010},
  doi = {10.1214/09-AOS752}
}

@article{CaiYuan2012,
  author = {Cai, T. Tony and Yuan, Ming},
  title = {Adaptive Covariance Matrix Estimation Through Block Thresholding},
  journal = {The Annals of Statistics},
  volume = {40},
  number = {4},
  pages = {2014--2042},
  year = {2012},
  doi = {10.1214/12-AOS999}
}

@article{CaiZhou2012,
  author = {Cai, T. Tony and Zhou, Harrison H.},
  title = {Optimal Rates of Convergence for Sparse Covariance Matrix Estimation},
  journal = {The Annals of Statistics},
  volume = {40},
  number = {5},
  pages = {2389--2420},
  year = {2012},
  doi = {10.1214/12-AOS998}
}

@article{ChenWangNgWang2023,
  author = {Chen, Junren and Wang, Cheng-Long and Ng, Michael K. and Wang, Di},
  title = {High Dimensional Statistical Estimation Under Uniformly Dithered {One-Bit} Quantization},
  journal = {IEEE Transactions on Information Theory},
  volume = {69},
  number = {8},
  pages = {5151--5187},
  year = {2023},
  doi = {10.1109/TIT.2023.3266271}
}

@article{Xiao2023,
  author = {Xiao, Yu-Hang and Huang, Lei and Ram{\'i}rez, David and Qian, Cheng and So, Hing Cheung},
  title = {Covariance Matrix Recovery From {One-Bit} Data With {Non-Zero} Quantization Thresholds: Algorithm and Performance Analysis},
  journal = {IEEE Transactions on Signal Processing},
  volume = {71},
  pages = {4060--4076},
  year = {2023},
  doi = {10.1109/TSP.2023.3325664}
}

@article{ChapeauBlondeau2008,
  author = {Chapeau-Blondeau, Francois and Blanchard, Solenna and Rousseau, David},
  title = {Fisher Information and Noise-Aided Power Estimation From One-Bit Quantizers},
  journal = {Digital Signal Processing},
  volume = {18},
  number = {3},
  pages = {434--443},
  year = {2008},
  doi = {10.1016/j.dsp.2007.04.012}
}

@inproceedings{LiuLin2021,
  author = {Liu, Chun-Lin and Lin, Zi-Min},
  title = {One-Bit Autocorrelation Estimation With Non-Zero Thresholds},
  booktitle = {Proceedings of the IEEE International Conference on Acoustics, Speech and Signal Processing (ICASSP)},
  pages = {4520--4524},
  year = {2021},
  doi = {10.1109/ICASSP39728.2021.9414732}
}

@inproceedings{LiuChou2025,
  author = {Liu, Chun-Lin and Chou, Yi-Hung},
  title = {Approximation and Analysis of the One-Bit Hermite Law},
  booktitle = {Proceedings of the IEEE International Conference on Acoustics, Speech and Signal Processing (ICASSP)},
  pages = {1--5},
  year = {2025},
  doi = {10.1109/ICASSP49660.2025.10888049}
}

@article{Eamaz2023,
  author = {Eamaz, Arian and Yeganegi, Farhang and Soltanalian, Mojtaba},
  title = {Covariance Recovery for One-Bit Sampled Stationary Signals With Time-Varying Sampling Thresholds},
  journal = {Signal Processing},
  volume = {206},
  pages = {108899},
  year = {2023},
  doi = {10.1016/j.sigpro.2022.108899}
}

@article{Moffet1968,
  author = {Moffet, Alan T.},
  title = {Minimum-Redundancy Linear Arrays},
  journal = {IEEE Transactions on Antennas and Propagation},
  volume = {16},
  number = {2},
  pages = {172--175},
  year = {1968},
  doi = {10.1109/TAP.1968.1139138}
}

@article{LinebargerSudboroughTollis1993,
  author = {Linebarger, D. A. and Sudborough, I. H. and Tollis, I. G.},
  title = {Difference Bases and Sparse Sensor Arrays},
  journal = {IEEE Transactions on Information Theory},
  volume = {39},
  number = {2},
  pages = {716--721},
  year = {1993},
  doi = {10.1109/18.212309}
}

@article{PalVaidyanathan2010,
  author = {Pal, Piya and Vaidyanathan, P. P.},
  title = {Nested Arrays: A Novel Approach to Array Processing With Enhanced Degrees of Freedom},
  journal = {IEEE Transactions on Signal Processing},
  volume = {58},
  number = {8},
  pages = {4167--4181},
  year = {2010},
  doi = {10.1109/TSP.2010.2049264}
}

@article{VaidyanathanPal2011,
  author = {Vaidyanathan, P. P. and Pal, Piya},
  title = {Sparse Sensing With {Co-Prime} Samplers and Arrays},
  journal = {IEEE Transactions on Signal Processing},
  volume = {59},
  number = {2},
  pages = {573--586},
  year = {2011},
  doi = {10.1109/TSP.2010.2089682}
}

@article{CaiRenZhou2013,
  author = {Cai, T. Tony and Ren, Zhao and Zhou, Harrison H.},
  title = {Optimal Rates of Convergence for Estimating Toeplitz Covariance Matrices},
  journal = {Probability Theory and Related Fields},
  volume = {156},
  number = {1--2},
  pages = {101--143},
  year = {2013},
  doi = {10.1007/s00440-012-0422-7}
}

@article{KlockmannKrivobokova2024,
  author = {Klockmann, Karolina and Krivobokova, Tatyana},
  title = {Efficient Nonparametric Estimation of Toeplitz Covariance Matrices},
  journal = {Biometrika},
  volume = {111},
  number = {3},
  pages = {843--864},
  year = {2024},
  doi = {10.1093/biomet/asae002}
}

@article{xu2026bit,
  author = {Xu, Hongwei and Yang, Zai},
  title = {Bit-Efficient {Toeplitz} Covariance Estimation},
  journal = {IEEE Transactions on Information Theory},
  volume = {72},
  number = {7},
  pages = {5293--5316},
  year = {2026},
  doi = {10.1109/TIT.2026.3697612},
  publisher = {IEEE}
}

@misc{XuZhengYang2025,
  author = {Xu, Hongwei and Zheng, Weichao and Yang, Zai},
  title = {Compressive {Toeplitz} Covariance Estimation From Few-Bit Quantized Measurements With Applications to {DOA} Estimation},
  howpublished = {arXiv preprint arXiv:2512.22527},
  year = {2025},
  doi = {10.48550/arXiv.2512.22527}
}

@inproceedings{LiuVaidyanathan2017,
  author = {Liu, Chun-Lin and Vaidyanathan, P. P.},
  title = {One-Bit Sparse Array {DOA} Estimation},
  booktitle = {Proceedings of the IEEE International Conference on Acoustics, Speech and Signal Processing (ICASSP)},
  pages = {3126--3130},
  year = {2017},
  doi = {10.1109/ICASSP.2017.7952732}
}

@article{AriananadaLeus2012,
  author = {Ariananda, Dyonisius Dony and Leus, Geert},
  title = {Compressive Wideband Power Spectrum Estimation},
  journal = {IEEE Transactions on Signal Processing},
  volume = {60},
  number = {9},
  pages = {4775--4789},
  year = {2012},
  doi = {10.1109/TSP.2012.2201153}
}

@article{Leech1956,
  author = {Leech, John},
  title = {On the Representation of {$1,2,\ldots,n$} by Differences},
  journal = {Journal of the London Mathematical Society},
  volume = {s1-31},
  number = {2},
  pages = {160--169},
  year = {1956},
  doi = {10.1112/jlms/s1-31.2.160}
}

@article{BarShalomWeiss2002,
  author = {Bar-Shalom, O. and Weiss, A. J.},
  title = {{DOA} Estimation Using One-Bit Quantized Measurements},
  journal = {IEEE Transactions on Aerospace and Electronic Systems},
  volume = {38},
  number = {3},
  pages = {868--884},
  year = {2002},
  doi = {10.1109/TAES.2002.1039405}
}

@article{Sedighi2021,
  author = {Sedighi, Saeid and {Bhavani Shankar}, M. R. and Soltanalian, Mojtaba and Ottersten, Bj{\"o}rn},
  title = {On the Performance of One-Bit {DoA} Estimation via Sparse Linear Arrays},
  journal = {IEEE Transactions on Signal Processing},
  volume = {69},
  pages = {6165--6182},
  year = {2021},
  doi = {10.1109/TSP.2021.3122290}
}

@article{ChengChen2020,
  author = {Cheng, Zhiyong and Chen, Shengyao and Shen, Qibin and He, Jin and Liu, Zhong},
  title = {Direction Finding of Electromagnetic Sources on a Sparse Cross-Dipole Array Using One-Bit Measurements},
  journal = {IEEE Access},
  volume = {8},
  pages = {83131--83143},
  year = {2020},
  doi = {10.1109/ACCESS.2020.2989525}
}

@article{LinLiu2026,
  author = {Lin, Yi-Heng and Liu, Chun-Lin},
  title = {Counter-Based Scatter Matrix Estimation for One-Bit Centered Bivariate {Cauchy} Signals},
  journal = {IEEE Transactions on Signal Processing},
  year = {2026},
  volume = {74},
  pages = {2417--2433},
  doi = {10.1109/TSP.2026.3694498}
}

@article{SteinBarNossekTabrikian2018,
  author = {Stein, Manuel S. and Bar, Shahar and Nossek, Josef A. and Tabrikian, Joseph},
  title = {Performance Analysis for Channel Estimation With 1-Bit {ADC} and Unknown Quantization Threshold},
  journal = {IEEE Transactions on Signal Processing},
  volume = {66},
  number = {10},
  pages = {2557--2571},
  year = {2018},
  doi = {10.1109/TSP.2018.2815022}
}

@article{LevinaVershynin2012,
  author = {Levina, Elizaveta and Vershynin, Roman},
  title = {Partial Estimation of Covariance Matrices},
  journal = {Probability Theory and Related Fields},
  volume = {153},
  number = {3--4},
  pages = {405--419},
  year = {2012},
  doi = {10.1007/s00440-011-0349-4}
}

@article{Lounici2014,
  author = {Lounici, Karim},
  title = {High-dimensional covariance matrix estimation with missing observations},
  journal = {Bernoulli},
  volume = {20},
  number = {3},
  pages = {1029--1058},
  year = {2014},
  doi = {10.3150/12-BEJ487}
}

@article{CaiZhang2016,
  author = {Cai, T. Tony and Zhang, Anru},
  title = {Minimax rate-optimal estimation of high-dimensional covariance matrices with incomplete data},
  journal = {Journal of Multivariate Analysis},
  volume = {150},
  pages = {55--74},
  year = {2016},
  doi = {10.1016/j.jmva.2016.05.002}
}

@article{Abdalla2024,
  author = {Abdalla, Pedro},
  title = {Covariance estimation under missing observations and {$L_4$--$L_2$} moment equivalence},
  journal = {Electronic Journal of Statistics},
  volume = {18},
  number = {1},
  pages = {2665--2686},
  year = {2024},
  doi = {10.1214/24-EJS2264}
}

@misc{KabanavaRauhut2017,
  author = {Kabanava, Maryia and Rauhut, Holger},
  title = {Masked {Toeplitz} Covariance Estimation},
  howpublished = {arXiv preprint arXiv:1709.09377},
  year = {2017},
  doi = {10.48550/arXiv.1709.09377}
}

@article{ChenGittensTropp2012,
  author = {Chen, Richard Y. and Gittens, Alex and Tropp, Joel A.},
  title = {The Masked Sample Covariance Estimator: An Analysis Using Matrix Concentration Inequalities},
  journal = {Information and Inference},
  volume = {1},
  number = {1},
  pages = {2--20},
  year = {2012},
  doi = {10.1093/imaiai/ias001}
}

@article{TerrinTaqqu1990,
  author = {Terrin, Norma and Taqqu, Murad S.},
  title = {A Noncentral Limit Theorem for Quadratic Forms of {Gaussian} Stationary Sequences},
  journal = {Journal of Theoretical Probability},
  volume = {3},
  number = {3},
  pages = {449--475},
  year = {1990},
  doi = {10.1007/BF01061262}
}

@article{GiraitisTaqqu1998,
  author = {Giraitis, Liudas and Taqqu, Murad S.},
  title = {Central Limit Theorems for Quadratic Forms with Time-Domain Conditions},
  journal = {The Annals of Probability},
  volume = {26},
  number = {1},
  pages = {377--398},
  year = {1998},
  doi = {10.1214/aop/1022855425}
}

@incollection{MalyYangDirksenRauhutCaire2022,
  author = {Maly, Johannes and Yang, Tianyu and Dirksen, Sjoerd and Rauhut, Holger and Caire, Giuseppe},
  title = {New Challenges in Covariance Estimation: Multiple Structures and Coarse Quantization},
  booktitle = {Compressed Sensing in Information Processing},
  publisher = {Springer International Publishing},
  address = {Cham},
  pages = {77--104},
  year = {2022},
  doi = {10.1007/978-3-031-09745-4_3}
}

@article{RomeroArianandaTianLeus2016,
  author = {Romero, Daniel and Ariananda, Dyonisius D. and Tian, Zhi and Leus, Geert},
  title = {Compressive Covariance Sensing: Structure-Based Compressive Sensing Beyond Sparsity},
  journal = {IEEE Signal Processing Magazine},
  volume = {33},
  number = {1},
  pages = {78--93},
  year = {2016},
  doi = {10.1109/MSP.2015.2486805}
}

@article{RomeroLopezLeus2015,
  author = {Romero, Daniel and Lopez-Valcarce, Roberto and Leus, Geert},
  title = {Compression Limits for Random Vectors With Linearly Parameterized Second-Order Statistics},
  journal = {IEEE Transactions on Information Theory},
  volume = {61},
  number = {3},
  pages = {1410--1425},
  year = {2015},
  doi = {10.1109/TIT.2015.2394784}
}

@inproceedings{Eldar2020,
  author = {Eldar, Yonina C. and Li, Jerry and Musco, Cameron and Musco, Christopher},
  title = {Sample Efficient {Toeplitz} Covariance Estimation},
  booktitle = {Proceedings of the ACM-SIAM Symposium on Discrete Algorithms (SODA)},
  pages = {378--397},
  year = {2020},
  doi = {10.1137/1.9781611975994.23}
}

@inproceedings{Lawrence2020,
  author = {Lawrence, Hannah and Li, Jerry and Musco, Cameron and Musco, Christopher},
  title = {Low-Rank {Toeplitz} Matrix Estimation via Random Ultra-Sparse Rulers},
  booktitle = {Proceedings of the IEEE International Conference on Acoustics, Speech and Signal Processing (ICASSP)},
  pages = {4796--4800},
  year = {2020},
  doi = {10.1109/ICASSP40776.2020.9053026}
}

@book{CoverThomas2006,
  author = {Cover, Thomas M. and Thomas, Joy A.},
  title = {Elements of Information Theory},
  edition = {2},
  publisher = {Wiley},
  year = {2006},
  doi = {10.1002/047174882X}
}

@book{Tsybakov2009,
  author = {Tsybakov, Alexandre B.},
  title = {Introduction to Nonparametric Estimation},
  publisher = {Springer},
  year = {2009},
  doi = {10.1007/b13794}
}

@article{RudelsonVershynin2013,
  author = {Rudelson, Mark and Vershynin, Roman},
  title = {Hanson--{Wright} Inequality and Sub-{Gaussian} Concentration},
  journal = {Electronic Communications in Probability},
  volume = {18},
  number = {82},
  pages = {1--9},
  year = {2013},
  doi = {10.1214/ECP.v18-2865}
}

@article{Adamczak2015,
  author = {Adamczak, Rados{\l}aw},
  title = {A Note on the {Hanson--Wright} Inequality for Random Vectors with Dependencies},
  journal = {Electronic Communications in Probability},
  volume = {20},
  number = {72},
  pages = {1--13},
  year = {2015},
  doi = {10.1214/ECP.v20-3829}
}

@article{GotzeSambaleSinulis2019,
  author = {G{\"o}tze, Friedrich and Sambale, Holger and Sinulis, Arthur},
  title = {Higher Order Concentration for Functions of Weakly Dependent Random Variables},
  journal = {Electronic Journal of Probability},
  volume = {24},
  number = {85},
  pages = {1--19},
  year = {2019},
  doi = {10.1214/19-EJP338}
}

@article{Pinelis1994,
  author = {Pinelis, Iosif},
  title = {Optimum Bounds for the Distributions of Martingales in {B}anach Spaces},
  journal = {The Annals of Probability},
  volume = {22},
  number = {4},
  pages = {1679--1706},
  year = {1994},
  doi = {10.1214/aop/1176988477}
}

\end{document}